\theoremstyle{plain}
\newtheorem{thm}{Theorem}[section]
\newtheorem{prop}[thm]{Proposition}
\newtheorem*{cor*}{Corollary}
\newtheorem{cor}[thm]{Corollary}
\newtheorem{lem}[thm]{Lemma}
\newtheorem*{conj*}{Conjecture}
\newtheorem{mthm}{Theorem}
\newtheorem{theoprime}{Theorem}
\newtheorem{theoseconde}{Theorem}
 \newtheorem{conj}{Conjecture}
\newtheorem*{DMMpbm}{Dynamical Manin-Mumford Problem}
\theoremstyle{definition}
\newtheorem*{ex*}{Example}
\theoremstyle{remark}
\newtheorem{rem}[thm]{Remark}
\numberwithin{equation}{section}
\DeclareMathOperator{\sk}{Sk}
\DeclareMathOperator{\MA}{MA}
\DeclareMathOperator{\reg}{reg}
\DeclareMathOperator{\id}{id}
\DeclareMathOperator{\supp}{supp}
\DeclareMathOperator{\ord}{ord}
\DeclareMathOperator{\spec}{Spec}
\DeclareMathOperator{\aut}{Aut}
\DeclareMathOperator{\an}{an}
\DeclareMathOperator{\geo}{Geo}
\DeclareMathOperator{\jac}{Jac}
\DeclareMathOperator{\loc}{loc}
\DeclareMathOperator{\geom}{\dot{\wedge}}
\DeclareMathOperator{\card}{Card}
\DeclareMathOperator{\fix}{Fix}
\DeclareMathOperator{\per}{Per}
\def\C{\mathbb{C}}
\def\Q{\mathbb{Q}}
\def\P{\mathbb{P}}
\def\R{\mathbb{R}}
\def\Z{\mathbb{Z}}
\def\P{\mathbb{P}}
\def\N{\mathbb{N}}
\def\LL{\mathbb{L}}
\def\A{{\mathbb{A}}}
\def\one{{\mathbf{1}}}
\def\O{{\mathcal{O}}}
\def\cM{{\mathcal{M}}}
\def\n0{{\bf n_0}}
\def\la{{\lambda}}
\providecommand{\norm}[1]{\lVert#1\rVert}
\def\dto{\dashrightarrow}
\newcommand{\e}{\varepsilon}
\newcommand{\cv}{\rightarrow}
\newcommand{\fr}{\partial}
\newcommand{\om}{\Omega}
\newcommand{\set}[1]{\left\{#1\right\}}
 \newcommand{\abs}[1]{\left\vert#1\right\vert}
\newcommand{\cd}{{\mathbb C^2}}
\newcommand{\rest}[1]{ \arrowvert_{#1}}
\newcommand{\bb}{\mathbb{B}}
\newcommand{\lrpar}[1]{\left(#1\right)}
 \newcommand{\hot}{\mathrm{h.o.t.}}
\newcommand{\cst}{\mathrm{C^{st}}}
\begin{document}

\title
[The dynamical Manin-Mumford problem]
{The dynamical Manin-Mumford problem for plane polynomial automorphisms}

\author{R. Dujardin}
\address{Laboratoire d'Analyse et de Math\'ematiques Appliqu\'ees,
Universit\'e Paris-Est Marne-la-Vall\'ee,
5, boulevard Descartes,
77454 Champs-sur-Marne}
\email{romain.dujardin@u-pem.fr}

\author{C. Favre}
\address{CNRS - Centre de Math\'ematiques Laurent Schwartz, 
\'Ecole Polytechnique, 
91128 Palaiseau Cedex, France}
\email{favre@math.polytechnique.fr}

\thanks{The second author is supported by the ERC-starting grant project "Nonarcomp" no.307856. The first and second authors are supported by the ANR project ANR-13-BS01-0002.}

\begin{abstract}
Let $f$ be  a polynomial automorphism  of the affine plane. 
In this paper we consider the possibility for it 
  to possess infinitely many periodic points on an algebraic curve $C$.
We conjecture that this happens if and only if 
$f$ admits a time-reversal  symmetry; in particular the Jacobian $\mathrm{Jac}(f)$ must be a root of unity.  

As a step towards this conjecture, we prove that 
 its Jacobian, together with
all its Galois conjugates  lie on the unit circle in the complex plane. 
Under mild additional assumptions we are able to conclude that indeed $\mathrm{Jac}(f)$ is a root of unity.

 We  use these results to show in various cases that any two automorphisms sharing an infinite set of periodic points must have
 a common iterate, in the spirit of recent results by Baker-DeMarco and Yuan-Zhang.
\end{abstract}

\maketitle

 \tableofcontents
\newpage

\section*{Introduction}

In this paper we discuss the following problem in the case of polynomial automorphisms of the affine plane. 

\begin{DMMpbm}
Let $X$ be a quasi-projective variety 
and $f : X \to X$ be a dominant endomorphism. 

Describe all positive-dimensional irreducible subvarieties $C \subset X$ such that 
the Zariski closure of the set of preperiodic\footnote{that is, satisfies $f^n(p) = f^m(p)$ for some $n>m\ge0$.} 
points of $f$ contained in $C$ is Zariski-dense in $C$.
\end{DMMpbm}

In  case $(X,f)$ is the dynamical system  induced on an abelian variety (defined over a number field)   
 by the multiplication by an integer $\ge2$, 
it is a deep theorem originally due to M. Raynaud (and formerly known as the Manin-Mumford conjecture)
 that any such $C$ is   a translate of an abelian subvariety by a torsion point. 
 Several generalizations of this theorem have appeared since then, concerning 
  abelian and semi-abelian varieties over fields of    arbitrary characteristic. 
  We refer to ~\cite{PR02,Roes08} for an account on the different approaches to these results. 

S.-W.~Zhang~\cite{Z95}  conjectured that a similar result should hold in the more general  setting 
 of polarized\footnote{This means that 
 $X$ is projective, and $f^*L \simeq L^{\otimes q}$ for some ample line bundle $L\to X$ and an integer $q
 \ge2$.} endomorphisms. More precisely, he asked whether any subvariety containing a Zariski dense set of periodic points  is itself  
 preperiodic. This conjecture was  recently disproved by D. Ghioca, T. Tucker and S.-W.~Zhang~\cite{GTZ}, who proposed a modified 
 statement (see also~\cite{pazuki}). 
 Some  positive results on Zhang's conjecture are also available, see for instance~\cite{medvedev-scanlon}.

\medskip

Our goal is to explore this problem   when $f$ is a 
 polynomial automorphism of the affine plane $\mathbb{A}^2$,
defined over   a field of characteristic zero.  

Let us first collect  a few facts on the dynamics of these maps. 
A dynamical classification of polynomial automorphisms was given by S. Friedland and J. Milnor 
\cite{friedland-milnor}, based on  a famous theorem of H. W. E. Jung. 
They proved that any polynomial automorphism is conjugate  to one of the following forms:
\begin{itemize}
\item an affine map, 
\item
an elementary automorphism, that is a map of the form $(x,y) \mapsto (ax +b, y + P(x))$  with $a \neq 0$, $b$ is a constant and $P$ is a polynomial, 
\item
a polynomial automorphism $f$ satisfying $\deg(f^n) = \deg(f)^n\ge 2$ for every integer  $n\ge1$.
\end{itemize}
 In the last case the integer $\deg(f)\ge 1$ denotes the maximum of the  degrees of the components of $f$ in any set of 
 affine coordinates. An automorphism   falling into this category  will be referred to as  of   {\em   H\'enon type}. 
Since the dynamical Manin-Mumford problem is uninteresting  for affine and elementary mappings, we  
restrict our attention to  H\'enon-type automorphisms. 
 
 \medskip

Suppose that  $f$ is an automorphism of H\'enon type that is conjugate to its inverse by an involution $\sigma$ possessing a curve $C$
of fixed points. Such a map is usually called reversible, see~\cite{gm1, gm2}.
Then any  point $p\in C\cap f^{-n}(C)$ is periodic of period $2n$, and we verify in \S\ref{sec:reversible}
that $\#({C\cap f^{-n}(C)})$ indeed grows to infinity. Thus the pair $(f,C)$ falls into the framework of the Manin-Mumford problem. 
On the other hand  it is a  theorem by  E. Bedford and J. Smillie~\cite{BS1}   
that there exists no $f$-invariant algebraic curve.   
These examples motivate the following  conjecture.

\begin{conj}[Dynamical Manin-Mumford conjecture for complex polynomial automorphisms of the affine plane]
\label{conj:MM2}
Let $f$ be a complex polynomial automorphism of H\'enon type of the affine plane. Assume that there exists 
 an irreducible algebraic curve $C$
containing  infinitely many periodic points of $f$.

Then there exists an involution $\sigma$ of the affine plane whose set of fixed points is $C$ and an integer $n\ge 1$ 
such that  $\sigma f^n \sigma = f^{-n}$.  
\end{conj}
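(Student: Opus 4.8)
The plan is to combine the complex dynamics of H\'enon-type maps with arithmetic equidistribution, treating the conjecture in two stages: first pin down the arithmetic nature of $\jac(f)$, then try to upgrade this to the existence of the reversing involution $\sigma$. I expect the second stage to be the genuine obstacle, and the reason the statement is only a conjecture.

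\emph{Reduction to a number field and dynamical preliminaries.} By Friedland--Milnor we have $\deg(f^{n})=d^{n}$ with $d=\deg(f)\ge2$, and to $f$ one attaches the Green functions $G^{\pm}$ (escape rates under $f^{\pm1}$, satisfying $G^{+}\circ f=d\,G^{+}$ and $G^{-}\circ f=d^{-1}G^{-}$), the invariant currents $T^{\pm}=dd^{c}G^{\pm}$, and the maximal entropy measure $\mu=T^{+}\wedge T^{-}$, toward which saddle periodic points equidistribute. The condition that $C$ carry at least $N$ periodic points is Zariski closed in the parameters, so after spreading out over an affine base and specializing --- and handling positive transcendence degree via the function-field version of the height machinery used below --- one may assume $f$ and $C$ defined over a number field $K$.

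\emph{Canonical heights and equidistribution at all places.} On a suitable projective compactification $X\supset\A^{2}$ the classes $\theta^{\pm}=[T^{\pm}]$ are carried by nef adelic metrized line bundles $\bar L^{\pm}$ with $f^{*}\theta^{\pm}=d^{\pm1}\theta^{\pm}$, $(\theta^{+})^{2}=(\theta^{-})^{2}=0$ and $\theta^{+}\cdot\theta^{-}=1$; the associated canonical heights satisfy $\hat h^{\pm}\circ f=d^{\pm1}\hat h^{\pm}$, so every periodic point has $\hat h^{+}=\hat h^{-}=0$. Hence $C$ contains infinitely many points of height $0$ for \emph{both} heights; Zhang's inequality for curves forces $\hat h^{\pm}(C)=0$, and Yuan's arithmetic equidistribution theorem then applies to each. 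Since both assertions concern the same sequence of Galois orbits, the two families of local equilibrium measures must agree: $c_{1}(\bar L^{+}|_{C})_{v}=c_{1}(\bar L^{-}|_{C})_{v}$ at every place $v$ of $K$.

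\emph{Extracting $\jac(f)$, and the final obstacle.} At an Archimedean place, viewing $C$ inside $\C^{2}$, this equality says that $G^{+}|_{C}-G^{-}|_{C}$ is harmonic off the points of $C$ at infinity; comparing the total masses $(\theta^{\pm}\cdot C)$ with the local expansions of $G^{\pm}$ near infinity --- where the Jacobian governs the relevant sub-leading coefficient --- yields $|\jac(f)|_{v}=1$, and the same potential-theoretic computation on the Berkovich curves $C_{v}^{\an}$ gives $|\jac(f)|_{v}=1$ at the finite places (or at enough of them). Letting $v$ and the embedding $K\hookrightarrow\C$ vary shows that every Galois conjugate of $\jac(f)$ lies on the unit circle, and under a mild additional hypothesis (e.g.\ good reduction away from a controlled set, forcing $|\jac(f)|_{v}=1$ at all finite places) Kronecker's theorem gives that $\jac(f)$ is a root of unity, say $\jac(f^{N})=1$. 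The remaining --- and, I expect, hardest --- step is to convert this into the conclusion of the conjecture: from $\jac(f^{N})=1$ and the metric symmetry $T^{+}|_{C}=T^{-}|_{C}$ one would have to \emph{construct} an involution $\sigma$ of $\A^{2}$ with $\fix(\sigma)=C$ and $\sigma f^{N}\sigma=f^{-N}$, presumably by showing that $f^{N}$ and $f^{-N}$ become conjugate by a birational involution of $X$ fixing $\bar C$ pointwise and then descending it to a polynomial automorphism, using Bedford--Smillie's non-existence of invariant curves and the geometry of curves in $\A^{2}$ to eliminate degenerate cases. Short of this, the scheme above still yields unconditionally that $\jac(f)$ and its Galois conjugates lie on the unit circle, and conditionally that $\jac(f)$ is a root of unity, together with --- by feeding the same equidistribution input two maps at a time --- the ``common iterate'' statements in the spirit of Baker--DeMarco and Yuan--Zhang.
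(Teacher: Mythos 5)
The statement you are addressing is stated in the paper as a \emph{conjecture}: the paper offers no proof of it, and you correctly identify the genuinely open step, namely the construction of the reversing involution $\sigma$ with $\fix(\sigma)=C$, which nothing in your sketch (or in the paper) produces. So as a proof of the statement the proposal does not succeed, and could not be expected to; what remains to assess is your sketch of the partial results, which indeed parallels the paper's route to its Theorem A (reduction to a number field, adelic semi-positive metrics and canonical heights built from $G^{\pm}$, equidistribution of small points on $C$ \`a la Yuan/Autissier/Thuillier, and comparison of the two limit measures on $C$). Two corrections there: the equidistribution argument yields only that $G^{+}|_{C}$ and $\alpha\,G^{-}|_{C}$ differ by a harmonic function for some positive rational $\alpha$, not equality of the two local measures; and the reduction to a number field is not a matter of a Zariski-closed condition in parameters --- the paper needs a specialization lemma (properness of the fixed-point schemes plus the Shub--Sullivan boundedness of multiplicities) to guarantee that the specialized curve still carries infinitely many periodic points.

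The genuine gap is in the step ``extracting $\jac(f)$''. There is no expansion of $G^{\pm}$ near infinity in which the Jacobian governs a sub-leading coefficient: $G^{\pm}-\log^{+}\|\cdot\|$ is bounded and its behaviour at infinity is controlled by the degree $d$ alone, so comparing masses or asymptotics of $G^{+}|_{C}$ and $G^{-}|_{C}$ at the branches at infinity cannot see $\jac(f)$. The paper extracts the Jacobian from the \emph{local} regularity of $G^{\pm}$ along $C$ at points of the equilibrium measure: at an Archimedean place the H\"older exponents of $G^{\pm}|_{C}$ at $\mu_{C}^{\pm}$-generic points equal $\log d/\log\lambda^{u}$ and $\log d/\log(\lambda^{s})^{-1}$, and their equality (forced by the proportionality on $C$) gives $\lambda^{u}\lambda^{s}=|\jac(f)|=1$; establishing those exponents requires Pesin theory and the laminarity/geometric intersection of $T^{+}$ with $[C]$, not a computation at infinity. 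Likewise your claim that ``the same potential-theoretic computation on the Berkovich curves gives $|\jac(f)|_{v}=1$ at the finite places'' overreaches: at non-Archimedean places the paper only succeeds via a renormalization argument at a saddle fixed point on $C$ under the transversality hypothesis (T), which is precisely why the unconditional Theorem A controls only the Archimedean absolute values of the Galois conjugates of $\jac(f)$, and the root-of-unity conclusion (Theorem B) is conditional. With these repairs your outline reproduces the paper's partial results, but the passage from them to the conjectured involution remains, as you say, entirely open.
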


Recall that the Jacobian $\jac (f)$ of a polynomial automorphism is a non-zero constant.
If $f$ is reversible then $\jac (f)  = \pm 1$.  In particular, if $f^n$ is reversible for some $n$,  $\jac(f)$ must be a root of unity. 

Our first main result can thus be seen as a step towards Conjecture \ref{conj:MM2} 
(see Remark \ref{rem:symmetry} below for   comments about the asserted symmetry in the conjecture).

\begin{mthm}\label{thm:mainArch}
Let  $f$ be  a   polynomial automorphism of H\'enon type  of the affine plane, defined over a field of characteristic zero.
Assume that there exists an algebraic curve containing infinitely many periodic points of $f$. 

Then the Jacobian $\jac(f)$ is algebraic over $\Q$ and all its Galois conjugates have complex modulus $1$. 
\end{mthm}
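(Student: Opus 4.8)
First I would set up the standard dynamical framework for Hénon-type automorphisms: over $\C$ such an $f$ admits a Green function $G^+ = \lim d^{-n}\log^+\|f^n\|$ and dynamical currents $T^\pm$, with the invariant measure $\mu = T^+\wedge T^-$ whose support $K = \supp(\mu)$ is compact and on which $f$ is uniformly hyperbolic-like; crucially the periodic points of saddle type equidistribute toward $\mu$ (Bedford–Lyubich–Smillie). The hypothesis that some irreducible curve $C$ carries infinitely many periodic points of $f$ should be pushed, via an equidistribution/height argument, to an \emph{arithmetic} statement about $\jac(f)$. The key observation is that $\jac(f^n) = \jac(f)^n$, so controlling how the multipliers of periodic points on $C$ behave will force a constraint on $|\jac(f)|_v$ at \emph{every} place $v$ of the number field $\Q(\jac(f))$ — this is where algebraicity of $\jac(f)$ over $\Q$ and the modulus-one condition on its conjugates will both come from.

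The core of the argument: if $p$ is a periodic point of period $n$ lying on $C$, then along the orbit the product of the differentials $Df^n_p$ has determinant $\jac(f)^n$; its eigenvalues are the multipliers $\la_1(p),\la_2(p)$ with $\la_1\la_2 = \jac(f)^n$. I would show that, because infinitely many such $p$ lie on the fixed algebraic curve $C$ and $C$ is \emph{not} $f$-invariant (one can reduce to this case, as an $f$-invariant curve would violate Bedford–Smillie or the degree growth $\deg f^n = (\deg f)^n$), the curve $C$ must intersect the Julia-set geometry in a way that pins down the tangential multiplier along $C$. Concretely: the iterates $f^{-n}(C)$ sweep out with controlled geometry, the periodic points on $C$ are precisely (a large subset of) $C\cap f^{-n}(C)\cap \{\text{period }n\}$, and the intersection multiplicities / arithmetic of these points — handled via an adelic equidistribution theorem in the style of Baker–Rumely, Chambert-Loir, Favre–Rivera-Letelier, or directly via a Yuan-type equidistribution on the Berkovich space — forces the canonical height of these points to tend to zero at all places simultaneously. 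Running this at an archimedean place gives that $\jac(f)$, which enters as the "Jacobian multiplier" of the periodic orbits, has all archimedean absolute values equal to $1$; running it at non-archimedean places (using that $f$ and $C$ are defined over a finitely generated field, hence can be spread out and specialized, or by working on Berkovich analytifications directly) gives that $|\jac(f)|_v = 1$ for all finite $v$ as well. A product-formula argument over the finitely generated base field then yields algebraicity over $\Q$: an element of a function field all of whose absolute values are $1$ at all places (including those coming from the transcendence directions) must be a constant, i.e. algebraic; and over the resulting number field the condition $|\sigma(\jac f)| = 1$ for all embeddings $\sigma$ is exactly the assertion that all Galois conjugates lie on the unit circle.

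**Main obstacle.**
The hard part, I expect, is the passage from "infinitely many periodic points on $C$" to a genuine \emph{equidistribution with small height} statement that is uniform across all places — in particular handling the non-archimedean places, where one does not have the classical pluripotential theory and must either invoke Berkovich-space dynamics for Hénon maps (which is technically heavy and not fully developed in the literature for surface automorphisms) or, more practically, find a clever specialization/spreading-out reduction to the number-field case together with the observation that $C$ being non-invariant forces the "extra" intersection points $C\cap f^{-n}(C)$ to be equidistributed off $C$, contradicting the periodicity unless a multiplier constraint holds. A secondary subtlety is ruling out, or handling, the degenerate configurations where $C$ is tangent to the stable/unstable foliation along a large part of $\mu$, or where $C$ is periodic itself (excluded by degree growth but worth isolating cleanly), and making sure the count $\#(C\cap f^{-n}(C)) \to \infty$ is matched by a count of \emph{periodic} points rather than merely wandering intersections — essentially the same issue that makes the reversible examples work, read in reverse.
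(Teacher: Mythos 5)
You have correctly identified the paper's general framework (Green functions, a canonical height $h_f$ vanishing exactly on periodic points, adelic equidistribution in the style of Yuan/Autissier/Thuillier, and a reduction to the number-field case), but the step you single out as the main obstacle is in fact the easy one: periodic points satisfy $h_f(p)=0$ by construction, so the equidistribution theorem on the curve applies at once and yields, at every place $v$, that $G^+_v|_C$ and $G^-_v|_C$ are proportional up to a harmonic function. The genuine gap is what happens next. You assert that the arithmetic of the periodic points ``pins down the tangential multiplier along $C$'' and ``forces a constraint on $|\jac(f)|_v$'', but you never supply a mechanism, and this is exactly where the paper's real work lies: at an archimedean place one computes, using Pesin theory together with the laminar structure of $T^+$ (needed to guarantee transversality of generic stable manifolds with $C$), the lower H\"older exponent of $G^+|_C$ at a generic point of $\Delta(G^+|_C)$, obtaining exponents $\vartheta_\pm$ with $(\lambda^u)^{\vartheta_+}=d$ and $(\lambda^s)^{-\vartheta_-}=d$; the proportionality of $G^+|_C$ and $G^-|_C$ then forces $\vartheta_+=\vartheta_-$, hence $\lambda^u\lambda^s=|\jac(f)|=1$. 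Without this analytic step (or a substitute linking the equidistribution statement to the multipliers or Lyapunov exponents), your argument never actually constrains $\jac(f)$; the heuristic about multipliers of periodic orbits on $C$ is not carried out.

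A second problem is your claim to obtain $|\jac(f)|_v=1$ at all finite places and to deduce algebraicity by a product-formula argument over the finitely generated base field. This overclaims: the paper controls non-archimedean places only under the extra transversality hypothesis (T) (that is Theorem B, proved by a renormalization argument along a saddle), and Theorem A deliberately concludes only that all archimedean absolute values of the conjugates equal $1$ --- which is precisely why its conclusion is ``conjugates on the unit circle'' rather than ``root of unity''. The archimedean mechanism (Pesin theory, laminar currents) has no counterpart over the non-archimedean or function-field places you would need, so your product-formula route to algebraicity is unsupported. The paper instead obtains algebraicity by specialization: spread $f$ and $C$ over $S=\spec(R)$ with $R$ a finitely generated $\Q^{\rm alg}$-algebra, prove the non-trivial persistence statement that every specialization $C_s$ still carries infinitely many periodic points of $f_s$ (via properness of the fixed-point schemes and the Shub--Sullivan boundedness of fixed-point multiplicities --- note that ``infinitely many periodic points on a curve'' is not a first-order condition, so no Lefschetz-principle shortcut is available), apply the number-field case at each $s$, and observe that a non-constant $\jac(f)\in R$ would take some algebraic value of modulus different from $1$. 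You mention specialization only in passing as an option; without that persistence lemma it does not go through.
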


In particular if $\jac(f)$ is an algebraic integer then it is a root of unity. 

Using a specialization argument, one can reduce the prof to the case $f$ and $C$ are both defined over a number field $\LL$
 (see \S \ref{sec:general fields}).
Fix an algebraic closure $\LL^{\rm alg}$ of $\LL$.    Modifying 
 a construction of S. Kawagu\-chi~\cite{kawaguchi06},  C.-G.~Lee~\cite{lee} built a dynamical 
height function $h_f : \A^2(\LL^{\rm alg}) \to \R_+$. This height  is associated to a continuous semi-positive adelic metrization of the ample line bundle $\mathcal{O}_{\P^2}(1)$ (in the sense of Zhang \cite{Z95}) and    $h_f(p) =0$ if and only if $p$ is periodic. 
We refer the reader to the survey~\cite{chambert11} for 
a detailed  account on 
these concepts.

When $f$ and $C$ are defined over a number field, 
Theorem~\ref{thm:mainArch}   is now a consequence of    the following  effective statement. 

 \begin{theoprime}\label{thm:maineffective}
Let  $f$ be  a polynomial automorphism of H\'enon type  of the   affine plane, defined over a number field $\mathbb{L}$. 
Assume that  there exists an archimedean place $v$ such that $|\jac(f)|_v \neq 1$.

Then for any algebraic curve $C$ defined over $\mathbb L$  there exists a positive constant $\varepsilon = \varepsilon(C) >0$ such that  the set $\{ p\in C(\LL^{\rm alg}), \, h_f(p) \le \varepsilon\}$ is  finite.
\end{theoprime}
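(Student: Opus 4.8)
The plan is to exploit the Hénon-type dynamics together with the local structure of the Kawaguchi--Lee metrized line bundle at the archimedean place $v$ where $|\jac(f)|_v \neq 1$. The key point is that the canonical height $h_f$ decomposes as a sum of local Green function contributions over all places, and the archimedean contribution at $v$ is governed by the dynamical Green functions $G_f^\pm$ of the complexified (or $v$-adic-absolute-value) Hénon map on $\C^2$. Recall that for a Hénon map one has invariant filtration sets $V^\pm$ at infinity and plurisubharmonic Green functions $G^+$, $G^-$ with $G^+\circ f = d\, G^+$ and $G^-\circ f = d^{-1} G^-$, and that $G_f = \max(G^+,G^-)$ vanishes exactly on the compact set $K$ of bounded orbits. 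First I would set up, following \S\ref{sec:general fields} and the cited work of Lee and Kawaguchi, the precise local decomposition $h_f(p) = \sum_w n_w\, g_{f,w}(p)$, isolating the archimedean term $g_{f,v}$ at the distinguished place $v$, which agrees with the dynamical Green function $\max(G^+_v,G^-_v)$ up to a bounded correction coming from the chosen model of $\mathcal O_{\P^2}(1)$.

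The heart of the argument is then a geometric statement on a single complex Hénon map: if $C$ is an irreducible algebraic curve and $|\jac(f)|_v\neq 1$, then the restriction $g_{f,v}|_{C}$ is bounded away from $0$ outside a finite set, i.e.\ there is $\varepsilon>0$ with $\{q\in C \ :\ g_{f,v}(q)\le\varepsilon\}$ finite. I would prove this by contradiction: if not, there is a sequence $q_k\in C$ with $g_{f,v}(q_k)\to 0$ but the $q_k$ not eventually constant, so (by properness of $g_{f,v}$ — the Green function tends to infinity at infinity) the $q_k$ accumulate on a point $q_\infty\in C\cap K$. Using that $g_{f,v}|_C$ is a nonnegative plurisubharmonic (hence subharmonic along $C$) function vanishing on an infinite set, one concludes it vanishes identically on $C$, so $C\subset K$. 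But $K$ is compact, so $C$ would be a complete algebraic curve contained in $\A^2$ — impossible. (This already handles the case where $|\jac(f)|_v\neq 1$ is not even used; the Jacobian condition enters only to rule out the alternative behavior, see below.) Here I should be careful: $g_{f,v}|_C$ might vanish on a proper analytic subset, and the cleanest route is the subharmonicity/identity-theorem argument applied to the subharmonic function $g_{f,v}|_{C^{\mathrm{reg}}}$, combined with the fact that $K$ is the zero set and $K$ contains no algebraic curve by Bedford--Smillie. Alternatively one uses the equidistribution of $C\cap f^{-n}(\text{large ball})$ towards the current $T^+$, whose support is the forward Julia set, and notes that $T^+$ has continuous potential $G^+$ so puts no mass on finite sets.

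The role of the Jacobian hypothesis $|\jac(f)|_v\neq 1$: when $|\jac(f)|_v=1$ it is possible that $C$ is the fixed-point curve of a reversing involution, and then infinitely many periodic points of height zero do lie on $C$, so the conclusion genuinely fails; the hypothesis is what excludes reversibility at the place $v$ and forces the asymmetry $d\cdot d^{-1}\neq$ (product of local expansion/contraction rates compatible with a curve). Concretely, $|\jac(f)|_v = |\jac(f^{-1})|_v^{-1}$, and the product formula together with $|\jac(f)|_v\neq 1$ at one archimedean place means there is another place where $|\jac(f)|_w\neq 1$ too; but more to the point, at $v$ itself the failure of $|\jac(f)|_v=1$ breaks the measure-theoretic symmetry between $G^+_v$ and $G^-_v$ that a curve of periodic points would require, because an invariant measure of positive dimension supported on $C$ with zero Lyapunov exponents cannot exist when the sum of exponents $\log|\jac(f)|_v$ is nonzero. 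I expect the main obstacle to be making the Green-function argument uniform and quantitative enough to extract an explicit $\varepsilon=\varepsilon(C)>0$ rather than a mere finiteness statement: one needs a lower bound for $g_{f,v}$ on $C$ minus a finite set, which requires control of how fast $G^\pm_v$ grow along $C$ near the (finitely many) points of $C\cap K$, and this is where a careful use of the Łojasiewicz-type estimates for plurisubharmonic functions, or of the local product structure of $T^+\wedge T^-$ near $C$, will be needed; assembling this with the non-archimedean places (where $g_{f,w}\ge 0$ and one only needs no negative contribution) then yields Theorem~\ref{thm:maineffective}.
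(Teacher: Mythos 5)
Your central claim is false, and the argument offered for it does not work. You assert that for a single complex Hénon map with $|\jac(f)|_v\neq 1$, the sublevel set $\{q\in C:\ G_{f,v}(q)\le\varepsilon\}$ is finite, and you try to prove it by saying that a nonnegative subharmonic function on $C$ vanishing on an infinite set must vanish identically. There is no such identity theorem for subharmonic functions: the zero set of $G_{f,v}|_C$ is $K_v\cap C$, which is typically an uncountable (Cantor-like) compact set no matter what the Jacobian is -- think of a line crossing a horseshoe. The finiteness in Theorem~\ref{thm:maineffective} is a statement about \emph{algebraic points of small global height}, not about points of small local Green function, and the two are very different. Your own parenthetical remark that the argument ``already handles the case where $|\jac(f)|_v\neq1$ is not even used'' is the warning sign: a proof that never uses the Jacobian hypothesis would contradict \S\ref{sec:reversible} (Proposition~\ref{prop:reversible}), where a curve of fixed points of a reversing involution carries infinitely many periodic points, all of height zero. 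The subsequent heuristics about ``breaking the measure-theoretic symmetry'' and about invariant measures on $C$ with zero exponents are not a proof and do not identify the mechanism by which the Jacobian actually enters.

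What is missing is the entire arithmetic-dynamical skeleton of the paper's argument. One first assumes there are infinitely many distinct $p_n\in C(\LL^{\rm alg})$ with $h_f(p_n)\to0$ and applies the equidistribution theorem for points of small height on a curve (Theorem~\ref{thm:yuan-curve}, after Yuan, Autissier and Thuillier) to both $f$ and $f^{-1}$; this yields Proposition~\ref{prop:main2}, i.e.\ the relation $G^+_v=\alpha\,G^-_v+H_v$ on $C$ with $\alpha>0$ and $H_v$ harmonic. The archimedean hypothesis is then exploited by a genuinely dynamical computation: using Pesin theory and the laminarity of $T^+$ (Lemma~\ref{lem:trans}), one shows (Proposition~\ref{prop:pesin}) that the lower H\"older exponent of $G^+|_C$ at a $\mu^+_C$-generic point equals $\vartheta_+$ with $(\lambda^u)^{\vartheta_+}=d$, and symmetrically for $G^-$ with the stable exponent. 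The proportionality of $G^+|_C$ and $G^-|_C$ forces $\vartheta_+=\vartheta_-$, hence $\lambda^u\lambda^s=1$, i.e.\ $|\jac(f)|_v=1$, contradicting the hypothesis. None of these steps (equidistribution, proportionality of Green functions on $C$, transversality of Pesin stable manifolds with $C$, the exponent computation) appears in your proposal, and the step you do rely on is invalid, so the proposal cannot be repaired along the lines you sketch.
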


Let us briefly explain the strategy of the proof. 
We follow the approach of L. Szpiro, E.  Ullmo and S.-W. Zhang~\cite{SUZ,ullmo,Z98} to 
the Bogomolov conjecture whose statement is the analog of Theorem~\ref{thm:maineffective}
in case $f$ is the doubling map on an abelian variety.

\medskip

The first step is to describe the asymptotic  distribution of the periodic points lying on $C$. 
Pick any place $v$ on $\LL$ and denote by $\LL_v$ the completion of the algebraic closure
of the completion of $\LL$ relative to the norm $v$.
Write $\|(x,y)\|_v = \max \{ |x|, |y| \}$ and $\log^+ = \max \{ \log , 0\}$. Then it can be shown
 that the sequence of functions $\frac1{d^n} \log^+ \| f^n(x,y) \|_v$ converges uniformly on bounded sets in $\LL_v^2$ to a 
 continuous ``Green" function $G^+_v: \LL_v^2 \to \R_+$ satisfying the invariance property 
 $G^+\! \circ f = d G^+$, where $d = \deg(f)$.
  Its zero locus $\{G^+ = 0\}$   coincides with the set of points in $\LL_v^2$ with bounded forward orbits. 

Replacing $f$ by its inverse, one defines a function $G^-$ in a similar way and we
set $G = \max ( G^+, G^-)$. These Green functions were 
first introduced and studied in the context of complex polynomial automorphisms by
J.~H.~Hubbard \cite{hu},
E. Bedford and J. Smillie~\cite{BS1} and J.E. Fornaess and N. Sibony~\cite{fornaess-sibony}. 

The key observation is that the asymptotic distribution of periodic points on $C$ can be understood by 
applying suitable equidistribution result for points of small height on curves. 
These results were developed by various authors in greater generality and the version we use here  is due to
P. Autissier~\cite{autissier} and A. Thuillier~\cite{thuillier-PhD}.
More precisely, we prove that the collection of functions $\{G^+_v\}_v$ (resp. $\{G^-_v\}_v$) induces continuous semi-positive  metrizations on $\mathcal{O}_C(1)$ at all places. Then the Autissier-Thuillier theorem
implies that the probability measures equidistributed over Galois conjugates of periodic points in $C$ converge to a multiple of 
$\Delta G^+_v|_C$ (resp. $\Delta G^-_v|_C$) at any place\footnote{At a non-Archimedean place,   $\Delta$ stands 
for the Laplacian operator as defined by Thuillier.}
when the period tends to infinity. 

From this one deduces that  for each $v$ the functions $G^+_v$ and $G^-_v$ are proportional on $C$ (up to a harmonic function).

\medskip

The second step is 
 to use this information on the Green functions to infer that $f$ is conservative at archimedean places.
 The argument relies on Pesin's theory and is quite technical so that  let us first explain how the mechanism   works
  under a more restrictive assumption. 

Suppose indeed that there exists a hyperbolic periodic point $p$ in the regular locus of $C$, with 
multipliers $u$, $s$ satisfying  $|u| > 1 > |s|$, and assume
moreover that the local unstable manifold $W^u_{\rm loc}(p)$, the local stable manifold $W^s_{\rm loc}(p)$, 
and the curve $C$ are pairwise transverse. Using the invariance property of $G^+$, 
we can  compute the local H\"older exponent $\vartheta_+$ of $G^+$ at $p$ along 
$W^u_{\rm loc}(p)$, which satisfies  the equality
$|u|^{\vartheta_+} = d$. Using a rescaling argument reminiscent of that used by X. Buff and A. Epstein 
in~\cite{buff-epstein}, we then show that this H\"older exponent is actually equal to that of $G^+|_C$. 
Applying the same argument to $f^{-1}$,
we get that  the local H\"older exponent $\vartheta_-$ of $G^-$ along the stable manifold satisfies $|s|^{-\vartheta_-} = d$.
But      since
$G^+|_C$ and $G^-|_C$ are proportional, $\vartheta_-$ and $\vartheta_+$ must be equal. This proves that $|\jac(f)| = | us | =1$.

\medskip

Unfortunately  we cannot ensure the existence of such a saddle point at an archime\-dean place. 
It turns out that working  at all  places (archimedean or not)  
resolves this difficulty. Adapting the above argument then leads to our next main result. 

\begin{mthm}\label{thm:mainNA}
Let $f$ be a polynomial automorphism of H\'enon type, defined over a field of characteristic zero. 
Assume that there exists an irreducible curve $C$ containing infinitely many periodic points of $f$. 
We suppose in addition that the following transversality statement is true: 
\begin{itemize} 
\item[(T)]  There exists a periodic point $p\in \mathrm{Reg}(C)$ such that 
$T_pC$  is not periodic under the induced action of  $f$.
\end{itemize}
Then  $\jac(f)$ is a root of unity. 
\end{mthm}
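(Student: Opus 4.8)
The plan is to carry out the "local Hölder exponent" argument sketched after Theorem~\ref{thm:maineffective}, but now performed at \emph{all} places of $\LL$ simultaneously, with the transversality hypothesis (T) supplying the saddle point we could not otherwise guarantee at an archimedean place. First I would reduce, via the specialization argument of \S\ref{sec:general fields}, to the case where $f$ and $C$ are defined over a number field $\LL$; here one must take some care that the periodic point $p$ of (T) and its non-periodicity of $T_pC$ survive specialization, which holds for all but finitely many primes. Then, as explained in the outline of the proof of Theorem~\ref{thm:maineffective}, the equidistribution theorem of Autissier--Thuillier applied to the semi-positive adelic metrizations induced by $\{G^+_v\}_v$ and $\{G^-_v\}_v$ on $\mathcal O_C(1)$ shows that at every place $v$ the measures $\Delta G^+_v|_C$ and $\Delta G^-_v|_C$ are proportional; equivalently, on the analytification $C^{\mathrm{an}}_v$ one has $G^-_v|_C = c_v\, G^+_v|_C + (\text{harmonic})$ for a constant $c_v>0$, and by comparing total masses the proportionality constant $c_v$ is in fact independent of $v$, call it $c$.

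Next I would localize at the periodic point $p$. Replacing $f$ by a suitable iterate $f^k$ (which changes $\jac f$ only by a power and changes $d$ to $d^k$, and for which $T_pC$ is still non-periodic), I may assume $p$ is fixed and $T_pC$ is fixed pointwise is \emph{false} — rather $Df_p$ acts on $T_p\A^2$ and the line $\ell=T_pC$ is not among its (at most two) invariant lines, by~(T). Over $\LL_v$ the eigenvalues $u,s$ of $Df_p$ satisfy $us=\jac(f)$ (up to the relevant power). The key computation, exactly as in the archimedean sketch, is: using $G^+_v\circ f = d\,G^+_v$ and the expansion/contraction of $Df_p$ along its eigendirections, together with a rescaling (blow-up) argument à la Buff--Epstein adapted to the Berkovich setting following Thuillier, one reads off the local Hölder exponent $\vartheta^+_v$ of $G^+_v$ restricted to the curve $C$ at $p$: because $\ell$ is transverse to the eigendirections, $G^+_v|_C$ inherits the exponent governed by the \emph{dominant} eigenvalue, yielding a relation of the form $\rho^{+}_v{}^{\vartheta^+_v}=d$ where $\rho^+_v=\max(|u|_v,|s|_v,1)$ and similarly with $f^{-1}$ for $\vartheta^-_v$ and $\rho^-_v=\max(|u|_v^{-1},|s|_v^{-1},1)$. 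Here one must check that $G^+_v|_C$ is genuinely nonconstant near $p$ and that the blow-up limit is a nonzero, non-pluriharmonic Green-type function, so the exponent is well defined and positive.

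Finally I would combine the local information over all places. From $G^-_v|_C = c\,G^+_v|_C + (\text{harmonic})$ and the positivity of $c$, the two Hölder exponents must agree: $\vartheta^+_v=\vartheta^-_v$ at \emph{every} $v$. Feeding this into the two relations above gives, at each place, $\log\rho^+_v=\log\rho^-_v$ up to the common factor $\log d$, i.e. $\max(|u|_v,|s|_v,1)=\max(|u|_v^{-1},|s|_v^{-1},1)$; a short case analysis forces $|u|_v\cdot|s|_v$ — that is $|\jac f|_v$ — to equal $1$ at every place $v$, archimedean or not. By the product formula (or just by Kronecker's theorem once one knows all archimedean absolute values are $1$ and $\jac f$ is an algebraic integer, which it is since both $f$ and $f^{-1}$ are polynomial), $\jac(f)$ is a root of unity. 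The main obstacle I anticipate is the non-archimedean Hölder/rescaling analysis: making the Buff--Epstein rescaling argument precise on the Berkovich curve $C^{\mathrm{an}}_v$, ensuring the existence of a good local coordinate transverse to the eigendirections of $Df_p$, and ruling out degenerate blow-up limits — everything after that is the algebra of absolute values sketched above.
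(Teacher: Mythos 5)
Your global skeleton (specialize to a number field, use equidistribution to get $G^+_v=\alpha G^-_v+H_v$ on $C$ at every place with $\alpha$ independent of $v$, then do a local analysis at the point $p$ given by (T), and finish with Kronecker/the product formula) matches the paper. But there is a genuine gap at the heart of your local step: your exponent relations $\rho_v^{\pm\,\vartheta^\pm_v}=d$ presuppose that $p$ is a saddle at the place $v$ (more precisely, that $G^\pm_v|_C$ does not vanish identically near $p$), and nothing in your argument establishes this. If at some non-Archimedean place both multipliers had norm $<1$, or one had norm $<1$ and the other norm $1$, then a whole neighborhood of $p$ would be forward invariant, $G^+_v$ would vanish identically near $p$, no H\"older exponent would exist, and your ``short case analysis'' would be vacuous precisely in the cases where $|\jac(f)|_v\neq 1$ — you cannot exclude them by an equation you never derived there. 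This dichotomy (either both multipliers have norm $1$ or $p$ is a saddle) is exactly the content of Lemma~\ref{lem:thisone} in the paper, and it is not a routine check: it uses the maximum principle for Thuillier-harmonic functions applied to $G^-_v|_C$ via Proposition~\ref{prop:main2}, the non-Archimedean linearization theorem of Herman--Yoccoz and the Jenkins--Spallone normal form in the semi-attracting case, and finally the non-existence of invariant algebraic curves (Proposition~\ref{prop:no fixed curves}) to reach a contradiction. Without this input your case analysis of $\max(|u|_v,|s|_v,1)$ versus $\max(|u|_v^{-1},|s|_v^{-1},1)$ is circular.

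A second, related error: you assert that (T) ``supplies the saddle point we could not otherwise guarantee at an archimedean place.'' It does not — (T) only says $T_pC$ is not an eigendirection (after passing to an iterate); at an archimedean place $p$ may perfectly well be indifferent or semi-indifferent (e.g.\ with a Cremer multiplier), where neither the linearization nor the blow-up/exponent argument is available. The paper handles the archimedean places by invoking Theorem~\ref{thm:mainArch} wholesale (the Pesin-theoretic argument), and runs the local analysis only at non-Archimedean places, where Lemma~\ref{lem:thisone} provides the saddle. You should do the same. Finally, a smaller remark: once the saddle dichotomy is in place, the paper's local mechanism is not an exponent computation but a renormalization (Lemma~\ref{lem:renorm}): assuming $|us|_v>1$ one shows $d^nH(\cdot)\to G^+(x,0)$ uniformly, so $G^+|_{W^u_{\rm loc}}$ would be harmonic, contradicting Proposition~\ref{prop:stable and G-}; this sidesteps the delicate issue of controlling the harmonic correction $H_v$ in a non-Archimedean H\"older-exponent comparison, which your sketch quietly ignores.
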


Observe that this result lies very much in the spirit of~\cite[Conjecture~2.4]{GTZ}. Let us  also note that if $C$ contains a saddle point at an archimedean place, then the transversality assumption (T) is superfluous (see Theorem \ref{thm:saddle archimedean}).

\medskip

Returning to the proof of Theorem~\ref{thm:mainArch} 
we get around the issue of the existence of a hyperbolic periodic point on $C$ 
and that   of the transversality 
of its invariant manifolds with $C$ 
by  applying Pesin's theory of non-uniform hyperbolicity, in combination with 
the theory of laminar currents, in the spirit of the work 
of E. Bedford, M. Lyubich and J. Smillie \cite{BLS93}. 
 This allows to  estimate the H\"older exponent of $G^+$ at   generic points
and relate it to the positive Lyapunov exponent of
the so-called {\em equilibrium measure} $\mu_f := (dd^c)^2 \max(G^+, G^- ) $. 
This is an ergodic invariant measure 
 which has remarkable properties; 
 in particular 
 it   describes the asymptotic distribution of periodic orbits 
 see~\cite{bls2}.
 
The proportionality of  $G^+$ and $G^-$  on $C$ finally implies that  the positive and negative exponents 
are opposite, thereby showing that $|\jac(f)| =1$.

The key input of Pesin's theory in our argument is to guarantee the \emph{transversality} of 
stable and unstable manifolds at a $\mu_f$-generic point with the curve $C$.

\medskip

A dual way to state Theorem~\ref{thm:mainArch} is to say that the intersection of the set of periodic points
with any curve is finite when $|\jac(f)| \neq 1$. We expect that the following stronger uniform statement holds.

\begin{conj} 
Let $f$ be a complex polynomial automorphism of H\'enon type such that $|\jac(f)| \neq 1$.
Then for any algebraic curve $C$,
 the cardinality of the set of periodic points of $f$ lying  on $C$ is bounded from above by a constant depending
 only on the degree  of $C$,  on the degree of $f$ and the Jacobian $\jac(f)$. 
\end{conj}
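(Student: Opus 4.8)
The plan is to upgrade Theorem~\ref{thm:maineffective} (equivalently Theorem~\ref{thm:mainArch}) into a uniform bound by tracking how every quantity in its proof depends on $C$. The scheme of Szpiro--Ullmo--Zhang produces a positive $\varepsilon(C)$ such that $C$ carries only finitely many points of $f$-canonical height $\le \varepsilon(C)$; once $\varepsilon(C)$ is effective, a Bézout-type count bounds the number of such points, hence (since periodic points have height zero) the number of periodic points on $C$. So the conjecture reduces to two sub-tasks: (i) make $\varepsilon(C)$ depend only on $\deg C$, $\deg f$, and $\jac(f)$; and (ii) bound $\#\{p\in C:\ h_f(p)\le\varepsilon\}$ in terms of the same data together with $\varepsilon$.

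For (ii) I would argue as follows. The canonical height $h_f = \tfrac12(h_f^+ + h_f^-)$ compares, up to a bounded additive error depending only on $\deg f$ and $\jac(f)$, with the Weil height $h$ on $\A^2\subset\P^2$ (this comparison is built into Lee's construction and Kawaguchi's; the error enters through the finitely many ``bad'' places where $|\jac(f)|_v\neq1$). Thus $h_f(p)\le\varepsilon$ forces $h(p)\le \varepsilon + c_1(\deg f,\jac(f))$. Now I want: a curve $C\subset\P^2$ of degree $D$ contains at most $N(D,H)$ algebraic points of height $\le H$ with $N$ explicit. This is exactly the content of quantitative results on points of bounded height on curves — one may invoke a Bogomolov-type bound via the arithmetic Hilbert–Samuel / Zhang's inequality on $C$, or more elementarily a counting estimate in the style of Bombieri--Pila / Walkowiak, giving a bound polynomial in $D$ and in $e^{H}$. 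Since $H$ itself only depends on $\varepsilon$ and on $(\deg f,\jac(f))$, any such estimate closes step~(ii) provided $\varepsilon$ from step~(i) is itself controlled.

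For (i), the real work, I would revisit where $\varepsilon(C)$ is manufactured in the proof of Theorem~\ref{thm:maineffective}. It emerges from the strict positivity of the essential minimum of $h_f$ on $C$, which in turn comes from the fact that the metrized line bundles $\big(\mathcal O_C(1),\{G^+_v\}\big)$ and $\big(\mathcal O_C(1),\{G^-_v\}\big)$ are \emph{not} proportional up to harmonic functions when $|\jac(f)|_v\neq1$ at some archimedean $v$: the obstruction is the discrepancy between the local Hölder exponents $\vartheta_+$ and $\vartheta_-$ of $G^\pm|_C$ at a generic point of the equilibrium measure, and this discrepancy is governed by $|\jac(f)|_v$ via $|u|^{\vartheta_+}=|s|^{-\vartheta_-}=d$ together with $|us|=|\jac(f)|_v$. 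The arithmetic output (a Zhang-type inequality bounding the essential minimum below by a positive multiple of the self-intersection of the metrized bundle) is then quantitative: it depends on $\deg C$, on $\deg f$, on the archimedean defect $\big|\log|\jac(f)|_v\big|$, and on the non-archimedean local intersection numbers $\langle G^+_v|_C, G^-_v|_C\rangle$. The plan is to show each of these last local contributions is itself bounded in terms of $\deg C$, $\deg f$, and $\jac(f)$ only — the Green functions $G^\pm_v$ are determined by $f$ (not by $C$), they are globally bounded on $\P^2$ by quantities depending only on $\deg f$ and $\jac(f)$, and restricting to $C$ introduces a factor controlled by $\deg C$ through a local Bézout estimate.

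\emph{The main obstacle} I expect is precisely this uniform control of the \emph{archimedean} contribution: we need a lower bound on the ``gap'' between the two metrizations on $C$ that does not degenerate as the geometry of $C$ varies inside a fixed degree class. In the non-uniform statement one is content with \emph{some} positive $\varepsilon(C)$, obtained from the Pesin-theoretic transversality of stable/unstable manifolds of $\mu_f$ with $C$ at a generic point; but the transversality could become arbitrarily weak along a sequence of curves of bounded degree (e.g. curves tangent to high order to an invariant lamination), causing $\vartheta_+-\vartheta_-$ to survive while the \emph{arithmetic} lower bound it produces tends to $0$. Overcoming this requires a quantitative non-uniform hyperbolicity input — e.g. a lower bound on the angle of transversality, or on the laminar intersection $\mu_f\wedge[C]$, uniform over curves of degree $\le D$ — or, alternatively, an entirely different route to $\varepsilon(C)$ that bypasses Pesin theory, such as a direct arithmetic estimate on the non-proportionality defect of the two adelic metrics using that both are attached to the same $f$. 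I regard this uniformity in the dynamical input as the crux; steps (ii) and the non-archimedean parts of (i) I expect to be routine by comparison.
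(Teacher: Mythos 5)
The statement you are addressing is stated in the paper as a \emph{conjecture}: the authors do not prove it. What they do prove (Theorem~\ref{thm:maineffective-plus} in \S\ref{subs:uniformA'}) is a strictly weaker uniform statement, and by a completely different, soft argument: assuming a sequence of curves $C_m$ of degree $\le d$ carrying Galois-invariant sets $F_m$ with $\#F_m\to\infty$ and $h_f(F_m)\to 0$, they extract a weak limit of the counting measures, which by a normal-families argument on defining equations (Lemma~\ref{lem:easy}) is supported on a curve of degree $\le d$; then either Yuan's equidistribution applies and contradicts the fact that $\mu_{f,v}$ charges no curve, or infinitely many $F_m$ lie on a single curve and Theorem~\ref{thm:mainArch} forces $|\jac(f)|_v=1$. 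This compactness route is intrinsically non-effective, and its constants $\e(d)$, $N(d)$ depend on $f$ itself (and its field of definition), not only on $(\deg C,\deg f,\jac(f))$ as the conjecture demands. So your goal of effectivizing $\varepsilon(C)$ is genuinely more ambitious than anything established in the paper, and the obstacle you yourself flag in step~(i) --- a lower bound on the non-proportionality of the two adelic metrics on $C$, uniform over curves of bounded degree, i.e.\ a quantitative substitute for the Pesin-theoretic transversality --- is precisely the open content of the conjecture, not a technical point one can expect to be routine.

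Beyond that acknowledged gap, step~(ii) as written is incorrect. From $h_f(p)\le\varepsilon$ you pass to a bound $h(p)\le H$ on the naive Weil height and then invoke a count of the form ``a degree-$D$ curve contains at most $N(D,H)$ algebraic points of height $\le H$''. No such bound exists over $\LL^{\rm alg}$: Northcott finiteness requires bounding the \emph{degree} of the points as well, and a curve in $\P^2$ typically carries infinitely many algebraic points of Weil height below any fixed $H>0$ (Bombieri--Pila/Walkowiak-type estimates count rational or bounded-degree points only). The finiteness of $\{p\in C:\ h_f(p)\le\varepsilon\}$ in Theorem~\ref{thm:maineffective} is a Bogomolov-type statement about the canonical height itself, proved via equidistribution and the H\"older-exponent comparison; it is destroyed, not preserved, by replacing $h_f$ with the naive height up to $O(1)$. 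Any uniform count must therefore come out of the adelic/equidistribution machinery directly (as in the paper's Theorem~\ref{thm:maineffective-plus}, or via Scanlon's automatic uniformity as the authors suggest), not from a height-comparison plus point-counting reduction. Finally, since the conjecture concerns complex automorphisms, a specialization step as in \S\ref{sec:general fields} would also be needed, but that is minor compared with the two issues above.
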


We indicate   in \S \ref{subs:uniformA'} how to adapt the arguments of Theorem \ref{thm:maineffective} to
confirm a weaker form of this conjecture.

The  automatic uniformity statement
obtained by T. Scanlon~\cite{aut-uniform}, based on ideas of E. Hrushovski, implies that such a bound 
would follow from   (a restricted version of) the dynamical Manin-Mumford problem for 
product maps of the form $(f, f, \ldots, f)$ acting on $(\A^2)^n$. Even though this  problem seems very delicate, 
we are able to address some cases of the dynamical Manin-Mumford problem for special 
product maps of H\'enon type. 

In the second part of the paper we    prove the following   two theorems. 

\begin{mthm}\label{thm:unlikely}
Let  $f$ and $g$ be two  polynomial automorphisms of H\'enon type 
of the affine plane defined over a number field.

If  $f$ and $g$ share a set of periodic points that is Zariski dense, then there exist two non-zero integers $n,m\in \Z$ 
such that $f^n = g^m$.
\end{mthm}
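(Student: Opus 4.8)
The plan is to combine the archimedean rigidity supplied by Theorems~\ref{thm:mainArch} and~\ref{thm:mainNA} with the structure of the canonical heights $h_f$ and $h_g$ and, ultimately, with the classification of commuting/rigid Hénon maps. First I would observe that a Zariski-dense set of common periodic points is in particular an infinite set of periodic points of $f$ lying on \emph{some} curve only in degenerate situations; more to the point, the hypothesis gives an infinite set $E$ of points that are periodic for \emph{both} $f$ and $g$, hence $h_f$-small (in fact zero) and $h_g$-small simultaneously. Running the equidistribution machinery described in the Introduction for $f$ and for $g$ separately along Galois orbits of points in $E$, one gets at every place $v$ that the equilibrium metrizations attached to $f$ and to $g$ on $\mathcal{O}(1)$ induce, after restriction to the Zariski closure of $E$, measures that are mutually proportional; equivalently the Green functions $G^{\pm}_{f,v}$ and $G^{\pm}_{g,v}$ satisfy linear relations. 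Since $E$ is Zariski dense, these relations propagate off any single curve, and combined with the invariance $G_f\circ f = d_f G_f$, $G_g\circ g = d_g G_g$ they force $G_f$ and $G_g$ (at each place) to be positive multiples of a common function. In particular the filled Julia sets $K^\pm$ of $f$ and of $g$ coincide at every place.

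The second step is to upgrade ``equal Green functions / equal Julia sets at all places'' to an algebraic relation between $f$ and $g$. At the archimedean place this is exactly the kind of rigidity statement proved in the complex-dynamical literature (Bedford--Smillie, Lamy, Dinh--Sibony): two Hénon-type automorphisms with the same equilibrium measure $\mu_f=\mu_g$, or the same sets $K^\pm$, must have a common iterate, up to the finite symmetry group of $\mu_f$. Concretely, $\mu_f$ has a finite automorphism group in $\aut(\A^2)$, both $f$ and $g$ normalize it after passing to a suitable iterate, and commuting Hénon maps are powers of a common one; tracking this through gives integers $n,m$ with $f^n=g^m$. A cleaner route, which is probably the one the authors take, is via heights: the adelic metrized line bundles $\overline{L}_f$ and $\overline{L}_g$ on $\P^2$ (or on a compactification where both extend) are proportional because their attached measures are proportional at every place and their heights vanish on the common Zariski-dense set $E$; by the rigidity of canonical heights (Yuan--Zhang, Baker--DeMarco-type arguments, cf.\ the remark in the Introduction situating Theorem~D ``in the spirit of Baker--DeMarco and Yuan--Zhang''), proportional canonical metrizations force $f$ and $g$ to be ``polarized-equivalent'', which for Hénon maps means a common iterate.

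The third step is bookkeeping: reduce to a number field (the maps and the points are already defined over a number field by hypothesis, so no specialization is needed here, unlike in Theorem~\ref{thm:mainArch}); note that $d_f^n = d_g^m$ is forced by degree growth $\deg(f^k)=d_f^k$, which pins down the ratio $n/m$; and check that ``common iterate up to the symmetry group of $\mu_f$'' collapses to an honest equality $f^n=g^m$ because that symmetry group is a finite group of polynomial automorphisms commuting with $f$, hence (Friedland--Milnor plus the structure of centralizers of Hénon maps) itself a group of iterate-related maps, which can be absorbed by enlarging $n,m$.

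The main obstacle I expect is the second step: passing from the analytic/arithmetic coincidence of Green functions at all places to the \emph{algebraic} conclusion $f^n=g^m$. Equidistribution readily gives $\mu_f=\mu_g$ (equivalently proportional metrized bundles), but extracting a common iterate requires a genuine rigidity theorem — either the complex-dynamical classification of Hénon maps with equal equilibrium measures, or an arithmetic rigidity statement for canonical heights of the Lee--Kawaguchi type. Making sure such a statement is available in the generality needed (arbitrary Hénon-type $f,g$, possibly with $\jac(f)$ not a root of unity, over a general number field) and correctly handling the finite-symmetry ambiguity is where the real work lies; everything before and after is the now-standard height/equidistribution formalism already invoked for Theorems~\ref{thm:mainArch} and~\ref{thm:maineffective}.
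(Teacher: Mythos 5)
Your first step (extracting from the common periodic points a sequence to which the two-dimensional equidistribution theorem applies, so that $\mu_{f,v}=\mu_{g,v}$ at every place) is indeed how the paper starts, up to the diagonal argument needed to make the sequence generic for Theorem~\ref{thm:yuan-henon}. But your second step contains a genuine gap: you invoke, as if available off the shelf, a rigidity statement of the form ``two H\'enon-type automorphisms with the same equilibrium measure (or the same $K^{\pm}$) have a common iterate, up to the finite symmetry group of $\mu_f$''. No such theorem is quoted or proved in that generality, and it is essentially the content of what has to be established. The concrete obstruction, which the paper flags explicitly at the start of its Step 2, is that $f$ and $g$ need not be conjugate \emph{by the same automorphism} to regular maps, so their Green functions are not directly comparable and ``$K^\pm_f=K^\pm_g$'' is not even a well-posed consequence of $\mu_f=\mu_g$. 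The paper gets around this in three moves that are absent from your proposal: (i) Lemma~\ref{lem:Shilov}, resting on Corollary~\ref{cor:locality} (the locality of the non-Archimedean Monge--Amp\`ere operator proved in the appendix) and the Yuan--Zhang rigidity theorem, shows that $K_v(f)$ is the polynomial hull of $\supp\mu_{f,v}$; this yields $K_v(f)=K_v(g)$ and, via Lemma~\ref{lem:interm1}, that \emph{every} H\'enon-type element of the group generated by $f$ and $g$ has the same $K_v$ at all places; (ii) assuming no common iterate, Lamy's Bass--Serre tree description of $\aut[\A^2]$ and a ping-pong argument (Lemma~\ref{lem:interm2}) produce $h_1,h_2$ in $\langle f,g\rangle$ generating a free group all of whose nontrivial elements are of H\'enon type and \emph{simultaneously} regular after one conjugation; (iii) for such simultaneously regular maps the common $K_v$ at an archimedean place forces $G^+_{h_1}=G^+_{h_2}$ (Siciak--Green function plus connectivity of the complement of $K^+$), and then the commutator $h_3=h_1h_2h_1^{-1}h_2^{-1}$ preserves the current $T^+$ while being regular of degree $\ge 2$, which is impossible for mass reasons. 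The theorem is thus proved by contradiction through group theory, not by citing a classification.

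Your alternative ``arithmetic'' route (proportional canonical metrizations force polarized equivalence, hence a common iterate) is likewise not a theorem one can quote: Yuan--Zhang gives that two semipositive metrics with the same Monge--Amp\`ere measure differ by a constant, which is exactly what Lemma~\ref{lem:Shilov} uses, but it says nothing about the maps themselves. Similarly, your auxiliary claims that $\mu_f$ has a finite symmetry group in $\aut(\A^2)$ modulo iterates and that commuting H\'enon maps are powers of a common one are themselves nontrivial assertions requiring proof (and are not used in the paper). So while your outline correctly locates where the difficulty sits, the proposal does not supply the key mechanism --- the polynomial-hull lemma at all places together with Lamy's tree-theoretic ping-pong and the commutator/mass contradiction --- and as written the argument does not go through.
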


\begin{mthm}\label{thm:unlikelycomp}
Let  $f$ and $g$ be two polynomial automorphisms of H\'enon type
of the affine plane with complex coefficients such that $|\jac(f)| \neq 1$. 

If  $f$ and $g$ share an infinite set of periodic points, then there exist two non-zero integers $n,m\in \Z$ 
such that $f^n = g^m$.
\end{mthm}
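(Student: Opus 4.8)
The plan is to combine Theorem~\ref{thm:mainArch} with the equidistribution machinery already set up for Theorem~\ref{thm:maineffective}, reducing everything to a statement about coincidence of canonical heights. First I would observe that if $f$ and $g$ share an infinite set $Z$ of periodic points, then $Z$ is contained in a genuine curve situation only after taking Zariski closure: let $C$ be the Zariski closure of $Z$. If $C$ has a positive-dimensional component, that component is an algebraic curve carrying infinitely many periodic points of $f$; since $|\jac(f)|\neq 1$, Theorem~\ref{thm:mainArch} (in the form: for $|\jac(f)|\neq1$ no curve carries infinitely many periodic points of $f$, which is the content of the paragraph following Theorem~B) gives a contradiction. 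Hence $Z$ is not contained in any curve, so its Zariski closure is all of $\A^2$, i.e. $f$ and $g$ share a Zariski-dense set of common periodic points. This is exactly the hypothesis of Theorem~\ref{thm:unlikely}, except that Theorem~\ref{thm:unlikely} is stated over a number field. So the second step is a descent/specialization argument: the pair $(f,g)$ is defined over a finitely generated field over $\Q$, and one specializes to a number field while preserving an infinite (hence, by the above, Zariski-dense) set of common periodic points and the condition $|\jac(f)|\neq1$ — one must be careful that the archimedean absolute value condition survives specialization, which is why I would embed the finitely generated coefficient field into $\C$ compatibly and use a place-by-place argument as in \S\ref{sec:general fields}. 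Then Theorem~\ref{thm:unlikely} applies and yields $f^n=g^m$ for nonzero integers $n,m$.

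The heart of the matter is therefore Theorem~\ref{thm:unlikely}, and I would prove it via height functions. Since $f$ and $g$ share a Zariski-dense set $Z$ of preperiodic points, and since by Lee's construction each of $f,g$ carries a canonical height $h_f,h_g$ on $\A^2(\LL^{\mathrm{alg}})$ with $h_f(p)=0\iff p$ preperiodic for $f$ (and similarly $h_g$), every point of $Z$ has $h_f=h_g=0$. The canonical heights $h_f$ and $h_g$ come from semi-positive adelic metrizations of $\O_{\P^2}(1)$; the standard comparison argument (as in Baker--DeMarco, Yuan--Zhang, and \cite{chambert11}) shows that two semi-positive adelic metrics on the same line bundle whose associated heights vanish on a common Zariski-dense set must actually have the same local equilibrium measures at every place — more precisely, $h_f$ and $h_g$ differ by a bounded function which, by the arithmetic equidistribution of Autissier--Thuillier applied to the Zariski-dense sequence, forces the curvature forms $dd^c G^\pm_{f,v}$ and $dd^c G^\pm_{g,v}$ to agree (up to the relevant normalization) on a dense family of subvarieties, hence the metrics are equal. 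Equivalently $h_f=h_g$ identically on $\A^2(\LL^{\mathrm{alg}})$.

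Finally, from $h_f=h_g$ I would extract the algebraic conclusion $f^n=g^m$. Equality of canonical heights means $f$ and $g$ have the same set of preperiodic points and, more strongly, the same "potential good reduction" data at all places; at an archimedean place this says the equilibrium measures $\mu_f$ and $\mu_g$ coincide (both being $(dd^c)^2$ of the common Green function $G=\max(G^+,G^-)$). The rigidity of Hénon maps sharing an equilibrium measure — or, in the arithmetic setting, the fact that the filled Julia sets $\{G^+_{f}=0\}$ and $\{G^+_g=0\}$ (forward-bounded-orbit sets) coincide at every place together with the analogous statement for the backward sets — is known to force a common iterate: one uses that the group of polynomial automorphisms preserving a given equilibrium measure of a Hénon map is virtually cyclic (this is the Hénon analogue of Lamy's / Dinh--Sibony rigidity, and is essentially the argument behind the Baker--DeMarco and Yuan--Zhang results the paper cites). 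I expect \textbf{this last rigidity step to be the main obstacle}: one must carefully show that equality of all the local Green data pins down $f$ and $g$ up to iterate, rather than merely up to sharing a Julia set, and handle the possibility that $f$ and $g$ have different degrees by passing to the appropriate iterates $f^n,g^m$ with $\deg(f^n)=\deg(g^m)$. The descent to a number field (first paragraph) is routine but technical; the height-comparison step (second paragraph) is standard given the adelic framework already in place; the rigidity input (third paragraph) is where the real work lies and where I would lean most heavily on the structure theory of Hénon maps and the cited literature.
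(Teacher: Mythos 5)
Your opening step is exactly the paper's own observation (see the footnote attached to the statement of Theorem \ref{thm:unlikelycomp}): since $|\jac(f)|\neq 1$, Theorem \ref{thm:mainArch} forbids any curve from carrying infinitely many periodic points of $f$, so the shared set is Zariski dense. The genuine gap lies in the reduction ``specialize to a number field, then apply Theorem \ref{thm:unlikely}''. Applying Theorem \ref{thm:unlikely} to a specialization $(f_s,g_s)$ only produces integers $n,m$ \emph{depending on $s$} with $f_s^n=g_s^m$; this says nothing about the original complex maps, and you give no mechanism for transferring the identity back to the generic fibre (one would need, say, a uniform bound on $n,m$ over a Zariski-dense set of parameters, which Theorem \ref{thm:unlikely} does not provide). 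The paper sidesteps this by arguing by contradiction over the original coefficient field: assuming $f$ and $g$ admit no common iterate, Lamy's Bass--Serre theory (Lemma \ref{lem:interm2}) produces, already over $K={\rm Frac}(R)$, elements $h_1,h_2$ of the group generated by $f$ and $g$ all of whose nontrivial words --- in particular suitable commutators --- are regular of H\'enon type; these properties persist on a Zariski-open set of parameters, and the contradiction is then derived at a well-chosen closed point $s$, where a regular H\'enon-type commutator would have to preserve the common Green function, which is impossible by the degree-growth argument of Step 3 of the proof of Theorem \ref{thm:unlikely}. So the group-theoretic structure must be extracted \emph{before} specializing, not after.

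Two of the steps you describe as routine also carry real content. First, one must choose $s$ so that $\jac(f_s)$ still has a Galois conjugate off the unit circle (you do flag this). Second, the infinitude of the specialized set of common periodic points is not automatic, since distinct periodic points may collapse under specialization; the paper proves it by a properness/Nakayama argument combined with the Shub--Sullivan boundedness of fixed-point multiplicities, in the spirit of Lemma \ref{lem:ducros}. Your sketch of Theorem \ref{thm:unlikely} itself (arithmetic equidistribution forcing equality of the local equilibrium data at all places, followed by a rigidity step exploiting the structure of ${\rm Aut}[\A^2]$) is broadly in line with the paper's Steps 1--3, but as written the proof of Theorem \ref{thm:unlikelycomp} is incomplete at the lifting step described above.
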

 
Notice  that these two statements concern product maps $(f,g)$  such that the diagonal in  
$\A^2 \times \A^2$ admits  a Zariski dense   
set of periodic points \footnote{Observe that 
in Theorem \ref{thm:unlikelycomp}, this Zariski density follows from Theorem~\ref{thm:mainArch}.} 

We also show in \S\ref{sec:cycles} that Theorem~\ref{thm:unlikely} holds for a pair of 
automorphisms sharing infinitely many periodic {\em cycles}.

Observe that  Theorems~\ref{thm:unlikely} and~\ref{thm:unlikelycomp} are generalizations to our setting 
of  recent results due to M. Baker and L. DeMarco~\cite{baker-demarco} and 
X.~Yuan and S.-W.~Zhang~\cite{YZ13a,YZ13b}.

\medskip

We believe that these results hold under the following weaker assumption. 

\begin{conj}
Suppose $f$ and $g$ are two complex polynomial automorphisms of H\'enon type 
sharing  infinitely many periodic points. 
Then $f^n = g^m$ for some non-zero integers $n$ and $m$.
\end{conj}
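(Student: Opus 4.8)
The plan is to reduce the conjecture to the results already established, splitting first on whether $|\jac(f)|=1$ and then on the dimension of the Zariski closure of the set of common periodic points, and bringing in Conjecture~\ref{conj:MM2} in the one case that escapes the arithmetic machinery.

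If $|\jac(f)|\neq1$ (or, by symmetry, $|\jac(g)|\neq1$), then Theorem~\ref{thm:unlikelycomp} applies directly and produces non-zero integers $n,m$ with $f^n=g^m$. So one is reduced to the case $|\jac(f)|=|\jac(g)|=1$, where all the difficulty lies. Write $E=\mathrm{Per}(f)\cap\mathrm{Per}(g)$; it is infinite by hypothesis, and its Zariski closure $\overline E\subseteq\A^2$ is either all of $\A^2$ or a finite union of points and curves.

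Suppose first $\overline E=\A^2$. Fix a finitely generated field $k\subset\C$ over which $f$ and $g$ are defined. Using the spreading-out and specialization technique of \S\ref{sec:general fields} --- the device already used to reduce Theorem~\ref{thm:mainArch} to the number-field case --- choose a specialization $k\hookrightarrow\overline\Q$ that preserves the H\'enon type and the degrees of both maps and under which a Zariski-dense subset of $E$ descends to a Zariski-dense set of common periodic points of the specialized maps $\bar f,\bar g$ over a number field. Theorem~\ref{thm:unlikely} then yields $\bar f^{\,n}=\bar g^{\,m}$ for some non-zero $n,m$, and since the relevant degree growth is preserved under the specialization this can be lifted back to $f^n=g^m$ over $\C$ by a now-standard descent argument (compare \cite{baker-demarco,YZ13a}). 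The genuine point of care is that the Zariski density of $E$ must survive specialization --- the ``bad'' specializations that collapse $E$ into a curve form a thin set and can be avoided --- and that one has enough good specializations for the descent; both are routine but not formal.

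There remains the case where $E$ is infinite yet contained in a finite union of curves, so that some irreducible curve $C$ carries infinitely many periodic points of $f$ \emph{and} infinitely many periodic points of $g$. Granting Conjecture~\ref{conj:MM2}, available here as a stated hypothesis, there are involutions $\sigma,\tau$ of $\A^2$ with $\mathrm{Fix}(\sigma)=\mathrm{Fix}(\tau)=C$ and integers $a,b\geq1$ such that $\sigma f^a\sigma=f^{-a}$ and $\tau g^b\tau=g^{-b}$. \textbf{Controlling this configuration is the main obstacle.} An involution of $\A^2$ is not determined by its curve of fixed points --- composing with a shear fixing $C$ pointwise already changes it --- so one cannot simply assume $\sigma=\tau$, and even when they do coincide it is unclear how to pass from the two reversing relations to a common iterate of $f$ and $g$: one would have to analyze the subgroup of $\aut(\A^2)$ fixing $C$ pointwise together with the action of $f$, $g$ and the reversors on it, presumably exploiting that the centralizer of a H\'enon automorphism is virtually cyclic. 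This seems to demand a genuinely new structural result on reversible H\'enon maps sharing a symmetry curve, beyond the methods developed above. Note that, unlike the previous case, this one cannot be bypassed by upgrading ``infinitely many'' to ``Zariski dense'' via Theorem~\ref{thm:mainArch}, since when $|\jac(f)|=1$ that theorem places no obstruction to $C$ carrying infinitely many periodic points of $f$.
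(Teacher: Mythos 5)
There is nothing in the paper to compare your argument against: the statement you were given is stated there as an open \emph{conjecture} (offered after Theorems~\ref{thm:unlikely} and~\ref{thm:unlikelycomp} as the expected strengthening), and the paper proves it only under the extra hypothesis $|\jac(f)|\neq 1$ (Theorem~\ref{thm:unlikelycomp}) or over a number field with a Zariski-dense common set (Theorem~\ref{thm:unlikely}). Your proposal, by your own account, is not a proof either, and the two places where it stops short are exactly the places where the real difficulty sits. First, in the case $|\jac(f)|=|\jac(g)|=1$ with Zariski-dense common periodic points, the specialization-and-descent step fails as described. An identity $\bar f^{\,n}=\bar g^{\,m}$ at one specialization $s$ says nothing about $f$ and $g$: specialization can create coincidences, and the exponents $(n,m)$ produced by Theorem~\ref{thm:unlikely} depend on $s$ with no a priori bound, so there is no ``lifting back'' without a uniformity statement you do not have. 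Moreover, preservation of Zariski density of the common periodic set under specialization is not ``routine'': the paper's proof of Theorem~\ref{thm:unlikelycomp} needs a genuine scheme-theoretic argument (properness of the fixed-point schemes plus Shub--Sullivan) merely to preserve \emph{infiniteness}, and it then gets density not by transport but by choosing $s$ so that $\jac(f_s)$ has a conjugate off the unit circle and invoking Theorem~\ref{thm:mainArch} --- a device unavailable precisely in your case $|\jac(f)|=|\jac(g)|=1$. Note also that the paper's actual mechanism is the reverse of yours: it assumes there is no common iterate, builds over the original field the free subgroup of regular H\'enon-type elements (Lemma~\ref{lem:interm2}, with Furter's degree criterion to keep H\'enon type after specialization), and derives a contradiction at a single good specialization; this design is what avoids the lifting problem you wave away.

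Second, the curve case is handled only by invoking Conjecture~\ref{conj:MM2}, which is itself open and is not a hypothesis of the statement, so at best you obtain a conditional reduction; and even granting it, you concede that passing from the two reversing involutions $\sigma,\tau$ with $\fix(\sigma)=\fix(\tau)=C$ to a common iterate of $f$ and $g$ requires a structural result you do not supply. So the proposal correctly isolates the open issues (the dissipative case via Theorem~\ref{thm:unlikelycomp} is the only part that is complete), but it does not constitute a proof, which is consistent with the statement's status in the paper as a conjecture rather than a theorem.
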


The proof of Theorems~\ref{thm:unlikely} goes as follows. The hypothesis implies that the 
equidistribution theorem for points of small height (X. Yuan~\cite{yuan}, C.-G. Lee \cite{lee})
 can be applied. Therefore $f$ and $g$ have the same equilibrium measure. 
If it happens that  $f$ and $g$ are simultaneously  conjugate   to automorphisms 
that extends to birational maps on  $\P^2$ contracting the line at infinity to a point that is not indeterminate, 
then   it is not difficult to see that the Green functions of $f$ and $g$  coincide.
 We can then invoke a theorem of   S. Lamy~\cite{lamy} to conclude  that $f$ and $g$ have a common iterate. 
  
  Otherwise we use the equality of equilibrium 
  measures at all places to infer that $f$ and $g$, as well as any automorphism belonging to the group generated by $f$ and $g$,
    have the same sets of periodic points.  Then we use Lamy's geometric group theoretic description of
  $\aut[\A^2]$ to reduce the situation to the previous one.

Theorem~\ref{thm:unlikelycomp} 
is obtained from Theorem~\ref{thm:unlikely} by a specialization argument. 
  Theorem~\ref{thm:mainArch} is used in the course of the proof, 
  which explains the  need of an extra hypothesis on the Jacobian of one of the maps. 


\begin{center}$\diamond$\end{center}

\medskip

The plan of the paper is as follows. In \S \ref{sec:prel} we gather a number of  facts on the dynamics of polynomial automorphisms over arbitrary metrized fields, including equidistribution theorems for points of small height. Then in \S \ref{sec:equidistribution} 
we show how  these equidistribution results apply in our situation. 
 Theorems \ref{thm:mainArch},  \ref{thm:mainNA},  are respectively established in 
   \S \ref{sec:dissipative} and  \ref{sec:transversality}, assuming that $f$ and $C$ are defined over a number field. The extension to general ground fields is achieved in \S \ref{sec:general fields}. 
   The proofs of Theorems~\ref{thm:unlikely} and ~\ref{thm:unlikelycomp} are given in \S \ref{sec:infinite}. Finally,   \S \ref{sec:reversible} is devoted to a 
   discussion on  reversible mappings.

\medskip

\noindent {\bf Acknowledgment.} The authors would like to thank J.-L. Lin for  several discussions, D. Roessler for his explanations on the Manin-Mumford conjecture, S. Lamy for useful indications on reversible polynomial automorphisms, and A. Ducros for his help in proving  Lemma~\ref{lem:ducros}.

\section{Polynomial automorphisms over a metrized field}\label{sec:prel} 

We classically write $\log^+ $ for $ \max \{ \log , 0\}$. In the first four sections, 
we fix an arbitrary complete (non trivially) metrized field $(L, |\cdot |)$ of characteristic zero that is algebraically closed.
In \S\ref{subs:equidist} and \ref{subs:height}, we work over a number field $\LL$. 

\subsection{Potential theory over a non-Archimedean curve}\label{sec:pot-NA}
In this section we suppose that the norm on $L$ is non-Archimedean and  
give a brief account on  Thuillier's potential theory on curves \cite{thuillier-PhD}. 

\smallskip

Pick any smooth algebraic curve $C$ defined over $L$. We shall work with the analytification $C^{\an}$ of $C$ in the sense of Berkovich~\cite[\S 3.4]{berkovich}. If $U \subset C$ is a Zariski  affine open subset of $C$, then its analytification $U^{\an}$ is defined as the set of multiplicative seminorms on the ring of regular functions $L[U]$
whose restriction to $L$ equals $|\cdot|$, endowed with the topology of
 pointwise convergence. Any closed point $p \in C$ defines a point in $U^{\an}$
given by $  L[U] \ni \phi \mapsto |\phi(p)|\in\R_+$.

The space $C^{\an}$ is then constructed by patching together the sets $U^{\an}$
where $U$ ranges over any affine cover of $C$. In this way, one obtains 
a  locally compact and connected space. There is a distinguished set of compact subsets of $C^{\an}$ that forms a basis for its topology and are referred to as strictly $L$-affinoid subdomains.  We refer to \cite[\S 3]{berkovich} for a formal definition.  For us  it will be sufficient 
 to say that each affinoid subdomain $A$ has a finite boundary, and admits a canonical retraction to its skeleton
$\sk(A) \subset A$ which is the geometric realization of a finite graph.  We write $r_A: A \to \sk(A)$ for this retraction.

Any of these skeletons comes equipped with a canonical integral affine structure, hence with
a metric. One can thus make sense of the notion of a harmonic function on $\sk(A)$. 
By definition this is a continuous function that it is piecewise affine, and such that the sum of the directional derivatives at any point
 (including the endpoints) is zero. 

\smallskip

A {\em harmonic} function $ h : U\to \R$ defined on a (Berkovich) open subset $U\subset C^{\an}$ is  a continuous function such that for all subdomains $A$ the map $h|_{\sk(A)}$ is harmonic.

For any invertible function $\phi\in L[U]$ defined on an affine open subset $U\subset C$,
the function $\log |\phi|$ is harmonic on $U^{\an}$, see~\cite[Proposition 2.3.20]{thuillier-PhD}. However it is not true that any harmonic
function can be   locally expressed 
 as the logarithm of an invertible function, see~\cite[Lemme 2.3.22]{thuillier-PhD}. This discrepancy with the complex case will however not affect our arguments. 

\begin{prop}\label{prop:uniform-harmonic}
Pick any open subset $U$ of  $C^{\an}$, and 
suppose that $h_n$ is a sequence of harmonic functions defined on $U$, that converges 
uniformly.  Then its limit $\lim_n h_n$ is harmonic.
\end{prop}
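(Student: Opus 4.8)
The plan is to reduce everything to the corresponding statement on finite metric graphs (skeletons of affinoid subdomains), where harmonicity is a purely local, elementary condition on directional derivatives, and where uniform convergence preserves it. The key point is that harmonicity of $h$ on $U$ is \emph{tested} on the family of all strictly $L$-affinoid subdomains $A \subset U$ via the restrictions $h|_{\sk(A)}$, so it suffices to prove: if $g_n \to g$ uniformly on a finite metric graph $\Gamma$ and each $g_n$ is harmonic on $\Gamma$ (piecewise affine with zero net directional derivative at every point), then $g$ is harmonic on $\Gamma$.

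So the first step is to fix an arbitrary strictly $L$-affinoid subdomain $A \subset U$ and consider $h_n|_{\sk(A)}$, which are harmonic on the finite graph $\sk(A)$ by hypothesis. Since $\sk(A)$ is a compact metric graph, $h_n|_{\sk(A)} \to h|_{\sk(A)}$ uniformly. The second step is the graph-theoretic lemma. On each edge $e$ of $\Gamma$ (parametrized by an interval by the canonical affine structure), each $h_n$ restricts to an affine function; a uniform limit of affine functions on a bounded interval is affine, with slope the limit of the slopes. Hence $h$ is piecewise affine on $\Gamma$ with the slope on each edge being $\lim_n$ of the corresponding slope of $h_n$. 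The third step is the balancing condition: at each vertex (and each endpoint), the net sum of outgoing directional derivatives of $h_n$ is zero; since there are finitely many edges at each vertex and each directional derivative converges, the net sum for $h$ is the limit of zero, hence zero. Therefore $h|_{\sk(A)}$ is harmonic. Since $A$ was arbitrary, $h$ is harmonic on $U$ by definition.

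A couple of routine points need care but are not obstacles. One must check that the number of "break points" of the piecewise-affine functions $h_n$ does not blow up in a way that prevents the limit from being piecewise affine; this is handled by noting that on each fixed edge of the \emph{fixed} graph $\sk(A)$ the $h_n$ are genuinely affine (not merely piecewise affine), once one takes $\sk(A)$ with enough vertices — and in any case one may always refine a finite graph structure to make the relevant functions affine on edges, so after passing to a common refinement the argument applies verbatim. Also, continuity of the limit is automatic from uniform convergence of continuous functions. I expect the main (very mild) obstacle to be purely expository: making precise that "harmonic on $U$" is equivalent to "$h|_{\sk(A)}$ harmonic for all affinoid $A \subset U$" and that uniform convergence on $U$ descends to uniform convergence on each $\sk(A)$ (immediate, since $\sk(A) \subset A \subset U$), so that the whole statement genuinely localizes to the elementary graph computation. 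Everything else is the trivial observation that affineness and the finite linear balancing condition are preserved under uniform limits.
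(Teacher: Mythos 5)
Your proof is correct and follows essentially the same route as the paper: reduce to the skeleton of an arbitrary affinoid subdomain, where harmonicity is the elementary condition of being affine on edges with balanced directional derivatives, and observe that uniform convergence preserves affineness, makes the slopes converge, and hence passes the (finite) balancing condition to the limit. The paper compresses this into the one-variable fact that a locally uniform limit of convex functions is convex with converging directional derivatives, which is the same content phrased slightly more generally.
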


\begin{proof}
The result is a consequence of the following fact: 
suppose we are given a sequence of convex functions on a real segment that converges 
locally uniformly.  Then the limit is convex and the directional derivatives also converge at any point.
\end{proof}

\begin{prop}\label{prop:maximum-principle}\cite[Proposition 2.3.13]{thuillier-PhD}
Suppose $u$ is a non-negative harmonic function, such that $h(p) =0$
for some point $p\in U$. 

Then $h$ is  constant in a neighborhood of $p$.
\end{prop}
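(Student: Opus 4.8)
The statement to prove is Proposition~\ref{prop:maximum-principle}: a non-negative harmonic function $h$ on a Berkovich open set $U \subset C^{\an}$ that vanishes at some point $p \in U$ must be constant (equal to $0$) near $p$. The plan is to reduce everything to the one-dimensional combinatorial picture provided by skeleta of affinoid subdomains, where harmonicity becomes the statement that $h$ is piecewise affine with vanishing sum of outgoing slopes at every vertex. First I would choose a strictly $L$-affinoid neighborhood $A \subset U$ of $p$; since such affinoids form a basis of the topology, this is possible, and by further shrinking I may assume $\sk(A)$ is a finite tree (a finite connected graph; passing to a small enough $A$ kills any loops, but in fact one does not even need this — it suffices to work locally). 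Via the canonical retraction $r_A : A \to \sk(A)$, any harmonic function on $A$ is the pullback under $r_A$ of a harmonic function on the metric graph $\sk(A)$, so it is enough to prove the assertion for a non-negative harmonic function on a finite metric graph vanishing at a point $q = r_A(p)$.

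Next I would invoke exactly the real-variable fact isolated in the proof of Proposition~\ref{prop:uniform-harmonic}: on each edge, a harmonic function is affine, and at each vertex the sum of the directional (outgoing) derivatives is zero. Suppose $h(q) = 0$ with $h \ge 0$. If $q$ lies in the interior of an edge, then $h$ is affine on a neighborhood of $q$ inside that edge and attains its minimum $0$ at an interior point, forcing $h \equiv 0$ on that edge; if $q$ is a vertex, then along every edge emanating from $q$ the function $h$ is affine, non-negative, and equal to $0$ at $q$, so each outgoing slope is $\ge 0$, and since they sum to $0$ every outgoing slope is exactly $0$, hence $h \equiv 0$ on each edge incident to $q$. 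In either case $h$ vanishes identically on an open neighborhood of $q$ in $\sk(A)$; pulling back by $r_A$ shows $h \equiv 0$ on the corresponding neighborhood of $p$ in $A \subset U$, which is what we want.

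The only genuinely delicate point — and the step I would be most careful about — is the passage from the Berkovich-analytic notion of harmonicity to the graph picture, namely justifying that harmonic on $U$ really does force $h$ to factor through $r_A$ and to have the stated local behavior on $\sk(A)$. This is essentially built into the definition given in the excerpt (for every affinoid subdomain $A'$, $h|_{\sk(A')}$ is harmonic on the graph $\sk(A')$), together with the structure of $C^{\an}$ near a point: every type-2 point has a basis of affinoid neighborhoods whose skeleta are stars, and the continuity of $h$ plus harmonicity on all these nested skeleta pins down the factorization through $r_A$. Since the excerpt explicitly cites \cite[Proposition 2.3.13]{thuillier-PhD} for this statement, I would simply record that the proof is the metric-graph maximum principle above, applied on a small affinoid neighborhood of $p$, and refer to Thuillier for the compatibility of the analytic and combinatorial notions. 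No further computation is needed.
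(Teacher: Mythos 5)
The paper does not actually prove this proposition: it is quoted directly from \cite[Proposition 2.3.13]{thuillier-PhD}, so there is no internal argument to compare yours with, and what you have written is a reconstruction of the standard proof. Your combinatorial core is correct and is indeed the heart of the matter: on a metric graph, a harmonic function is affine on edges with outgoing slopes summing to zero at each (interior) vertex, so a non-negative such function vanishing at $q$ has all outgoing slopes $\ge 0$ and hence $=0$, and therefore vanishes on every edge issuing from $q$.

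The thin point is precisely the reduction to the skeleton, i.e.\ your claim that $h|_A$ is the pullback under $r_A$ of a function on $\sk(A)$. This is equivalent to saying that a harmonic function is constant on each connected component of $A\setminus\sk(A)$ (an open disk), and that is \emph{not} a consequence of the finite-graph computation on any single skeleton: it is a statement about the limit over all skeleta of the disk, and it is itself part of the maximum-principle circle of results in Thuillier's thesis. Your justification mentions only type-2 points with star-shaped skeleta, but note that in the only place the paper uses the proposition (the proof of Lemma~\ref{lem:thisone}) the point $p$ is a classical (type-1) point, namely a periodic point on $C$; such a point never lies on any skeleton, so the equality $h(p)=h(r_A(p))$ is exactly the non-trivial step, and without it your argument only covers points lying on some $\sk(A)$. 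Deferring that step to Thuillier is defensible --- the paper cites him for the whole statement --- but you should record explicitly that this is where the analytic content sits. Two minor cautions: harmonicity is defined on open sets, so run the argument on the interior of $A$ (or a slightly larger affinoid inside $U$); and the paper's informal definition, which imposes the zero-sum condition ``including the endpoints'' of $\sk(A)$, is not Thuillier's actual definition --- the balancing condition holds only at points interior to the subdomain --- so your slope argument should be carried out at interior points, with the directions leaving $\sk(A)$ contributing zero slope because $h$ is locally constant on the corresponding disks.
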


Pick any connected open subset $U\subset C^{\an}$.
An upper-semicontinuous function $u: U\to \R\cup\{ - \infty\}$ is said to be {\em subharmonic} if it is not identically $-\infty$ and
 satisfies the condition that for any strictly $L$-affinoid subdomain $A$ and any harmonic function $h$ on $A$ then $u|_{\partial A} \le h|_{\partial A}$ implies $u \le h$ on $A$.

\smallskip

One can check that the set of subharmonic functions is a positive convex cone that is stable by taking maxima, 
 contains all functions of the form $\log |\phi|$ for any regular function $\phi$, and is stable under decreasing sequences\footnote{We shall be concerned \emph{only} with subharmonic functions that are uniform limits of positive linear combinations of maxima of functions of the form $\log|\phi|$.}, see~\cite[Proposition 3.1.9]{thuillier-PhD}.

\medskip

To any subharmonic function $u$ defined on an open set $U\subset C^{\an}$ is associated a unique positive Radon measure $\Delta u$ supported on $U$ that satisfies the following properties:
\begin{itemize}
\item 
$\Delta ( au + v) = a \Delta u + \Delta v$ for any two subharmonic functions $u,v$ and any positive constant $a>0$; 
\item
for any regular function $\phi$, the Poincar\'e-Lelong formula holds:
$$
\Delta \log |\phi| = \sum_{\phi(p)  =0} \ord_p(\phi) \delta_p~; 
$$
\item
for any decreasing sequence $u_n \to u$,   $\Delta u_n$ converges to $\Delta u$ in the weak sense of measures.
\end{itemize}

We shall use the following properties of this Laplacian operator.
\begin{prop}\label{prop:laplacian-char-harm}
Let $u: U \to \R \cup \{ - \infty\}$
be any subharmonic function. 

Then $u$ is harmonic iff $\Delta u =0$.
\end{prop}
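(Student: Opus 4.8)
The statement to prove is Proposition~\ref{prop:laplacian-char-harm}: a subharmonic function $u$ on an open set $U\subset C^{\an}$ is harmonic if and only if $\Delta u = 0$. One direction is essentially built into the theory: if $u$ is harmonic then for any strictly $L$-affinoid subdomain $A$ the restriction $u|_{\sk(A)}$ is piecewise affine with vanishing sum of directional derivatives at every point, so $u$ contributes no mass to $\Delta u$; more precisely, by the defining properties of $\Delta$ and the Poincar\'e--Lelong formula, $\Delta u$ is the measure recording the failure of this balancing condition (the ``excess'' of outgoing slopes at each point of the skeletons), which is zero for a harmonic $u$. So the plan is to take this implication from Thuillier's construction, citing \cite[\S3]{thuillier-PhD} for the fact that the Laplacian localizes on affinoid skeletons and agrees there with the combinatorial Laplacian.

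For the converse, suppose $u$ is subharmonic on $U$ with $\Delta u = 0$; I want to show $u$ is harmonic, i.e.\ that for every strictly $L$-affinoid subdomain $A\subset U$ and every harmonic $h$ on $A$ with $u|_{\partial A}\le h|_{\partial A}$ we in fact get $u\le h$ on $A$, and symmetrically that $u$ is not strictly subharmonic anywhere. The natural route is a maximum-principle argument. Fix such an $A$ and $h$; the function $v := u - h$ is subharmonic on (a neighborhood of) $A$ with $\Delta v = \Delta u = 0$ there (using additivity of $\Delta$ and that $\Delta h = 0$), and $v\le 0$ on $\partial A$. I claim $v\le 0$ on $A$. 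Since $A$ is compact, $v$ attains its maximum $M$ at some point $p\in A$; if $p\in\partial A$ then $M\le 0$ and we are done, so assume $p$ lies in the interior. Then $M - v$ is a non-negative subharmonic function? — no: subharmonicity is not preserved under $M - v$. Instead I use that $\Delta v = 0$ forces $v$ to be harmonic in the interior: by the local structure of the Laplacian, near an interior point $v$ restricted to any small affinoid skeleton through that point is piecewise affine with balanced slopes (otherwise $\Delta v$ would have an atom there), hence locally harmonic. Thus $v$ is harmonic on the interior of $A$, continuous up to $\partial A$, and now the genuine maximum principle for harmonic functions on a compact affinoid (Proposition~\ref{prop:maximum-principle} applied after subtracting $M$, or directly the graph-theoretic maximum principle on $\sk(A)$ combined with the retraction $r_A$) gives $v\le\max_{\partial A} v\le 0$.

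The one subtlety I expect to be the main obstacle is the passage ``$\Delta v = 0$ on the interior $\Rightarrow$ $v$ harmonic on the interior'': this is where the non-Archimedean potential theory differs from the naive picture, because on $C^{\an}$ the Laplacian of a subharmonic function is built from a limiting/patching procedure over all affinoid subdomains, and one must know that vanishing of the global measure $\Delta v$ implies vanishing of each local combinatorial contribution and that a continuous function whose restriction to every affinoid skeleton is harmonic is harmonic in the sense defined in the excerpt (namely continuous and harmonic on each $\sk(A)$). This is exactly the content of \cite[Proposition 2.3.13]{thuillier-PhD} and the surrounding development, so I would invoke it directly rather than reprove it. Given that input, the argument reduces to the classical statement recalled in the proof of Proposition~\ref{prop:uniform-harmonic} (convexity/affinity on real segments and behavior of directional derivatives) applied on each skeleton $\sk(A)$, together with the retraction $r_A:A\to\sk(A)$ to propagate the conclusion from the skeleton to all of $A$.
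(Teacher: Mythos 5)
There is a genuine gap in your converse direction, and it sits exactly where you locate the ``main obstacle''. First, your restatement of what must be shown is off: in this paper ``harmonic'' means continuous with $u|_{\sk(A)}$ harmonic (piecewise affine with balanced slopes) on every affinoid skeleton, \emph{not} the comparison property ``$u|_{\partial A}\le h|_{\partial A}\Rightarrow u\le h$ on $A$'' — the latter is the definition of \emph{subharmonic}, which $u$ already satisfies by hypothesis. Consequently your whole maximum-principle detour with $v=u-h$ proves nothing beyond the hypothesis: the inequality $v\le 0$ on $A$ is immediate from subharmonicity of $u$, and the genuine content (that $u$ is not ``strictly subharmonic'' anywhere, i.e.\ its skeleton restrictions are balanced) is smuggled in through the step ``$\Delta v=0$ forces $v$ to be harmonic in the interior''. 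But that step \emph{is} the proposition you are proving, so as written the argument is circular. Your heuristic for it is also not correct: the restriction of a subharmonic function to a skeleton is convex-type, not piecewise affine, and its Laplacian there can be a diffuse measure, so ``otherwise $\Delta v$ would have an atom'' does not rule out the failure of harmonicity. Finally, the reference you propose to invoke, \cite[Proposition 2.3.13]{thuillier-PhD}, is the maximum principle quoted as Proposition~\ref{prop:maximum-principle} in this paper; it is not the characterization you need. The correct input is Thuillier's Corollaire 3.4.9, which states precisely that a subharmonic function with vanishing Laplacian is harmonic — and indeed the paper's proof of the converse is nothing but this citation.

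For the forward direction your appeal to a ``combinatorial Laplacian on skeletons'' plus Poincaré--Lelong is loose (Poincaré--Lelong only computes $\Delta\log|\phi|$ and the listed defining properties of $\Delta$ do not by themselves give the local combinatorial description). The paper's argument is cleaner and worth adopting: if $u$ is harmonic then both $\pm u$ are subharmonic (Thuillier, Définition 3.1.5), hence both $\pm\Delta u$ are positive measures (Théorème 3.4.8), forcing $\Delta u=0$. So the viable repair of your proposal is simply to drop the comparison argument and cite the two Thuillier results above, which is exactly what the paper does.
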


\begin{proof}
If $u$ is harmonic then $\pm u$ are subharmonic as in \cite[Definition 3.1.5]{thuillier-PhD}, hence
$\pm \Delta u$ is a positive measure by \cite[Th\'eor\`eme 3.4.8]{thuillier-PhD}, and $\Delta u =0$.

Conversely, if $\Delta u =0$ then $u$ is harmonic by \cite[Corollaire 3.4.9]{thuillier-PhD}.
\end{proof}

\begin{prop}\label{prop:laplacian-invariance}
Suppose $U,V$ are two  respective 
open subsets of the Berko\-vich analytification of two smooth algebraic curves, and  $f : U  \to V$ is an isomorphism. Let $u$ be any subharmonic function on $V$.

Then $u \circ f$ is subharmonic on $U$, and $\Delta (u \circ f)  = f^* \Delta u$.
\end{prop}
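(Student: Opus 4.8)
The statement is a compatibility property of Thuillier's Laplacian with isomorphisms of (open subsets of) Berkovich analytifications of smooth curves; the natural strategy is to reduce everything to the defining properties of $\Delta$ listed just above, and in particular to the Poincaré–Lelong formula and the approximation of subharmonic functions by decreasing limits. First I would note that the statement is local on $U$, so after shrinking we may assume both $U$ and $V$ are (Berkovich analytifications of) affinoid or affine pieces where the structure is manageable, and that $f$ is realized by an isomorphism of the underlying algebraic curves (or at least of suitable affinoid domains). Then I would first check that $u\circ f$ is subharmonic on $U$: since $f$ is an isomorphism it induces a homeomorphism sending strictly $L$-affinoid subdomains of $U$ to those of $V$, sends skeletons to skeletons respecting the integral affine structure (hence the metric), and pulls harmonic functions back to harmonic functions; the defining inequality $u|_{\partial A}\le h|_{\partial A}\Rightarrow u\le h$ is therefore transported verbatim, so $u\circ f$ is subharmonic (it is clearly u.s.c.\ and not identically $-\infty$).

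Next I would establish the measure identity $\Delta(u\circ f)=f^*\Delta u$ by a two-step approximation. For the base case, take $u=\log|\phi|$ with $\phi$ a regular function on $V$: then $u\circ f=\log|\phi\circ f|$ and $\phi\circ f$ is regular on $U$, so the Poincaré–Lelong formula gives
\[
\Delta(u\circ f)=\sum_{(\phi\circ f)(p)=0}\ord_p(\phi\circ f)\,\delta_p .
\]
Since $f$ is an isomorphism it preserves vanishing orders, $\ord_p(\phi\circ f)=\ord_{f(p)}(\phi)$, and the zero set of $\phi\circ f$ is $f^{-1}$ of the zero set of $\phi$; hence the right-hand side equals $f^*\big(\sum_{\phi(q)=0}\ord_q(\phi)\,\delta_q\big)=f^*\Delta u$, using that pushforward/pullback of Dirac masses under a homeomorphism does the obvious thing. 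By the linearity property $\Delta(au+v)=a\Delta u+\Delta v$ and the fact (recorded in the footnote) that the subharmonic functions we care about are uniform limits of positive linear combinations of maxima of such $\log|\phi|$'s, I would then pass to maxima and finite positive combinations: $\max$ commutes with precomposition by $f$, and for a decreasing sequence $u_n\downarrow u$ of such combinations one has $u_n\circ f\downarrow u\circ f$, so $\Delta(u_n\circ f)\to\Delta(u\circ f)$ weakly by the continuity property of $\Delta$, while $f^*\Delta u_n\to f^*\Delta u$ weakly because $f$ is a homeomorphism (pullback of measures along a homeomorphism is weak-continuous). Matching the two limits with the already-known equality $\Delta(u_n\circ f)=f^*\Delta u_n$ yields the claim for all $u$ in the relevant class, which is all we need; a general $u$ can be handled the same way using that any subharmonic function is locally a decreasing limit of ones of this form.

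\textbf{Main obstacle.} The routine parts are genuinely routine once one accepts that an isomorphism of curves induces a well-behaved homeomorphism on Berkovich spaces; the only real point requiring care is that $f$ respects \emph{all} of the combinatorial-metric data used to define harmonicity and the Laplacian — namely that $f$ maps strictly $L$-affinoid subdomains to strictly $L$-affinoid subdomains, commutes with the canonical retractions $r_A$, and sends the canonical integral affine structure on $\mathrm{Sk}(A)$ isometrically to that on $\mathrm{Sk}(f(A))$. This is a standard functoriality fact for Berkovich analytification of morphisms of curves, but it is the step on which the whole proof rests, so I would cite it carefully (e.g.\ from Berkovich or Thuillier) rather than reprove it. Everything else is a formal consequence of the axioms for $\Delta$ listed before the proposition.
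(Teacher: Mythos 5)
The paper's own proof is a one-line citation: the stability of subharmonic functions under pullback is \cite[Proposition 3.1.13]{thuillier-PhD} and the Laplacian identity is \cite[Proposition 3.2.13]{thuillier-PhD}; functoriality is imported from Thuillier's construction rather than rederived from the short list of properties stated before the proposition. Your first step (that $u\circ f$ is subharmonic, via preservation of affinoid subdomains, skeletons with their metric, and harmonic functions) is fine modulo the structure-preservation facts you correctly flag as needing a citation, and your Poincar\'e--Lelong computation for $u=\log|\phi|$ is correct.

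The genuine gap is in the passage "pass to maxima". Your scheme handles positive linear combinations (by the linearity axiom) and decreasing limits (by the continuity axiom; uniform limits can be reduced to decreasing ones by adding constants along a subsequence, a point you should at least mention since continuity is only stated for decreasing sequences). But it does not handle maxima, and maxima are an essential part of the class singled out in the footnote and of the functions the paper actually needs (the Green functions are limits of multiples of $\log\max\{|P_1|,|P_2|\}$). Knowing $\Delta(u_i\circ f)=f^*\Delta u_i$ for $i=1,2$ gives no information about $\Delta\bigl(\max(u_1,u_2)\circ f\bigr)$, since $\Delta\max(u_1,u_2)$ is not determined by $\Delta u_1$ and $\Delta u_2$; the observation that $\max$ commutes with precomposition by $f$ merely rewrites the left-hand side and does not connect it to $f^*\Delta\max(u_1,u_2)$. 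Similarly, the closing claim that an arbitrary subharmonic function is locally a decreasing limit of functions of this special form is an unproved structural statement (essentially part of Thuillier's theory), so the three listed axioms do not by themselves pin down $\Delta$ on the functions you need. The clean repair is the paper's: invoke Thuillier's functoriality results directly, or equivalently argue by transport of structure through his actual construction of $\Delta$ (restriction to skeleta and their integral affine structure, all preserved by an isomorphism), which is what your "main obstacle" paragraph is implicitly reaching for.
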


\begin{proof}
The first statement follows from  \cite[Proposition 3.1.13]{thuillier-PhD}, and the second from
\cite[Proposition 3.2.13]{thuillier-PhD}.
\end{proof}

\subsection{Dynamics of regular automorphisms}
Following Sibony \cite{sib-survey} we say that a   polynomial automorphism of the affine plane 
$f: \A^2 \to \A^2$ is   {\em regular} if its extension
as a rational map to the projective plane $F: \P^2 \dto \P^2$ 
contracts the line at infinity $H_\infty$ to a point $p_+$
that is \emph{not} indeterminate for $F$. 
It follows that $p_+$ is a super-attracting fixed point, and that its inverse map contracts $H_\infty$ to the (single)
point of indeterminacy $p_-$ of $F$.

The degree of a polynomial map is the maximum of the degrees of its (two) components. 
By \cite{friedland-milnor}, up to a linear change of coordinates,  any regular polynomial automorphism of degree $\ge2$ is the composition of finitely many maps of the form
$$
(x,y) \mapsto (ay, x + P(y))
$$
where $a\in L^*$, and $P$ is a polynomial of degree $\ge2$.

A  polynomial automorphism of $\A^2$ will be said to be of H\'enon type if it is conjugated (in the group of automorphisms) to a regular automorphism of degree $\ge 2$. A  complex polynomial automorphism has positive topological entropy if and only if it 
is of H\'enon-type.

\medskip

Let $f : \A^2 \to \A^2$ be any regular polynomial automorphism of degree $d \ge 2$
and such that  $p_+ = [0:1:0]$ and $p_- = [1:0:0]$ in  homogeneous coordinates on $\P^2$.

Fix a  constant $C>0$, and  define
\begin{eqnarray*}
V\,\, &=& \{p= (x,y) \in L^2, \, \|p\| = \max \{ |x|, |y|\} \le C\}, \\ 
V^{+} &=& \{ (x,y) \in L^2, \,   |y| \ge \max\{ |x| , C\} \},\\ 
V^{-}  &=& \{ (x,y) \in L^2, \, |x| \ge \max\{ |y| , C\} \}.
\end{eqnarray*}
It can be shown that if $C$ was chosen  sufficiently large, then $f(V^+)\subset V^+$, and more precisely
\begin{equation} \label{eq:clas}
\frac1d \log | y \circ f | \ge \log |y| - \cst
\end{equation}
for any point in $V^+$. The same kind of inequaliy holds when $f$ is replaced by its inverse, so one obtains that
$f^{-1} (V^-) \subset V^-$. This implies
 $f(V) \subset V \cup V^+$.

Let us set 
\begin{eqnarray*}
K&=& \{p \in L^2, \, \sup_{n \in \Z} \|f^n(p)\| < +\infty \}~\text{ and} \\ 
 K^\pm&=& \{p \in L^2, \, \sup_{n \ge 0} \|f^{\pm n}(p)\| < +\infty \}~.
\end{eqnarray*}
The next result easily  follows from the above properties.
\begin{lem}
We have\begin{itemize}
\item $K = K^+ \cap K^- \subset V$; 
\item $L^2 \setminus K^+ = \bigcup_{n\ge 0} f^{-n} (V^+)$;
and  $K^+\subset V \cup V^-$; 
\item $L^2 \setminus K^- = \bigcup_{n\ge 0} f^{-n} (V^-)$;
and  $K^-\subset V \cup V^+$.
\end{itemize}
\end{lem}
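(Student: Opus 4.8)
The plan is to verify each bullet point directly from the inclusions established just above, namely $f(V^+)\subset V^+$, $f^{-1}(V^-)\subset V^-$, $f(V)\subset V\cup V^+$, together with the partition $L^2 = V\cup V^+\cup V^-$ and the growth estimate~\eqref{eq:clas}. Everything is soft; the only real content is keeping track of where orbits can go.

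First I would prove the middle bullet, $L^2\setminus K^+ = \bigcup_{n\ge0} f^{-n}(V^+)$. For ``$\supset$'': if $f^n(p)\in V^+$ for some $n\ge0$, then since $f(V^+)\subset V^+$ we have $f^m(p)\in V^+$ for all $m\ge n$, and iterating~\eqref{eq:clas} gives $\log|y\circ f^m(p)|\ge d^{m-n}\log|y\circ f^n(p)| - C'$ with $|y\circ f^n(p)|\ge C>1$ (choosing $C>1$), so $\|f^m(p)\|\to\infty$ and $p\notin K^+$. For ``$\subset$'': suppose $p\notin\bigcup_n f^{-n}(V^+)$, i.e. $f^n(p)\notin V^+$ for all $n\ge0$; I claim the forward orbit stays in $V\cup V^-$ and in fact is bounded. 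Since $f^0(p)=p\notin V^+$, either $p\in V$ or $p\in V^-$. If $f^n(p)\in V$ then $f^{n+1}(p)\in V\cup V^+$ by $f(V)\subset V\cup V^+$, but $f^{n+1}(p)\notin V^+$, so $f^{n+1}(p)\in V$. Hence if the orbit ever enters $V$ it stays there, and $\|f^n(p)\|\le C$ afterwards; and before that it lies in $V^-$. To bound the orbit on the initial stretch in $V^-$: here I would use that $f^{-1}(V^-)\subset V^-$ means $V^-$ is backward-invariant, equivalently $f$ maps the complement of $V^-$ into itself — but that is not quite what I want. Instead, observe that on $V^-$ the inverse map satisfies the analogue of~\eqref{eq:clas}, namely $\frac1d\log|x\circ f^{-1}|\ge\log|x|-C^{st}$; so if the forward orbit of $p$ stayed in $V^-$ forever, pulling back would force $|x\circ f^{-n}(p)|$ to be large for all $n$, which is fine — the point is rather that a point of $V^-$ with bounded forward orbit must eventually leave $V^-$, because $f(V^-)$ is ``large in the $y$-direction'': one checks that $f(V^-)\subset V\cup V^+$ when $C$ is large (this is essentially the computation behind $f(V)\subset V\cup V^+$, applied to the region $|x|\ge|y|$). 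Granting that, the orbit leaves $V^-$ after one step into $V\cup V^+$; since it avoids $V^+$ it lands in $V$ and stays, so $p\in K^+$. This establishes the second bullet; the third follows by applying it to $f^{-1}$ (whose roles of $V^+$ and $V^-$ are swapped), and the inclusions $K^+\subset V\cup V^-$, $K^-\subset V\cup V^+$ drop out of the same analysis since a point of $K^\pm$ either lies in $V$ or, failing that, must sit in the complementary horn.

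Finally, for the first bullet: $K=K^+\cap K^-$ is immediate from the definitions since $\sup_{n\in\Z}\|f^n(p)\|<\infty$ iff both $\sup_{n\ge0}$ and $\sup_{n\le0}$ are finite. And $K^+\cap K^-\subset (V\cup V^-)\cap(V\cup V^+) = V\cup(V^-\cap V^+) = V$, using that $V^+\cap V^-=\emptyset$ (a point cannot have both $|y|\ge\max\{|x|,C\}$ and $|x|\ge\max\{|y|,C\}$ unless $|x|=|y|\ge C$, which is excluded by the strict interplay — or more carefully, $V^+\cap V^-\subset\{|x|=|y|\ge C\}$, which one includes into $V$ by enlarging $C$, or simply notes is disjoint from both open horns; in any case $K\subset V$).

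The main obstacle I expect is the step bounding the forward orbit on the portion that lies in $V^-$: one needs that a point whose forward orbit is not bounded-away-from-$V^+$-forever cannot linger in $V^-$ indefinitely, i.e.\ that $f(V^-)\subset V\cup V^+$ (equivalently $f^{-1}(V)\cap V^-=\emptyset$ up to the boundary), which is the one inclusion not stated verbatim in the excerpt. This is the same type of elementary estimate as the ones already granted — comparing $|x\circ f|$, $|y\circ f|$ with $|x|,|y|$ using the normal form $(x,y)\mapsto(ay,x+P(y))$ — so it is routine but does require writing down; I would dispatch it with the observation that on $V^-$ one has $|y\circ f|=|a||y|$ which, combined with $|x|\ge C$ large and the degree-$\ge2$ growth of $P$, forces $f(V^-)$ out of $V^-$ after one iterate. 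Everything else is bookkeeping with the three-region partition and the contraction/expansion estimates already in hand.
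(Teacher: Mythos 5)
The overall skeleton (handle $V^+$ via \eqref{eq:clas}, then track the three regions) is sound, and since the paper itself leaves this lemma as an easy consequence of the displayed inclusions there is no written proof to compare against; but your argument has a genuine gap at exactly the spot you flagged: the inclusion $f(V^-)\subset V\cup V^+$ is false. Take the single generalized H\'enon map $f(x,y)=(ay,\,x+P(y))$ with $\deg P=d\ge 2$ (which is regular with $p_+=[0:1:0]$), and a point $(x_0,y_0)$ with $x_0=-P(y_0)$ and $|y_0|$ large: then $|x_0|\sim |y_0|^d\ge\max\{|y_0|,C\}$, so $(x_0,y_0)\in V^-$, while $f(x_0,y_0)=(ay_0,0)$ lies again in $V^-$. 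The cancellation in $x+P(y)$ is precisely what your heuristic (``$|x|\ge C$ large plus the degree-$\ge 2$ growth of $P$ force $f(V^-)$ out of $V^-$'') overlooks. Hence the step ``the orbit leaves $V^-$ after one step into $V\cup V^+$'' collapses, and with it your proof of $L^2\setminus K^+\subset\bigcup_{n\ge0} f^{-n}(V^+)$ (and of the third bullet by symmetry).

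The correct tool is the one you mentioned and then discarded: the analogue of \eqref{eq:clas} for $f^{-1}$ on $V^-$, read along the \emph{forward} orbit. If $f^{n+1}(p)\in V^-$, applying that estimate at the point $q=f^{n+1}(p)$ gives $\log|x(f^{n+1}(p))|\le\frac1d\log|x(f^{n}(p))|+\cst$; so as long as the orbit remains in $V^-$ one gets $\log|x(f^{n}(p))|\le d^{-n}\log|x(p)|+\frac{d}{d-1}\cst$, and since $\|\cdot\|=|x|$ on $V^-$ this stretch of orbit is bounded (indeed, once $\log C>\frac{d}{d-1}\cst$ the orbit must exit $V^-$ after finitely many steps). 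Thus an orbit avoiding $V^+$ either stays in such a bounded stretch of $V^-$ or falls into $V$, where, as you correctly argued, it then remains; either way $p\in K^+$. Two smaller points: iterating \eqref{eq:clas} produces an error term proportional to $d^{m-n}$, so escape on $V^+$ requires $\log C$ larger than $\frac{d}{d-1}\cst$, not merely $C>1$; and $V^+\cap V^-=\{|x|=|y|\ge C\}$ is nonempty since the regions are closed, so the identity $(V\cup V^-)\cap(V\cup V^+)=V$ is not literally correct --- but $K\subset V$ still follows, because a point of $V^+$ (resp.\ $V^-$) escapes under forward (resp.\ backward) iteration and so cannot belong to $K$.
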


It follows from~\eqref{eq:clas} that the sequence $\frac1{d^n} \log^+ \| f^n(p) \|$ converges uniformly on 
$L^2$ to a  non-negative continuous function that we denote by $G^+$. 
Similarly  one can define $G^-(p) = \lim_{n\to\infty}\frac1{d^n} \log^+ \| f^{-n}(p) \|$.
The next result collects some properties of  these functions, see~\cite{BS1} and \cite[Theorem~A]{kawaguchi13}. 

\begin{prop}\label{prop:Gplus}
The functions $G^+, G^-$ are continuous non-negative functions on $L^2$ which  satisfy
\begin{enumerate}
\item
$G^\pm \circ f^{\pm 1} =  d G^{\pm}$;
\item
$G^\pm (p) - \log^+ \| p\|$ extend to  continuous functions to $\P^2(L) \setminus \{ p_\mp\}$ that are bounded from above;
\item 
$\{G^\pm =0\} = K^\pm$.
\end{enumerate}
\end{prop}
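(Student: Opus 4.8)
The plan is to establish Proposition~\ref{prop:Gplus} by leveraging the uniform convergence $\frac1{d^n}\log^+\|f^n(p)\|\to G^+$ already asserted, together with the controlled behavior on $V^+$. I work only with $G^+$, the statement for $G^-$ following by applying the same argument to $f^{-1}$.

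First, for part (1): since $f(V^+)\subset V^+$ and $\|f^n(p)\|$ is eventually dominated by $|y\circ f^n(p)|$ on $V^+$, the uniform limit defining $G^+$ is unchanged if one writes $G^+(p)=\lim_n \frac1{d^n}\log^+\|f^n(p)\|$. Then $G^+\circ f(p)=\lim_n\frac1{d^n}\log^+\|f^{n+1}(p)\| = d\cdot\lim_n\frac1{d^{n+1}}\log^+\|f^{n+1}(p)\| = d\,G^+(p)$, the middle equality being just a reindexing. Continuity of $G^+$ is immediate from uniform convergence of continuous functions on $L^2$, which is exactly what inequality~\eqref{eq:clas} (iterated) provides: the successive differences $\frac1{d^{n+1}}\log^+\|f^{n+1}(p)\| - \frac1{d^n}\log^+\|f^n(p)\|$ are bounded by $\cst/d^n$ uniformly, so the telescoping series converges uniformly.

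For part (2), the main point is to understand $G^+$ near infinity. On $V^+$ one has $\log^+\|p\| = \log|y|$ (for $C$ large), and~\eqref{eq:clas} gives, after iteration and passage to the limit, that $G^+(p) - \log|y|$ is bounded on $V^+$; more precisely one gets two-sided bounds $-\cst' \le G^+(p)-\log^+\|p\| \le \cst'$ there. Off $V^+$, i.e. on $V\cup V^-$, the orbit enters $V^+$ after finitely many steps or stays bounded, and one can patch the estimate using the functional equation $G^+\circ f = dG^+$ to transport the control near $p_+=[0:1:0]$ back to any point of $\P^2(L)\setminus\{p_-\}$. The claim that $G^+-\log^+\|p\|$ extends continuously across $H_\infty\setminus\{p_-\}$: in the affine chart centered at a point of $H_\infty$ other than $p_-$, rewrite $\log|y|$ in the new coordinates and check directly that $G^+ - \log^+\|p\|$ has a continuous bounded-above extension, using that $p_+$ is superattracting so $G^+$ extends by $0$ in a neighborhood of $p_+$ and grows like $\log|y|$ elsewhere along $H_\infty$. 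I would cite \cite[Theorem~A]{kawaguchi13} for the precise form of this extension in the non-Archimedean case.

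Part (3) is the identification $\{G^+=0\}=K^+$. The inclusion $K^+\subset\{G^+=0\}$ is clear: if $\sup_n\|f^n(p)\|<\infty$ then $\frac1{d^n}\log^+\|f^n(p)\|\to 0$. Conversely, if $p\notin K^+$, then by the Lemma $f^{-n}(p)$... rather $f^{n_0}(p)\in V^+$ for some $n_0\ge 0$, and on $V^+$ inequality~\eqref{eq:clas} forces $\frac1{d}\log|y\circ f|\ge \log|y|-\cst$, so $\log|y\circ f^n|$ grows like $d^n$ and hence $G^+(f^{n_0}(p))>0$; by~(1), $G^+(p) = d^{-n_0}G^+(f^{n_0}(p))>0$. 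The one genuine subtlety — and the step I expect to be most delicate — is the continuous extension across the line at infinity in part~(2), since in the Berkovich setting one must be slightly careful about what "continuous on $\P^2(L)$" means and about the behavior near the superattracting point $p_+$; but this is essentially bookkeeping once the affine estimates are in place, and it is exactly the content of the cited results of Bedford–Smillie and Kawaguchi.
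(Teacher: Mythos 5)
The paper itself gives no proof of this proposition: it is stated as a collection of known facts with pointers to \cite{BS1} and \cite[Theorem~A]{kawaguchi13}, and your sketch follows exactly the standard arguments underlying those references, so in substance you are on the same route and the outline is correct. Three small inaccuracies are worth flagging. First, \eqref{eq:clas} is a one-sided inequality valid only on $V^+$; to get your uniform Cauchy estimate you also need the trivial global bound $\log^+\|f(p)\|\le d\,\log^+\|p\|+\cst$ together with the filtration behavior ($f(V^+)\subset V^+$, $f(V)\subset V\cup V^+$, and forward orbits leave $V^-$ in a time controlled by the norm), and the resulting convergence is uniform on bounded sets (as in the introduction) rather than literally on all of $L^2$: for points of $K^+\cap V^-$ of arbitrarily large norm one has $G^+=0$ while $\frac1{d^n}\log^+\|f^n(p)\|$ is unbounded for each fixed $n$. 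This is harmless, since continuity is local and the functional equation in (1) only needs pointwise convergence, but the sentence ``uniform on $L^2$'' should not be taken at face value. Second, the claim that ``$G^+$ extends by $0$ in a neighborhood of $p_+$'' is wrong as written: $G^+\to+\infty$ at $p_+$; what extends continuously there, and across $H_\infty\setminus\{p_-\}$ (which is the only genuinely nontrivial content of (2)), is the difference $G^+-\log^+\|p\|$, and this is precisely the step you delegate, as the paper does, to Bedford--Smillie and Kawaguchi. Third, in (3) the iterated form of \eqref{eq:clas} only yields $\frac1{d^n}\log|y\circ f^n|\ge\log|y|-\frac{d}{d-1}\,\cst$ on $V^+$, so the conclusion $G^+>0$ on $V^+$ requires having chosen $C$ large compared with that additive constant; with that bookkeeping your argument identifying $\{G^+=0\}$ with $K^+$ is the standard one and is complete.
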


 The following fact will be crucial in our work. It follows from~\cite[Proposition~4.2]{BS1} whose proof works over any field. 
 
 \begin{prop}\label{prop:no fixed curves}
A polynomial automorphism of H\'enon type admits no invariant algebraic curve. 
 \end{prop}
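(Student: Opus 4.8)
The plan is to show that an invariant algebraic curve would be forced, by its very invariance, to avoid the "escaping" regions $V^+$ and $V^-$, hence to be a bounded algebraic set, which is impossible. First I would reduce to the regular case: since being of Hénon type means being conjugate to a regular automorphism of degree $\ge 2$, and the existence of an invariant curve is a conjugation-invariant property, I may assume $f$ is regular with $p_+ = [0:1:0]$, $p_- = [1:0:0]$, and that the sets $V$, $V^\pm$ and the constant $C$ from the excerpt are fixed. Suppose $C_0 \subset \A^2$ is an irreducible algebraic curve with $f(C_0) = C_0$.

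The key step is a dichotomy on the closure $\overline{C_0}$ inside $\P^2$. If $\overline{C_0}$ meets the line at infinity $H_\infty$ at a point other than $p_+$, then — using that $F$ contracts $H_\infty$ to $p_+$ while $p_-$ is the sole point of indeterminacy — I would track the forward images: a branch of $C_0$ escaping to such a point enters $V^+$ (where $|y|\ge \max\{|x|,C\}$), and by the contraction estimate \eqref{eq:clas}, namely $\tfrac1d\log|y\circ f|\ge \log|y| - \cst$, its forward iterates have $|y\circ f^n|\to\infty$; since $f(V^+)\subset V^+$ this escape is permanent, contradicting $f(C_0)=C_0$ (an invariant curve cannot contain points of unbounded forward orbit while also being mapped onto itself in a way compatible with algebraicity — more precisely $C_0 \cap K^+$ would be a proper Zariski-closed, hence finite, invariant subset absorbing all of $C_0$'s mass at infinity, which is absurd for a curve whose closure meets $H_\infty$ there). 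Symmetrically, applying the same argument to $f^{-1}$ and $V^-$, the point $p_-$ is also excluded. Hence $\overline{C_0} \cap H_\infty \subseteq \{p_+\}$; but then I would argue $\overline{C_0}\cap H_\infty = \{p_+\}$ exactly (a projective curve must meet $H_\infty$), so $p_+$ is the unique point at infinity of $C_0$. Now $F$ contracts $H_\infty$ to $p_+$, so $F^{-1}(H_\infty)$ — which contains $\overline{f^{-1}(C_0)} = \overline{C_0}$ — forces a branch of $C_0$ to pass through $p_-$ after one backward step, unless $C_0$ is entirely "at" $p_+$; pushing this genericity argument, the curve's behaviour near $p_+$ under $f$ and near $p_-$ under $f^{-1}$ becomes incompatible. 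Concretely: near $p_+$, $f$ is super-attracting, so a local branch of $C_0$ through $p_+$ is contracted toward $p_+$; but $C_0$ is also $f^{-1}$-invariant, and $f^{-1}$ contracts $H_\infty$ to $p_-$, so $C_0$ must also have a branch at $p_-$ — contradicting $\overline{C_0}\cap H_\infty = \{p_+\}$ unless $p_+ = p_-$, which is false since $d\ge 2$. Therefore no invariant curve exists.

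An alternative and cleaner route, which is probably what the cited \cite[Proposition 4.2]{BS1} does and which I would actually write up, uses the Green functions directly: if $f(C_0)=C_0$ then $G^+|_{C_0}$ satisfies $G^+\circ f = d\,G^+$ on $C_0$, so for any $p\in C_0$ with $G^+(p)>0$ we get $G^+(f^n(p)) = d^n G^+(p)\to\infty$; by Proposition~\ref{prop:Gplus}(2), $G^+(q) - \log^+\|q\|$ is bounded above on $\P^2(L)\setminus\{p_-\}$, so $\|f^n(p)\|\to\infty$, i.e. $p\notin K^+$ — consistent so far. The point is to combine both: $G := \max(G^+,G^-)$ is an exhaustion of $L^2$ whose sublevel sets $\{G\le R\}$ are bounded, and $\{G=0\} = K = K^+\cap K^-$ is a \emph{bounded} set. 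On an invariant curve $C_0$, $G^+|_{C_0}$ and $G^-|_{C_0}$ are each subharmonic (pluriharmonic off their zero sets in the Archimedean case, and one has the non-Archimedean analogues from \S\ref{sec:pot-NA}), nonnegative, and transform by $G^\pm\circ f^{\pm1}=dG^\pm$; an invariant curve would then either be contained in $K$ (impossible: $K$ is bounded, a curve is not) or support a nonzero "mass at infinity" that the functional equation $G^\pm\circ f = dG^\pm$ cannot sustain on a $1$-dimensional variety — the resolution of the curve's closure at infinity carries only finitely many branches, each with a definite ramification behaviour under $F$, incompatible with the simultaneous contraction of $H_\infty$ to $p_+$ by $F$ and to $p_-$ by $F^{-1}$.

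The main obstacle is making the "incompatibility at infinity" step rigorous rather than heuristic: one must carefully analyze the intersection $\overline{C_0}\cap H_\infty$, the local branches of $\overline{C_0}$ at $p_+$ and at $p_-$, and how $F$ and $F^{-1}$ act on these branches, using that $F$ has a single point of indeterminacy $p_-$ and contracts $H_\infty$ to $p_+$ (and vice versa). This is precisely where the regularity hypothesis — no indeterminacy of $F$ along $H_\infty$ except at one point, and the super-attracting structure at $p_+$ — is indispensable, and where an elementary or Jonquières-type automorphism would fail to give a contradiction (indeed they \emph{do} have invariant curves, e.g. $\{y = \text{const}\}$). Since the excerpt explicitly allows us to invoke the result, in the actual write-up I would simply cite \cite[Proposition~4.2]{BS1}, observing that although stated over $\C$, its proof is purely formal — it uses only the contraction estimate \eqref{eq:clas}, the filtration $V, V^\pm$, and the structure of $F$ near $p_\pm$, all of which hold verbatim over an arbitrary complete non-trivially valued field $L$ of characteristic zero.
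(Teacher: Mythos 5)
Your closing move --- simply citing \cite[Proposition~4.2]{BS1} and remarking that its proof is formal enough to work over an arbitrary complete valued field of characteristic zero --- is exactly the paper's proof: the authors give no independent argument, just that citation with that remark. So, in the form you say you would actually write up, your proposal coincides with the paper.

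The self-contained sketches you offer before falling back on the citation, however, do not add up to a proof, and two steps in them are actually false. First, $C_0\cap K^+$ is closed only for the analytic topology; the slice of $K^+$ by a curve is in general an uncountable (fractal) set, not a Zariski-closed hence finite set, so no contradiction arises from the mere presence of escaping points on an invariant curve --- every unbounded curve contains such points. Second, the assertion that $F^{-1}(H_\infty)$ contains $\overline{f^{-1}(C_0)}=\overline{C_0}$ is not meaningful (the curve is affine), and the claim that invariance then forces a branch of $\overline{C_0}$ at $p_-$ is precisely what needs proof; note that a branch of $\overline{C_0}$ through the indeterminacy point $p_-$ may be sent by $F$ to points of $H_\infty$ other than $p_+$, so the bookkeeping of branches at infinity is the whole difficulty, as you yourself acknowledge. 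The same criticism applies to the ``mass at infinity'' step of your Green-function route; to complete it one can argue, e.g., that $f|_{C_0}$ is an automorphism of $C_0$ and hence preserves the finite total mass of $\Delta(G^+|_{C_0})$, while $G^+\circ f=d\,G^+$ multiplies that mass by $d$, so the mass vanishes; since the mass equals the sum of the logarithmic growth rates of $G^+$ along the branches of $\overline{C_0}$ at infinity, and $G^+-\log^+\|\cdot\|$ is bounded away from $p_-$, every branch at infinity would have to lie at $p_-$, and symmetrically (using $G^-$) at $p_+$, a contradiction. In short, the citation you end with is the paper's proof; the heuristics preceding it should not be presented as an alternative one.
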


\subsection{Invariant measures}\label{sec:inv-meas}
We keep  notation as in the previous subsection. 
Our purpose is to construct an invariant measure from the functions $G^+$ and $G^-$. 

The next result follows directly from Proposition~\ref{prop:Gplus} above.

\begin{prop}\label{prop:continuity of G}
The function $G = \max \{ G^+, G^-\}$ is a  continuous non-negative function on $L^2$ such that:
\begin{enumerate}
\item
$G (p) - \log ^+\| p\|$ extends to a continuous function to $\P^2(L)$;
\item 
$\{G =0\} = K$.
\end{enumerate}
\end{prop}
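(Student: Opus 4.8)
The assertions that $G$ is continuous and non-negative are immediate: by Proposition~\ref{prop:Gplus} both $G^+$ and $G^-$ are continuous and $\ge 0$ on $L^2$, hence so is $G=\max\{G^+,G^-\}$. Item (2) is equally formal: Proposition~\ref{prop:Gplus}(3) gives $\{G^\pm=0\}=K^\pm$, so $\{G=0\}=\{G^+=0\}\cap\{G^-=0\}=K^+\cap K^-$, and the latter equals $K$ by the preceding Lemma. The only point that needs an argument is (1).

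Write $g^\pm:=G^\pm-\log^+\|\cdot\|$ and $g:=G-\log^+\|\cdot\|=\max\{g^+,g^-\}$ on $L^2$. By Proposition~\ref{prop:Gplus}(2), $g^+$ extends to a continuous function on $\P^2(L)\setminus\{p_-\}$ and $g^-$ to a continuous function on $\P^2(L)\setminus\{p_+\}$, both bounded from above. Hence $g=\max\{g^+,g^-\}$ already extends continuously to $\P^2(L)\setminus\{p_+,p_-\}$, and it suffices to produce a continuous extension near each of $p_+$ and $p_-$. Since passing from $f$ to $f^{-1}$ interchanges $G^+\leftrightarrow G^-$ and $p_+\leftrightarrow p_-$, it is enough to treat $p_+$.

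I claim that $G=G^+$ on a punctured neighbourhood $\Omega$ of $p_+$ in $\P^2(L)$; granting this, $g|_{\Omega\cap L^2}=g^+|_{\Omega\cap L^2}$ extends continuously across $p_+$ (because $p_+\neq p_-$), which finishes the proof of (1). To prove the claim, note that for $C$ large enough a small neighbourhood of $p_+$ meets $L^2$ inside $V^+$. On $V^+$ one combines $f(V^+)\subset V^+$, the invariance $G^+\circ f=dG^+$ and the estimate \eqref{eq:clas}, iterated, to obtain a uniform lower bound $G^+\ge\log^+\|\cdot\|-C'$. On the other hand $f^{-1}$ sends a small neighbourhood of $p_+$ into $V^-$, where by the analogue of \eqref{eq:clas} for $f^{-1}$ (giving the lower bound) and the upper bound for $g^-$ on $\overline{V^-}\subset\P^2(L)\setminus\{p_+\}$ one has $G^-=\log^+\|\cdot\|+O(1)$; since moreover $\|f^{-1}(p)\|$ grows strictly slower than $\|p\|^{d}$ as $p\to p_+$ inside $V^+$, one deduces $G^-(p)=d^{-1}G^-(f^{-1}(p))\le\theta\,\log^+\|p\|+O(1)$ for some $\theta<1$, so that $g^-(p)\to-\infty$. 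Therefore $G^+>G^-$ on $\Omega\cap L^2$, as claimed.

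The only genuine obstacle is this last local comparison: one must control the growth of $\|f^{-1}(p)\|$ in terms of $\|p\|$ as $p$ approaches $p_+$ inside $V^+$, which rests on the description of a regular automorphism near the line at infinity as a composition of H\'enon maps $(x,y)\mapsto(ay,x+P(y))$. Over $\C$ this is part of the classical analysis of Bedford--Smillie and Hubbard--Oberste-Vorth; the same elementary estimates go through verbatim over an arbitrary complete valued field.
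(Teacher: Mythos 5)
Your overall skeleton is sound and matches what a complete argument must do: continuity, non-negativity and item (2) are formal consequences of Proposition~\ref{prop:Gplus} and the preceding lemma, the function $g=\max\{g^+,g^-\}$ extends off $\{p_+,p_-\}$ for free, and by the symmetry $f\leftrightarrow f^{-1}$ everything reduces to showing $G=G^+$ near $p_+$ (the paper itself gives no details here, asserting the proposition follows directly from Proposition~\ref{prop:Gplus}). The problem is in your justification of that claim. The assertion that $f^{-1}$ sends a small punctured neighbourhood of $p_+$ into $V^-$ is false. Take the single Hénon map $f(x,y)=(ay,x+P(y))$, $\deg P=d\ge2$, so $p_+=[0:1:0]$ and $f^{-1}(x,y)=(y-P(x/a),x/a)$: the points $p_x=(x,P(x/a))$ tend to $p_+$ as $|x|\to\infty$, yet $f^{-1}(p_x)=(0,x/a)$ lies in $V^+$, not $V^-$ (this is no accident: $K^-$ clusters at $p_+$ and is totally invariant with $K^-\subset V\cup V^+$, so backward images of points near $p_+$ cannot all enter $V^-$). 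Consequently the identity $G^-=\log^+\|\cdot\|+O(1)$ at $f^{-1}(p)$, which you derive from membership in $V^-$, is unavailable; note also $\|f^{-1}(p_x)\|\approx\|p_x\|^{1/d}$, so $\|f^{-1}(p)\|$ is not even comparable to $\|p\|$ there. The subsequent bound $G^-(p)\le\theta\log^+\|p\|+O(1)$ with a \emph{fixed} $\theta<1$ on a fixed punctured neighbourhood of $p_+$ is also false: along $(cy,y)$ with $c\neq0$ small and fixed, the points stay in such a neighbourhood while $g^-$ extends continuously and finitely at $[c:1:0]\neq p_+$, so $G^-\sim\log\|p\|$ there.

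What is true, and all you need, is the limit statement $G^-(p)-\log^+\|p\|\to-\infty$ as $p\to p_+$, and for this the detour through $V^-$ is unnecessary: Proposition~\ref{prop:Gplus}(2) already gives the global upper bound $G^-\le\log^+\|\cdot\|+O(1)$, so writing $\|f^{-1}(p)\|=\eta(p)\|p\|^{d}$ one gets $G^-(p)=d^{-1}G^-(f^{-1}(p))\le\log^+\|p\|+d^{-1}\log\eta(p)+O(1)$, and it remains only to prove $\eta(p)\to0$ as $p\to p_+$. Combined with the boundedness of $G^+-\log^+\|\cdot\|$ near $p_+$ (Proposition~\ref{prop:Gplus}(2) again, since $p_+\neq p_-$), this gives $G^-<G^+$ near $p_+$ and your claim, hence (1). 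But the estimate $\|f^{-1}(p)\|=o(\|p\|^{d})$ as $p\to p_+$, for a general regular automorphism written as a composition of Hénon maps over an arbitrary complete field, is precisely the non-formal content of the proposition; you misstate the two surrounding steps and defer this estimate to the complex literature, so as written the proof of the key claim does not stand, although it is repairable along the lines just indicated.
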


Assume first that $(L,|\cdot|)$ is archimedean, i.e.   $L = \C$ endowed with its standard hermitian norm. In this case, $G^+$ and $G^-$ are continuous plurisubharmonic functions on $\C^2$ and so is $G$. Using Bedford-Taylor's theory it is possible to make sense of the 
Monge-Amp\`ere of $G$ and define the positive measure $\mu_f:= (dd^c)^2 G$. 
It is a    $f$-invariant probability measure whose support is included in $K$.
We refer to~\cite{BLS93} for more details on its ergodic properties. 

Pick any irreducible algebraic curve $C$ and denote by $\reg(C)$ its set of regular points.  
Then  $\mu_{f,C}$ is by definition 
the Laplacian of the function $G$ restricted to $\reg(C)$. Since $G$ is continuous and $G (p) - \log^+ \| p\|$ is bounded,  this measure carries  no mass on points and its mass equals $\deg(C)$.

\smallskip

When $(L,|\cdot|)$ is non-Archimedean,  the analogues of the measures $\mu_f$ and $\mu_{f,C}$ have
 been constructed by Chambert-Loir~\cite{chambert06,chambert11}. 

Indeed the function $G$ induces a metrization $|\cdot|_G$ on the line bundle $\mathcal{O}(1)_{\P^2}$ by setting $|\sigma|_G := \exp(-G)$, where $\sigma$ is the section  corresponding to the constant function $1$ on $\A^2$.

Proposition~\ref{prop:continuity of G} together with  the fact that $G^+$ and $G^-$ are uniform limits of multiples of functions of the form $\log \max \{ |P_1|, |P_2|\}$ with $P_i \in L[x,y]$
imply that the metrization $|\cdot|_G$ is a continuous semi-positive metric in the sense of~\cite[\S 3.1]{chambert11}.

The measure  $\mu_f$ is defined as a probability measure 
on the Berkovich analytic space $\A^{2, {\rm an}}_L$. 
This measure is $f$-invariant, however the study of its ergodic properties remains
to be done.

When the affine plane is replaced by an irreducible curve $C$, the measure $\mu_{f,C}$ is 
then a positive measure on the analytification $C^{\rm an}$ of $C$ in the sense of Berkovich.
It can be 
 defined using Thuillier's theory recalled in \S\ref{sec:pot-NA} as $\mu_{f,C} := \Delta G |_C$ . Its mass is again equal to $\deg(C)$.

\subsection{Saddle fixed points}\label{ss:saddle}
 In this subsection we let 
$f$ be any analytic germ fixing the point $0 \in \A^2_L$. 
The fixed point $0$ is said to be a \emph{saddle} when 
 the eigenvalues $u,s$ of $Df(0)$
satisfy $|u| > 1 > |s|$.

\smallskip

Given a small bidisk $B$ around 0, we let 
 $W^s_{\rm loc}(0)$ (resp. $W^u_{\rm loc}(0)$) to be the set of points $p\in B$ such that
 for every $n\geq 0$, $f^n(p) \in B$ (resp. $f^{-n}(p) \in B$). It follows that if $p\in  W^s_{\rm loc}(0)$ (resp. $p\in W^u_{\rm loc}(0)$)
$\lim_{n\to \infty} f^n(p)= 0$ ($\lim_{n\to \infty} f^{-n}(p) = 0$). 
It is a theorem that $W^s_{\rm loc}$ and $W^u_{\rm loc}$ are graphs of analytic functions in a neighborhood of $0$ tangent to the respective eigendirections of $df$, hence  intersect 
transversely. 
We refer to~\cite[Theorem~A.1]{HY83} for a proof that works over any metrized field. We refer to $W^s_{\rm loc}(0)$ (resp. $W^u_
{\rm loc}(0)$) as the local stable (resp. unstable)  manifold (or curve) of $0$.

It is easy  that one may always make a change of coordinates  such that 
$W^u_{\rm loc}(0) = \set{y=0}$ and $W^s_{\rm loc}(0) = \set{x=0}$. 
We  will need the following more precise normal form.

\begin{lem}\label{lem:coordinates}
There exist coordinates $(x,y)$ near $0$ in which $f$ assumes the form 
 $$f(x,y) =\left( ux(1+xy g_1(x,y)), sy(1+xy g_2(x,y)) \right)~,$$
where $g_1, g_2$ are analytic functions.
 \end{lem}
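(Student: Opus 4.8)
\textbf{Proof strategy for Lemma~\ref{lem:coordinates}.}

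The plan is to start from the preliminary normal form in which $W^u_{\rm loc}(0)=\{y=0\}$ and $W^s_{\rm loc}(0)=\{x=0\}$, so that after a further rescaling of the two coordinate axes we may write $f(x,y)=(ux+\dots,\ sy+\dots)$ where the omitted terms vanish on the respective invariant curves. Concretely, since $\{y=0\}$ is invariant and the restriction of $f$ to it is $x\mapsto ux(1+\cdots)$, every monomial in the first component that does not involve $y$ is already absorbed into $ux$ up to a coordinate change along the unstable curve; symmetrically for the second component. Thus one may assume
\[
f(x,y)=\bigl(ux + y\,a(x,y),\ sy + x\,b(x,y)\bigr)
\]
with $a,b$ analytic and $a(x,0)=b(0,y)=0$ — wait, more precisely the first component is divisible by $y$ modulo the ``pure-$x$'' part which is $ux$ exactly, hence the first component has the form $ux(1+y\,\tilde a(x,y))+$ (terms divisible by $y^2$), and likewise for the second. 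The goal is to upgrade the divisibility by $y$ (resp.\ $x$) in the correction terms to divisibility by $xy$.

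The key step is an inductive elimination of resonant and non-resonant monomials degree by degree, in the spirit of Poincaré--Dulac normalization but only to finite order at each stage combined with a convergence argument. I would look at the first component $f_1(x,y)=ux + \sum_{i,j} c_{ij}x^iy^j$ where the sum runs over $(i,j)$ with $j\ge 1$ (using invariance of $\{y=0\}$) and $(i,j)\neq(1,0)$; I want to kill all terms with $i=0$, i.e.\ all monomials $c_{0j}y^j$, $j\ge 1$, since those are precisely the ones obstructing divisibility by $x$. A coordinate change of the form $x\mapsto x + \varphi(y)$, $y\mapsto y$ (a shear preserving $\{x=0\}$? — no, it does not; so instead) $x \mapsto x\cdot(1+\psi(y))^{-1}$ type change, or more robustly a change $x \mapsto x + \sum_{j\ge 1}\alpha_j y^j$: conjugating $f$ by this and matching coefficients gives, for each $j$, an equation of the form $(u - s^j)\alpha_j = c_{0j} + (\text{lower order data})$. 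Since $|u|>1>|s|$ we have $|u-s^j|\ge |u|-1>0$ for all $j\ge 1$, so there is no small-divisor problem and each $\alpha_j$ is uniquely determined; the resulting formal series for $\varphi$ converges because the denominators are bounded below and, over a non-Archimedean field, one controls $|\alpha_j|$ directly, while over $\C$ one runs the standard majorant-series estimate. This change preserves $\{y=0\}$ (it is the identity there) and, being tangent to the identity, preserves the property $W^s_{\rm loc}=\{x=0\}$ after a matching change in $y$; symmetrically one removes the pure-$x$ monomials $y$-component. After these two changes the first component is divisible by $x$ in its non-linear part and equals $ux$ along $y=0$, hence has the shape $ux(1+xyg_1)$ with the extra factor of $y$ coming for free because the non-linear part was already divisible by $y$ to begin with; likewise the second component becomes $sy(1+xyg_2)$.

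The main obstacle I anticipate is the convergence of the normalizing coordinate change, i.e.\ making ``the formal series converges'' rigorous uniformly over an arbitrary complete metrized field $L$ of characteristic zero. In the Archimedean case this is the classical majorant method (and one should check it really only needs the lower bound $|u-s^j|\ge |u|-1$, which is genuinely uniform in $j$, so no resonance or Siegel condition intervenes); in the non-Archimedean case one must instead track the norms of the Taylor coefficients and exploit that division by $u-s^j$ does not increase norms beyond a fixed factor, again using $|u|>1>|s|$. A secondary technical point is bookkeeping: the two successive changes (one for each component) interact, so I would either do them one after the other and re-expand, checking that the second does not reintroduce bad terms in the first (it does not, since it is supported on monomials $y^j$ with $j\ge1$ entering the $x$-component only through terms already divisible by $x$), or set up a single change of coordinates solving both systems simultaneously. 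Finally one records that all the changes used are tangent to the identity composed with the initial linear normalization, so the invariant manifolds remain the coordinate axes and the eigenvalues $u,s$ are unchanged, giving exactly the asserted form.
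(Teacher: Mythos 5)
Your opening reduction---straightening both local invariant manifolds and linearizing $f$ along each of them via the Schr\"oder theorem (valid over any complete $L$, as in the paper, since $|u|,|s|\neq 1$)---is the same first step as in the paper. But the elimination step that follows targets the wrong monomials, and this is a genuine gap. Once $W^s_{\rm loc}(0)=\{x=0\}$ is straight, the first component of $f$ has \emph{no} pure-$y$ terms at all: the monomials $c_{0j}y^j$ you propose to kill, with denominators $u-s^j$, are already zero, and the additive change $x\mapsto x+\sum_j\alpha_j y^j$ you settle on is exactly the shear you had just (correctly) discarded because it does not preserve $\{x=0\}$. The terms that genuinely obstruct the asserted normal form are the monomials $xy^j$, $j\ge1$, in the first component (and $x^iy$, $i\ge 1$, in the second): note that $ux(1+xyg_1)=ux+u\,x^2y\,g_1$, so the correction must be divisible by $x^2y$, not merely by $xy$. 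Consequently your concluding inference---that having the nonlinear part divisible by both $x$ and $y$ ``gives the shape $ux(1+xyg_1)$''---is false; it only reproduces the intermediate form $ux(1+y\tilde g)$ that you already had after straightening and linearizing.

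Removing the $xy^j$ terms requires a change that is \emph{multiplicative} in $x$, of the form $(x,y)\mapsto(\varphi(y)x,\,y)$ with $\varphi(0)=1$ (and its analogue in $y$ for the second component); such a change preserves both axes and the linearity of $f$ along them. This is what the paper does: writing the first component as $ux\,a_1(y)+x^2a_2(y)+\cdots$ with $a_1(0)=1$, the conjugated map has leading coefficient $u\,a_1(y)\varphi(sy)/\varphi(y)$, and the functional equation $a_1(y)\varphi(sy)=\varphi(y)$ is solved explicitly by the infinite product $\varphi(y)=\prod_{n\ge0}a_1(s^ny)$, which converges for small $y$ over any complete metrized field precisely because $|s|<1$ and $a_1(0)=1$. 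In particular no Poincar\'e--Dulac coefficient-by-coefficient scheme, majorant series, or small-divisor discussion is needed. (If you insist on a coefficient approach for the $xy^j$ terms, the relevant denominators are $1-s^j$, bounded below in absolute value by $1-|s|$, not $u-s^j$; but you would still have to supply the convergence argument that the product formula yields for free.)
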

 
 Note that   in this set of coordinates  $f$ is linear along the stable and unstable manifolds.  
  
 \begin{proof}
By straightening the local stable and  unstable manifolds, $f$ can be put under the form  
  $$
f(x,y) = (ux (1+ \hot), s y(1+ \hot))  \text{, with } \abs{u}>1 \text{ and }    \abs{s}<1.
$$
and we want to make this expression more precise. First, by the Schr\"oder linearization theorem (which holds for arbitrary $L$, 
see~\cite{HY83}) we can make a local change of coordinates depending only on $x$ in which 
$f\rest{W^u_{\rm loc}(0)}$ becomes linear. 
Doing the same for $y$, we reach the form
\begin{equation}\label{eq:us2}
 f(x,y) = (ux(1+  y g^{(0)}(x,y)) , s y (1+x  h^{(0)}(x,y)).   
\end{equation}
 
Let us now focus on the first coordinate in \eqref{eq:us2}. We want to get rid of monomials of the form $xy^j$ for $j>0$. 
 Re-order the expression of $f$ so that it writes as
$$f (x,y) = (u x a_1(y) + x^2 a_2(y) + \cdots   , sy b_1(x) + y^2b_2(x) + \cdots ) , $$ where the $a_j$ and $b_j$ are analytic and $a_1(0) = b_1(0) = 1$. We want to find local coordinates in which $a_1(y) \equiv 1$. 
For this, put 
$(x',y') = (\varphi(y) x,y)$, with $\varphi(0 )=1$.  Notice that in the   coordinates $(x',y')$, 
 $f$ preserves the coordinate axes and remains linear  along them, so $f$ is still of the form \eqref{eq:us2}. 
 In the new coordinates, $f$ expresses as 
$$(x',y') \mapsto \lrpar{ u a_1(y') \frac{\varphi(sy')}{\varphi(y')} x' + O((x')^2), sy'  + O(x')  },$$ thus to achieve 
$a_1(y') \frac{\varphi(sy')}{\varphi(y')}=1$ it is enough to choose $\varphi(y')  = \prod_{n=0}^{\infty} a_1 (s^ny')$, which is well-defined for sufficiently small $y'$,  since $\abs{s}<1$ and $a_1(0) = 1$. Doing the same in the second variable, and renaming the coordinates  
as $(x,y)$, we obtain 
$$f (x,y) = (u x   + x^2 a_2(y) + \cdots   , sy   + y^2b_2(x) + \cdots ).$$ Going back to the form \eqref{eq:us2}, we  get the desired result.
 \end{proof}

\subsection{Stable manifolds of polynomial automorphisms}
In the case of polynomial automorphisms local stable manifolds can be globalized  and have the following structure. 

\begin{prop}\label{prop:stablecurve}
Let $f : \A^2 \to \A^2$ be any polynomial automorphism of  $\A^2$
and assume that  $0$ is a saddle fixed point for $f$. Denote by $s$ the eigenvalue of $Df(0)$ lying in the unit disk.

Then the global stable manifold $W^s(0) := \{p \in L^2, \, f^n(p) \to 0\}$ is an immersed affine line.  More precisely,  there exists an analytic injective immersion $\phi_s : \A^1 \to \A^2$ whose image is $W^s(0)$, and such that $f \circ \phi_s (\zeta) = \phi_s( s\zeta)$ for every 
$\zeta \in \mathbb A^1$.
 \end{prop}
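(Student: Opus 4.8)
The plan is to globalize the local stable manifold $W^s_{\rm loc}(0)$ by pushing it forward under iterates of $f$, using the dynamical relation to patch things together. Concretely, start from the coordinates provided by Lemma~\ref{lem:coordinates}, in which $W^s_{\rm loc}(0) = \{x = 0\}$ and $f$ restricted to this local curve is exactly $y \mapsto sy$ (the linearization along the stable axis is already built into the normal form). This gives a first analytic injective immersion $\phi_{s,0} : D_r \to \A^2$, $\zeta \mapsto (0,\zeta)$ on a small disk $D_r$, satisfying $f \circ \phi_{s,0}(\zeta) = \phi_{s,0}(s\zeta)$ for $|\zeta|$ small enough that $\phi_{s,0}(\zeta)$ stays in the bidisk $B$.

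The key step is the inductive extension. Since $|s| < 1$, for any $\zeta \in \A^1$ we have $s^n \zeta \in D_r$ for $n$ large, so the only consistent way to define $\phi_s(\zeta)$ is to set $\phi_s(\zeta) := f^{-n}\bigl(\phi_{s,0}(s^n\zeta)\bigr)$ for $n \gg 0$. First I would check this is well-defined: if both $s^n\zeta$ and $s^m\zeta$ lie in $D_r$ with $n \le m$, then $\phi_{s,0}(s^m\zeta) = \phi_{s,0}(s^{m-n}(s^n\zeta)) = f^{m-n}(\phi_{s,0}(s^n\zeta))$ by the functional equation on the local piece (valid since all intermediate iterates $s^{n}\zeta, \dots, s^{m}\zeta$ are in $D_r$ by $|s|<1$), hence $f^{-m}(\phi_{s,0}(s^m\zeta)) = f^{-n}(\phi_{s,0}(s^n\zeta))$. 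Thus $\phi_s : \A^1 \to \A^2$ is a well-defined analytic map (analyticity is local, and near any $\zeta_0$ one uses a fixed $n$ and composes the analytic maps $f^{-n}$ and $\phi_{s,0}(s^n\cdot)$). The conjugacy relation $f \circ \phi_s(\zeta) = \phi_s(s\zeta)$ follows immediately by choosing $n$ large enough to compute both sides. That $\phi_s$ is an immersion follows because $\phi_{s,0}$ is an immersion and $f^{-n}$ is a diffeomorphism; injectivity follows because $f^{-n}$ is injective and $\phi_{s,0}$ is injective on $D_r$ (if $\phi_s(\zeta_1) = \phi_s(\zeta_2)$, choose $n$ so that $s^n\zeta_1, s^n\zeta_2 \in D_r$ and apply $f^n$).

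Finally I would identify the image with the global stable set $W^s(0) = \{p : f^n(p) \to 0\}$. The inclusion $\phi_s(\A^1) \subset W^s(0)$ is clear: $f^n(\phi_s(\zeta)) = \phi_s(s^n\zeta) \to \phi_s(0) = 0$. For the reverse inclusion, if $f^n(p) \to 0$ then $f^N(p) \in W^s_{\rm loc}(0) = \phi_{s,0}(D_r)$ for some $N$ (since local stable manifolds are open in the set of points staying near $0$ forever, or more simply, once $f^N(p)$ enters $B$ and all further iterates stay in $B$ tending to $0$, it lies in $W^s_{\rm loc}$); writing $f^N(p) = \phi_{s,0}(\eta)$ with $\eta \in D_r$ gives $p = f^{-N}(\phi_{s,0}(\eta)) = \phi_s(s^{-N}\eta)$, so $p \in \phi_s(\A^1)$.

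The main obstacle, or at least the point requiring care, is the verification that $\phi_s$ is globally injective and an immersion — locally injective immersions need not be injective globally, so one must genuinely use the bijectivity of the polynomial automorphism $f^{-n}$ on all of $\A^2$ rather than just a local inverse. A secondary subtlety is making precise that a point with $f^n(p) \to 0$ eventually lands in $W^s_{\rm loc}(0)$; this uses the standard local characterization of $W^s_{\rm loc}$ recalled in \S\ref{ss:saddle} (points whose forward orbit stays in the bidisk $B$), together with the observation that convergence to $0$ forces the tail of the orbit into $B$.
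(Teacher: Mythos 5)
Your proof is correct and follows essentially the same route as the paper: start from the local parameterization of $W^s_{\rm loc}(0)$ satisfying $f\circ\phi_s(\zeta)=\phi_s(s\zeta)$ (via the normal form of Lemma~\ref{lem:coordinates}) and extend it globally by $\phi_s(\zeta)=f^{-n}\phi_s(s^n\zeta)$ for $n$ large, using that $f^{-n}$ is a global automorphism. The paper's proof is just a terser version of this; your added checks of well-definedness, injectivity, and the identification of the image with $W^s(0)$ are exactly the details it leaves implicit.
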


\begin{proof}
We saw in \S\ref{ss:saddle} that for a sufficiently small neighborhood $N$ of $0$ the intersection $W^s_{\rm loc} (0) = W^s(0) \cap N$ is 
parameterized by an analytic immersion $\phi_s : B(0,1) \to N$ which satisfies 
\begin{equation}\label{eq:invmfd}
f \circ \phi_s (\zeta) = \phi_s( s\zeta)~.
\end{equation}
Since $f$ is an 
automorphism, it follows that $W^s(0) = \bigcup_{n\ge 0} W^s_{\rm loc}(0)$, and  using the   functional equation 
we may extend the analytic immersion $\phi_s$    by putting 
$$ 
\phi_s (\zeta) = f^{-n}\phi_s( s^n\zeta)
$$
for every $\zeta \in L$ and sufficiently large $n$.
\end{proof}

\begin{prop}\label{prop:stable and G-}
Let $f : \A^2 \to \A^2$ be a  polynomial automorphism of H\'enon type of  $\A^2$
and assume that $0$ is a saddle fixed point for $f$.

Then the restriction of $G^-$ to $W^s(0)$ is not  identically $0$. In particular $G^-\rest{W^s(0)}$ cannot vanish identically  in a neighborhood of the origin. The same results hold for $G^+\rest{W^u(0)}$. 
\end{prop}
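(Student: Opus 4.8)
The plan is to argue by contradiction: if $G^-|_{W^s(0)}$ vanished identically, then $W^s(0)$ would be contained in $K^-$, and combined with $W^s(0) \subset K^+$ (which is immediate since every point of $W^s(0)$ has bounded, indeed convergent, forward orbit) this would force $W^s(0) \subset K$. The Zariski closure $\overline{W^s(0)}$ is then an algebraic curve all of whose points lie in $K$ after we take closure — here I must be slightly careful because $K$ is only closed, not Zariski closed, so the first real task is to promote the statement ``$W^s(0)\subset K$'' to ``$\overline{W^s(0)}$ (Zariski closure) is an $f$-invariant algebraic curve,'' and then invoke Proposition~\ref{prop:no fixed curves} for the contradiction.

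The key steps, in order, would be the following. First, record that $W^s(0)\subset K^+$: by Proposition~\ref{prop:stablecurve}, $W^s(0)$ is the increasing union $\bigcup_n f^{-n}(W^s_{\rm loc}(0))$, and since $W^s_{\rm loc}(0)$ is relatively compact and forward orbits of its points converge to $0$, one gets $\sup_{n\geq 0}\|f^n(p)\| < \infty$ for all $p\in W^s(0)$; hence $G^+|_{W^s(0)} \equiv 0$ by Proposition~\ref{prop:Gplus}(3). Second, assume for contradiction $G^-|_{W^s(0)}\equiv 0$, so $W^s(0)\subset K^- \cap K^+ = K$. Third, take $Z$ to be an irreducible component of the Zariski closure $\overline{W^s(0)}^{\,\mathrm{Zar}}$ containing the origin; since $W^s(0)$ is an injectively immersed copy of $\A^1$, it is Zariski dense in a curve, so $Z$ is a genuine algebraic curve containing a Zariski-dense subset of $W^s(0)$. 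Fourth, observe that $Z$ is $f$-invariant: $f$ preserves $W^s(0)$ (it maps it into itself bijectively via $\phi_s(\zeta)\mapsto\phi_s(s\zeta)$), hence $f$ permutes the finitely many components of $\overline{W^s(0)}^{\,\mathrm{Zar}}$, and since $f^n(Z)$ always contains the fixed point $0$ in a neighborhood of which $W^s(0)$ is irreducible, $f(Z)=Z$. Fifth, apply Proposition~\ref{prop:no fixed curves}: a H\'enon-type automorphism has no invariant algebraic curve — contradiction. The last assertion about $G^-|_{W^s(0)}$ not vanishing near the origin is then automatic from the maximum/minimum-principle type statements (e.g.\ Proposition~\ref{prop:maximum-principle} in the non-Archimedean case, the minimum principle for plurisubharmonic functions in the Archimedean case applied to the curve $W^s_{\rm loc}(0)$): a nonnegative (sub)harmonic function on a curve vanishing on an open subset vanishes on the whole connected component, so if it vanished near $0$ it would vanish on all of $W^s(0)$, contradicting what we just proved. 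The statement for $G^+|_{W^u(0)}$ follows verbatim by replacing $f$ with $f^{-1}$.

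I expect the main obstacle to be the passage from the merely topological containment $W^s(0)\subset K$ to an honest $f$-invariant \emph{algebraic} curve, i.e.\ controlling the Zariski closure of the immersed line $W^s(0)$ and checking its invariance cleanly in both the Archimedean and non-Archimedean settings; one should make sure the argument that $\overline{W^s(0)}^{\,\mathrm{Zar}}$ is one-dimensional (rather than all of $\A^2$) is justified — this uses that $W^s(0)$ carries a nonconstant regular function, namely any coordinate pulled back along $\phi_s$ is nonconstant since $\phi_s$ is an immersion, so $W^s(0)$ cannot be Zariski dense in $\A^2$. Everything else is a direct application of the already-established Propositions~\ref{prop:Gplus}, \ref{prop:no fixed curves}, and \ref{prop:stablecurve}.
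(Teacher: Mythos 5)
Your reduction to ``$W^s(0)\subset K^+\cap K^-=K$'' is correct and is exactly how the paper begins, but the core of your contradiction --- that the Zariski closure of $W^s(0)$ is an algebraic \emph{curve}, which you then show is $f$-invariant and exclude by Proposition~\ref{prop:no fixed curves} --- has a genuine gap. An injectively immersed analytic copy of $\A^1$ need not be contained in any algebraic curve: the image of $\zeta\mapsto(\zeta,e^\zeta)$ is Zariski dense in $\C^2$, and for a H\'enon-type map the stable manifold is in fact \emph{never} contained in an algebraic curve (precisely because such a curve would be invariant). The contradiction hypothesis does not rescue this: a bounded subset of $\A^2$ can perfectly well be Zariski dense (any small ball is), so $W^s(0)\subset K$ gives no control on the dimension of the Zariski closure. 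Your stated justification --- that the coordinates pull back to nonconstant functions along $\phi_s$ --- is a non sequitur; if anything it points the other way, since a \emph{constant} coordinate would place $W^s(0)$ inside a line. Some analytic input is unavoidable here, and the paper supplies it with Liouville's theorem: under the contradiction hypothesis the entire map $\phi_s:\A^1\to\A^2$ of Proposition~\ref{prop:stablecurve} has bounded image, hence is constant (this also holds over non-Archimedean fields, see \cite{robert}), contradicting that $\phi_s$ is an immersion. The purely algebraic route via the Zariski closure cannot get off the ground.

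Your treatment of the final assertion is also not right as stated: $G^-$ restricted to $W^s_{\rm loc}(0)$ is only \emph{subharmonic} (it is harmonic only where it is positive), and a nonnegative subharmonic function may vanish on a nonempty open set without vanishing identically (e.g.\ $\log^+|z|$); Proposition~\ref{prop:maximum-principle} applies to harmonic functions only. The paper instead uses the dynamics: by the invariance relation \eqref{eq:invmfd} one has $\phi_s(\zeta)=f^{-n}(\phi_s(s^n\zeta))$, so $G^-(\phi_s(\zeta))=d^{\,n}\,G^-(\phi_s(s^n\zeta))$ by Proposition~\ref{prop:Gplus}(1); since $s^n\zeta\to 0$, vanishing of $G^-|_{W^s(0)}$ near the origin would propagate to all of $W^s(0)$, contradicting the first part. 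That one-line dynamical argument is what you should use in place of the minimum principle.
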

 
\begin{proof}
Suppose by contradiction that $G^- \equiv 0 $ on $W^s (0)$.
By Proposition~\ref{prop:Gplus} (3), we have $W^s(0) \subset K^-$.
Since the successive images by $f$ of any point in  $W^s(0)$ converges to $0$ it follows that
$W^s(0) \subset K^+$ hence $W^s(0) \subset K$. 

We conclude that the image of the analytic map $\phi_s : \A^1 \to \A^2$ lies in a bounded domain hence it is a constant by Liouville's theorem (see~\cite{robert} for  the non-Archimedean case). This contradicts the fact that $\phi_s$ is an immersion. 

The second statement follows from the invariance relation~\eqref{eq:invmfd}.
\end{proof}

\subsection{Heights associated to adelic metrics on the affine space}\label{subs:equidist}
We now assume that $\LL$ is a number field, and we fix an algebraic closure $\LL^{\mathrm{alg}}$ of $\LL$. Let $\cM_\LL$ be the set of places of $\LL$, that is, of its multiplicative norms modulo equivalence. For each place $v\in \cM_\LL$ we denote by $|\cdot|_v$ the unique representative that is normalized in such a way that its restriction to $\Q$
is either the standard archimedean norm or the $p$-adic norm satisfying $|p|= p^{-1}$ for some prime $p >1$.

Then for any $x\in \LL$, the 
 {\em  product formula}  $\prod_{v\in \cM_\LL} |x|_v^{n_v} = 1$ holds. 
 Here the integer $n_v$ is the degree of the field extension of the completion of $\LL$ over the completion of $\Q$ relative to $|\cdot|_v$.

\smallskip

Fix an integer $d\ge1$.
We let $\LL_v$ be  the completion of the algebraic closure of the completion of $\LL$ relative to the absolute value $\abs{\cdot}_v$. For any $p = (x_1, \ldots, x_d) \in \LL_v^d$ we shall write
$\| p \|_v = \max \{ |x_1|, \ldots, |x_d|\}$ (or simply $\norm{p}$ when there is no risk of confusion).

Recall that one  the standard height of a point  $p\in \A^d(\LL^{\rm alg})$ is defined by the formula
\begin{eqnarray*}
h(p)=\frac{1}{\deg(p)}\sum_{v\in \cM_\LL}\sum_{q\in O(p)} n_v \log^+\|q \|_v 
\end{eqnarray*}
where $O(p)$ denotes  the orbit of $p$ under the absolute Galois group of $\LL$ and $\deg(p)$ is the cardinality of $O(p)$.

\smallskip

As in~\cite{chambert11} we use heights that are associated to semi-positive adelic metrics on ample line bundles. 
We present here this notion in a form that is taylored to our needs.

Suppose $X\subset \A^d_\LL$ is an irreducible affine variety. 
For us, $X$ will always  be either a curve or $\A^2_\LL$. 

A semi-positive adelic metric on the (ample) line bundle $\O(1)_X$ is a collection of functions 
$\{ G_v\}_{v\in \cM_\LL}$, $G_v : X(\LL^{\rm alg}) \to \R$ such that 
\begin{enumerate}
\item[(M1)] 
the function $G_v (p) - \log^+\| p\|$ extends continuously to the closure of $X$ in $\P^d(\LL)$ for each place $v$;
\item[(M2)] 
for all but finitely many places $G_v (p) = \log^+\| p\|$;
\item[(M3)] 
for each archimedean place the function $G_v$ is plurisubharmonic;
\item[(M4)] 
for each non-Archimedean place, the function $G_v$ is a uniform limit of positive multiples of functions of the form $\log \max \{ |P_1|, \ldots, |P_r|\}$ with $P_i \in \LL[x_1, \ldots, x_d]$.
\end{enumerate}

To any such semi-positive adelic metric $\{ G_v\}_{v\in \cM_\LL}$ is associated a height defined on $X(\LL^{\rm alg})$ by 
\begin{eqnarray*}
h_G(p)=\frac{1}{\deg(p)}\sum_{v\in \cM_\LL}\sum_{q\in O(p)} n_v G_{v} (q)~,
\end{eqnarray*}
and such that $\sup_{X(\LL^{\rm alg})} |h_G - h | < \infty$.

\smallskip

For any place $v$, one can also associate to the metrization $G_v$ 
a positive measure  $\MA (G_v)$ which is  defined in the same way as in \S\ref{sec:inv-meas}. 

When $v$ is archimedean then $\MA (G_v)$ is the Monge-Amp\`ere measure  
defined using Bedford-Taylor's theory whose mass is $\deg(X)$. 

When $v$ is non-Archimedean, then the measure $\MA (G_v)$ is defined by Chambert-Loir~\cite{chambert06} as a positive measure of mass $\deg(X)$ on the analytification of $X$ over $\LL_v$ in the sense of Berkovich. 
Its definition relies in an essential way on the condition (M4) above. 

When $X$ is a curve then $\MA(G_v)$ is alternatively defined as the Laplacian of $G_v|_C$
(in the sense of Thuillier when $v$ is a non-Archimedean place).

\subsection{Metrizations associated to polynomial automorphisms of H\'e\-non type and equidistribution}\label{subs:height}
Assume that $f$ is a regular polynomial automorphism of degree $\ge2$   defined over a number field $\LL$.

The following two results   are direct consequences of the definitions and Propositions~\ref{prop:Gplus} and~\ref{prop:continuity of G}.

\begin{prop}\label{pro:adelicG}
For any regular polynomial automorphism $f$ of degree $\ge2$
the collection $\{G_{v,f}\}$ defines a semi-positive adelic metric on $\mathcal{O}(1)_{\P^2}$.
\end{prop}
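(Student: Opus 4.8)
\textbf{Proof plan for Proposition~\ref{pro:adelicG}.}

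The plan is to verify the four axioms (M1)--(M4) in the definition of a semi-positive adelic metric for the collection $\{G_{v,f}\}_{v\in\cM_\LL}$, where $G_{v,f}$ denotes the Green function $G^+_v$ attached to $f$ at the place $v$ (the symmetric statement for $G^-_f$, built from $f^{-1}$, being identical). Most of the work has already been recorded in the excerpt, so the proof will be a matter of assembling the pieces and checking the one genuinely global condition, (M2).

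First I would dispose of (M1) and (M3)--(M4) place-by-place. Condition (M1) is exactly Proposition~\ref{prop:Gplus}(2) (or Proposition~\ref{prop:continuity of G}(1) if one works with $G=\max(G^+,G^-)$): the difference $G^\pm_v(p)-\log^+\|p\|_v$ extends continuously to $\P^2(\LL_v)\setminus\{p_\mp\}$, and in fact to all of $\P^2(\LL_v)$ for $G_v$. Condition (M3), for an archimedean place, is the classical fact that $G^+_v$ is the uniform (hence plurisubharmonic) limit of the plurisubharmonic functions $\tfrac1{d^n}\log^+\|f^n(\cdot)\|_v$ on $\C^2$, which is recalled in \S\ref{sec:inv-meas}. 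Condition (M4), for a non-archimedean place, is likewise the observation made just before Proposition~\ref{prop:Gplus} and again in \S\ref{sec:inv-meas}: each iterate $\tfrac1{d^n}\log^+\|f^n(x,y)\|_v=\tfrac1{d^n}\log\max\{1,|f^n_1(x,y)|_v,|f^n_2(x,y)|_v\}$ is a positive multiple of a $\log\max$ of polynomials with coefficients in $\LL$, and by \eqref{eq:clas} these converge uniformly on $\LL_v^2$ to $G^+_v$, so $G^+_v$ is a uniform limit of the required form.

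The one step requiring a new (but routine) argument is (M2): for all but finitely many places $v$ one must have $G_{v,f}(p)=\log^+\|p\|_v$ identically. Here I would use the integrality of the coefficients at almost all places. Write $f$ in the Friedland--Milnor normal form as a composition of maps $(x,y)\mapsto(a_iy,\,x+P_i(y))$; the finitely many elements of $\LL$ appearing among the $a_i$, the $a_i^{-1}$, and the coefficients of the $P_i$ are $v$-integral for all but finitely many $v$, and one may also exclude the (finitely many) archimedean places. For such a non-archimedean, "good" place $v$, the inequality \eqref{eq:clas} can be run with $\cst=0$ and with $C$ taken to be the radius of the unit ball: the ultrametric inequality forces $\|f^n(p)\|_v=\|p\|_v^{\,d^n}$ as soon as $\|p\|_v>1$ (top-degree terms dominate with integral coefficients and no cancellation of norms), while $\|f^n(p)\|_v\le1$ whenever $\|p\|_v\le1$. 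Taking $\tfrac1{d^n}\log^+$ and letting $n\to\infty$ gives $G^+_v(p)=\log^+\|p\|_v$ exactly. I expect this uniform-good-reduction estimate to be the main obstacle, in the sense that it is the only part not already quoted verbatim; the care needed is simply to check that the chosen $C$ and the vanishing of the error constant are legitimate at almost all $v$, which follows from $v$-integrality of the finitely many structure constants of $f$. Once (M1)--(M4) are all in hand, the collection $\{G_{v,f}\}$ satisfies the definition, and the proposition follows; the statement for $\{G_{v,f^{-1}}\}$, and hence for $\{G_v=\max\}$, is obtained by applying the same argument to $f^{-1}$, which is again a regular automorphism of the same degree.
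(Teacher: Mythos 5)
Your overall strategy (check (M1)--(M4) place by place) is the right one, and it is in fact more detailed than the paper, which simply declares the proposition a direct consequence of the definitions together with Propositions~\ref{prop:Gplus} and~\ref{prop:continuity of G} (the good-reduction input being implicit, cf.\ Kawaguchi [Ka13]). But your treatment of the one step you single out as new, condition (M2), contains a genuine error. At a place $v$ of good reduction it is \emph{not} true that $\|f^n(p)\|_v=\|p\|_v^{d^n}$ for every $p$ with $\|p\|_v>1$, and consequently $G^+_{v}$ is \emph{not} equal to $\log^+\|\cdot\|_v$ at such places. Take a single Hénon factor $f(x,y)=(ay,\,x+P(y))$ with $v$-integral coefficients, $|a|_v=1$ and unit leading coefficient, and $p=(x,0)$ with $|x|_v>1$: then $f(p)=(0,x+P(0))$ has norm $|x|_v$, and only from the second iterate on does the degree-$d$ growth kick in, so $\|f^n(p)\|_v=|x|_v^{d^{n-1}}$ and $G^+_v(p)=\tfrac1d\log|x|_v<\log^+\|p\|_v$. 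The exact growth $\|f(p)\|_v=\|p\|_v^d$ holds only on the forward-invariant region $V^+=\{|y|\ge\max(|x|,1)\}$, not on all of $\{\|p\|_v>1\}$. The same failure occurs for $G^-_v$ on the symmetric region, so your closing ``hence for $G_v=\max$'' does not follow from the two one-sided claims as you state them.

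The statement that is both true and actually needed is (M2) for $G_v=\max\{G^+_v,G^-_v\}$ (note that the metric on $\mathcal O(1)_{\P^2}$ in the proposition must be the one induced by this max: $G^+_v$ alone fails (M1), since $G^+_v-\log^+\|\cdot\|_v$ does not extend continuously at $p_-$, which is why the paper invokes Proposition~\ref{prop:continuity of G} and not just Proposition~\ref{prop:Gplus}). The repair is the two-sided good-reduction argument: at a place where the coefficients of $f$ \emph{and} of $f^{-1}$ are $v$-integral (and the relevant leading coefficients are units), integrality gives $\|f^{\pm n}(p)\|_v\le\max(1,\|p\|_v)^{d^n}$ for all $p$, hence $G^\pm_v\le\log^+\|\cdot\|_v$ everywhere; the ultrametric estimate gives $\|f^n(p)\|_v=\|p\|_v^{d^n}$ on $V^+$ and $\|f^{-n}(p)\|_v=\|p\|_v^{d^n}$ on $V^-$, hence $G^+_v=\log\|\cdot\|_v$ on $V^+$ and $G^-_v=\log\|\cdot\|_v$ on $V^-$; and both functions vanish on the unit polydisk $V$. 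Since $\A^2(\LL_v)=V\cup V^+\cup V^-$ (with $C=1$), one concludes $\max\{G^+_v,G^-_v\}=\log^+\|\cdot\|_v$ at all such places, which is (M2). With this correction, and with (M1), (M3), (M4) checked for the max rather than for $G^+$ alone (both are routine, as you indicate), your proof is complete and matches what the paper leaves to the reader.
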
 

In the sequel we denote by $h_f$ the height associated to this collection of metrics. 

\begin{prop}\label{pro:adelicCurve}
Let $f$ be a  regular polynomial automorphism  of degree $d\ge2$.  As above, denote by $p_+$
the fixed point at infinity of its rational extension to $\P^2$. 

Then for any irreducible algebraic curve $C$ whose Zariski closure $\bar{C}$ in $\P^2$ intersects the line at infinity at the single point $p_+$,   
the collection $\{G^+_{v,f}|_C\}$ defines a semi-positive adelic metric on $\O(1)_{\bar{C}}$.
\end{prop}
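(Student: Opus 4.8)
The plan is to verify the four conditions (M1)--(M4) defining a semi-positive adelic metric, this time for the collection $\{G^+_{v,f}|_C\}$ on $\O(1)_{\bar C}$, using the properties of $G^+_v$ established in Proposition~\ref{prop:Gplus} and the hypothesis on $\bar C \cap H_\infty$. The only real subtlety is condition (M1): one must check that $G^+_{v,f}|_C(p) - \log^+\|p\|$ extends \emph{continuously} across the point at infinity of $\bar C$, and here the hypothesis $\bar C \cap H_\infty = \{p_+\}$ is exactly what is needed, since by Proposition~\ref{prop:Gplus}(2) the function $G^+_v - \log^+\|\cdot\|$ extends continuously to all of $\P^2(\LL_v)\setminus\{p_-\}$, and $p_- \notin \bar C$ by assumption. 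So the restriction to $\bar C$ of this globally (away from $p_-$) defined continuous function is continuous, giving (M1).

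First I would dispose of (M2) and (M4), which are essentially inherited. For (M2): for all but finitely many places $v$, the automorphism $f$ has good reduction in a suitable sense and $G^+_v = \log^+\|\cdot\|$ on $\LL_v^2$ (this is part of Proposition~\ref{pro:adelicG}, or can be seen directly from the uniform convergence of $\frac1{d^n}\log^+\|f^n\|$ together with the fact that the defining polynomials of $f$ are $v$-integral with $v$-unit leading coefficients for almost all $v$); restricting to $C$ preserves this equality. For (M4): by the discussion in \S\ref{sec:inv-meas}, $G^+_v$ is a uniform limit on bounded sets of multiples of functions of the form $\log\max\{|P_1|,|P_2|\}$ with $P_i\in\LL[x,y]$ — indeed $\frac1{d^n}\log^+\|f^n(x,y)\|_v = \frac1{d^n}\log\max\{1,|f^n_1|,|f^n_2|\}$ is of this shape — and restricting each $P_i$ to $C$ keeps it a polynomial function on $C$, so (M4) passes to $G^+_v|_C$.

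Then I would handle (M3): for archimedean $v$ (so $\LL_v = \C$), the function $G^+_v$ is continuous plurisubharmonic on $\C^2$ by Proposition~\ref{prop:Gplus} together with the fact that it is a \emph{decreasing} (or at least uniformly convergent) limit of the pluriharmonic-up-to-a-maximum functions $\frac1{d^n}\log^+\|f^n\|$; hence its restriction to the one-dimensional analytic set $\mathrm{Reg}(C)$ is subharmonic there, which is what (M3) demands for curves. The main obstacle, as noted, is the continuity-at-infinity point in (M1): one has to be slightly careful that the local coordinate on $\bar C$ near its unique point at infinity $p_+$ is comparable, up to bounded error, to the ambient $\log^+\|\cdot\|$, so that the bounded continuous extension of $G^+_v - \log^+\|\cdot\|$ from $\P^2\setminus\{p_-\}$ really does restrict to a bounded continuous function in that coordinate; this follows because on $\bar C$ near $p_+$ the function $\log^+\|\cdot\|$ differs from (a positive multiple of) $-\log|t|$, $t$ a local parameter, by a bounded amount, $p_+$ being a single (possibly singular) point of $\bar C$ on $H_\infty$. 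Once (M1)--(M4) are all checked, the collection $\{G^+_{v,f}|_C\}$ is by definition a semi-positive adelic metric on $\O(1)_{\bar C}$, which is the assertion.
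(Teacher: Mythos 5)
Your checks of (M1), (M3) and (M4) are fine and match what the paper intends (the paper gives no written proof, declaring the statement a direct consequence of the definitions and of Propositions~\ref{prop:Gplus} and~\ref{prop:continuity of G}). The genuine problem is your verification of (M2). The identity you invoke there --- $G^+_{v,f}=\log^+\|\cdot\|$ on all of $\LL_v^2$ for all but finitely many $v$ --- is false, and it is not ``part of Proposition~\ref{pro:adelicG}'': that proposition concerns $G_{v,f}=\max\{G^+_{v,f},G^-_{v,f}\}$, and only this maximum equals $\log^+\|\cdot\|$ at places of good reduction. Concretely, take $f(x,y)=(y,y^2-x)$ and a non-archimedean place $v$; for $p=(x,0)$ with $|x|_v=c>1$ one computes $\|f^n(p)\|_v=c^{2^{n-1}}$ for $n\ge 1$, hence $G^+_v(p)=\tfrac12\log c<\log c=\log^+\|p\|_v$, even though $f$ has integral coefficients and unit leading coefficient at $v$. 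In general $G^+_v<\log^+\|\cdot\|$ on a large part of the region $V^-$ (indeed $G^+_v$ vanishes on $K^+_v$, which contains points of arbitrarily large norm), so the sentence ``restricting to $C$ preserves this equality'' rests on an equality that does not hold to begin with.

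The statement you actually need is only $G^+_{v,f}|_C=\log^+\|\cdot\|$ on $C$ for almost all $v$, and this is true, but its proof must use the hypothesis $\bar C\cap H_\infty=\{p_+\}$ a second time, not only for (M1). Since $\bar C$ meets the line at infinity only at $p_+=[0:1:0]$, the top-degree homogeneous part of a defining equation $P$ of $C$ is $c\,x^{e}$ with $c\neq 0$; hence at every place where the coefficients of $P$ are $v$-integral and $c$ is a $v$-unit, any point $q\in C$ with $\|q\|_v>1$ satisfies $|y|_v>|x|_v$, i.e.\ lies in the region $V^+$ (with the constant $C$ of \S 1.2 taken equal to $1$). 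At the cofinitely many places where moreover $f$ has good reduction (integral coefficients, relevant leading coefficients units), the estimate \eqref{eq:clas} becomes an equality on $V^+$ and gives $G^+_v=\log|y|=\log^+\|\cdot\|$ there, while on the closed unit polydisk the forward orbit stays bounded, so $G^+_v=0=\log^+\|\cdot\|$. Together these yield (M2) for $G^+_{v,f}|_C$. With this substitution your argument is complete and is in the spirit of the paper's (implicit) verification; the rest of your write-up, in particular the use of Proposition~\ref{prop:Gplus}(2) and of $p_-\notin\bar C$ for (M1), is correct.
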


\smallskip

We will need two versions of the equidistribution theorem for points of small height: one is in restriction to a curve, 
and the other is in $\A^2$. 

The first statement follows from a statement due to 
P.~Autissier~\cite[Prop.~4.7.1]{autissier} at the archimedean place and to  A.~Thuillier~\cite[Th\'eor\`eme~4.3.6]{thuillier-PhD}.

\begin{thm}[Equidistribution for points of small height on a curve]\label{thm:yuan-curve}
Let $f$ be  a polynomial automorphism of H\'enon type defined over $\mathbb L$.
Suppose $C$ is an irreducible curve of the affine space $\A^2$ that is defined over $\mathbb L$  whose Zariski closure in $\P^2$ intersects the line at infinity only at $p_+$.

Suppose that we are given a infinite sequence of distinct points $p_m\in C(\LL^{\rm alg})$ such that $h_{G^+}(p_m) \to 0$.

Then, for any place $v\in \cM_\LL$,  the convergence 
\begin{equation}\label{eq:yuan1}
\frac{1}{\deg (p_m)}\sum_{q\in O(p_m)}\delta_q \longrightarrow \frac1{\deg(C)} \, 
\Delta(G^+_v|_{C_v})
\end{equation}
holds in the weak topology of measures, where $O(x_m)$ is the orbit of $x_m$ under the action of the absolute Galois group of $\LL$.
\end{thm}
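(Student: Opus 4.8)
The plan is to deduce Theorem~\ref{thm:yuan-curve} from the general equidistribution theorems of Autissier and Thuillier by checking that our geometric setup satisfies their hypotheses. First I would recall the setup: by Proposition~\ref{pro:adelicCurve}, the collection $\{G^+_{v,f}|_C\}$ defines a semi-positive adelic metric on $\O(1)_{\bar C}$, so the associated height $h_{G^+}$ is well-defined on $C(\LL^{\rm alg})$ with $\sup |h_{G^+} - h| < \infty$. The crucial point to verify is that this metrized line bundle has \emph{height zero}, i.e. that the essential minimum of $h_{G^+}$ on $C$ vanishes, equivalently that there exists \emph{some} infinite sequence of points of $h_{G^+}$-height tending to $0$ and that the normalized self-intersection number (the analog of the Mahler measure / the height of $C$ itself) is zero. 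Here the hypothesis already hands us such a sequence $p_m$; what remains is the vanishing of the height of $\bar C$ with respect to this adelic metric, which one sees from the invariance relation $G^+\circ f = dG^+$ (Proposition~\ref{prop:Gplus}(1)): the adelic metric is an eigenvector for the pullback by $f$ with eigenvalue $d>1$, so by a standard argument (as in the Bogomolov-type setting of \cite{SUZ, ullmo, Z98}) the height of any $f$-stable cycle, and more generally the arithmetic self-intersection $\widehat{c_1}(\O_{\bar C}(1), G^+)^2$, must be $0$.

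The second step is to invoke the equidistribution statement itself. Once the metric is semi-positive at all places (conditions (M1)--(M4), guaranteed by Proposition~\ref{pro:adelicCurve}) and of height zero, the theorem of Autissier \cite[Prop.~4.7.1]{autissier} at archimedean places, together with Thuillier \cite[Th\'eor\`eme~4.3.6]{thuillier-PhD} at non-Archimedean places (this is where Thuillier's potential theory recalled in \S\ref{sec:pot-NA} enters, the Laplacian $\Delta$ being the one from that section), applies verbatim: for any strict sequence of distinct points $p_m \in C(\LL^{\rm alg})$ with $h_{G^+}(p_m)\to 0$, the Galois orbit measures $\frac1{\deg(p_m)}\sum_{q\in O(p_m)}\delta_q$ converge weakly, at each place $v$, to the normalized curvature measure, which by the last paragraph of \S\ref{subs:equidist} is exactly $\frac1{\deg(C)}\MA(G^+_v|_{C_v}) = \frac1{\deg(C)}\Delta(G^+_v|_{C_v})$. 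The total mass being $\deg(C)/\deg(C) = 1$ matches the mass of the left-hand probability measures, so no escape of mass occurs.

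The main obstacle — and really the only substantive point — is establishing the vanishing of the height of $\bar C$ for the metric $G^+$, since without it one only gets convergence to a measure of total mass strictly less than $1$ (loss of mass) or no convergence at all. The argument should run: the arithmetic intersection number $\widehat{c_1}(\O(1)_{\bar C}, G^+)^2$ transforms under $F^*$ (the rational extension of $f$ to $\P^2$, which near $p_+$ is regular and sends a neighborhood of $\bar C$ to itself in the appropriate sense) by multiplication by $d$, using $G^+\circ f = dG^+$; but this number is also a fixed rational number, hence it is $0$. One must be slightly careful that $f$ does not act as a self-map of $C$ (it need not), so the cleanest route is to compare $G^+$ with the canonical metric on $\O(1)_{\P^2}$ coming from $h_f$ (Proposition~\ref{pro:adelicG}) and use that $h_f(p)\to 0$ along the given sequence as well, which is immediate since $h_{G^+}\le h_{f}$ up to the bounded function $G^- \ge 0$, together with the fact that $\A^2$-height zero already forces, via the product formula and the definition of $h_{G^+}$ through $\{G^+_v\}$, that the essential minimum on $C$ is $0$; then Zhang's inequality relating the essential minimum, the height of $C$, and the geometric minimum pins the height of $\bar C$ to $0$. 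Once this is in hand the rest is bookkeeping with the definitions of \S\ref{sec:inv-meas} and \S\ref{subs:equidist}.
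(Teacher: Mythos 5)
Your final route is the paper's route: verify that the metrized bundle $(\O(1)_{\bar C},\{G^+_v|_C\})$ is semi-positive adelic (Proposition~\ref{pro:adelicCurve}) and that its height is zero, then invoke the equidistribution theorem for points of small height (the paper applies Yuan's result \cite[Theorem~3.1]{yuan} directly; the Autissier--Thuillier statements give the same conclusion place by place). The vanishing of the height is obtained exactly as in your closing sentence: the hypothesis $h_{G^+}(p_m)\to 0$ together with $G^+_v\ge 0$ at all places (hence $h_{G^+}\ge 0$ on $C$) forces the essential minimum of $h_{G^+}$ on $C$ to be zero, and Zhang's inequality \cite[Theorem~1.10]{Z95} then pins the height of $\bar C$ to zero; this is precisely the paper's proof. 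Two of your intermediate claims, however, do not hold as stated and should be excised. First, the ``eigenvector'' argument of your first paragraph --- that $G^+\circ f=dG^+$ forces the arithmetic self-intersection on $\bar C$ to vanish because the metric is an eigenvector of $f^*$ --- is not available, since $C$ is not $f$-invariant (Proposition~\ref{prop:no fixed curves} excludes invariant curves altogether), so $F$ does not act on $\bar C$ and there is no invariance of the self-intersection to exploit; you sense this yourself. Second, the assertion that $h_f(p_m)\to 0$ ``is immediate since $h_{G^+}\le h_f$'' is backwards: that inequality transfers smallness of $h_f$ to smallness of $h_{G^+}$, not conversely, and the hypothesis does not give $h_f(p_m)\to 0$. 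Fortunately neither claim is needed: the direct argument (essential minimum zero, nonnegativity, Zhang) is self-contained, and once the height of $\bar C$ is zero the equidistribution theorem yields the stated convergence to $\frac1{\deg(C)}\Delta(G^+_v|_{C_v})$ with no loss of mass.
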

For simplicity, we write  $C_v$ for the analytification of $C$ over the field $\LL_v$.

\begin{proof}
To keep the argument as short as possible we directly apply Yuan's result~\cite[Theorem~3.1]{yuan}. To do so one needs to check that the height of $C$ induced by the metrization given by $\{ G^+_v\}_v$ on $\mathcal{O}(1)_C$ is equal to zero. 
We refer to~\cite{chambert11} for a definition of this quantity. Now for a curve it follows from e.g.~\cite[Theorem~1.10]{Z95} that  this height is equal to 
$$
e = \mathrm{ess.} \inf_C h_{G^+} : = \sup_{\# F < \infty } \inf_{p \in C \setminus F} h_{G^+}(p)~.
$$
Our assumption implies that  $\inf_{p \in C \setminus F} h_{G^+}(p) \le \liminf_n h_{G^+}(p_n) = 0$.
On the other hand we have $G^+_v \ge0$ at all places hence $h_{G^+}(p) \ge0$ for every
$p \in C$. Therefore $e =0$ as required.
\end{proof}

The next result, still based on  Yuan's Theorem~\cite{yuan}, is due to C.-G.~Lee~\cite[Theorem~A]{lee}.

\begin{thm}[Equidistribution theorem for periodic points of H\'enon maps]\label{thm:yuan-henon}
Let $f$ be an automorphism of H\'enon type defined over $\mathbb L$.

Let $(p_m)_{m\geq 0}$ be any sequence  of distinct periodic points such that the set 
$\{ p_m\} \cap C$ is   finite   for any irreducible curve
$C\subset \A^2_\LL$.
Then, for any place $v\in \cM_\LL$,  the convergence 
\begin{equation}\label{eq:yuan2}
\frac{1}{\deg (p_m)}\sum_{q\in O(p_m)}\delta_q \longrightarrow 
\MA(G_v)
\end{equation}
holds in the weak topology of measures, where $O(x_m)$ is the orbit of $x_m$ under the action of the absolute Galois group of $\LL$.
\end{thm}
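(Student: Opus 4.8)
The plan is to derive Theorem~\ref{thm:yuan-henon} as a direct application of Yuan's equidistribution theorem~\cite{yuan}, in exactly the same spirit as the proof of Theorem~\ref{thm:yuan-curve} above, but now working on $X = \A^2_\LL$ (equivalently, on $\P^2$) rather than on a curve. The ingredients are already in place: by Proposition~\ref{pro:adelicG} the collection $\{G_{v,f}\}_v$ is a semi-positive adelic metric on $\O(1)_{\P^2}$, so it has an associated height $h_f$ and associated Monge--Amp\`ere measures $\MA(G_v)$ of total mass $\deg(\P^2)=1$ at every place. The content to check before invoking Yuan's theorem is two-fold: first, that the sequence $(p_m)$ consists of points of small height, i.e. $h_f(p_m)\to 0$; second, that the ``essential minimum'' of $h_f$ on $\A^2$ vanishes, which is the normalization hypothesis in Yuan's statement.

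For the first point: every $p_m$ is a periodic point of $f$, so $f^N(p_m)=p_m$ for some $N\ge 1$, hence $\sup_n\|f^n(p_m)\|_v<\infty$ at every place, which by Proposition~\ref{prop:Gplus}(3) (applied to both $G^+$ and $G^-$, or directly to $G$ via Proposition~\ref{prop:continuity of G}(2)) gives $G_v(p_m)=0$ for all $v$; summing over the Galois orbit yields $h_f(p_m)=0$ for every $m$, so in particular $h_f(p_m)\to 0$. (Alternatively one invokes the invariance $G_v\circ f=dG_v$ from Proposition~\ref{prop:Gplus}(1): $d^N G_v(p_m)=G_v(f^N(p_m))=G_v(p_m)$ forces $G_v(p_m)=0$ since $d\ge 2$.) For the second point, one argues exactly as in the curve case: $G_v\ge 0$ at every place forces $h_f\ge 0$ on $\A^2(\LL^{\rm alg})$, while the existence of the periodic points $p_m$ with $h_f(p_m)=0$ shows the essential minimum is $\le 0$; hence it is $0$, which (for the top-dimensional variety $\P^2$) is the statement that the height of $\P^2$ with respect to this metrized line bundle vanishes, as required to apply~\cite[Theorem~3.1]{yuan}.

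The remaining hypothesis of Yuan's theorem is a genericity/non-degeneracy condition on the sequence: the points must not accumulate on any proper subvariety, i.e. the sequence must be \emph{generic} in the sense that only finitely many of the $p_m$ lie on any fixed proper closed subvariety. In dimension $2$ the proper irreducible subvarieties are points and curves; since the $p_m$ are distinct, only finitely many can be a given point, and the hypothesis ``$\{p_m\}\cap C$ is finite for every irreducible curve $C$'' takes care of curves. Thus the sequence is generic, and Yuan's theorem applies to give, at each place $v$, the weak convergence of $\frac1{\deg(p_m)}\sum_{q\in O(p_m)}\delta_q$ to $\MA(G_v)$, normalized to a probability measure since $\deg(\P^2)=1$. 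This is exactly~\eqref{eq:yuan2}.

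I do not expect a genuine obstacle here: the statement is essentially a dictionary translation of Yuan's theorem into the present dynamical setting, the hard analytic work (semi-positivity of the metric at all places, existence and properties of $\MA(G_v)$ at non-Archimedean places via Chambert-Loir's construction) having already been carried out in Propositions~\ref{pro:adelicG} and~\ref{prop:continuity of G} and recorded in \S\ref{subs:equidist}. The only points requiring a word of care are (i) matching the normalization conventions — Yuan's theorem is usually stated for the probability measure $\MA(G_v)/\deg(X)$, and here $\deg(\P^2)=1$ so no factor appears, in contrast to the $1/\deg(C)$ in~\eqref{eq:yuan1}; and (ii) verifying that periodic points indeed have height zero, which as noted follows immediately from the invariance $G_v\circ f=dG_v$ together with $d\ge 2$. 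Since this is entirely parallel to the already-proved curve case, citing~\cite[Theorem~A]{lee} and sketching the height-zero computation suffices.
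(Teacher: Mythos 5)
Your proposal is correct and is essentially the paper's own route: the paper does not prove this statement but quotes it from \cite[Theorem~A]{lee}, whose proof is exactly this application of Yuan's theorem, in parallel with the paper's proof of Theorem~\ref{thm:yuan-curve} on a curve; your verification of genericity and of the vanishing of $h_f$ at periodic points via Propositions~\ref{pro:adelicG}, \ref{prop:Gplus} and \ref{prop:continuity of G} is the intended argument. Two small imprecisions are worth fixing before this could stand as a proof. First, the parenthetical invariance $G_v\circ f=dG_v$ is false for $G_v=\max\{G^+_v,G^-_v\}$, since $G^-\circ f=d^{-1}G^-$, so $G_v\circ f=\max\{dG^+_v,d^{-1}G^-_v\}$; the correct version of that alternative is to apply the invariance to $G^+$ and $G^-$ separately, or simply to use, as you do in your main argument, that a periodic point (and each of its Galois conjugates, which are again periodic because $f$ is defined over $\LL$) lies in $K=\{G=0\}$. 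Second, on a surface the vanishing of the essential minimum is not literally ``the statement that the height of $\P^2$ vanishes'': these are a priori different invariants related only by inequalities. What closes the gap is exactly what you already have in hand, namely $G_v\ge 0$ at every place, so that all of Zhang's successive minima are nonnegative; Zhang's inequality \cite[Theorem~1.10]{Z95} then squeezes the normalized height of $\P^2$ between the essential minimum $0$ and a nonnegative quantity, forcing it to be $0$, which is the smallness normalization required to invoke \cite[Theorem~3.1]{yuan}. With these two adjustments your argument is complete and coincides with the cited source.
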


\section{Applying the equidistribution theorem}\label{sec:equidistribution}

A key step   of the proofs  of Theorems~\ref{thm:mainArch} and~\ref{thm:mainNA} 
 is  the use of equidistribution theorems  for points of small height. Doing so
we follow the approach to the Manin-Mumford conjecture initiated by 
Szpiro-Ullmo-Zhang~\cite{SUZ}. In our setting this results in the following proposition. 

\begin{prop}\label{prop:main2}
Let $f$ be a regular polynomial automorphism, and $C$ be an irreducible algebraic curve in the affine plane,
 both defined over a number field $\LL$.  
Suppose there exists a sequence of distinct points $p_n\in C(\LL^{\rm alg})$ 
%
such that 
$h_f(p_n) \to 0$.

Then for any place $v\in \cM_\LL$
there exists a positive rational number $\alpha:= \alpha(C,f)$ and a harmonic function $H_v : C_v \to \R$ such that 
$$
G^+_{v,f} =  \alpha\, G^-_{v,f} + H_v \text{ on } C.
$$

\end{prop}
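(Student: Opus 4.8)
The plan is to apply both equidistribution theorems (Theorems~\ref{thm:yuan-curve} and~\ref{thm:yuan-henon}) and compare the limit measures they produce. First, the hypothesis $h_f(p_n)\to 0$ together with the inequalities $h_f\ge h_{G^+}\ge 0$ and $h_f\ge h_{G^-}\ge 0$ (which hold because $G=\max(G^+,G^-)$, so $h_f$ dominates both $h_{G^\pm}$ up to the fact that each $G^\pm_v\ge 0$ and $G^\pm_v=\log^+\|\cdot\|$ off a finite set of places) gives $h_{G^+}(p_n)\to 0$. Up to replacing $f$ by a conjugate we may assume $f$ is regular with $\bar C$ meeting the line at infinity only at $p_+$ — here I would invoke Proposition~\ref{pro:adelicCurve}, which requires exactly this normalization; since the conclusion of the proposition is conjugation-invariant this is harmless. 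Passing to an infinite subsequence of \emph{distinct} points with $h_{G^+}(p_n)\to 0$, Theorem~\ref{thm:yuan-curve} applies and yields, at every place $v$,
$$
\frac{1}{\deg(p_n)}\sum_{q\in O(p_n)}\delta_q \longrightarrow \frac{1}{\deg(C)}\,\Delta\bigl(G^+_v|_{C_v}\bigr).
$$

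Next I would run the same argument with $f$ replaced by $f^{-1}$: the curve $C$ now has Zariski closure meeting the line at infinity at the point $p_-$ attached to $f^{-1}$, and $h_{f^{-1}}=h_f$ (both heights are comparable to the standard height and their difference is governed by the same Green functions, or simply: preperiodicity is the same for $f$ and $f^{-1}$), so $h_{G^-}(p_n)\to 0$ along the same sequence. Applying Theorem~\ref{thm:yuan-curve} to $f^{-1}$ gives that the same sequence of empirical measures converges to $\frac{1}{\deg(C)}\,\Delta(G^-_v|_{C_v})$ as well. By uniqueness of weak limits, for every place $v$,
$$
\Delta\bigl(G^+_v|_{C_v}\bigr) = \Delta\bigl(G^-_v|_{C_v}\bigr)
$$
as measures on $C_v$ (in the sense of Bedford--Taylor at archimedean places, of Thuillier at non-Archimedean ones). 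This already forces $G^+_v|_C$ and $G^-_v|_C$ to differ by a harmonic function, i.e.\ gives the statement with $\alpha=1$ — \emph{provided} the hypothesis really does give small $h_{G^-}$ along the sequence.

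The main obstacle is precisely this last point: a priori the sequence $(p_n)$ need only satisfy $h_f(p_n)\to 0$, and while $h_f$ dominates both $h_{G^+}$ and $h_{G^-}$, one must be careful that $C$ is positioned correctly relative to \emph{both} points at infinity $p_+$ and $p_-$ for Proposition~\ref{pro:adelicCurve} (and hence Theorem~\ref{thm:yuan-curve}) to apply in both directions. A generic curve will not pass through either $p_\pm$, but an arbitrary $C$ might. To handle this I would argue as follows: if $\bar C$ does not contain $p_+$, then $G^+_v|_C - \log^+\|\cdot\|$ is bounded on all of $C_v$ (Proposition~\ref{prop:Gplus}(2)), the metrization $\{G^+_v|_C\}$ is still adelic and semi-positive, and Yuan's theorem applies verbatim with essential infimum zero as in the proof of Theorem~\ref{thm:yuan-curve}; the rational number $\alpha$ then records the ratio of the masses of $\Delta(G^+_v|_C)$ and $\Delta(G^-_v|_C)$, which is $\deg(C)$ over $\deg(C)$ but could differ if $\bar C$ meets a point at infinity, where one of the Green functions acquires extra mass proportional to a local intersection multiplicity — this is where $\alpha\in\mathbb{Q}_{>0}$ rather than $\alpha=1$ enters. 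Concretely, $\alpha = \bigl(\deg(C)-\mathrm{ord}_{p_+}\!\bigr)\big/\bigl(\deg(C)-\mathrm{ord}_{p_-}\!\bigr)$ or a similar ratio of local contributions; since the total masses of the two limit measures are both positive rationals, their ratio $\alpha$ is a positive rational. Finally, from $\Delta(G^+_v|_C)=\alpha\,\Delta(G^-_v|_C)$ one concludes by Proposition~\ref{prop:laplacian-char-harm} (non-Archimedean) resp.\ the corresponding potential-theoretic fact (archimedean) that $G^+_{v,f}-\alpha\,G^-_{v,f}$ is harmonic on $C_v$, which is the desired $H_v$.
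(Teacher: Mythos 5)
Your overall strategy (apply the equidistribution theorem once for $f$ and once for $f^{-1}$, compare the two limit measures, and conclude that the difference of Green functions is harmonic via the Laplacian) is the same as the paper's, but the step on which everything hinges is handled incorrectly. You reduce to the hypothesis of Proposition~\ref{pro:adelicCurve} by claiming that ``up to replacing $f$ by a conjugate we may assume $\bar C$ meets the line at infinity only at $p_+$'', and later that for $f^{-1}$ the closure of $C$ meets infinity only at $p_-$. Neither claim is justified, and in general neither can be arranged: whether $\bar C\cap H_\infty$ is a single point, and whether that point is the attracting point of $f$ or of $f^{-1}$, is a genuine geometric constraint on the pair $(f,C)$ that conjugation does not produce (and conjugating also moves $C$, so nothing is gained). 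The paper's fix is different and is exactly the missing idea: one replaces $C$ by a forward iterate $f^k(C)$, which by \cite[Proposition 4.2]{BS1} together with Proposition~\ref{prop:no fixed curves} meets $H_\infty$ only at $p_+$ for some $k\ge 0$; one applies Theorem~\ref{thm:yuan-curve} to the points $f^k(p_n)$ on $f^k(C)$ (using $0\le G^+_v\le G_v\le d^kG_v\circ f^{-k}$ to see their $h_{G^+}$-height tends to $0$), and then pulls the limit measure back by $(f^k)^*$ using $\Delta(G^+\circ f^k|_C)=d^k\,\Delta(G^+|_C)$. Doing the same with $f^{-1}$ and an iterate $k'$ yields
\begin{equation*}
\frac{d^k}{\deg(f^k(C))}\,\Delta\bigl(G^+_{v}|_{C_v}\bigr)=\frac{d^{k'}}{\deg(f^{-k'}(C))}\,\Delta\bigl(G^-_{v}|_{C_v}\bigr),
\end{equation*}
and the rational constant $\alpha$ is precisely the ratio $d^{k'}\deg(f^k(C))/d^k\deg(f^{-k'}(C))$ --- not $1$, and not a ratio of ``local multiplicities at $p_\pm$'' as you suggest.

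Two further inaccuracies in your fallback discussion confirm the gap. First, you have the roles of $p_+$ and $p_-$ reversed: by Proposition~\ref{prop:Gplus}(2) the function $G^+_v-\log^+\Vert\cdot\Vert$ extends continuously away from $p_-$, so the metric $\{G^+_v|_C\}$ degenerates when $\bar C$ passes through $p_-$, not $p_+$; your proposed weakening of Theorem~\ref{thm:yuan-curve} therefore does not cover the problematic configurations, and in the case where $\bar C$ does pass through $p_-$ (or $p_+$, for the $G^-$ side) you give no argument at all, only ``a similar ratio of local contributions''. Second, your assertion that in the main case the two limit measures coincide and $\alpha=1$ should itself have been a warning sign: the statement is formulated with an arbitrary positive rational $\alpha$ exactly because the reduction to good position at infinity is done by iterating $f$, which rescales the Laplacian by powers of $d$ and changes the degree of the curve.
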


Let us stress that  the constant $\alpha$ does not depend on the chosen place $v$, but only
on the curve $C$ and the automorphism $f$. When $v$ is non-Archimedean, the harmonicity of $H_v$ is understood in the sense of Thuillier, see \S \ref{sec:pot-NA}.

\begin{rem}
When $C$ has a single place at infinity\footnote{that is,
$C$ intersects the line at infinity in $\P^2$ at a single point and is analytically irreducible there.}, 
then it may be shown that $H_v$ must be a constant, hence necessarily zero since it vanishes at any periodic point of $f$. Taking $H_v \equiv 0$ somewhat simplifies  the proof of Theorem~\ref{thm:mainArch}.  
However it seems delicate to prove the vanishing of $H_v$ in the general case of a curve with several places at infinity. 
\end{rem}

\begin{proof}
Let $p_n$ be a sequence of distinct points in $C(\LL^{\rm alg})$ with $h_f(p_n) \to 0$, and fix any place $v\in \cM_\LL$.
To simplify notation we 
denote by $[F_n]$ the normalized equidistributed integration measure on 
the Galois orbit of $p_n$. It is a probability measure supported on the analytification $C_v$ of $C$
over $\LL_v$ in the sense of Berkovich.

By~\cite[Proposition~4.2]{BS1} 
together with Proposition \ref{prop:no fixed curves}, there exists an integer $k\ge0$ such that 
$f^k(C)$ intersects the line at infinity in $\P^2_\LL$ only at the super-attracting point $p_+$.
 
By Proposition~\ref{pro:adelicCurve}, the metrization given by $\{G^+_{v,f}\}$ is    semi-positive adelic. Let  $h_{G^+}$ be the  associated height. Since $$0 \le G_{f,v}^+ \le G_{f,v} \le d^k G_{f,v} \circ f^{-k}$$ at all places, it follows that 
 $$h_{G^+}(f^k(p_n)) \le h_f (f^k(p_n)) \le  d^k h_f (p_n) \to 0$$ whence $h_{G^+}(p_n)  \to 0$.
Theorem~\ref{thm:yuan-curve} thus applies
and we get that the sequence of probability measures $f^k_*[F_n]$
converges to the unique probability measure $\mu_{k}$ that is proportional to $\Delta(G^+_v |_{f^k(C_v)})$, that is 
   $$\mu_{k}=\frac1{\deg(f^k(C))} \, \Delta \left( G^+_{f,v}|_{f^k(C_v)}\right)~.$$
Pulling-back the convergence $f^k_*[F_n]\to \mu_{k}$ by the automorphism $f^k$, we get that 
\begin{align*}
\lim_n [F_n] &=
\frac1{\deg(f^k(C))} \, (f^k)^* \Delta \left(G^+_{f,v}|_{f^k(C_v)}\right) 
 \\
&= \frac1{\deg(f^k(C))} \,    \Delta\left( G^+_{f,v} \circ f^k \rest{C_v} \right) 
= 
\frac{d^k}{\deg(f^k(C))} \,   \Delta \lrpar{G^+_{f,v}|_{C_v}}~.
\end{align*}
Proceeding in the same way for $f^{-1}$ deduce that there exist  two non-negative integers
$k, k'$ such that 
$$
\frac{d^k}{\deg(f^k(C))} \,  \left.  \Delta \lrpar{ G^+_{f,v}\right|_{C_v}}=
\frac{d^{k'}}{\deg(f^{-k'}(C))} \, \left.  \Delta \lrpar{ G^-_{f,v}\right|_{C_v}} ~.
$$
Therefore, there exists a positive rational number $\alpha_C$  depending only on $C$ and $f$ such that 
 the restriction of the function  $G^+_{f,v} - \alpha_C G^-_{f,v}$ to $C_v$ is harmonic. The proof is complete.
\end{proof}

For completeness, let us  mention the following partial  converse to Proposition~\ref{prop:main2}. 
\begin{prop}
Suppose that there exists a positive constant $\alpha>0$,   such that 
$G^+_{v,f} =  \alpha \, G^-_{v,f}$ on $C$ for each place $v$.

Then there exists a sequence of points $p_n \in C(\LL^{\rm alg})$ such that
$h_f(p_n) \to 0$.
\end{prop}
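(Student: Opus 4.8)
The plan is to run the proof of Proposition~\ref{prop:main2} backwards, invoking the equidistribution machinery in the opposite direction. Suppose $G^+_{v,f}=\alpha\,G^-_{v,f}$ on $C$ for all $v$, with $\alpha>0$. After replacing $f$ by a suitable iterate $f^k$ (which changes neither the conclusion nor the hypothesis, up to rescaling $\alpha$), Proposition~\ref{prop:no fixed curves} together with~\cite[Proposition~4.2]{BS1} lets me assume that the Zariski closure of $C$ in $\P^2$ meets the line at infinity only at the super-attracting point $p_+$; dually, applying the same to $f^{-1}$, one may also arrange that $\bar C$ meets the line at infinity only at $p_-$. The first reduction makes the collection $\{G^+_{v,f}|_C\}$ a semi-positive adelic metric on $\O(1)_{\bar C}$ by Proposition~\ref{pro:adelicCurve}.

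Next I would show the associated height $h_{G^+}$ has essential infimum $0$ on $C$. Since $G^+_{v}\ge 0$ everywhere, $h_{G^+}\ge 0$, so $\mathrm{ess.}\inf_C h_{G^+}\ge 0$; the point is the reverse inequality, i.e.\ that the height of $\bar C$ with respect to this metrized line bundle vanishes. Here the hypothesis $G^+=\alpha G^-$ on $C$ enters: by the same argument applied to $f^{-1}$, $h_{G^-}$ also has essential infimum $0$, and $h_{G^+}=\alpha h_{G^-}$ on $C$ forces the arithmetic intersection number / self-intersection of the metrized bundle to be $0$ (via~\cite[Theorem~1.10]{Z95} or the adjunction-type computation used implicitly in the proof of Theorem~\ref{thm:yuan-curve}). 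Once the height of $C$ is $0$, Zhang's inequality~\cite[Theorem~1.10]{Z95} gives a sequence of distinct points $p_n\in C(\LL^{\rm alg})$ with $h_{G^+}(p_n)\to 0$.

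It then remains to upgrade $h_{G^+}(p_n)\to 0$ to $h_f(p_n)\to 0$. This is where the hypothesis at \emph{all} places is essential rather than just the first reduction: using the two reductions above, $G^+_v|_C$ and $G^-_v|_C$ are proportional, hence $G_v|_C=\max(G^+_v,G^-_v)|_C$ is a bounded multiple of $G^+_v|_C$ (up to the harmonic discrepancy, which is controlled since it vanishes at periodic points), so $h_f(p_n)$, which is built from $G_v=\max(G^+_v,G^-_v)$, is comparable to $h_{G^+}(p_n)$ on $C$ and therefore also tends to $0$. Concretely one has $0\le G_{f,v}|_C\le C_0\, G^+_{f,v}|_C$ at every place for a uniform constant $C_0$ depending on $\alpha$, summing over places and Galois orbits yields $h_f(p_n)\le C_0\, h_{G^+}(p_n)+o(1)\to 0$.

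\textbf{Main obstacle.} I expect the delicate point to be the vanishing of the height of $C$ for the $G^+$-metrization, i.e.\ translating the pointwise identity $G^+=\alpha G^-$ on $C$ into the statement $h_{G^+}(C)=0$. The clean way around it is to observe that the essential infimum of $h_{G^+}$ on $C$ equals the height of $\bar C$ up to normalization, and that the identity $G^+|_C=\alpha G^-|_C$ together with the complementary geometry at infinity ($p_+$ versus $p_-$) forces this essential infimum to be $\le 0$, hence $=0$; alternatively, one notes directly that if $\mathrm{ess.}\inf_C h_{G^+}>0$ then, symmetrically, $\mathrm{ess.}\inf_C h_{G^-}>0$, and a laminar/equidistribution argument à la Proposition~\ref{prop:main2} applied in both directions would contradict the proportionality. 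The cleanest route is the first one, relying only on~\cite[Theorem~1.10]{Z95}.
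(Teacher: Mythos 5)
Your overall skeleton (arrange the geometry at infinity, show the relevant height of $C$ vanishes, then invoke Zhang's arithmetic Hilbert--Samuel theorem \cite[Theorem~1.10]{Z95} to produce small points) is the same as the paper's, and your final step comparing $h_f$ with $h_{G^+}$ on $C$ is harmless since the exact proportionality $G^+_v=\alpha G^-_v$ on $C$ gives $G_v|_C=\max(1,1/\alpha)\,G^+_v|_C$ at every place. But the heart of the matter --- proving that the height of $C$ is zero --- is exactly the step you leave open, and neither of the two routes you sketch closes it. The first ("the identity together with the complementary geometry at infinity forces the essential infimum to be $\le 0$") is a restatement of the desired conclusion with no mechanism behind it; the second is circular, since the equidistribution theorem (Theorem~\ref{thm:yuan-curve}) can only be applied once one already has a sequence of points of height tending to $0$, which is precisely what the proposition is supposed to produce. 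There is also an incoherent reduction at the start: replacing $C$ by a forward image makes $\bar C$ meet the line at infinity only at $p_+$, and replacing it by a backward image makes it meet only $p_-$, but you cannot arrange both simultaneously for one curve (and replacing $f$ by an iterate changes nothing at infinity); your appeal to "$p_+$ versus $p_-$" geometry therefore rests on an impossible normalization.

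The paper's actual argument for the vanishing is a short explicit computation that your proposal is missing. Replace $C$ by $f^n(C)$ with $n$ large so that $\bar C$ meets infinity only at $p_+$ \emph{and} the proportionality constant becomes $\alpha>1$ (it gets multiplied by $d^{2n}$); the point of $\alpha>1$ is that then $G_v=\max(G^+_v,G^-_v)=G^+_v$ on $C$ at every place. One then evaluates the height of $C$ for the metrization $\{G_v\}$ by the induction formula
\[
h_f(C)=\sum_v \int_{C_v} G_v\,\Delta G_v|_{C_v} \;+\; h_f(p_+)
      =\sum_v \int_{C_v} G^+_v\,\Delta G^+_v|_{C_v},
\]
which vanishes because $G^+_v\equiv 0$ on $\supp\bigl(\Delta G^+_v|_{C_v}\bigr)$; the boundary term $h_f(p_+)=0$ is obtained from Lee's formula \cite[Theorem~6.5]{lee} expressing $h_f$ as $\lim_n d^{-n}h_{\mathrm{naive}}\bigl((f^n,f^{-n})\bigr)$, since $(f^n(p_+),f^{-n}(p_+))$ does not depend on $n$. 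Only after this computation does \cite[Theorem~1.10]{Z95} yield points $p_n\in C(\LL^{\rm alg})$ with $h_f(p_n)\to 0$. Without this (or an equivalent) computation, and with the faulty normalization at infinity, your proposal does not constitute a proof.
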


\begin{rem} 
If for every place $v$ there exists a constant $\alpha_v$ such that  $G^+_{v,f}\rest{C} = \alpha_v G^-_{v,f}\rest{C}$ 
 then $\alpha_v$ does not depend on $v$.   
 This follows from the fact that the mass of $\Delta G^\pm_v |_{C_v}$  
only depends on the geometry of the branches of $C$ at infinity. 
\end{rem}

\begin{proof}
Replacing $C$ by $f^n(C)$ for $n$ large enough, one may suppose that $\alpha >1$ and that the completion of $C$ intersects the line at infinity only at the point $p_+$.  We claim that the height of $C$ is zero 
(see~\cite{chambert11} for a definition of the height of a curve). Indeed, 
$$
h_f(C) = \sum_v \int_{C_v} G_v \Delta G_v|_{C_v} + h_f(p_+) = \sum_v \int_{C_v} G^+_v \Delta G^+_v|_{C_v} =0,
$$ since $G^+_v\equiv 0$ on $\supp( \Delta G^+_v|_{C_v})$.
The fact that $h_f(p_+)=0$ can be obtained from~\cite[Theorem 6.5]{lee} which asserts that  
$$h_f (p) = \lim_n \frac1{\deg(f)^n} h_{\rm naive} (\phi_n(p))$$ where
$\phi_n : \P^2 \to \P^4$ is the regular map whose restriction to $\A^2$ is defined by $\phi_n(p) = (f^n(p), f^{-n}(p))$
and $h_{\rm naive}$ is the naive height on $\P^4$ (see \cite[\S B.2]{hindry silverman}).
 An easy computation shows that $\phi_n(p_+)$ is independent of $n$ so the result follows.

We then conclude by applying the arithmetic Hilbert-Samuel theorem, see~\cite[Theorem~1.10]{Z95}.
\end{proof}


\section{The DMM statement in the Archimedean dissipative case}\label{sec:dissipative}

Throughout this section we   assume that $f$ is a regular polynomial automorphism of 
$\mathbb{A}^2$ defined over a number field $\LL$. We use  the   notation and results from  Section~\ref{sec:prel}. 
Our purpose is to establish Theorem~\ref{thm:maineffective}. This in turn 
clearly implies Theorem~\ref{thm:mainArch} in the case where $f$ and $C$ are defined over a number field 
(the general case will be treated in \S \ref{sec:general fields} below). 

The proof  will be  based on Pesin's theory of non-uniformly hyperbolic dynamical systems. We refer to \cite{BLS93} for a presentation adapted to our situation.

\subsection{Proof of Theorem \ref{thm:maineffective}}\label{subs:number field}
Recall that $h_f$ denotes the height associated to the semi-positive adelic metric $\{G_{f,v}\}$.
We suppose that there exists an irreducible curve $C$ defined over $\LL$ and a sequence of points $p_n\in C(\LL^{\rm alg})$ with $h_f(p_n) \to 0$.

\smallskip

 We want to prove that $|\jac(f)|_v =1$.
To simplify notation we assume that  $\LL \subset \C$, 
drop the reference to $v$ and 
work directly over $\C$ endowed with its standard absolute value. 

Under our assumptions,  we know from Proposition~\ref{prop:main2}  that there exists a positive constant $\alpha$ such that 
$(G^+-\alpha G^-)\rest{C}$ is harmonic,
which implies that the positive measures 
$\mu^\pm_C := dd^c ( G^\pm |_C )$ are proportional.

Denote by $\chi^{u}$ and $\chi^s$    the Lyapunov exponents of $f$ relative to 
 the  measure $\mu = (dd^c)^2  G_f $. Recall that they are defined by
$$
\chi^u = \lim_{n\cv+\infty}  \int \log \; \norm{D f^n_p} d\mu(p) \text{ and }  \chi^s = \lim_{n\cv+\infty}  \int \log \; \norm{D f^{-n}_p} d\mu(p).
$$
We also  put $\lambda^{u} = \exp(\chi^{u})$ and $\lambda^s = \exp(-\chi^s)$. 
It is known that $\lambda^{u/s} \geq d >1$, and 
$\lambda^u  \lambda^s  = |\jac (f)|$, see~\cite{bs3}. 
The main step of the proof of Theorem \ref{thm:maineffective} is the following proposition,
which computes the lower H\"older exponent of continuity of $G^+$ at a $\mu^+_C$-generic point 
(observe that for such a point one has $G^+(p)=0$).

\begin{prop}\label{prop:pesin}
For $\mu^+_C$-almost every point $p$ in $C$, one has 
$$
\liminf_{r\to 0} \frac1{\log r}\log \left[ \sup_{ d(p,q) \le r, \, q \in C } G^+(q)  \right]= \vartheta_+,
$$
where $\vartheta_+$ is the unique positive real number satisfying
$(\lambda^u)^{\vartheta_+}   =d $.
\end{prop}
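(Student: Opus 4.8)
The plan is to combine the invariance relation $G^+\circ f = d\,G^+$ with Pesin theory for the measure $\mu^+_C$ on $C$, viewed as the image of a ``stable-manifold-like'' slice, to pin down the exact lower Hölder exponent of $G^+|_C$ at a generic point. First I would recall that $\mu^+_C = dd^c(G^+|_C)$ is a probability measure (after normalizing by $\deg(C)$) supported on $C\cap K^+$, and that by Proposition~\ref{prop:main2} it is proportional to $\mu^-_C$; this proportionality is what will eventually be exploited in the \emph{next} step of the global argument, but for the present proposition I only need to analyze $G^+$ itself. The key dynamical fact is that $f$ expands $G^+$ by the factor $d$: if one can show that for $\mu^+_C$-a.e.\ $p$ the forward orbit $f^n(p)$ equidistributes (in a suitable Pesin-regular sense) and that the curve $C$ is, near such generic points of its forward orbit, asymptotically tangent to the unstable direction (this is where the transversality input of Pesin theory enters — generic points of $\mu$ have stable/unstable manifolds, and the measure $\mu^+_C$ should be carried by pieces of $C$ that, under forward iteration, are stretched along unstable manifolds), then the local size of $G^+$ on $C$ near $p$ is governed by the unstable multiplier $\lambda^u$.

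**Key steps.** (1) Set up the Pesin machinery: fix a Pesin set of large $\mu$-measure, obtain Lyapunov charts along $\mu$-generic orbits, and recall that along the unstable manifold $W^u$ the restriction $G^+|_{W^u}$ has a precise local Hölder exponent — namely, since $G^+$ is a nonconstant nonnegative function vanishing on $K^+\supset W^u\cap K$ near the relevant point... no: the correct statement, analogous to the model computation sketched in the introduction with $|u|^{\vartheta_+}=d$, is that iterating $G^+\circ f^n = d^n G^+$ and using bounded distortion of $f^n$ in Lyapunov charts forces $\sup_{d(p,q)\le r,\,q\in W^u} G^+(q) \asymp r^{\vartheta_+}$ with $(\lambda^u)^{\vartheta_+} = d$. (2) Transfer this from $W^u$ to $C$: show that at $\mu^+_C$-a.e.\ $p\in C$, the curve $C$ is transverse to $W^s(p)$ (equivalently, not asymptotically stable-tangent), so that under $f^n$ the piece of $C$ near $p$ converges in the $C^1$ sense, after rescaling by the Lyapunov charts, to a piece of $W^u$; this is the heart of the argument and uses laminarity of the currents $dd^c G^\pm$ and the Bedford–Lyubich–Smillie description of $\mu$ as the wedge of these laminar currents. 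A rescaling/renormalization argument à la Buff–Epstein then shows $G^+|_C$ has the \emph{same} local exponent $\vartheta_+$ as $G^+|_{W^u}$. (3) Upgrade the ``limsup/liminf'' to the clean $\liminf$ statement by using that along a full-measure set of radii the distortion estimates are two-sided.

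**Main obstacle.** The hard part will be step (2): ensuring the transversality of $C$ with the stable manifolds $W^s(p)$ at $\mu^+_C$-generic points $p$. A priori $C$ is just \emph{some} algebraic curve, with no dynamical relation to $f$, so one cannot assume it meets unstable manifolds transversally; the point is rather that $\mu^+_C$, being $dd^c$ of $G^+|_C$, lives on $C\cap J^+$ and the structure of $J^+$ as a (stable-)lamination lets one say that $\mu^+_C$-a.e.\ $p$ lies on a stable manifold that meets $C$ \emph{transversally} — otherwise $C$ would contain an open piece of a stable manifold, hence (by analytic continuation and Proposition~\ref{prop:stable and G-}, or by the fact that stable manifolds are dense in $J^+$ and $C$ is algebraic) one would contradict Proposition~\ref{prop:no fixed curves}, the non-existence of invariant curves. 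Once transversality is secured, the rescaling argument and the distortion control in Lyapunov charts are comparatively routine, though technical; the whole thing requires that $\mu^+_C$ does not charge the (measure-zero) set of non-Pesin-regular points or the finitely many tangency points, which follows because $\mu^+_C$ puts no mass on points.
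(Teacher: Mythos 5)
Your overall strategy is the same as the paper's: Pesin charts for the hyperbolic measure $\mu_f$, transversality of $C$ with stable manifolds at $\mu^+_C$-a.e.\ point obtained from the laminar structure of $T^+$, an inclination-lemma statement saying that $f^n(C)$ becomes a graph close to the unstable direction in the chart at $f^n(p)$, and the invariance $G^+\circ f^n=d^nG^+$ to read off the exponent. However, two steps in your sketch are not proofs as written. First, your justification of the transversality is the dichotomy ``either $C$ is transverse to $W^s(p)$ at $\mu^+_C$-a.e.\ $p$, or $C$ contains an open piece of a stable manifold, contradicting Proposition~\ref{prop:no fixed curves}.'' This dichotomy is false: tangencies of $C$ with stable leaves can be isolated on each leaf and still, a priori, occur on a set of positive transverse measure, without $C$ containing any leaf. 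What is actually needed (and is the content of Lemma~\ref{lem:trans} in the paper) is a current-theoretic argument: one writes $\mu^+_C=T^+\wedge[C]$, approximates $T^+$ by uniformly laminar pieces $T^+_\e$ using the \emph{geometric} intersection $T^+\wedge T^-$ of Bedford--Lyubich--Smillie, shows via $d^{-n}(f^n)_*[C]\to\deg(C)\,T^-$ that the measures $d^{-n}(f^n)^*T^+_\e\wedge[C]$ capture mass $\ge(1-2\e)\deg(C)$ of $\mu^+_C$, and then invokes the facts that the laminar pieces have continuous potential (so only \emph{transverse} disk-curve intersections contribute, BLS Lemma~6.4) and that almost every subordinate disk is an open subset of a stable manifold of a Pesin-regular point (BLS Corollary~8.8). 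You name laminarity as ``the heart,'' but the argument you actually give in its place would fail.

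Second, your claim in step (1) that $\sup_{d(p,q)\le r,\,q\in W^u}G^+(q)\asymp r^{\vartheta_+}$ (two-sided, at all scales), and the step (3) remark that ``along a full-measure set of radii the distortion estimates are two-sided,'' are not justified and are in fact the reason the proposition is stated with a $\liminf$ rather than a limit. The invariance relation and the Lyapunov-chart estimates give the upper bound $\theta_{\bar p}(r)\lesssim d^{-n(r)}$ for \emph{all} small $r$ (hence the lower bound $\liminf\ge\log d/\log\lambda^u$), but the matching lower bound on $\theta_{\bar p}$ is only obtained along a \emph{sequence} of scales $r_n\sim(\lambda^u-2\e)^{-n_j}$: one introduces $\psi(p)=\sup_{W^u_{\loc}(p)}G^+$, which is positive by Proposition~\ref{prop:stable and G-} (or BLS Lemma~2.8), applies Poincar\'e recurrence to find return times $n_j$ with $\psi(f^{n_j}(p))\ge g_0>0$, and uses H\"older continuity of $G^+$ together with the $C^1$-convergence of the renormalized piece of $f^{n_j}(C)$ to the unstable graph to locate a point of $f^{n_j}(C)$ where $G^+\ge g_0/2$, then pulls back by $f^{-n_j}$. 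Without this recurrence mechanism your step (3) has no content, and a uniform two-sided H\"older estimate on $G^+|_{W^u}$ is simply not available.
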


Replacing $f$ by $f^{-1}$, we see that  a similar result holds for 
$G^-$ at $\mu_C^-$-a.e. point, with $\lambda^u$ replaced by $(\lambda^s)^{-1}$
and $\vartheta_+$ by $\vartheta_-$ such that  $(\lambda^s)^{-\vartheta_-}   =d $. 
Observe that $\lambda^{u/s} \geq d$ implies that $\vartheta_\pm \in (0,1]$.
 
\medskip

If $\vartheta_+ = \vartheta_- =1$ then $|\jac(f)| = \lambda^u  \lambda^s = d \cdot d^{-1} =1$ and we are done.
 Thus we may assume that $\vartheta_- <1$. Recall that $G^+ = \alpha G^- + H$ with $\alpha >0$ and $H$ a harmonic function on $C$. 
For a $\mu^+_C$-generic point $p$,  the preceding proposition yields
\begin{eqnarray*}
\vartheta_+
&=& \liminf_{r\to 0} \frac1{\log r}\log \left[ \sup_{ d(p,q) \le r, \, q \in C } G^+(q)  \right]
\\
&=& 
\liminf_{r\to 0} \frac1{\log r}\log \left[ \sup_{ d(p,q) \le r, \, q \in C } (\alpha G^- + H) (q)  \right]~.
\end{eqnarray*}
Now observe that for any sequence $r_n \to 0$  and any sequence of points $p_n$ at distance $r_n$ from $p$
we have
$$
\liminf_n \frac{\log G^+(p_n)}{\log r_n} \ge  \liminf_n \frac1{\log r_n}\log \left[ \sup_{ d(p,q) \le r_n, \, q \in C } G^+(q)  \right] \ge \vartheta_+~.
$$
Now pick $p_n$ with $d(p_n,p) =r_n \to 0$ such that  $\frac{\log G^-(p_n)}{\log r_n} \to \vartheta_- <1$. This is possible because since $\mu_C^+$ is proportional to $\mu_C^-$ the corresponding notions of generic points coincide. 
Since $H$ is smooth, 
if      $\varepsilon$  is so small  that $(1+ \varepsilon) \vartheta_- <1$,  we infer that    
$$\alpha G^-(p_n) + H(p_n)
\ge \alpha r_n^{ (1+ \e) \vartheta_-} + O(r_n)  \ge \cst ~r_n^{ (1+ \e) \vartheta_-} $$ for large   $n$, so that 
$1 > (1+\e) \vartheta_- \ge \vartheta_+$. Since $\e$ was arbitrary we conclude that 
$\vartheta_-\geq \vartheta_+$. 

Applying the same argument with    the roles of 
 $\vartheta_+$ and $\vartheta_-$ reversed,  we conclude that  $\vartheta_+ =\vartheta_-$, which implies that 
$\lambda^u   = (\lambda^{s})^{-1}$ and finally that  $|\jac(f)| = 1$, which was the result to be proved.
\qed

\begin{rem}
It would perhaps be 
 dynamically more significant to prove that the  Hausdorff dimension of the measure $\Delta (G^+|_C)$ at a generic point is equal to the  
 Hausdorff dimension of the   measures induced by $T^+$ along
the unstable lamination.
  The  value of this dimension 
  is precisely equal to the above constant $\vartheta_+$, in virtue of  
Lai-Sang Young's formula~\cite{young}.
\end{rem}

\subsection{Proof of Proposition~\ref{prop:pesin}}\label{subs:pesin}
The proposition  relies  on the interplay between 
Pesin's theory  
and
  the laminarity properties of the currents $T^\pm$.   
This is very close in spirit to  the main results of~\cite{BLS93}.

\medskip

Since   its Lyapunov exponents  
are both  non-zero, the measure $\mu_f$ is  
hyperbolic.  
  Pesin's theory then asserts the existence of a family of Lyapunov charts, in which $f$ expands (resp. contracts)
 in the  horizontal (resp. vertical) direction. The precise statement is as follows (see~\cite[Theorem~8.14]{barreira-pesin}). 
Let $\bb(r)= \{ (x,y), \, \max \{ |x|, |y|\} < r\}$ be the polydisk of radius $r$ in $\C^2$. Then 
for any given $\varepsilon>0$, there exists an $f$-invariant set $E$ (the set of {\em regular points}) of full $\mu_f$-measure,  
a measurable  
function $\rho:  E \to (0,1)$
and  a family of charts
$\varphi_p:  \bb(\rho(p)) \to \C^2$ defined for   $p\in E$ 
and satisfying 
\begin{enumerate}[(i)]
\item
$\varphi_p (0) = p$; $e^{-\varepsilon} < \rho(f(p))/\rho(p) < e^\varepsilon$; 
\item
if  $f_p := \varphi_{f(p)}^{-1} \circ f \circ \varphi_p$, then
\begin{equation}\label{eq:pesin}
f_p(x,y) = ( a^u(p) x +  x h_1(x,y),  a^s(p) y + y  h_2(x,y))
\end{equation}
where 
$
{\lambda^u- \varepsilon} \le \abs{ a^u(p)}  \le  {\lambda^u + \varepsilon}$, 
$\lambda^s - \varepsilon  \le \abs{ a^s(p)} \le \la^s + \varepsilon$, and 
$$ \sup \{ \|h_1\|, \|h_2\|\} < \varepsilon ~;$$
\item
there exists a constant $B>0$ and a measurable function $A: E \to (0, \infty)$ such that 
\begin{equation}\label{eq:pesin2}
B^{-1} \, \| \varphi_p(q) - \varphi_p(q')\|
\le 
 \| q-q'\|
\le 
A(p)\, 
\| \varphi_p(q) - \varphi_p(q')\|
\end{equation}
with 
$e^{-\varepsilon} < A(f(p))/A(p) < e^\varepsilon$. 
\end{enumerate}
We denote  by $W^{u}_{\loc}(p)$ (resp. $W^{s}_{\loc}(p)$) the image by $\varphi_p$
of $\set{x=0}$ (resp. $\set{y=0}$). These will be referred to as local unstable (resp. stable) manifold
at $p$. 
Notice that \eqref{eq:pesin} is slightly different from the corresponding statement in 
\cite{barreira-pesin} as we have straightened the local stable and unstable manifolds. Observe also that   removing a set of measure $0$ we may assume that
$f^{-1}(W^u_{\loc}(p)) \subset W^u_{\loc}(f^{-1}(p))$,  and
 $f(W^s_{\loc}(p)) \subset W^s_{\loc}(f(p))$.

For every point $p\in E$, we define the global stable and unstable manifolds by  
$W^s(p) = \bigcup_{n\ge 0}f^{-n} W^s_{\loc}( f^n(p))$
and 
$W^u(p) = \bigcup_{n\ge 0}f^{n} W^u_{\loc}( f^{-n}(p))$.
These are embedded images of $\C$ respectively lying in $K^+$ and $K^-$
like the stable and unstable manifolds of saddle points as we saw in \S~\ref{ss:saddle}.

The next result follows from~\cite[Lemma~8.6]{BLS93}, and will be proved afterwards. Recall that  $\mu^+_C := T^+ \wedge [C]$.

\begin{lem}\label{lem:trans}
Let $E$ denote as above   the set of Pesin regular points for $\mu_f$. Then for every subset 
$A\subset E$ of full $\mu_f$-measure there exists  
$\bar  A \subset C$ of full   $\mu^+_C$-measure such that if   $\bar{p}\in \bar A$, there exists 
$p\in A$ such that 
\begin{itemize}
\item $\bar p\in W^s(p)$,
\item $W^s(p)$ intersects $C$ transversally at $\bar{p}$. 
\end{itemize}
\end{lem}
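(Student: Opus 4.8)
The statement is a measure-theoretic transversality result whose proof follows the scheme of \cite[Lemma~8.6]{BLS93}, transposed to the present setting where the ambient invariant curve $C$ plays the role of an unstable-type object. The plan is to exploit the fact that $\mu_C^+ = T^+ \wedge [C]$ is, locally, obtained by slicing the laminar current $T^+$ by the curve $C$, together with the well-known ``local product structure'' of $\mu_f$ coming from Pesin theory.

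First I would set up the local picture. Fix a generic point $\bar p \in C$ (outside the singular locus of $C$ and outside the pluripolar set where $G^+$ fails to be locally bounded); near $\bar p$ the positive closed current $T^+ = dd^c G^+$ is laminar, so in a small bidisk $B$ around $\bar p$ one can write $T^+ = \int [\Delta_a]\, d\nu(a)$ as an integral of disjoint holomorphic disks, and the slice measure satisfies $\mu_C^+|_B = \int ([C]\wedge[\Delta_a])\, d\nu(a)$. Thus a $\mu_C^+$-generic point of $C$ is a transverse intersection point $C \cap \Delta_a$ for $\nu$-a.e.\ $a$, and moreover the intersection is transverse for $\nu$-a.e.\ $a$ precisely because two distinct holomorphic curves either coincide or meet discretely, and the non-transverse intersections form a $\nu$-negligible set (this uses that $C$ is not a piece of a stable manifold, i.e.\ $T^+\wedge[C]$ carries no mass on common components — which holds since $C$ is algebraic and $f$ has no invariant curve, Proposition~\ref{prop:no fixed curves}). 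Next, the disks $\Delta_a$ composing $T^+$ are, up to taking forward iterates and using the invariance $G^+\circ f = dG^+$, pieces of global stable manifolds: this is the standard identification (as in \cite{BLS93}) of the unstable (here: $T^+$) lamination with the Pesin stable manifolds $W^s(p)$ for $p$ in a full-$\mu_f$-measure set $E$. Concretely, I would use that $T^+$ disintegrates along the measurable partition into local stable manifolds of Pesin regular points, with conditional measures equivalent to arc length, and transverse measure comparable to $\mu_f$; hence for $\mu_f$-a.e.\ $p \in E$ the plaque $W^s_{\loc}(p)$ is one of the disks $\Delta_a$.

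The main argument then combines these two facts. Given $A \subset E$ of full $\mu_f$-measure, the set of $p \in E$ whose local stable manifold appears in the laminar decomposition of $T^+$ near some point of $C$, and for which $p \in A$, still carries full transverse measure; intersecting with $C$ produces the desired set $\bar A \subset C$. For $\bar p \in \bar A$ we then get $p \in A$ with $\bar p \in W^s_{\loc}(p) \subset W^s(p)$ and, by the transversality of $\nu$-a.e.\ disk with $C$ established above, $W^s(p)$ meets $C$ transversally at $\bar p$. That $\bar A$ has full $\mu_C^+$-measure follows because its complement is covered by: (a) the $\nu$-negligible set of non-transverse or bad disks, and (b) the image in $C$ of the $\mu_f$-null set $E \setminus A$ under the (measurable, measure-compatible) correspondence between the transverse measure of $T^+$ and $\mu_f$.

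\textbf{Main obstacle.} The delicate point is the passage from the \emph{local} laminar structure of $T^+$ to the \emph{global} Pesin stable manifolds, and the verification that transversality of $C$ with a $T^+$-plaque holds for a \emph{full} transverse-measure set of plaques rather than merely almost all. One must rule out that $C$ is tangent to the stable lamination along a set of positive $\mu_C^+$-measure; this is where one invokes that $C$ is algebraic and not contained in any stable manifold, and where the argument of \cite[Lemma~8.6]{BLS93} — using that a holomorphic curve tangent to the lamination on a positive-measure set would have to be a leaf, contradicting Proposition~\ref{prop:no fixed curves} together with the fact that leaves of the unstable lamination are not algebraic — is the crucial input. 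I expect the bookkeeping identifying the transverse measure of $T^+$ with $\mu_f$ (so that ``full $\mu_f$-measure'' translates to ``full $\mu_C^+$-measure'') to be the most technical part, but it is standard once the laminar picture is in place.
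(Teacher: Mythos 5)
Your plan has a genuine gap at its core: you assume that near a $\mu^+_C$-generic point of $C$ the current $T^+$ admits a local uniformly laminar representation $T^+=\int[\Delta_a]\,d\nu(a)$ whose disks are (pieces of) Pesin stable manifolds $W^s(p)$ with $p$ in the prescribed full-measure set $A$, and that slicing this representation by $[C]$ recovers a full-measure part of $\mu^+_C$. Neither half of this is available from the cited results, and together they essentially amount to assuming the conclusion. First, laminarity of $T^+$ only gives an approximation $T^+=T^+_\e+R$ with $\mathbf{M}(R)\le\e$, and small mass of $R$ does \emph{not} imply that $R\wedge[C]$ has small mass: the wedge product with a fixed curve is not controlled by the mass of the current, so the local decomposition $\mu^+_C|_B=\int([C]\wedge[\Delta_a])\,d\nu(a)$ you write down is unjustified. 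Second, the identification of plaques of $T^+$ with stable manifolds of Pesin-regular points (and the compatibility of the transverse measure with $\mu_f$, which is what lets ``full $\mu_f$-measure'' become ``full $\mu^+_C$-measure'') is only established, in \cite[Corollary 8.8]{BLS93}, for the uniformly laminar pieces that carry the geometric product $T^+\wedge T^-$, i.e.\ near $\supp\mu_f$; an arbitrary algebraic curve $C$ has no reason to meet that region, and near $C$ the disks of $T^+$ need not be stable manifolds of points of $A$ at all. Your parenthetical ``up to taking forward iterates'' points at the right cure but is never made into an argument; the paper's proof is built precisely on the quantitative dynamical statement you omit, namely $d^{-n}(f^n)_*[C]\to\deg(C)\,T^-$, which gives $\mathbf{M}\bigl(d^{-n}(f^n)^*T^+_\e\wedge[C]\bigr)\to\deg(C)\,\mathbf{M}(T^+_\e\wedge T^-)\ge(1-2\e)\deg(C)$ and thus transfers the laminar/stable structure of $T^+$ from $\supp\mu_f$ back to $C$ with mass control.

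Two subsidiary points are also off. The discarding of tangencies cannot rest on ``two distinct holomorphic curves meet discretely'': isolated tangency points could still carry positive $\mu^+_C$-mass for a positive $\nu$-measure set of plaques. The paper handles this by noting that the uniformly laminar pieces have continuous potential, hence do not charge $C$, and invoking \cite[Lemma 6.4]{BLS93}, which says that only transverse intersections contribute to the geometric product $\geom$ in that situation; your alternative (``a curve tangent to the lamination on a positive-measure set would have to be a leaf'') is not a statement you can quote and is not needed. Finally, the structural claims that $\mu_f$ disintegrates along stable manifolds with conditionals ``equivalent to arc length'' and transverse measure ``comparable to $\mu_f$'' are incorrect (the conditionals are slices of $T^-$, typically singular), though this is peripheral to the main gap.
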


With notation  as in  the lemma, pick any $\bar{p}\in \bar{E}$ and 
introduce  the function
$$\theta_{\bar{p}} (r) = \sup\{ G^+(q), \, q\in C,\, d (\bar{p},q) \le r \}~.$$ 
To prove the proposition we need to show that $\mu_C^+$-a.s. 
$$\liminf_{r\cv0}  \frac{\log(\theta_{\bar p} (r))}{\log r}  = \vartheta_+ = \frac{\log d}{\log \la^u}.$$

Using Lemma \ref{lem:trans}, let $p\in E$ such that $W^s(p)$ intersects $C$ transversely at $\bar p$. Then
 there exists  an integer $N$ such that $f^N(\bar{p})$ lies
in $W^s_{\loc}(f^N(p))$. 
By the invariance relation for $G^+$ and the differentiability of $f$, replacing $C$ by $f^N(C)$ if needed,  it is no loss of generality to assume that $N=0$. 

Choose an integer $n$, and pick a point 
$q\in B(\rho(p))$ such that   $f^k(q) \in \varphi_{f^k(p)} B(\rho(f^k(p)))$ for all $0\le k \le n$.
Write $\varphi_p(q) = (x,y)$ so that $|x| , |y| \le \rho(p)$, and let 
$(x_k,y_k):=\varphi_{f^k(p)}^{-1} (f^k(q))$.
It then follows from \eqref{eq:pesin} that $\abs{y_n} \leq (\la^s+2\e)^n \abs{y_0}$ and 
\begin{equation}\label{eq:x}
(\la^u-2\e)^n \abs{x_0}\leq \abs{x_n} \leq (\la^u+2\e)^n \abs{x_0}.
\end{equation}Conversely, it follows a posteriori from these estimates that  any point $q = \varphi_p(x,y)$
such that 
\begin{equation*}
|x| \le  \rho(p)  ( \la^u+2\e)^{-n} e^{-\e n}
\end{equation*}
satisfies 
$|x_k| \le |x| ( \la^u+2\e)^{k} \le \rho(f^k(p))$, hence
$f^k(x) \in  \varphi_{f^k(p)} B(\rho(f^k(p)))$ for every $0\le k \le n$.

\medskip

We now estimate  $\theta_{\bar{p}} (r)$ for a given  small enough $r$. To   estimate it    from below, 
choose $q\in C$ 
such that $d(p,q)\leq r$ and 
$\theta_{\bar{p}} (r) = G^+(q)$. By \eqref{eq:pesin2}   writing  $\varphi_p(q) = (x,y)$, we infer that $\abs{x}\leq Br$. To ease notation 
let us   assume without loss of generality that $B=1$. 
 Choose $n$  such that 
\begin{equation}\label{eq:1}
\rho(p)  ( e^\e(\la^u+2\e))^{-n}   \leq 
r 
<\rho(p)  ( e^\e(\la^u+2\e))^{-(n-1)}.
\end{equation}
From \eqref{eq:1} and the invariance relation for $G^+$  we get the upper bound:
\begin{align*}
\log \theta_{\bar{p}} (r) &=  
- \log (d^n) + \log G^+(f^n(q))
  \le 
- \log (d^n) +   \max_{\varphi_p(B(\rho(p)))}\nolimits G^+  \\ 
&\le - n\log d + \cst
\le 
   \frac{\log r - \log \rho(p)}{ (\e +\log (\la^u+ 2 \varepsilon))} \log d+ \cst~,
\end{align*}
where the first inequality on the second line follows from the fact that due to \eqref{eq:pesin2},
$\bigcup_E \varphi_p(B(\rho(p))$ is bounded in $\C^2$.
Letting $r\to 0$, 
we infer that 
$$\liminf_{r\to 0} \frac{\log \theta_{\bar{p}}(r)}{\log r} \ge 
\frac{\log d}{\e+\log (\lambda^u + 2 \varepsilon)}~.$$
Since this holds for every $\varepsilon$, we conclude  that 
\begin{equation}\label{eq:lower}
\liminf_{r\to 0} \frac{\log \theta_{\bar{p}}(r)}{\log r} \ge 
\frac{\log d}{\log \la^u }~.
\end{equation}

To prove the opposite inequality we proceed as follows. 
Let us introduce the auxiliary function
$\psi: p\mapsto  \sup_{W^u_{\loc}(p)} G^+$. This is a measurable function that is uniformly bounded
from above since
$\bigcup_E \varphi_p(B(\rho(p))$ is bounded in $\C^2$.
Likewise $\psi(p)>0$ for every $p\in E$ since $W^u_{\loc}(p)$ cannot be contained in 
$K^+$, see~\cite[Lemma 2.8]{BLS93} (the argument is identical to that of Proposition~\ref{prop:stable and G-}).

Now fix a constant $g_0>0$ such that the set 
$\{\psi > g_0\}$ has positive $\mu_f$-measure. 
By the Poincar\'e recurrence theorem,
there exists a measurable set 
 $A\subset E$ of full $\mu_f$-measure 
such that for every $p\in A$, $\psi (f^{n_j}(p)) >g_0$ for 
infinitely many  $n_j$'s.

Let $\bar A$ be as in Lemma \ref{lem:trans}, let $\bar p \in \bar A $ and $p$ be as above.
Stable manifold theory shows that there exists $n_0$ such that for $n\geq n_0$
 the connected component 
 of $\varphi_{f^n(p)}^{-1}(f^n(C))$ in $\bb(\rho(f^n(p)))$
  containing $ \varphi_{f^n(p)}^{-1} (\bar p)$ is a graph over the first coordinate which converges exponentially fast in the $C^0$ (hence $C^1$) topology  to $\set{y = 0}$ 
  \cite[\S 8.2 and Thm 8.13]{barreira-pesin}. Let us denote it by $C_{n, f^n(\bar p)}$.

Since $p$ belongs to $A$ we have $\psi (f^{n_j}(p)) >g_0$ for 
infinitely many  $n_j$'s.
To ease notation we simply write  $n$ for $n_j$. 
  Since $G^+$ is H\"older continuous,  for such an iterate $n$ we infer that 
 \begin{equation*}
 \sup\{ G^+(w), w\in {C_{n, f^{n}(\bar p)}}\}  \geq g_0 - \delta_n,
 \end{equation*} where $\delta_{n}$ is exponentially small. Let $w_{n}\in 
    {C_{n, f^{n}(\bar p)}}$  be a point at which  $G^+ (w_n)\geq \frac{g_0}{2}$.   Consider now $f^{-n}(w_{n})$ 
  and denote $\varphi_p^{-1} (f^{-n}(w_{n})) = (x_{n}, y_{n})$. By \eqref{eq:pesin}, we have that 
  $$ 
\abs{x_{n}}\leq  \frac{\rho(f^{n}(p))}{(\la^u-2\e)^n}\le \frac{\cst}{(\la^u-2\e)^n},$$ hence, since $C$ is transverse to $W^s_{\rm loc}(p)$, 
from \eqref{eq:pesin2} we get that 
$$d (f^{-n}(w_{n}), \bar p) \leq  \frac{C_0}{(\la^u-2\e)^n}, $$ where $C_0$ 
does not depend on $n$. 
Therefore, putting $r_n =  \frac{C_0}{(\la^u-2\e)^n}$ and using  the invariance relation for $G^+$ and the 
definition of $w_n$, 
 we infer that 
$$ \theta(\bar p, r_n) \geq \frac{g_0}{2d^n} = \frac{g_0}{2} 
 \left(\frac{r_n}{C_0}\right)^{\frac{\log d}{\log(\la^u-2\e)}}.$$ Finally 
 $$\limsup_{n\cv\infty} \frac{\log\theta(\bar p, r_n)}{\log r_n} \leq  \frac{\log d}{\log(\la^u-2\e)},$$ thus 
 $$\liminf_{r\cv0}  \frac{\log\theta(\bar p, r )}{\log r } \leq  \frac{\log d}{\log \la^u }$$ which, along with \eqref{eq:lower}, finishes the proof.

\begin{proof}[Proof of Lemma~\ref{lem:trans}]
The proof relies on the theory of laminar currents \cite{BLS93}.
It is shown in~\cite[Thm 7.4]{BLS93} that 
 the positive closed 
 $(1,1)$-current $T^+:=  dd^c  G^+$ is   laminar. 
 
Recall that this means the following. First, we say that 
a current $S$ in $\om\subset \cd$  is {\em locally uniformly laminar}  if every point in $\supp(S)$ admits a neighborhood $B$
biholomorphic to a bidisk, such that in adapted coordinates, $S$ locally writes as   $\int  [\Delta_a]\, d\alpha(a)$, where the 
$\Delta_a$ are disjoint graphs over the first coordinate in $B$ and $\alpha$ is a positive measure on the space of such graphs.
  These disks will be said to be {\em subordinate} to $S$. Notice that a locally uniformly laminar current is always closed.

A current is   {\em laminar} if 
for any   $\varepsilon >0$ there exists a finite  family of disjoint  open sets 
$\om ^i$, and  for each $i$ a locally uniformly laminar current 
$T^i\leq T$  
such that  the mass of $T -  \sum_i T^i$ is    smaller than $\varepsilon$.
If $R$ is any positive closed current in $\C^2$ such that the wedge product $  T\wedge R$ 
is admissible, then slightly abusing notation we define the wedge product  $\big(\sum_i T^i\big) \wedge R$ by  $\sum_i (T^i\wedge R)\rest{\om^i}$. 

The geometric intersection product of a current of integration over a curve $[M]$ with a uniformly laminar current 
$T = \int [\Delta_a] d\alpha(a)$ is defined by 
$$
T \geom [M] = \int   [\Delta_a \cap M] d\alpha(a)
$$
where $[\Delta_a \cap M]$ is the atomic positive measure 
putting mass $1$ at any  intersection of 
$\Delta_a$ and  $M$. 
If $T$ has continuous potential then $T\geom [M] = T\wedge [M]$, see \cite[Lemma 6.4]{BLS93}. 
 
\medskip

Pick any open subset  $\om\subset \cd$, and let
$0<S^+\leq T^+$ be any locally uniformly laminar current in $\om$.
Denote by $\mathbf{M}(\mu)$  the total mass of a given positive measure $\mu$.
We claim that
\begin{equation}\label{eq:mass}
\lim_{n\to \infty} \mathbf M \lrpar{ \frac{1}{d^n} (f^n)^* S^+ \geom [C]}=\deg(C)\cdot  \mathbf M (S^+\wedge T^-) ~.
\end{equation}
To see this, we observe that  the current $S^+$ has continuous potential by \cite[Lemma 8.2]{BLS93} so that 
$$\frac1{d^n} (f^n)^* S^+ \geom [C]= \frac1{d^n} (f^n)^* S^+ \wedge [C]$$
as positive measures in $f^{-n}(\om)$. Now $f^n : f^{-n}(\om)\to \om$ is an automorphism so that 
$$
\mathbf M \lrpar{ \frac{1}{d^n} (f^n)^* S^+ \geom [C]} =  \mathbf M  \lrpar{ S^+ \wedge d^{-n}(f^n)_*[C]}~.
$$
Replacing $C$ by some iterate we may assume that it intersects the line at infinity at $p_+$ only, hence $d^{-n}(f^n)_*[C] \cv (\deg(C))T^-$ by \cite{BS1,fornaess-sibony}.
Again since  $S^+$ has continuous potential in $\om$, the measures 
$S^+\wedge d^{-n}(f^n)_*[C] $ converge to  $\deg(C) \, S^+\wedge T^-$. We conclude that 
$\mathbf M \lrpar{ {d^{-n}} (f^n)^* S^+ \geom [C]}$ converges to   $\deg(C)\,  \mathbf M (S^+\wedge T^-)$ as $n\cv\infty$ as required. 

\smallskip

Another result in \cite{BLS93} is that   
 $T^+$ and $T^-$ intersect  geometrically (see also \cite{isect}). 
 This implies  that  for every $\e>0$ there exists a 
current $T^+_\e\leq T^+$, which is a finite sum of uniformly laminar  currents in disjoint open sets $\om^i$ as  above,   
and such that 
 $\mathbf{M}(T^+_\e\wedge T^-) \geq 1-\e$. Then from~\eqref{eq:mass}  we deduce that  for large $n$, the positive measures
 $$\frac1{d^n} (f^n)^* T^+_\e \wedge [C] 
:= \sum_i \frac1{d^n} (f^n)^* T^+_\e \wedge [C] |_{f^{-n}\om_i}
$$ are dominated by $\mu_C^+$, and
\begin{multline*}
\mathbf{M} \left(\frac1{d^n} (f^n)^* T^+_\e \wedge [C]\right) \ge 
(1-\e) \deg(C)\, \sum_i \mathbf{M} (T_\e|_{\om_i} \wedge T^{-}) \ge\\ \ge (1-\e)^2\deg(C)\mathbf M(T^+\wedge T^{-})  \ge (1-2\e)\, \deg(C)~.
\end{multline*}
Now recall that $T^+_\e|_{\om_i}$ has continuous potential for each $i$ hence does not
charge any curve. By \cite[Lemma 6.4]{BLS93} it follows that 
only transverse intersections between disks subordinate to $\frac1{d^n} (f^n)^* (T^+_\e|_{\om_i})$ and $[C]$ need to be taken into account in the computation of the geometric intersection $\frac1{d^n} (f^n)^* (T^+_\e|_{\om_i}) \geom [C]$.  
Further by~\cite[Corollary~8.8]{BLS93},  almost every disk subordinate to $T^+_\e|_{\om_i}$
is an open set of some stable curve $W^s(p)$ 
for some $p\in A$.

In particular, there exists a set $B_n$ of total mass for the positive measure $d^{-n} (f^n)^* T^+_\e \wedge [C]$ such that for all points $q \in B_n \subset C$ 
there exists a point $p\in A$ such that $W^s(p)$ intersects $C$ transversally at $q$. Since
$\mu^+_C( B_n) \ge (1-2\e)\, \deg(C)$
the proof is complete.
\end{proof}

%

\subsection{A uniform Theorem~\ref{thm:maineffective}}\label{subs:uniformA'}

In this section we indicate how our  arguments can be modified so as to get the following statement.

 \begin{theoseconde}\label{thm:maineffective-plus}
Let  $f$ be  a polynomial automorphism of H\'enon type  of the   affine plane, defined over a number field $\mathbb{L}$. 
Assume   that there exists an archimedean place $v$ such that $|\jac(f)|_v \neq 1$.

For any integer $d$, there exists a positive constant $\e(d) >0$ and an integer $N(d) \ge1$ such that  for any algebraic curve $C$ defined over $\mathbb L$  of degree at most $d$,
 the set $\{ p\in C(\LL^{\rm alg}), \, h_f(p) \le \e(d)\}$ contains at most $N(d)$ points.
\end{theoseconde}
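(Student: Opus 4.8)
The strategy is to make the proof of Theorem~A$'$ (equivalently, of Theorem~\ref{thm:maineffective}) uniform in the curve $C$, the key point being that the only place where $C$ enters the argument is through its degree and through a compactness/limiting argument that can be replaced by a quantitative one. First I would argue by contradiction: suppose that for some fixed $d$ and some archimedean place $v$ with $|\jac(f)|_v\neq 1$, there exist curves $C_k$ of degree $\le d$ defined over $\LL$, together with collections of at least $k$ Galois orbits of points $p$ on $C_k$ with $h_f(p)\le 1/k$. Passing to the components of largest degree and using that degree is bounded, the Chow variety parametrizing degree-$\le d$ curves in $\P^2$ is itself a projective variety of bounded dimension; after a further extraction one may assume $C_k\to C_\infty$ in this parameter space (over $\overline{\LL}$, or after base change). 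The curves $C_k$ need not be irreducible, but one handles this by passing to an irreducible component carrying $\ge k/d$ of the small points, at the cost of replacing $d$ by a bounded quantity.

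The heart of the argument is then to run the equidistribution machinery of \S\ref{sec:equidistribution} and \S\ref{sec:dissipative} ``with uniform constants''. Concretely: for each fixed $k$ the hypothesis gives, exactly as in Proposition~\ref{prop:main2}, that $G^+_{v,f}$ and $G^-_{v,f}$ are proportional up to harmonic on $C_k$ with a rational constant $\alpha(C_k,f)$; but the denominators of these $\alpha$'s are controlled (they divide $\deg(f^{j}(C_k))\le d\cdot d^{\,j_0}$ for the bounded power $j_0$ needed to push $C_k$ to have a single place at infinity at $p_+$, by Proposition~\ref{prop:no fixed curves}), so after extraction $\alpha(C_k,f)=\alpha$ is a single fixed rational number. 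The Pesin-theoretic computation of the Hölder exponent in Proposition~\ref{prop:pesin} takes place on the ambient $\C^2$ and depends only on $f$ (through $\lambda^u,\lambda^s,d$), not on $C$; the transversality Lemma~\ref{lem:trans} produces, for the equilibrium measure $\mu_f$, stable manifolds meeting a positive-$\mu^+_{C_k}$-measure set of $C_k$ transversally, again with estimates depending only on $f$. The contradiction $|\jac(f)|_v=1$ that this produces for the limiting configuration is genuinely in conflict with the hypothesis, and the point is that it was derived uniformly.

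The one genuinely new ingredient, and what I expect to be the main obstacle, is to make the limiting step effective rather than merely qualitative: the argument of Theorem~\ref{thm:maineffective} derives a contradiction from the \emph{existence} of one sequence of small points on one fixed curve, whereas here one must rule out the existence of arbitrarily large \emph{finite} sets of uniformly small points on a \emph{varying} family of curves. The clean way to do this is to invoke a uniform / limiting version of the arithmetic equidistribution theorem: the metrized line bundle $\mathcal{O}(1)$ with the adelic metric $\{G^+_{v,f}\}$ restricts to $C_k$ as a family of metrized bundles of bounded degree, the essential minimum $\mathrm{ess.\,inf}_{C_k}h_{G^+}$ tends to $0$ along the subsequence, and one needs the conclusion of Theorem~\ref{thm:yuan-curve} to hold for the limit curve $C_\infty$ with the induced metric. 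This is where one must be careful: the limit of the $C_k$ in the Chow variety, and the limit of the induced measures $\Delta(G^+_v|_{C_k})$, must be controlled simultaneously at all places, including the non-archimedean ones where one works on Berkovich curves. Granting this (it follows from continuity of the Monge--Ampère / Chambert-Loir measures in families, together with semicontinuity of the essential minimum), one reduces to a single curve $C_\infty$ of degree $\le d$ on which $G^+$ and $G^-$ are proportional up to harmonic, and the dissipative argument of \S\ref{subs:number field} applies verbatim to contradict $|\jac(f)|_v\neq 1$. Unwinding the contradiction yields explicit $\e(d)$ and $N(d)$: $\e(d)$ from the quantitative gap in the Hölder-exponent estimate, and $N(d)$ from Bézout-type bounds on how many Galois orbits of height exactly $0$ (periodic points) a degree-$d$ curve can carry, which by Theorem~\ref{thm:maineffective} is already finite for each fixed $C$ and becomes uniformly bounded once the family is compact.
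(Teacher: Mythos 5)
Your overall shape (argue by contradiction, take a sequence of curves $C_k$ of degree at most $d$ over $\LL$ carrying ever larger sets of points of height at most $1/k$, extract a limit) matches the paper's starting point, but the way you propose to conclude has a genuine gap. After extracting a limit of the normalized defining equations you only obtain a curve $C_\infty$ \emph{place by place}: at the fixed archimedean place its coefficients are limits of algebraic numbers and are in general transcendental, and the limits at different places need not be coherent, so $C_\infty$ is not an algebraic curve defined over a number field. Consequently neither Proposition~\ref{prop:main2} nor the dissipative argument of \S\ref{subs:number field} ``applies verbatim'' to it: both rest on the arithmetic equidistribution theorem on a \emph{fixed} curve over $\LL$ (Autissier--Thuillier/Yuan, Theorem~\ref{thm:yuan-curve}), which requires an adelic structure that $C_\infty$ does not possess. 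The step you flag yourself --- a ``uniform/limiting version'' of Theorem~\ref{thm:yuan-curve} along a varying family, justified by ``continuity of the Chambert-Loir measures in families plus semicontinuity of the essential minimum'' --- is precisely the unproven ingredient, and it does not follow from the cited results (it is of the same nature as uniform Bogomolov-type statements, which are genuinely hard). The bounded-denominator control of $\alpha(C_k,f)$ and the uniformity of the Pesin estimates are beside this point, and the closing claim that unwinding gives explicit $\e(d)$, $N(d)$ is unfounded (nor is effectivity required by the statement).

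The paper sidesteps the problem by a dichotomy that is missing from your plan. Either some single algebraic curve $D$ defined over $\LL$ contains infinitely many of the Galois-invariant small sets $F_m$; then $D$ carries an infinite sequence of points with $h_f\to 0$ and Theorem~\ref{thm:mainArch} (already established over number fields) yields $|\jac(f)|_v=1$, contradicting the hypothesis. Or every curve over $\LL$ meets only finitely many of the $F_m$; then the sequence is generic and the \emph{two-dimensional} equidistribution theorem for the H\'enon map on $\A^2$ (Theorem~\ref{thm:yuan-henon}, not the curve version) forces the measures $\mu_m$ equidistributed on $F_m$ to converge to $\mu_{f,v}$, while a soft compactness argument at the single place $v$ (normalize the equations $P_m$ of $C_m$, extract convergent coefficients, use that all the $F_m$ lie in a bounded region of $\LL_v^2$, as in Lemma~\ref{lem:easy}) shows that any weak limit of $\mu_m$ is carried by a --- possibly transcendental --- curve $\{P=0\}$ of degree at most $d$. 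Since $\mu_{f,v}$ charges no curve, this is a contradiction. In this branch the limit curve is used only at one archimedean place and only to contradict equidistribution in the plane, never as input to the arithmetic machinery on curves; that is exactly what makes the argument go through without any uniform equidistribution statement on a varying family of curves.
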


\begin{proof}
Pick an automorphism $f$ of H\'enon type defined over $\LL$ and 
fix any archimedean place $v$.
We suppose that there exists a sequence of curves $C_m$ defined over $\LL$ of degree $d$ and finite sets $F_m \subset C_m$ invariant under the absolute Galois group of $\LL$ such that $\#F_m\geq m$ and $h_f(F_m) \le \frac1m$. Let us show that $|\jac(f)|_v =1$.

Let $\mu_m$ be the  probability measure  equidistributed over $F_m$.

\begin{lem}\label{lem:easy}
Any weak limit of the sequence $(\mu_m)$ is supported on a curve of degree $d$.
\end{lem}

Suppose first that   any curve defined over $\LL$ intersect only finitely many $F_m$'s. Then Yuan's result~\cite[Theorem 3.1]{yuan} implies   the equidistribution $\mu_n \to \mu_{f,v}$ (see \cite[Thm B]{lee}). However $\mu_{f,v}$  gives no mass to curves so  
 the previous lemma gives a contradiction. 

We may thus suppose that there exists a curve $D$ defined over $\LL$ that contains infinitely many $F_m$'s. 
Theorem~\ref{thm:mainArch} then applies to show  that $|\jac(f)|_v =1$ as required.
\end{proof}

\begin{proof}[Proof of Lemma~\ref{lem:easy}]
For each $m$ pick an equation $P_m = \sum_{i+ j \le d} a_{ij}^m x^i y^j$ of $C_m$ such that 
$\max \{ | a^m_{ij} | \} = 1$.
Replacing $F_m$ by a suitable subsequence we may assume that each coefficient $a_{ij}^m$ converges to some $a_{ij} \in \LL_v$ and we set $P = \sum_{i+ j \le d} a_{ij} x^i y^j$.
Since the height of $F_m$ is bounded from above, $\bigcup_m F_m$ is included in a fixed bounded set $K$ in $\LL^2_v$. We have $\sup_K |P_m - P| \to 0$ and this implies
$\int |P| d\mu = \lim_m \int |P_m| d\mu_m = 0$. Therefore $\mu$ is supported on the curve $\{P= 0\}$.
\end{proof}

\section{The DMM statement under a transversality assumption}\label{sec:transversality}

This section is devoted to the proof of Theorem \ref{thm:mainNA} in the number field case.  
Let $f$ be a regular polynomial automorphism of $\mathbb{A}^2$,
and $C$ be an irreducible algebraic curve containing 
infinitely many periodic points, both defined over a number field $\LL$. 
By the transversality assumption (T), 
replacing $f$ by $f^N$ if needed we assume that one of these periodic points $p\in \mathrm{Reg}(C)$ is fixed 
and satisfies $Df_p(T_pC) \neq T_pC$.

 We want to show that $\jac(f)$ is a root of unity. 
To do so it will be enough to prove that $|\jac(f)|_v = 1$ for each place $v$.

If the place $v$ is  archimedean, the equality $|\jac(f)|_v = 1$ follows from Theorem~\ref{thm:mainArch}, so we will work at non-Archimedean places only.

\begin{lem}\label{lem:thisone}
Let $f$ and $C$ be as in Theorem \ref{thm:mainNA}. 
Let $p$ be any fixed point lying on $C$ and denote by $\lambda_1, \lambda_2$ the two (possibly equal) eigenvalues of $Df(p)$.

At any non-Archimedean place $v$, 
then either $|\lambda_1|_v = |\lambda_2|_v = 1$ or $p$ is a saddle.
\end{lem}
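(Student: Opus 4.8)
The plan is to exploit the fact that $f$ is a polynomial automorphism: its Jacobian $\jac(f)$ is a nonzero constant, so $\lambda_1\lambda_2 = \jac(f)$ at the fixed point $p$, and in particular $|\lambda_1|_v\,|\lambda_2|_v = |\jac(f)|_v$. The dichotomy to be proved is really a statement about the possible valuations of the pair $(\lambda_1,\lambda_2)$: I want to rule out the ``mixed'' situation where exactly one of $|\lambda_1|_v,|\lambda_2|_v$ equals $1$ and the other does not (or both differ from $1$ but lie on the same side), leaving only the totally neutral case and the genuine saddle case $|\lambda_1|_v<1<|\lambda_2|_v$. The key tool is the proportionality of Green functions on $C$ supplied by Proposition \ref{prop:main2}: since $C$ contains infinitely many periodic points, we have $h_f(p_n)\to 0$ along those points, hence there is a positive rational $\alpha$ and a harmonic $H_v$ with $G^+_{v,f} = \alpha\,G^-_{v,f} + H_v$ on $C_v$ at every place $v$.

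First I would analyze the local behaviour of $G^+_v$ and $G^-_v$ near $p$ along $C$. Work at a non-Archimedean place $v$ and pass to $\LL_v$. Assume toward a contradiction that, say, $|\lambda_1|_v = 1$ while $|\lambda_2|_v > 1$ (the case $|\lambda_2|_v<1$ being symmetric via $f\mapsto f^{-1}$, and the case $|\lambda_1|_v<1<|\lambda_2|_v$ is exactly ``$p$ is a saddle''). Since $p$ is a fixed point with $G^+(p)=G^-(p)=0$ (a fixed point is periodic, hence in $K=\{G=0\}$), both $G^+_v|_C$ and $G^-_v|_C$ vanish at $p$ and are, away from $H_v$, proportional up to the \emph{harmonic} term; a harmonic function vanishing at $p$ is locally constant near $p$ by Proposition \ref{prop:maximum-principle} once we know it is nonnegative — but $H_v = G^+_v - \alpha G^-_v$ need not be signed, so instead I would argue directly: on a small affinoid neighborhood of $p$ in $C_v$, both $G^+_v$ and $G^-_v$ are subharmonic, nonnegative, vanish at $p$, and their Laplacians (the measures $\mu^\pm_{C,v}$) are proportional by the proof of Proposition \ref{prop:main2}. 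Hence $G^+_v = \alpha G^-_v$ plus a harmonic function; evaluating at $p$ forces that harmonic function to vanish at $p$, and harmonicity then pins down the local ``slopes'' (directional derivatives on the skeleton of the affinoid) of $G^+_v$ and $G^-_v$ to be proportional with ratio $\alpha$.

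Now I would compute those slopes dynamically. Using Lemma \ref{lem:coordinates} (or the Pesin-type normal form, but in the non-Archimedean setting the analytic linearization of Lemma \ref{lem:coordinates} suffices when the eigenvalues are off the unit circle), put $f$ into the form where it is linear along the stable/unstable manifolds with multipliers $\lambda_1,\lambda_2$. The invariance $G^+_v\circ f = d\,G^+_v$ forces, at a fixed point, a self-similarity: along any analytic curve through $p$ on which $f$ contracts (resp.\ expands) with rate $|\lambda|_v$, the function $G^+_v$ has a well-defined local exponent $\vartheta$ determined by $|\lambda|_v^{\vartheta} = d$ — provided $G^+_v$ does not vanish identically along that curve, which is guaranteed along $W^u(p)$ by Proposition \ref{prop:stable and G-}. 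The transversality hypothesis $Df_p(T_pC)\neq T_pC$ enters precisely to ensure that $C$ meets neither $W^s_{\loc}(p)$ nor $W^u_{\loc}(p)$ tangentially, so that the restriction of $G^+_v$ (resp.\ $G^-_v$) to $C$ near $p$ inherits the same exponent as on the unstable (resp.\ stable) manifold. If $|\lambda_1|_v=1$ and $|\lambda_2|_v>1$, then $W^s_{\loc}(p)$ degenerates: there is a whole neutral direction, $f$ has only a one-dimensional unstable manifold, $G^-_v|_C$ must vanish identically near $p$ (the would-be stable manifold is contained in $K^-$ by Proposition \ref{prop:Gplus}(3) since orbits are bounded, contradicting non-vanishing unless $W^s$ collapses), and then $\mu^-_{C,v}$ has no mass near $p$ while $\mu^+_{C,v}$ does — contradicting their proportionality with $\alpha>0$.

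\textbf{Main obstacle.} The delicate point is making rigorous the claim that the local exponent of $G^\pm_v$ along $C$ equals that along the invariant manifold, in the non-Archimedean (Berkovich) setting: one needs a rescaling/renormalization argument analogous to the Buff–Epstein argument cited in the introduction, carried out with Thuillier's potential theory, controlling how $\Delta(G^+_v|_C)$ behaves under iteration near $p$. Equivalently — and this is the cleaner route I would actually pursue — I would avoid exponents entirely and argue at the level of the Laplacian measures: show that near a fixed point $p\in\mathrm{Reg}(C)$ transverse to the invariant directions, $\mu^+_{C,v}$ has an atom-free density whose ``mass function'' $r\mapsto \mu^+_{C,v}(B(p,r))$ scales like $r^{\log d/\log|\lambda_{\mathrm{exp}}|_v}$, and similarly for $\mu^-_{C,v}$ with the contracting multiplier; proportionality of the two measures then forces $\log|\lambda_1|_v$ and $\log|\lambda_2|_v$ to have opposite signs (the saddle case) or both to vanish (the neutral case), which is exactly the assertion. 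The verification that the transversality (T) really does transfer the scaling exponent from the invariant manifold to $C$, uniformly in the Berkovich topology, is where the real work lies.
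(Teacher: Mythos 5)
Your reduction to the ``mixed'' case and your use of Proposition \ref{prop:main2} start out along the right lines, but the contradiction you propose does not exist. Suppose, as in your setup, $|\lambda_1|_v=1$ and $|\lambda_2|_v>1$ (equivalently, for $f^{-1}$ the eigenvalues have norms $1$ and $<1$). Because the place is non-Archimedean, $p$ then has a small bounded neighborhood $U$ with $f^{-1}(U)\subset U$, so $U\subset K^-$ and $G^-_v\equiv 0$ on $U$. Feeding this into $G^+_v=\alpha G^-_v+H_v$ on $C$, the function $G^+_v|_{C\cap U}$ is harmonic, non-negative, and vanishes at $p$, hence vanishes identically near $p$ by Proposition \ref{prop:maximum-principle}. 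So \emph{both} $\mu^+_{C,v}$ and $\mu^-_{C,v}$ are zero near $p$, and your asserted dichotomy (``$\mu^-_{C,v}$ has no mass near $p$ while $\mu^+_{C,v}$ does'') is simply false; proportionality of the two measures gives no contradiction at this stage. Your appeal to Proposition \ref{prop:stable and G-} is also illegitimate here: that statement concerns the stable/unstable manifolds of a \emph{saddle}, which is precisely the case you are not in.

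The missing idea is what the paper does after reaching $G^+_v=G^-_v=0$ on $C\cap U$: this says $C\cap U\subset K$, so all forward and backward iterates of points of $C\cap U$ stay in the bounded set $K$, and Cauchy estimates bound $\|Df^{-n}\|$ along $C\cap U$ uniformly in $n$. In the sink case this is already contradictory. In the semi-neutral case one must then linearize or normalize $f$ at $p$ — via Herman–Yoccoz when $\lambda_2$ is not a root of unity (the norm-one eigenvalue is automatically Diophantine over a non-Archimedean field), and via the Jenkins–Spallone normal form when it is — and the boundedness of $Df^{-n}$ along $C$ forces $C$ to coincide locally with the neutral axis, i.e.\ $C$ would be an invariant algebraic curve, contradicting Proposition \ref{prop:no fixed curves}. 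Your alternative ``scaling exponent'' route (transferring H\"older exponents from invariant manifolds to $C$ via transversality) is the mechanism the paper uses elsewhere, at genuine saddles; it cannot see the degenerate case above, where there is no saddle and both restricted Green functions vanish locally, so it does not prove this lemma.
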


It is also not difficult to see that for all but finitely many places $v$, 
\emph{all} periodic points are indifferent in the sense that their multipliers have norm $1$.

\begin{proof}[Proof of Lemma~\ref{lem:thisone}]
Assume by contradiction that $\abs{\lambda_1}_v <1$ and $|\lambda_2|_v \leq  1$.

Since $\abs{\lambda_1}_v \le |\lambda_2|_v \le 1$ it is classical that 
there exists a (bounded) neighborhood $U$ of $p$ that is forward invariant, 
 in particular $U\subset K^+$. Indeed, performing a linear change of coordinates we can write 
 $f(x,y)  =  (\la_1 x + \hot, \la_2 y + \hot)$, and since $v$ is non-Archimedean if $x$ and $y$ are small enough, then 
 $\abs{\la_1 x + \hot} = \abs{\la_1}\abs {x}$ and   $\abs{\la_2 y + \hot} =  \abs{\la_2}\abs {y}$.

It follows that $G^+\rest{U} \equiv 0$, hence from Proposition~\ref{prop:main2} we deduce that 
 $G^-\rest{U\cap C}$ is harmonic, and since $G^-(p) = 0$ and $G^-\geq 0$, by Proposition \ref{prop:maximum-principle} 
 we conclude that 
 $G^-\rest{U\cap C} \equiv 0$ as well. 
This implies that $f^{-n}  (U\cap C) \subset K$ for all $n$, and since $K$ is bounded the 
Cauchy inequality implies that the norms  $\| Df^{-n}(p)\|$ stay  uniformly bounded in $n$ along $U\cap C$. 

If $p$ is a sink, that is $|\lambda_1|_v \leq |\lambda_2|_v <  1$, then $\| Df^{-n}(p)\|$ must grow exponentially and
we readily get a contradiction.
 The semi-attracting case $\abs{\lambda_1}_v< \abs{\lambda_2}_v =1$ 
requires a few more arguments. 

Assume first that $\lambda_2$ is not a root of unity. Then a theorem by 
Herman and Yoccoz \cite{HY83} asserts that  in this case 
$\lambda_2$ satisfies a Diophantine condition, hence so does the pair $(\lambda_1, 
\lambda_2)$, and the fixed point $p$ is analytically linearizable. Therefore there exist adapted coordinates $(x,y)$ near $p$, 
in which $p$  is sent to the origin, and  $f$ takes the form $f(x,y) = (\lambda_1 x, \lambda_2 y)$. Since $\| Df^{-n}(p)\|$ is  uniformly bounded in $n$ 
along $U\cap C$, in these coordinates $C$ must be tangent to the $y$ axis, so it locally expresses as a graph 
$t\mapsto (\psi(t), t)$.  Now $Df^{-n}(\psi'(t), 1) = (\lambda_1^{-n}\psi'(t), \lambda_2^{-n})$ is unbounded as 
$n\cv \infty$ unless $\psi\equiv 0$. From this we 
conclude that $C$ locally coincides with the $y$ axis, so in particular it is invariant. But $f$ does not admit any invariant algebraic curve so we reach 
a contradiction. 

If $\lambda_2$ is a root of unity, the argument is similar. Replace $f$ by some iterate so that $\lambda_2=1$. 
To ease notation we work   with $f^{-1}$ instead of $f$.
A theorem by Jenkins and Spallone 
\cite[\S 4]{jenkins spallone} asserts that there are coordinates $(x,y)$ as above in which $f^{-1}$ expresses as
$$f^{-1}(x,y) = (\lambda_1^{-1} x (1+ g(y)), h(y)), \text{ with } g(0) = 0 \text{ and } h(y) = y + \hot$$ (recall that $\abs{\la_1^{-1}}>1$). 
From this we deduce that 
$$ f^{-n}(x,y) = (\lambda_1^{-n}x (1+ g_n(y)), h^{n}(y)) = \lrpar{\lambda_1^{-n}x \prod_{j=0}^{n-1} (1+ g(h^{j}(y))) , h^{n}(y)}.$$
For $\e$ small enough, if $\abs{y}_v <\e$,  using the ultrametric inequality we   get 
  that  for every $0 \leq j\leq n$, $\abs{h^j(y)}_v <\e$. 
It follows that $\abs{1+ g(h^j(y))}_v =1$.

Now as in the previous case, in the new coordinates 
 the curve $C$ must be tangent to the $y$-axis. Parameterize it as $t\mapsto (\psi(t), t)$, so that the curve $f^{-n} C$ is parameterized by 
 $$ t \longmapsto   \lrpar{\lambda_1^{-n}\psi(t) \prod_{j=0}^{n-1} (1+ g(h^j(t))) , h^n(t)}.$$ We see that the only possibility
  for it to be locally bounded 
 as $n\cv \infty$ is that $\psi\equiv 0$. As before we deduce that $C$ is equal to the $y$-axis, which is invariant, and again we get a contradiction. 
\end{proof}

Let us resume the proof of Theorem~\ref{thm:mainNA}. Pick any non-Archimedean place $v$, 
and recall that we wish to prove that
$\abs{\jac(f)}_v=1$.
To simplify notation we   drop all indices referring to this place $v$. 
 Let $p$ be the fixed point satisfying (T), and denote by $\lambda_i$  its eigenvalues. Then $\jac(f)  = \lambda_1\lambda_2$.
  By the previous proposition 
either $\abs{\lambda_1} = \abs{\lambda_2} =1$ and we are done, or $p$ is a saddle.
For   notational consistency we   
  denote  by  $u$ (resp. $s$)  the unstable (resp. stable) eigenvalue (which can alternatively be $\la_1$ or $\la_2$ depending on the place).  By the transversality assumption (T), $W^u_{\rm loc}(p) $ and $W^s_{\rm loc} (p)$ are not tangent to $C$ at $p$. Indeed since 
  the tangent directions to $W^u_{\rm loc}(p) $ and $W^s_{\rm loc} (p)$ are given by the eigenvectors of $Df(p)$, this transversality 
  does  not depend on the place. 
  
By Lemma \ref{lem:coordinates} there are adapted coordinates $(x,y)$ near $p$
 in which $f$ takes the form 
\begin{equation}\label{eq30}
f(x,y) =\left( ux(1+xy g_1(x,y)), sy(1+xy g_2(x,y)) \right).
\end{equation}
By scaling  the coordinates  if necessary we may assume that   we work in the unit bidisk $\bb$.

The following key renormalization lemma  will be proven afterwards.

 \begin{lem}\label{lem:renorm}
If $\abs{x}, \abs{y}$ are small enough, then for every $1\leq j\leq n$, 
$f^j\lrpar{\frac{x}{u^n}, {y}}  \in  \bb$ and 
$$f^n\lrpar{\frac{x}{u^n},  {y}}  = (x, 0) + O(n\rho^n),$$ 
uniformly in $(x,y)$,  with $\rho := \max \{ |u|^{-1} , |s| \} <1$.
\end{lem}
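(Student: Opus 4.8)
The plan is to iterate the normal form~\eqref{eq30} starting from the rescaled point $(x/u^n, y)$ and track the two coordinates separately, exploiting that $f$ is exactly linear along the coordinate axes (which are $W^u_{\rm loc}(p)$ and $W^s_{\rm loc}(p)$) and that the nonlinear terms carry a factor $xy$. First I would set $(x_0,y_0) = (x u^{-n}, y)$ and $(x_j, y_j) = f^j(x_0,y_0)$, and prove by induction on $j$ that, for $|x|, |y|$ small enough (say both $\le r_0$ with $r_0$ depending only on $\|g_1\|, \|g_2\|$ on $\bb$), one has the quantitative bounds
\begin{equation}\label{eq:renorminduction}
|x_j| \le |u|^{\,j-n}\,2|x|\,, \qquad |y_j| \le |s|^{\,j}\,2|y|\,,
\end{equation}
for all $0 \le j \le n$; in particular $(x_j,y_j) \in \bb$ since $j \le n$ forces $|x_j| \le 2|x| \le 2r_0 < 1$ and $|y_j| \le 2|y| < 1$. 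The induction step is immediate from~\eqref{eq30}: $|x_{j+1}| \le |u|\,|x_j|\,(1 + |x_j||y_j|\,\|g_1\|)$ and $|y_{j+1}| \le |s|\,|y_j|\,(1+|x_j||y_j|\,\|g_2\|)$, and the product $|x_j||y_j| \le 4|x||y| \le 4r_0^2$ is small enough that each correction factor stays below, say, $e^{1/(2n)}$ after absorbing it into the constant~$2$ — more precisely one chooses $r_0$ so that $\prod_{j=0}^{n-1}(1+C\,|x_j||y_j|) \le 2$ uniformly in $n$, using that $\sum_j |x_j||y_j| \le 4|x||y|\sum_j |u|^{j-n}|s|^j$ converges geometrically (both $|u|^{-1} < 1$ and $|s| < 1$) to something $O(|x||y|)$; this is where I make the constant in~\eqref{eq:renorminduction} and the smallness of $r_0$ consistent.

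Next I estimate the deviation from the linearized orbit. Along the $y$-coordinate, \eqref{eq:renorminduction} already gives $|y_n| \le 2|s|^n|y| = O(\rho^n)$. Along the $x$-coordinate, write $x_{j+1} = u x_j(1 + \epsilon_j)$ with $\epsilon_j := x_j y_j g_1(x_j,y_j)$, so $x_n = u^n x_0 \prod_{j=0}^{n-1}(1+\epsilon_j) = x \prod_{j=0}^{n-1}(1+\epsilon_j)$. From~\eqref{eq:renorminduction}, $|\epsilon_j| \le \|g_1\|\,|x_j||y_j| \le C'\,|u|^{j-n}|s|^j \le C'\,\rho^{\,?}$ — here I would bound $|u|^{j-n}|s|^j$ by $\rho^{\,n-j}\cdot\rho^{\,j} = \rho^n$ when $|s| \le |u|^{-1} \le \rho$... more carefully: $|u|^{j-n} = (|u|^{-1})^{n-j} \le \rho^{n-j}$ and $|s|^j \le \rho^j$, so $|\epsilon_j| \le C'\rho^n$ for every $j$, hence $\sum_{j=0}^{n-1}|\epsilon_j| \le C' n \rho^n$ and therefore $\left|\prod_{j=0}^{n-1}(1+\epsilon_j) - 1\right| \le \exp\!\big(\sum_j|\epsilon_j|\big) - 1 \le 2\sum_j |\epsilon_j| \le 2C'n\rho^n$ for $n$ large. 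Thus $|x_n - x| \le |x|\cdot 2C'n\rho^n = O(n\rho^n)$, and combining with the bound on $y_n$ gives $f^n(x u^{-n}, y) = (x,0) + O(n\rho^n)$ uniformly in $(x,y)$ in the polydisk of radius $r_0$.

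The one genuinely delicate point is making the argument work \emph{uniformly in both $(x,y)$ and $n$} — the naive iteration of a $(1+\text{small})$ factor $n$ times would only give a bound like $(1+C r_0^2)^n$, which blows up. The resolution, as sketched above, is that the smallness of each correction $\epsilon_j$ is not a fixed constant but decays geometrically in $\min(j, n-j)$ (the orbit is trapped near the two axes and the product $x_j y_j$ is tiny except in a bounded window), so $\sum_j |\epsilon_j|$ is actually $O(n\rho^n) \to 0$ rather than $O(n)$. In the Archimedean case this is exactly the estimate I wrote; in the non-Archimedean case it is even cleaner, since the ultrametric inequality turns $|1+\epsilon_j| = 1$ for $|\epsilon_j| < 1$, so $|x_n| = |x|$ exactly and the error term $O(n\rho^n)$ in the statement can be replaced by an exact equality $f^n(xu^{-n},y) = (x,0)$ once $|x|,|y|$ are below the relevant threshold — but since the lemma only claims the $O(n\rho^n)$ bound, the unified statement holds in both cases. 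I would present the proof in the general metrized-field language, noting the ultrametric simplification in passing.
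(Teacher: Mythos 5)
Your proof is correct and follows essentially the same route as the paper's: iterate the normal form \eqref{eq30}, show by a bootstrapping induction that $|x_j|\lesssim |u|^{j-n}$ and $|y_j|\lesssim |s|^j$ (so the orbit stays in $\bb$), then bound $\bigl|\prod_{j=0}^{n-1}(1+\epsilon_j)-1\bigr|\le \exp\bigl(\sum_j|\epsilon_j|\bigr)-1$ with $\sum_j|\epsilon_j|=O(n\rho^n)$; the paper estimates $\sum_j|u|^{j-n}|s|^j$ by splitting the cases $|us|>1$ and $|us|\le 1$, while you bound each term by $\rho^{n-j}\rho^j=\rho^n$, an immaterial difference. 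The only inaccuracy is your parenthetical non-Archimedean remark: the ultrametric inequality yields $|x_n|=|x|$ and $|y_n|\le |s|^n|y|$, not the exact equality $f^n(x u^{-n},y)=(x,0)$, but this aside does not affect the proof of the stated estimate.
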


In the coordinates $(x,y)$, we write $C$ as a graph $y=\psi(x)  = bx + \hot$ over the first coordinate. 
Replacing $y$ by $by$ we can assume $b=1$.
Using Proposition \ref{prop:main2} we set    $\tilde{G}(x) = G^+(x, \psi(x)) = \alpha G^-(x, \psi(x))  + H(x, \psi(x))$.  
 By Lemma \ref{lem:renorm}, the continuity of $G^+$ and the invariance relation for $G^+$  we get that for small enough $x$, 
 \begin{equation}\label{eq:G+}
 d^n \tilde{G}\lrpar{\frac{x}{u^n}} = G^+\circ f^n\lrpar{\frac{x}{u^n}, \psi\lrpar{\frac{x}{u^n}}}\cv G^+(x,0).
 \end{equation}
Applying Lemma \ref{lem:renorm} to $f^{-1}$, for small $y$ we have 
the  following uniform convergence in a small disk:
\begin{equation}\label{eq:34}
f^{-n} (\psi^{-1}(s^{n} y), s^{n} y) \underset{n\cv\infty}\longrightarrow (0,y) ~.
 \end{equation}
Now we  claim that $|us| =1$. 
Indeed   assume  by contradiction that $|us|>1$. Then putting
$y_n =  s^{-n} \psi(x/u^n)$, we get that $y_n\cv 0$ when $n\cv\infty$. 
 We write 
\begin{align}\label{eq:35}
d^n\tilde{G}\lrpar{\frac{x}{u^n}}  = \alpha G^-\! \circ f^{-n}\lrpar{\frac{x}{u^n}, \psi\lrpar{\frac{x}{u^n}}} + d^{n} H 
\lrpar{\frac{x}{u^n}, \psi\lrpar{\frac{x}{u^n}}}  
 \end{align}
and 
applying \eqref{eq:34}, we see that 
$$G^-\! \circ f^{-n}\lrpar{\frac{x}{u^n}, \psi\lrpar{\frac{x}{u^n}}}  = G^-\! \circ f^{-n} (\psi^{-1}(s^{n} y_n), s^{n} y_n) \underset{n\cv\infty}\longrightarrow G^-(0,0) = 0.$$ Thus from  \eqref{eq:G+} 
and \eqref{eq:35} we infer that 
$$d^{n} H 
\lrpar{\frac{x}{u^n}, \psi\lrpar{\frac{x}{u^n}}} \underset{n\cv\infty}\longrightarrow G^+(x,0) ~,$$ locally uniformly in the neighborhood of the origin.
Since a limit of harmonic functions is harmonic (see Proposition \ref{prop:uniform-harmonic}), we conclude that 
$G^+$ is harmonic, hence identically zero on $W^u_{\rm loc} (p)$, 
 thereby 
contradicting  Proposition~\ref{prop:stable and G-}. This contradiction shows that 
 at the place $v$ we have $\abs{us}_v = \abs{\jac(f)}_v =1$,
 and completes the proof of Theorem \ref{thm:mainNA}.
\qed

 \begin{proof}[Proof of Lemma~\ref{lem:renorm}]
Recall that   we work in the unit bidisk $\bb$. Scaling the coordinates further, 
we may assume that the functions $g_1$, $g_2$ appearing in \eqref{eq30} are   as small as we wish,  say 
$\sup_\bb \{|g_1|, |g_2| \}\leq \e$, where $\e$ is a small positive constant whose value will be determined shortly. 

 Assume $(x_0, y_0)\in \bb$  and denote by 
$(x_1,y_1) = f(x_0,y_0), \ldots, (x_k,y_k) = f^k(x_0,y_0)$ its sucessive iterates
(whenever defined). 
Using~\eqref{eq30} recursively, 
we obtain
$$x_k = u^k x_0  \prod_{j=0}^{k-1} \big(1 + x_jy_j g_1(x_j,y_j)\big)\text{ and }\\
y_k = s^k y_0  \prod_{j=0}^{k-1} \big(1 + x_jy_j g_2(x_j,y_j)\big).
$$
We claim that if $|x_0| \le B|u|^{-n}$  for a suitable constant $B$ and $|y_0 | \le 1$ then 
the $n$ first iterates of $(x_0,y_0)$ are well-defined.

Indeed assume by induction that  the $k-1$ first iterates of $(x_0,y_0)$ stay in $\bb$ for some 
 $k\le n-1$. Then   we get that 
$$ |y_k| \le |s|^k \prod_{j=0}^{k-1} (1 + |y_j|  \varepsilon) 
.$$
This will in turn be bounded by $A \abs{s}^k$ if  $A$ is any constant satisfying $A \ge\prod_{j\ge 0} (1 + A |s|^j  \varepsilon)$. 
We leave the reader check that if $\e <(1-\abs{s})/10$, then $A=3$ will do.  In what follows we work under this assumption. 

Now assume that  $|x_0| \le \frac14 |u|^{-n}$ and let us show by induction that for small enough $\e$, 
$\abs{x_j}\leq \abs{u}^{j-n}$ for $0\leq j\leq n$  (so that in particular $(x_j, y_j)\in \bb$). 
Indeed, if this estimate holds for $0\leq j\leq k-1$, then using the formula for   $x_k$, we get that 
\begin{align*}
|x_k| &\le |u|^k |x_0| \prod_{j=0}^{k-1} (1 +    |x_j| 3  |s|^j   \varepsilon) 
\leq \frac14 \abs{u}^{k-n}  \prod_{j=0}^{k-1} \lrpar{1 + 3 \e \frac{\abs{us}^j}{\abs{u}^n}} \\
&\leq \frac14 \abs{u}^{k-n}\lrpar{1+ \exp\lrpar{\frac{3\e}{\abs {u}^n} \sum_{j=0}^n \abs{us}^j} }\\
&\leq \frac14 \abs{u}^{k-n}\lrpar{1+ \exp\lrpar{ {3\e} \sup_{n\geq 0}
 \max\lrpar{\frac{\abs{s}^n}{\abs{us}-1}, \frac{n}{\abs {u}^n} }}}. 
\end{align*}
Hence, choosing $\e$ sufficiently small (depending only on $u$ and $s$), the exponential term is smaller than 2, and we are done.

To get the conclusion of the lemma, we simply reconsider the previous computation for $k=n$, and use the inequality 
$$\abs{\prod (1+z_j) -1}\leq  \exp\lrpar{\sum \abs{z_j}}-1 $$ to obtain that 
\begin{align*}
\abs{\frac{x_n}{u^n x_0}   -1} &= \abs{ \prod_{j=0}^{n-1} \big(1 + x_jy_j g_1(x_j,y_j)\big) -1}  = O\lrpar{
 \max\lrpar{ {\abs{s}^n} , \frac{n}{\abs {u}^n} }}
\end{align*}
and we are done.
 \end{proof}

In the next theorem, we give a direct argument for Theorem \ref{thm:mainNA} under a more restrictive assumption
 which is reasonable from the dynamical point of view. We feel interesting to include it as it gives in this case a purely 
 archimedean proof of our main result. Observe that no transversality assumption is required.

\begin{thm}\label{thm:saddle archimedean}
Let $f$ be a polynomial automorphism of the affine plane of H\'enon type that is defined over a number field $\LL$. 
Assume that there exists an algebraic curve defined over $\LL$ and containing infinitely many periodic points. 

Suppose that at some archimedean place there exists a saddle point 
$p\in C$. Then $\jac(f)$ is a root of unity. 
\end{thm}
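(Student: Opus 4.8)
\emph{Strategy.} The plan is to combine Theorem~\ref{thm:mainArch} with a version of the renormalization argument of \S\ref{sec:transversality} in which the transversality statement (T) is not \emph{assumed} but is \emph{deduced} from the existence of the archimedean saddle. Begin with the usual reductions: replacing $f$ by an iterate and $\LL$ by a finite extension — which affects neither the hypotheses nor the conclusion, since $\jac(f^{N})=\jac(f)^{N}$ is a root of unity iff $\jac(f)$ is — we may assume that the saddle point $p$ is fixed, lies in $\reg(C)$, and that $f$, $C$, $p$ and the two eigenvalues $\lambda_{1},\lambda_{2}$ of $Df(p)$ are defined over $\LL$. Since the periodic points on $C$ have vanishing $h_{f}$-height, Proposition~\ref{prop:main2} applies: at every place $v$ there is a positive rational number $\alpha$, independent of $v$, and a harmonic function $H_{v}$ on $C_{v}$ with $G^{+}_{v}=\alpha\,G^{-}_{v}+H_{v}$ on $C$; moreover $H_{v}(p)=0$ because $p$ is periodic.

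\emph{The easy case.} Suppose first that $T_{p}C$ is not one of the two eigendirections of $Df(p)$, i.e. $C$ is transverse at $p$ to both $W^{u}_{\loc}(p)$ and $W^{s}_{\loc}(p)$. Because $p$ is a saddle at $v_{0}$ we have $|\lambda_{1}|_{v_{0}}\neq|\lambda_{2}|_{v_{0}}$, so $\lambda_{1}/\lambda_{2}$ is not a root of unity; hence the only $Df(p)$-periodic directions in $\P(T_{p}\A^{2})$ are the two eigendirections, and $T_{p}C$ is not periodic. Thus (T) holds at $p$ and Theorem~\ref{thm:mainNA} gives that $\jac(f)$ is a root of unity. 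It remains to rule out the tangent case.

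\emph{The tangent case is impossible.} Suppose $T_{p}C$ is an eigendirection; after possibly exchanging $f$ and $f^{-1}$ we may assume $C$ is tangent to $W^{u}_{\loc}(p)$, with a finite order of contact $k\ge2$ (finite since $C$ is algebraic while $W^{u}(p)$ is not — Proposition~\ref{prop:no fixed curves}); note that $k$ is an algebraic invariant of $(C,Df(p))$ at $p$, hence independent of the place. I would now carry out the renormalization at $v_{0}$, over $\C$, in the normal coordinates of Lemma~\ref{lem:coordinates}, writing $f(x,y)=\bigl(ux(1+xyg_{1}),sy(1+xyg_{2})\bigr)$ with $W^{u}_{\loc}(p)=\{y=0\}$, $W^{s}_{\loc}(p)=\{x=0\}$, $|s|_{v_0}<1<|u|_{v_0}$, and $C=\{y=\psi(x)\}$ with $\psi(x)=cx^{k}+\hot$ ($c\neq0$). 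The crucial point — the reason (T) is superfluous — is that the estimates of Lemma~\ref{lem:renorm} survive tangency: since $|s|_{v_0}/|u|_{v_0}^{k}<1$ holds automatically, the same computation as in its proof (valid over $\C$) gives $f^{j}(x/u^{n},\psi(x/u^{n}))\in\bb$ for $0\le j\le n$ and $f^{n}(x/u^{n},\psi(x/u^{n}))\to(x,0)$ uniformly for small $x$, while applying this to $f^{-1}$ along a branch of $C$ over small values of $y$ produces points $q_{n}\in C$ with $q_{n}\to p$, $d(p,q_{n})\asymp|s|_{v_0}^{\,n/k}$ and $f^{-n}(q_{n})\to(0,y_{0})$. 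Combining these convergences with the invariance relations $G^{\pm}\circ f^{\pm1}=dG^{\pm}$, the continuity of $G^{\pm}$, and Proposition~\ref{prop:stable and G-} (so that $G^{+}|_{W^{u}_{\loc}(p)}$ and $G^{-}|_{W^{s}_{\loc}(p)}$ are not identically $0$ near $p$), one obtains that the local H\"older exponents along $C$ at $p$ are
$$
\vartheta\bigl(G^{+}|_{C}\bigr)=\vartheta_{+},\qquad
\vartheta\bigl(G^{-}|_{C}\bigr)=k\,\vartheta_{-},\qquad\text{where }|u|_{v_0}^{\vartheta_{+}}=|s|_{v_0}^{-\vartheta_{-}}=d .
$$
Feeding this into $G^{+}_{v_{0}}=\alpha\,G^{-}_{v_{0}}+H_{v_{0}}$ and matching the orders of vanishing at $p$ — using $G^{\pm}|_{C}\ge0$, $H_{v_{0}}(p)=0$, and the fact that a non-zero harmonic function vanishing at $p$ has a finite integral order of vanishing there and no strict local extremum (the minimum principle; Proposition~\ref{prop:maximum-principle} at non-Archimedean places) — forces $\vartheta_{+}=k\vartheta_{-}$. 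This rewrites as $|u|_{v_{0}}^{k}|s|_{v_{0}}=1$, i.e. $|\jac(f)|_{v_{0}}=|u s|_{v_{0}}=|u|_{v_{0}}^{\,1-k}\neq1$ since $k\ge2$. This contradicts Theorem~\ref{thm:mainArch}. Hence $k=1$, we are in the easy case, and the proof is complete. (Equivalently, after having established transversality one may bypass Theorem~\ref{thm:mainNA}: the same computation with $k=1$ gives $|\jac(f)|_{v}=1$ at every place where $p$ is a saddle, Lemma~\ref{lem:thisone} handles the remaining non-Archimedean places, Theorem~\ref{thm:mainArch} the Archimedean ones, and Kronecker's theorem concludes.)

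\emph{Main obstacle.} The delicate step is the H\"older-exponent identity $\vartheta(G^{-}|_{C})=k\vartheta_{-}$, together with the two-sided estimates it rests on (genuine bounds $G^{+}(x,\psi(x))\asymp|x|^{\vartheta_{+}}$ and $G^{-}(x,\psi(x))\asymp|x|^{k\vartheta_{-}}$ that make the order-of-vanishing matching rigorous). This requires controlling the germ of $G^{\pm}$ at the saddle $p$ beyond the linear level — in particular understanding how $C$ approaches $K^{\pm}$ near $p$ when it is tangent to the unstable manifold to order $k$ — via the local structure of $K^{\pm}$ near $p$ and the non-linear renormalization of Lemmas~\ref{lem:coordinates}--\ref{lem:renorm}, and doing so in a form valid at Archimedean and non-Archimedean places alike.
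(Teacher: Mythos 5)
Your overall renormalization-at-the-saddle idea is the same as the paper's, but as written the proof has two genuine gaps. First, the reduction ``we may assume $p\in\reg(C)$'' is not a reduction: the hypothesis only provides a saddle somewhere on $C$, possibly at a singular point, and nothing lets you move it. At a singular point your ``easy case'' is unavailable (condition (T) of Theorem~\ref{thm:mainNA} requires $p\in\reg(C)$ and a well-defined $T_pC$), and your tangent/transverse dichotomy does not even make sense. The paper's proof is built precisely to avoid any smoothness assumption: it takes a local branch $\Psi(t)=(t^k,\psi(t))$, $\psi(t)=t^l+\hot$, and runs the renormalization to force $k=l$. Second, the step you yourself flag as the main obstacle is a real gap, and moreover the two-sided bounds you propose are false as stated: by the renormalization $d^nG^+\bigl(x/u^n,\psi(x/u^n)\bigr)\to G^+(x,0)$ and the fact that $K^+\cap W^u_{\loc}(p)$ accumulates at $p$ (the paper uses exactly this accumulation, via the fact that $\Delta\bigl(G^+|_{W^u_{\loc}(p)}\bigr)$ has no atoms), one can choose $x_*$ arbitrarily small with $G^+(x_*,0)=0$, and then $G^+\bigl(x_*/u^n,\psi(x_*/u^n)\bigr)=o\bigl(|x_*/u^n|^{\vartheta_+}\bigr)$; so the pointwise lower bound in $G^+|_C\asymp|x|^{\vartheta_+}$ fails, and only sup-over-discs estimates hold. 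With only such estimates, the ``matching of orders of vanishing'' in $G^+|_C=\alpha G^-|_C+H$ against a sign-changing harmonic $H$ of unknown vanishing order is not forced by the positivity/minimum-principle reasoning you sketch (the borderline case where $H$ vanishes to an order equal to the competing exponent allows cancellations you do not exclude). The paper's missing idea here is the uniform-harmonic-limit trick: if the exponents mismatch (i.e. $|s^ku^l|>1$), the $G^-$ term in the renormalized identity tends to $0$, so $d^nH\circ\Psi(t/u^{n/k})$ converges uniformly to $G^+(t^k,0)$; a uniform limit of harmonic functions is harmonic, and a nonnegative harmonic function vanishing at $0$ vanishes identically near $0$, contradicting Proposition~\ref{prop:stable and G-}. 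This sidesteps any need for two-sided pointwise estimates.

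A smaller remark on routing: even where your argument does work (regular saddle, transverse branch), you conclude by invoking Theorem~\ref{thm:mainNA}, i.e.\ by re-running the non-Archimedean analysis at every place; this is not circular (Theorem~\ref{thm:saddle archimedean} is not used in \S\ref{sec:transversality}), but it loses the point of the paper's statement, which is a purely archimedean argument. The paper instead, after forcing $k=l$, extracts subsequences with $(u^ks^k)^{n_j}\to\theta$ for every $\theta$ on the unit circle and deduces that $\supp\Delta\bigl(G^-(0,\cdot)\bigr)$ is rotation invariant near $0$, again contradicting Proposition~\ref{prop:stable and G-} unless $us=\jac(f)$ is a root of unity. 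If you want to salvage your write-up, you should (i) treat an arbitrary branch $(t^k,\psi(t))$ rather than assuming $p\in\reg(C)$, and (ii) replace the H\"older-exponent matching by the harmonic-limit argument above.
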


It follows from~\cite{bls2} 
that at the  archimedean place most periodic orbits of  $f$ are saddles, which make the assumption of the proposition natural. 
Still, there exists examples   polynomial automorphisms of $\cd$ with infinitely many non-saddle periodic orbits, even in a conservative setting, see \cite{duarte ETDS2008}.

\begin{proof}
We do an   analysis similar to that  of the proof of Theorem \ref{thm:mainNA}, starting from equation 
\eqref{eq:G+}, and  keeping the same notation. 
For simplicity  we write $\LL_v = \C$, and drop the reference to $v$. 
By assumption there is  a saddle periodic point $p\in C$,  with multipliers $u$ and $s$. From 
 Theorem \ref{thm:mainArch} we know that $\abs{us}= 1$. 
We work in the local adapted coordinates 
$(x,y)$ given by Lemma \ref{lem:coordinates}.
Since we make no smoothness or transversality assumption here, we
pick any local irreducible component  of $C$ at $p$,
and parameterize it by 
 $\Psi: t \mapsto (t^k, \psi(t))$ with $\psi(t) = t^l + \hot$. By Proposition \ref{prop:main2} for small $t\in \C$ we have that 
 $$\tilde G(t)  := G^+\circ \Psi(t) =G^+(t^k, \psi(t)) = \alpha G^- (t^k, \psi(t)) + H(t^k, \psi(t)).$$
Swapping the stable and unstable directions if needed we may assume that $k \leq l$.
Pick a $k^{\rm th}$ root of $u$, denoted by $u^{1/k}$. 

Applying the same reasoning as in \eqref{eq:G+} we get that 
 \begin{equation}\label{eq:G+singular}
 d^n \tilde{G}\lrpar{\frac{t}{u^{n/k}}} = G^+\!\circ f^n\lrpar{ \frac{t^k}{u^n} , \psi\lrpar{\frac{t}{u^{n/k}}}}\cv G^+(t^k,0).
 \end{equation}
Since $k\leq l$ and $\abs{us}= 1$, we see  that $\abs{s^{k} u^l}\geq 1$, from which we infer that 
  $\psi\lrpar{\frac{t}{u^{n/k}} }= O(s^n)$. Therefore  we can do the same with  $f^{-n}$ to  deduce that 
\begin{align} \label{eq:G-singular} 
d^n \tilde{G}&\lrpar{\frac{t}{u^{n/k}}} = \alpha G^- \!\circ f^{-n}\lrpar{ \frac{t^k}{u^n} ,\psi\lrpar{\frac{t}{u^{n/k}}}} 
+ d^n H  \lrpar{ \frac{t^k}{u^n} ,\psi\lrpar{\frac{t}{u^{n/k}}}} 
\\ \notag
&= \alpha G^- \lrpar{0, s^{-n}  \psi\lrpar{\frac{t}{u^{n/k}} }} +o(1) + 
d^n H \lrpar{ \frac{t^k}{u^n} ,\psi\lrpar{\frac{t}{u^{n/k}}}} \\   \notag
&= \alpha G^- \lrpar{0, \frac{t^l}{(s^ku^l)^{n/k}}} +o(1) + d^n H  \lrpar{ \frac{t^k}{u^n} ,\psi\lrpar{\frac{t}{u^{n/k}}}}  . 
\end{align}
Arguing exactly as in the proof of Theorem \ref{thm:mainNA}, we see that this is contradictory unless $\abs{s^k u^l} = 1$, that is, $k=l$ 
(in particular if $C$ is smooth at $p$ it must be transverse to $W^u_{\rm loc}(p)$ and $W^s_{\rm loc} (p)$).

\medskip

Now since we work in the archimedean setting, we can push the analysis further and proceed to prove that $us = \jac(f)$ is a root of unity.  
Assume by contradiction that this is not the case. 
Choose any $\theta$ in the unit circle, and pick a subsequence $(n_j)$ such that  
$(u^ks^k)^{n_j} \to \theta$. 
Observe that $s^{-n} \psi (x/u^{n/k}) \to 
x^k/\theta$.
Then by~\eqref{eq:G+} and \eqref{eq:35} in the smooth case ($k=1$) and 
\eqref{eq:G+singular} and \eqref{eq:G-singular} in the singular case  we get that for small $t$
\begin{align*}
G^+(t^k,0) &= \lim_{n_j \to \infty}  G^+\! \circ f^{n_j}\left(\Psi \lrpar{\frac{t}{u^{n_j/k}} } \right)\\
&= \lim_{n_j \to \infty} \left[\alpha   G^-\! \circ f^{-n_j}\left( \Psi \lrpar{\frac{t}{u^{n_j/k}} } \right)
+ d^{n_j} H  \circ \Psi \lrpar{\frac{t}{u^{n_j/k}} } 
\right]\\
& =  \alpha G^- \lrpar{0, \frac{t^k}{\theta}}  + 
 \lim_{n_j \to \infty} d^{n_j} H \circ  \Psi \lrpar{\frac{t}{u^{n_j/k}} }  
\end{align*} 
Since $\theta$ was arbitrary and since a uniform limit of harmonic functions is harmonic, we see that the Laplacian of the function  
$t \mapsto G^- (0, t^k)$  is rotation-invariant in a neighborhood of the origin. 
Observe that  this Laplacian expresses as 
 $\kappa^* \Delta (G^-(0,t))$, where $\kappa: t\cv t^k$. Recall also that the support of $\Delta (G^-(0,t))$ equals 
 $\fr (K^-\cap W^s_{\rm loc}(p))$, where the boundary is relative to the intrinsic topology on $W^s_{\rm loc} (p)$. 
 Thus we  conclude  that relative the linearizing coordinate on $W^s(p)$,   
$\kappa^{-1} (\fr (K^-\cap W^s_{\rm loc}(p)))$ is rotation invariant, so $\fr (K^-\cap W^s_{\rm loc}(p))$ is rotation invariant as well. 
But   since  $dd^c(G^-\rest{W^s_{\rm loc} (p)})$ 
gives no mass to points, 
   $p$ must be an accumulation point of $\fr (K^-\cap W^s_{\rm loc}(p))$. By rotational invariance,  $K^-\cap W^s_{\rm loc}(p)$ will then contain small circles around the origin. By the Maximum principle this
    implies that $G^-\rest{W^s_{\rm loc} (p)}$ vanishes in a neighborhood of $p$, which  contradicts Proposition 
   \ref{prop:stable and G-}. The proof is complete. 
 \end{proof}

\begin{rem}\label{rem:symmetry}
It is a well-known idea in the dynamical study of plane polynomial automorphisms 
that the slices of $T^\pm$ by stable and unstable manifolds (or more generally by any curve) 
contain a great deal of information about $f$. 
For instance, as we saw in \S \ref{sec:dissipative}, the  Lyapunov exponents of the maximal entropy measure can be read off this data. 
The same holds  for  multipliers of all saddle periodic orbits. 
 See also  \cite{bs6} for a striking application of this circle of  ideas. 

The proof of Theorem \ref{thm:saddle archimedean}  (with $k=1$, say) implies 
 that   in adapted coordinates a  relation of the form  $G^+(x,0) = G^-(0,x) + \tilde H$ holds, where 
 $\tilde H$ is a harmonic function. 
So we get  that  an unstable  slice of $K^+$ is {\em holomorphically equivalent} to  a stable slice of $K^-$. 
 
 This rigidity    suggests a strong form of symmetry between $f$ and $f^{-1}$ which 
  gives additional credit to Conjecture \ref{conj:MM2}.
\end{rem}


\section{Conclusion of the proof of Theorems \ref{thm:mainArch} and \ref{thm:mainNA}} \label{sec:general fields}

Recall that when $f$ and $C$ are defined over a number field,  
Theorems \ref{thm:mainArch} and \ref{thm:mainNA} were established in  \S \ref{sec:dissipative} and \S \ref{sec:transversality} respectively. 
In this section we explain how a specialization argument allows to 
extend these results to an arbitrary field $K$ of  characteristic zero. 
Our approach  treats Theorems \ref{thm:mainArch} and \ref{thm:mainNA} simultaneously. 

Notice that   
it is not clear how to use the Lefschetz Principle here
because the statement that $f$ possesses infinitely many periodic points on a curve does not belong to first order logic. 

\medskip

Since $K$ has characteristic zero,  replacing it by an algebraic extension if needed,
 we may assume that it contains the algebraic closure of its prime field. We fix an isomorphism of this algebraically closed field with $\Q^{\rm alg}$. 

We first  make a conjugacy so   that $f$ becomes a regular polynomial automorphism of degree $d \ge 2$.
Pick a  finitely generated $\Q^{\rm alg}$-algebra $R \subset K$ containing all coefficients
of $f$, $f^{-1}$, and of  an equation defining $C$. We may assume that ${\rm Frac}(R) = K$, and we set $S = \spec (R)$. 

Let us start with a  loose explanation of the proof. The field $K$ is a  function  field 
over $\Q^{\rm alg}$, which we view as the function field of the variety $S$. 
For every $s\in S$, we   substitute the corresponding value $s$ into the coefficients of $f$, obtaining a map $f_s$. For generic $s$,
 we  obtain a polynomial automorphism (Lemma \ref{lem:regular S}), which satisfies the assumptions of 
   Theorem \ref{thm:mainArch} or \ref{thm:mainNA}
(Lemmas \ref{lem:transversality S} and \ref{lem:ducros}). 
Thus for every $s$, $\mathrm{Jac}(f_s)$ is of modulus 1 and we can conclude that the same holds for $\mathrm{Jac}(f)$. 
The details however require a little bit of algebro-geometric technology. 

\smallskip

Let $\pi : \A^2_S \to S$ be the natural projection map. 
Observe that for every $s\in  S$,  the fiber $\pi^{-1}(s)$ is canonically isomorphic to $\A^2_{\kappa(s)}$, 
where $\kappa(s)$ is the residue field of $s$.  
For such a $s$, we   let $C_s := \pi^{-1}(s) \cap C$ be the specialization of $C$, and likewise we
denote by $f_s$ and $f^{-1}_s$  the maps respectively induced by $f$ and $f^{-1}$  on $\A^2_{\kappa(s)}$.

\begin{lem}\label{lem:regular S}
There exists a non-empty  open subset $S' \subset S$ such that for any $s \in S'$ the map $f_s$ is a regular polynomial automorphism of degree $d$.
\end{lem}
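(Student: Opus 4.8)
The plan is to translate the definition of ``regular polynomial automorphism of degree $d$'' into the non-vanishing of finitely many elements of $R$, and then to observe that these elements, being non-zero over $K=\mathrm{Frac}(R)$ precisely because $f$ itself is regular of degree $d$, cut out a non-empty Zariski open subset of $S$.

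First, $f_s$ is a polynomial automorphism of $\A^2_{\kappa(s)}$ with inverse $(f^{-1})_s$ for \emph{every} $s\in S$: the relations expressing $f\circ f^{-1}=\mathrm{id}=f^{-1}\circ f$ are equalities of polynomials with coefficients in $R$ (this uses that the coefficients of both $f$ and $f^{-1}$ lie in $R$), and they are preserved by the specialization homomorphism $R\to\kappa(s)$. For the two remaining requirements -- that $\deg(f_s)=d$ and that the rational extension $F_s\colon\P^2_{\kappa(s)}\dashrightarrow\P^2_{\kappa(s)}$ contract $H_\infty$ to a point not belonging to its indeterminacy locus -- I would use the standard description of these data through top-degree parts. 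Write $f=(f_1,f_2)$ with $f_i\in R[x,y]$, let $f_i^{(d)}$ be the homogeneous component of degree $d$, and recall that $F|_{H_\infty}$ is the map $[X:Y:0]\mapsto[f_1^{(d)}(X,Y):f_2^{(d)}(X,Y):0]$, while $\mathrm{Indet}(F)$ is always contained in $H_\infty$, where it coincides with the common zero locus of $f_1^{(d)}$ and $f_2^{(d)}$. Consequently, over $K$, regularity of degree $d$ is equivalent to: $\deg f=d$, the forms $f_1^{(d)}$ and $f_2^{(d)}$ are proportional, and the resulting point $p_+$ is not a common zero of them.

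Now, proportionality of $f_1^{(d)}$ and $f_2^{(d)}$ is the vanishing of all $2\times2$ minors of the coefficient matrix of the pair $(f_1^{(d)},f_2^{(d)})$ -- elements of $R$ that vanish in $K$, hence already in $R$ -- so it persists under every specialization. Fix $j\in\{1,2\}$ with $f_j^{(d)}\neq0$ over $K$ (such $j$ exists since $\deg f=d$), and pick a degree-$d$ monomial whose coefficient $c_0\in R$ in $f_j^{(d)}$ is non-zero; denote by $(a,b)\in R^2$ the pair of coefficients of that monomial in $(f_1^{(d)},f_2^{(d)})$, so that $c_0\in\{a,b\}$, and proportionality yields $p_+=[a:b:0]$. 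A short homogeneity computation then shows that ``$p_+$ is not a common zero of $f_1^{(d)},f_2^{(d)}$'' is equivalent to $c_1:=f_j^{(d)}(a,b)\neq0$, and $c_1\in R$ is non-zero over $K$ because $f$ is regular. I claim $S':=D(c_0c_1)$ works: it is non-empty since $R$ is a domain and $c_0c_1\neq0$, and for $s\in S'$ one checks in turn that $f_{j,s}^{(d)}\neq0$ (its chosen coefficient is $c_0(s)\neq0$), whence $\deg f_s=d$ since specialization cannot raise the degree; that $f_{1,s}^{(d)}$ and $f_{2,s}^{(d)}$ are still proportional and not both zero, so $F_s$ contracts $H_\infty$ to $p_{+,s}=[a(s):b(s):0]$, a genuine point as $c_0(s)\in\{a(s),b(s)\}$ is non-zero; and that $p_{+,s}\notin\mathrm{Indet}(F_s)$ because $f_{j,s}^{(d)}(p_{+,s})=c_1(s)\neq0$. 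Hence $f_s$ is a regular polynomial automorphism of degree $d$ for every $s\in S'$. The only step requiring more than bookkeeping is the equivalence stated at the end of the second paragraph -- the purely algebraic reformulation of regularity via the forms $f_1^{(d)},f_2^{(d)}$; once it is in hand, the rest is the elementary principle that a finite list of polynomial non-vanishing conditions valid at the generic point of the integral scheme $S$ is valid on a non-empty open subset.
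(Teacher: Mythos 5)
Your proof is correct and takes essentially the same route as the paper: both observe that $f_s$ is an automorphism for every $s$ and then reduce ``regular of degree $d$'' to an open condition on the top-degree homogeneous part $f^{(d)}$, which holds at the generic point of the integral scheme $S$ and hence on a non-empty open subset. The only difference is cosmetic: the paper packages the condition as the single non-vanishing $f^{(d)}\circ f^{(d)}\not\equiv 0$, while you unpack it into proportionality of $f_1^{(d)},f_2^{(d)}$ (which, holding over $K$, holds identically on $S$) together with the explicit non-vanishing locus $D(c_0c_1)$.
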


\begin{proof}
Observe first that $f \circ f^{-1} = \id$ hence  $f_s \circ f^{-1}_s = \id$ on $\A^2_{\kappa(s)}$ so $f$ is an automorphism for all $s\in S$.

The condition for being regular of degree $d$ can be stated as follows  :
 expand $f$ as $f = f^{(0)} + f^{(1)} + \ldots + f^{(d)}$, where $f^{(i)}: \A^2 \to \A^2$ contains only homogeneous terms of degree $i$.
Then $f$ is regular of degree $d$ if and only if the composition $f^{(d)} \circ f^{(d)}$ is not identically $0$.
Now the set of $s\in S$ where $f^{(d)}_s \circ f^{(d)}_s \not\equiv 0$ is  open,  and the result follows.
\end{proof}

\begin{lem}\label{lem:transversality S}
Suppose that $p\in C(R)$ is a (closed) periodic point for $f$ lying in $C$ and defined over $R$ such that  the transversality condition (T) holds for $f$. 

Then there exists an open subset $S' \subset S$ such that for any $s \in S'$ the condition (T) is also satisfied for $f_s$.
\end{lem}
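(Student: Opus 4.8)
The plan is to read the transversality condition (T) as an open condition on the base $S$, exactly in the spirit of Lemma~\ref{lem:regular S}. Recall that (T) requires a periodic point $p\in \mathrm{Reg}(C)$ such that $T_pC$ is not periodic under the induced action of $f$; after replacing $f$ by an iterate we may (and do) assume $p$ is a fixed point, that $p$ is a regular point of $C$, and that $Df_p(T_pC)\neq T_pC$. By hypothesis $p$ is defined over $R$, so it gives a section $s\mapsto p_s\in \A^2_{\kappa(s)}$, and since $p$ lies on $C$ defined over $R$ we have $p_s\in C_s$ for every $s\in S$. The first step is to check that "$p_s$ is a fixed point of $f_s$" holds for all $s$: indeed $f(p)=p$ is an equation with coefficients in $R$, so it specializes identically. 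Hence the only two conditions that can fail upon specialization are "$p_s\in\mathrm{Reg}(C_s)$" and "$Df_{p_s}(T_{p_s}C_s)\neq T_{p_s}C_s$", and I claim each of these fails only on a proper closed subset.

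First I would handle regularity. Write $C=\{P=0\}$ with $P\in R[x,y]$. The point $p$ being a smooth point of $C$ means $(\partial_xP,\partial_yP)(p)\neq(0,0)$; the two partial derivatives evaluated at the section $p$ are elements of $R$, so the locus of $s$ where both vanish is closed in $S$, and by hypothesis it does not contain the generic point. Shrinking $S$ we get an open $S_1$ on which $p_s$ is a smooth point of $C_s$ (one should also ensure $C_s$ itself stays irreducible and reduced near $p_s$, which again is an open condition on $S$, cf. the standard generic-flatness/generic-smoothness argument, or simply the fact that the nonvanishing of a discriminant-type polynomial in $R$ is open). On $S_1$ the tangent line $T_{p_s}C_s$ is spanned by the vector $v_s=(-\partial_yP,\partial_xP)(p_s)$, whose entries lie in $R$.

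Second, the transversality of $Df$ with $T_pC$. The condition $Df_p(T_pC)\neq T_pC$ is equivalent to the nonvanishing of the determinant $D(s):=\det\big(v_s\ \big|\ Df_{p_s}(v_s)\big)$, where the entries of $Df_{p_s}(v_s)$ are obtained by evaluating the (polynomial, with coefficients in $R$) entries of the Jacobian matrix of $f$ at $p_s$ and applying them to $v_s$. Thus $D(s)\in R$, its vanishing locus is closed in $S$, and by hypothesis $D$ does not vanish at the generic point of $S$, i.e. $D\neq 0$ in $R={\rm Frac}(R)$'s ring of functions. Hence there is a non-empty open $S_2\subset S_1$ on which $D(s)\neq 0$. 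Taking $S'=S_2\cap S''$ where $S''$ is the open set of Lemma~\ref{lem:regular S} on which $f_s$ is a regular polynomial automorphism of degree $d$, every $s\in S'$ gives a regular automorphism $f_s$ admitting the fixed point $p_s\in\mathrm{Reg}(C_s)$ with $Df_{p_s}(T_{p_s}C_s)\neq T_{p_s}C_s$, which is precisely condition (T) for $f_s$.

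I do not expect a serious obstacle here; this is a routine "open conditions on the base" argument, and the only point requiring a little care is ensuring that, after shrinking $S$, the specialized curve $C_s$ remains irreducible (or at least that the branch through $p_s$ stays reduced and smooth) so that "$T_{p_s}C_s$" is unambiguous — this is the kind of genericity statement for which one invokes the standard fact that the generic fibre being geometrically integral propagates to an open set of fibres. The genuinely subtle specialization input (that $C_s$ still contains infinitely many periodic points of $f_s$) is the content of the subsequent Lemma~\ref{lem:ducros}, not of this one.
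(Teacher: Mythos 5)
Your overall strategy (read everything as open conditions on the base $S$, using that $p$ is a section over $R$) is the same as the paper's, and your treatment of the fixed-point property and of the regularity of $C_s$ at $p_s$ matches the paper's proof. The problem is the last step. Condition (T) for $f_s$ asks that $T_{p_s}C_s$ be \emph{not periodic} under the induced action of $f_s$ on tangent directions, i.e.\ that $D(f_s^{mk})_{p_s}(T_{p_s}C_s)\neq T_{p_s}C_s$ for \emph{every} $m\ge 1$; a priori this is a countable intersection of open conditions, and the whole content of the lemma is to replace it by finitely many. Your determinant $D(s)$ only rules out invariance under one application of the chosen iterate. That is strictly weaker: at special parameters $s$ the induced action of $D(f_s^{N})_{p_s}$ on the projectivized tangent space can have finite order (for instance if the ratio of the eigenvalues of $D(f_s^{N})_{p_s}$ degenerates to a root of unity), and then \emph{every} direction, in particular $T_{p_s}C_s$, is periodic even though $D(s)\neq 0$. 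So the concluding sentence ``which is precisely condition (T) for $f_s$'' is not justified, and the locus of such bad $s$ is not visibly contained in a proper closed subset, since ``eigenvalue ratio is a root of unity of some order'' is a countable union of closed conditions.

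The paper handles this by a stronger finite test: it keeps track of the exact period $k$ of $p_s$ (an open condition $f(p),\dots,f^{k-1}(p)\neq p$) and requires that the gradient of the defining equation be not an eigenvector of $df^{2k}$ at $p$, which in particular excludes tangent directions of period one or two under the return map and is the open condition it takes as expressing (T). To repair your argument you should either verify this stronger eigenvector condition (it is again the nonvanishing of an element of $R$, so open), or else observe explicitly that what the proof of Theorem~\ref{thm:mainNA} actually uses at the specialized parameter is only that $T_{p_s}C_s$ is not an eigendirection of the return map $D(f_s^{k})_{p_s}$ (so that $C_s$ is transverse to the local stable and unstable manifolds at $p_s$), a property which your condition $D(s)\neq 0$ does imply; but as a proof of the lemma as stated, the step from ``not invariant under one iterate'' to ``not periodic'' is a genuine gap. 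Your side remarks (the iterate replacement, irreducibility of $C_s$) are harmless, but the reduction of non-periodicity to a finite, open condition is the point that must be addressed.
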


\begin{proof}
The condition that $p_s$ belongs to the  regular locus of $C_s$ is open since it is given 
by an equation of the form 
$d\phi_s (p_s) \neq 0$ where $\phi \in R[x,y]$ is an equation of $C$. 

If $p$ has exact period $k$, the condition that the period of $p_s$ is also $k$ is given by the open condition: $f (p), \ldots, f^{k-1}(p) \neq p$. 

The condition that $T_pC$ is not invariant by an iterate of $f$ is equivalent to say that 
the vector $(\frac{\partial \phi}{\partial x}, \frac{\partial \phi}{\partial y})$ is not an eigenvector for
$df^{2k}$  which is open.
\end{proof}

The key point is the following lemma. 

\begin{lem}\label{lem:ducros}
Suppose that $C$ contains infinitely many periodic points of $f$.
Then for any $s\in S$ the curve $C_s$ contains infinitely many periodic points of $f_s$.
\end{lem}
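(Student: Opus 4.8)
The plan is to deduce the statement from a semicontinuity argument for the fixed-point schemes of the iterates $f^{\,n}$, the crucial point being that periodic points of a H\'enon automorphism cannot collide with unbounded multiplicity under specialization. First I would make a few reductions on the base. After enlarging the finitely generated $\Q^{\mathrm{alg}}$-algebra $R$ (in particular normalizing it and inverting finitely many elements) we may assume that $R$ is normal and that, over $R$, $f=g_k\circ\cdots\circ g_1$ is a composition of elementary regular automorphisms $g_j(x,y)=(a_jy,\,x+P_j(y))$ with $\deg P_j\ge2$, all the $a_j$, the leading coefficient of each $P_j$, and $\jac(f)^{\pm1}$ being units of $R$. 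This has two consequences. First, $f_s$ is still a regular automorphism of H\'enon type of degree $d$ for \emph{every} $s\in S$, so by Proposition~\ref{prop:no fixed curves} the scheme $\fix(f_s^{\,n})$ is $0$-dimensional, hence finite, for all $n$ and $s$. Second, running the filtration $V,V^{+},V^{-}$ of \S\ref{sec:prel} over each valuation ring dominating a point of $S$ — where, all the relevant coefficients being units, one may take the constant $C$ equal to $1$ — shows that every periodic point of $f$ has coordinates integral over $R$; the elimination ideal of $\fix(f^{\,n})$ in $R[x,y]$ then contains monic polynomials in $x$ and in $y$, so the closed subscheme $\mathcal F_n:=\fix(f^{\,n})\subset\A^2_S$ is finite over $S$ for every $n$.

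Now fix $s\in S$, and let $C\subset\A^2_S$ also denote the closed subscheme cut out by an equation $\phi\in R[x,y]$ of $C$, as in the excerpt. If $\phi_s=0$ then $C_s=\A^2_{\kappa(s)}$ and the statement is trivial (a H\'enon automorphism has infinitely many periodic points), so assume $\phi_s\neq0$, i.e. $C_s$ is a curve. Consider $\mathcal W_n:=\fix(f^{\,n})\cap C$, a closed subscheme of $\mathcal F_n$ and hence finite over $S$. Its generic fibre is $\fix(f^{\,n})_K\cap C$, whose length over $K$ is at least $N_n:=\#\{q\in C(K^{\mathrm{alg}}):f^{\,n}(q)=q\}$; by hypothesis $N_n$ is unbounded, since if $p_1,\dots,p_k$ are distinct periodic points on $C$ and $n$ is the lcm of their periods then $N_n\ge k$. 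Since $\pi_*\mathcal O_{\mathcal W_n}$ is a coherent sheaf on the integral scheme $S$, the function $t\mapsto\dim_{\kappa(t)}(\mathcal W_n)_t$ is upper semicontinuous, so its value at $s$ is at least its value at the generic point, which is $\ge N_n$; as $\fix(f^{\,n})$ and the intersection with $C$ commute with base change, $(\mathcal W_n)_s=\fix(f_s^{\,n})\cap C_s$, and therefore $\mathrm{length}_{\kappa(s)}\big(\fix(f_s^{\,n})\cap C_s\big)\ge N_n$. This length is a sum over the finite set $Y_n(s)$ of periodic points of $f_s$ of period dividing $n$ lying on $C_s$, and at each such $q$ it is bounded by $\mathrm{mult}_q\big(\fix(f_s^{\,n})\big)$ (intersecting the fat point $\fix(f_s^{\,n})$ with $C_s$ can only decrease the length). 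Hence $\sum_{q\in Y_n(s)}\mathrm{mult}_q\big(\fix(f_s^{\,n})\big)\ge N_n$ for all $n$.

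Suppose for contradiction that $\bigcup_n Y_n(s)$ were finite, say equal to $\{r_1,\dots,r_m\}$. Each $r_j$ is a periodic point of $f_s$ and is isolated in $\fix(f_s^{\,N})$ for every $N$ (again by Proposition~\ref{prop:no fixed curves}); choosing an embedding $\kappa(s)\hookrightarrow\C$, the theorem of Shub and Sullivan on the fixed-point indices of iterates gives $M_j:=\sup_N\mathrm{mult}_{r_j}\big(\fix(f_s^{\,N})\big)<\infty$. But then $N_n\le M_1+\cdots+M_m$ for every $n$, contradicting the unboundedness of $N_n$. Therefore $\bigcup_n Y_n(s)$ is infinite, i.e. $C_s$ contains infinitely many periodic points of $f_s$. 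I expect the delicate step to be the first one: one must make sure that the reductions on $R$ genuinely force $\mathcal F_n$ to be finite over \emph{all} of $S$, uniformly in $n$ — this is precisely where the good-reduction structure of H\'enon automorphisms and the integrality of their periodic points are used — and that the Shub--Sullivan bound is available in the required generality, which is why one passes to $\C$.
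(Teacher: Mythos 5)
Your overall architecture --- a good-reduction model of $f$ over $R$, finiteness of the fixed-point schemes $\fix(f^{\,n})\subset\A^2_S$ over $S$, upper semicontinuity of the fibre length of $\fix(f^{\,n})\cap C$, and Shub--Sullivan to turn unbounded length into unboundedly many points --- is sound, and its second half is in fact a small simplification of the paper, which instead works with the ideals $(f^n_1-x,\,f^n_2-y,\,\phi^l)$ and needs the Nullstellensatz for $l\gg0$ to identify the special-fibre length with $\sum\mu(p,f^n_s)$, whereas your two inequalities (generic length $\ge N_n$, local length at $q$ bounded by $\mathrm{mult}_q(\fix(f_s^{\,n}))$) suffice. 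The genuine gap is exactly in the step you flag: finiteness of $\fix(f^{\,n})\to S$ does \emph{not} follow from integrality over $R$ of the periodic points of $f$. Integrality of the generic-fibre points only gives, via Cayley--Hamilton for multiplication by $x$ on the Artinian algebra $K[x,y]/I_nK[x,y]$ (whose eigenvalues are the $x$-coordinates of those points), a monic $\chi\in R[T]$ with $\chi(x)\in I_n\cdot K[x,y]$; clearing denominators yields only $c\,\chi(x)\in I_n$ for some nonzero $c\in R$ possibly depending on $n$, i.e.\ finiteness after an $n$-dependent shrinking of $S$, which destroys the argument. And the implication ``all fibres finite $+$ generic-fibre points integral $\Rightarrow$ finite over $S$'' is false: over $R=\Q^{\rm alg}[s,t]$ the ideal $x\cdot(s,\,tx-1)\subset R[x]$ cuts out a closed subscheme of the affine line over $S$ with finite fibres whose generic fibre is $\{x=0\}$, yet it is not finite over $S$ (the component $V(s,tx-1)$ has non-closed image), and its elimination ideal contains no monic polynomial; components sitting over proper closed subsets of $S$ are invisible to the generic fibre.

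The fix stays inside your framework: run the escape argument at \emph{all} valuation rings, i.e.\ verify the valuative criterion of properness for the affine morphism $\fix(f^{\,n})\to S$. Given a valuation ring $V$ (discrete valuation rings suffice, the base being Noetherian) with fraction field $F$, a map $\spec V\to S$ and an $F$-point $(x_0,y_0)$ of $\fix(f^{\,n})$ above it, the coefficients of your normal form $g_j(x,y)=(a_jy,\,x+P_j(y))$ land in $V$ with the $a_j$, the leading coefficients and $\jac(f)^{\pm1}$ units, so the filtration argument with $C=1$ shows that a point with $\max(|x_0|,|y_0|)>1$ has strictly increasing norm along its forward or backward orbit, contradicting periodicity; hence $(x_0,y_0)\in V^2$ and the point extends to a $V$-point. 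Thus $\fix(f^{\,n})\to S$ is proper and affine, hence finite, for every $n$ at once and with no shrinking of $S$ beyond the initial one (which is independent of $n$ and harmless for the specialization argument, as is the possible finite extension of $K$ needed to reach the normal form). With this correction your proof is complete and genuinely different from the paper's: there, properness is obtained by compactifying the intersection of the diagonal with the graph of $f$ inside $\P^2_S\times\P^2_S$ and showing, via Ducros' lemma on sections, that the two sections at infinity $p_\pm(s)$ form connected components, while you exploit good reduction and integrality of periodic points; also note that Shub--Sullivan can be applied directly over $\kappa(s)$, as in the paper's footnote, without choosing an embedding into $\C$.
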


Before proving this lemma, let us show how to conclude the proof of Theorems \ref{thm:mainArch} and \ref{thm:mainNA}. 
Since $\jac(f)$ lies in $R$, it may be viewed as a regular function from $S$ to $\A^1_{\Q^{\rm alg}}$. 

Fix any embedding $\Q^{\rm alg} \subset \C$.
By Theorems \ref{thm:mainArch} (resp. \ref{thm:mainNA})
over $\Q^{\rm alg}$, we know that for any closed point $s\in S$, the algebraic number $\jac(f)(s)$ 
has all its conjugates of modulus $1$ (resp. it is a root of unity).

If $\jac(f)$ were not a constant function then its 
image would contain an open affine subset of $\A^1_{\Q^{\rm alg}}$, 
and in particular at least one algebraic number  of modulus different from $1$. 
Therefore  $\jac(f)$ is a constant lying in  $\Q^{\rm alg}$ and we are done.\qed

\begin{proof}[Proof of Lemma \ref{lem:ducros}]
For any $n\ge1$, write $f^n = (f^n_1, f^n_2)$ in affine coordinates $(x,y)$, and pick a
defining equation  $C= \{\phi =0\}$ with $\phi \in R[x,y]$.

Given an integer $l\geq 1$, denote by $\mathcal{I}_{n,l}$ the coherent ideal sheaf generated by the polynomials
$f^n_{1}-x$, $f^n_{2}-y$ and $\phi^l$. Let $\mathcal{F}_{n,l}$ be the quotient sheaf $\mathcal{O}_{\A^2} /\mathcal{I}_{n,l}$, and denote by $X_{n,l}$ the $S$-subscheme of $\A^2_S$ defined by $\mathcal{F}_{n,l}$.

We claim that the  map of schemes $\pi : X_{n,l} \to S$ is proper (hence finite since $X_{n,l}$ is a sub-scheme of $\A^2_S$). Taking  this fact for granted we proceed with the proof of the lemma.

Since $\pi: X_{n,l} \to S$ the sheaf $\mathcal{G}:= \pi_* \mathcal{F}_{n,l}$ is coherent on $S$.
It follows from Nakayama's lemma \cite[Exercice II.5.8]{hartshorne} that the function $s \mapsto \dim_{\kappa(s)} \mathcal{G}_s/\mathfrak{m}_s \mathcal{G}_s$ is upper semi-continuous. 
Now observe that $$\mathcal{G}_s/\mathfrak{m}_s \mathcal{G}_s = \oplus_{p\in \A^2_{\kappa(s)} } \kappa(s)[x,y] / ((f^n_1)_s -x, (f^n_2)_s -y, \phi^l_s)~.$$

Pick any closed point $s\in S$. By the Nullstellensatz, for $l$ large enough  and for any given point $p \in \pi^{-1}(s)$ the stalk of the coherent sheaf $\mathcal{F}_{n,l}/\mathfrak{m}_s \mathcal{F}_{n,l}$  at $p$ coincides with the stalk of $\kappa(s)[x,y] / ((f^n_1)_s -x, (f^n_2)_s -y)$. To simplify notation let us denote by 
$\mu(p,f^n_s)$ the multiplicity of $p$ as a fixed point for $f^n_s$ that is 
the dimension of $\mathcal{O}_{\A^2_\kappa(s),p} / ((f^n_1)_s -x, (f^n_2)_s-y)$.
Then we have
\begin{multline*}
\sum_{p\in C\cap \fix(f^n_s)} \mu(p,f^n_s)
\\=
\sum_{p\in C\cap \pi^{-1}(s)} \dim_{\kappa(s)} \kappa(s)[x,y] / ((f^n_1)_s -x, (f^n_2)_s -y)
\\
=
\sum_{p\in \A^2\cap \pi^{-1}(s)} \dim_{\kappa(s)} \kappa(s)[x,y] / ((f^n_1)_s -x, (f^n_2)_s -y, \phi^l_s)
\\
=
\dim_{\kappa(s)} \mathcal{G}_s/\mathfrak{m}_s \mathcal{G}_s
\ge 
\dim_{K} \mathcal{G}_\eta
=
\sum_{p\in C_K} \mu(p,f^n_K)
~,
\end{multline*}
where $\eta$ denotes the generic point of $S$, and $f_s$ (resp. $f_K$) is the map induced by $f$ on $\A^2_{\kappa(s)}$ (resp. on $\A^2_K$).

\smallskip

By assumption we know that the quantity $$\sum_{p\in C_K} \mu(p,f^n_K)\ge \card (C_K \cap \per(f^n_K))$$ tends to infinity as $n\to \infty$. It follows that $\sum_{p\in C_{\kappa(s)} \cap \fix(f^n_s)} \mu(p,f^n_s) \to \infty$. By \cite{shub-sullivan} for any $p\in \fix(f^n_s)$ the sequence of multiplicities $\{\mu(p,f^n_s)\}_{n\in \N}$ is bounded\footnote{The paper of Shub and Sullivan was written in the setting of
 differentiable mappings. 
 However the arguments work in the formal category over any field of characteristic zero. 
 The latter assumption is crucial since their result is not valid in positive characteristic.}.  
Hence $\card (C_{\kappa(s)} \cap \fix(f^n_s))$ tends to infinity, as was to be shown.

\medskip

It remains to prove that the projection map $\pi : X_{n,l} \to S$ is proper.
Let $X_n$ be the $S$-scheme defined by the equations $f^n_{1}-x$, $f^n_{2}-y$.
Since $X_{n,l}$ is a subscheme of $X_n$ it is sufficient to show that $\pi: X_n \to S$ is proper. 

To simplify notation we shall only treat the case $n=1$. Consider the intersection
of the diagonal $\Delta$ and the graph $\Gamma$ of $f$  in $\P^2_S \times \P^2_S$, and denote by $Y$ its projection into the first factor.  The projection map $Y \to S$ is projective hence proper. Observe that for each $s\in S$, $Y_s$ is the union of the support of $(X_1)_s$ which is finite by Proposition~\ref{prop:no fixed curves} together with two points at infinity $p_-(s)$ and $p_+(s)$ corresponding to the (unique) point of indeterminacy of $f_s$ and its super-attracting fixed point. 

Let $Y_+$ and $Y_-$ be the irreducible components of $Y$ such that 
$(Y_\pm)_s = p_\pm(s)$ for all $s$. Since $p_+(s)$ is super-attracting, the differential of $f_s$ at $p_+(s)$ has no eigenvalue equal to $1$, and the intersection of $\Delta$ and $\Gamma$ is transversal at $p_+(s)$. 
It follows from  the next lemma (which was 
indicated to us by A. Ducros) applied to the section $s \mapsto p_+(s)$ that  
$Y_+$ is a connected component of $Y$. Replacing $f$ by $f^{-1}$ we get that $Y_-$ is also a connected component of $Y$. 

We conclude that $\pi$ is a projective map from $X_n= Y\setminus (Y_+ \cup Y_-)$ to $S$ hence it is proper. 
\end{proof}
\begin{lem}
Let $f : Y \to S$ be a finite morphism of finite presentation and let $\sigma : S \to Y$ be a section of $f$. 
If $\mathcal{O}_{f^{-1}(s), \sigma(s)} = \kappa(s)$ for all $s\in S$ then $\sigma(S)$ is open in $Y$.
\end{lem}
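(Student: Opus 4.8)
The plan is to recognise the hypothesis as saying that $f$ is unramified along $\sigma(S)$, and then to deduce that the section $\sigma$ is an open immersion by exhibiting it as a base change of a diagonal morphism.

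First I would translate the hypothesis. For a point $y=\sigma(s)$ one has $f(y)=s$, and since $f$ is of finite presentation (in particular locally of finite type), $f$ is unramified at $y$ precisely when the local ring of the fibre $f^{-1}(s)$ at $y$ is a finite separable field extension of $\kappa(s)$. By assumption this local ring is $\kappa(s)$ itself, so $f$ is unramified at every point of $\sigma(S)$. Because $f$ is of finite presentation, the locus $U\subseteq Y$ where $f$ is unramified is open (a standard fact; see EGA IV or the Stacks project), and $U\supseteq\sigma(S)$. Replacing $Y$ by $U$, we may therefore assume that $f\colon Y\to S$ is unramified (still of finite presentation), and it suffices to prove that $\sigma(S)$ is open in this $Y$. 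I would also note in passing that $f$, being finite, is separated, so $\sigma$ is a closed immersion; hence once we know $\sigma(S)$ is open it will in fact be open and closed in the original $Y$.

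Now, since $f$ is unramified and locally of finite presentation, the diagonal $\Delta\colon Y\to Y\times_S Y$ is an open immersion. The key observation is that $\sigma$ is a base change of $\Delta$: consider the morphism $\gamma:=(\id_Y,\sigma\circ f)\colon Y\to Y\times_S Y$ and form the fibre product of $\Delta$ and $\gamma$ over $Y\times_S Y$. A direct check on $T$-valued points shows that a point of this fibre product is the same datum as a morphism $a\colon T\to Y$ with $a=\sigma\circ f\circ a$, equivalently (via $c\mapsto\sigma\circ c$ and $a\mapsto f\circ a$) the same datum as a morphism $T\to S$; so the fibre product is $S$, and under this identification the induced morphism to $Y$ is exactly $\sigma$. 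Since open immersions are stable under base change, $\sigma$ is an open immersion, and therefore $\sigma(S)$ is open in $Y$.

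The argument is essentially formal and I do not foresee a real obstacle. The only points needing care are bookkeeping: verifying that the stalk condition is exactly unramifiedness at the points of $\sigma(S)$ (characteristic zero makes the separability automatic, but one should still record it), quoting openness of the unramified locus (this is where finite presentation of $f$ enters), and carrying out the $T$-point computation that identifies a section with a pullback of the diagonal.
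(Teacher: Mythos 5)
Your proof is correct, but it follows a genuinely different route from the paper. The paper's argument base-changes to the spectrum $T$ of the henselization of $\mathcal{O}_{S,s}$, uses that the finite $T$-scheme $Y\times_S T$ splits as a disjoint union of spectra of local rings, shows via the hypothesis, Nakayama's lemma and the existence of the section that the piece through $\sigma_T(t)$ is exactly $\sigma_T(T)$, and then spreads this decomposition out to an \'etale neighborhood $U\to S$, whose image in $Y$ yields an open neighborhood of $\sigma(s)$ contained in $\sigma(S)$. You instead read the stalk condition $\mathcal{O}_{f^{-1}(s),\sigma(s)}=\kappa(s)$ as unramifiedness of $f$ along $\sigma(S)$, restrict to the open unramified locus, and invoke the formal fact that a section of an unramified morphism is a base change of the diagonal, which is an open immersion; your $T$-point computation identifying the fibre product with $S$ is the standard one and is correct. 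What your approach buys: it is shorter, avoids henselization and the limit/spreading-out step, and in fact never uses finiteness of $f$ (only that $f$ is locally of finite type/presentation, needed for the fibrewise criterion of unramifiedness, the openness of the unramified locus, and the open-immersion diagonal), so it proves a more general statement; what the paper's proof buys is an explicit \'etale-local description of $Y$ near $\sigma(s)$ using only basic facts about finite algebras over henselian local rings. One small remark: your aside about characteristic zero is unnecessary --- separability of the residue extension is automatic here simply because that extension is trivial, in any characteristic.
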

\begin{proof}
Pick any $s\in S$ and let $T$ be the spectrum of the henselization of $\mathcal{O}_{S,s}$. Denote by $t$ the closed point of $T$. 
Since $T$ is henselian, the finite $T$-scheme $X\times_S T$ is a disjoint union $\coprod T_i$ of spectra of local rings. 
Pick $i_0$ such that $\sigma_T(s) \in T_{i_0}$ (here $\sigma_T$ is the section obtained from $\sigma$ by base change).

The ring $\mathcal{O}(T_{i_0})$ is a module of finite type over $\mathcal{O}(T)$. Its rank over a closed point is equal to $1$ by assumption, hence is at most $1$ at any point of $T$. Since there is a section $\sigma_T$, this rank  is actually equal to $1$  everywhere, whence 
$\sigma_T(T) = T_{i_0}$.

It follows that there exists an \'etale morphism $U\to S$ whose image contains $s$ and a decomposition $ Y\times_U S =\coprod U_i$ where each $U_i$ is finite over $U$ where $\sigma_U(U) = U_{i_0}$. The image of $U_{i_0}$ in $Y$ is an open subset $Y'$ of $Y$. By construction $Y' \subset \sigma(S)$ and $Y'$ contains $\sigma(s)$. We conclude that  $\sigma(S)$ is open. 
\end{proof}

\section{Automorphisms sharing periodic points}\label{sec:infinite}

The main purpose of this  section is to prove Theorems \ref{thm:unlikely} and \ref{thm:unlikelycomp}.

\subsection{The Bass-Serre tree of $\aut[ \A^2]$}
Let us recall briefly how the group of polynomial automorphisms of the affine plane naturally acts on a  
 tree. We refer to~\cite{lamy} for details and to~\cite{serre-amalgame} for basics on (groups acting on) trees.

Denote by $A$ (resp. $E$) the subgroup of affine (resp. elementary) automorphisms. 
The intersection $A \cap E$ consist of those automorphisms of the form
$(x,y) \mapsto ( ax + b, cy + dx +e)$ with $ac \neq 0$.

Jung's theorem states that $\aut[\A^2]$ is the free amalgamated product of $A$ and $E$ over their intersection. This means that any automorphism $f \in \aut[\A^2]$ can be written as a product
$$
f = e_1 \circ a_1 \ldots \circ a_s \circ e_s
$$
with $e_i \in E$ and $a_i \in A$, and  such a decomposition is unique 
up to replacing a product $ e \circ a$ by  $ (e \circ h^{-1}) \circ (h \circ a)$ with $h \in A \cap E$. 

\smallskip

The Bass-Serre tree  $\mathcal{T}$ of $\aut[\A^2]$ is the simplicial tree whose vertices are 
left cosets modulo $A$ or $E$. In other words we choose a set of representative $S_A$ (resp. $S_E$) of the quotient of $\aut[\A^2]$ 
under the right action of $A$ (resp. of $E$). 
Then vertices of $\mathcal{T}$ are in bijection with $\{h A\}_{h \in S_A} \cup \{h E\}_{h \in S_E}$.
An edge is joining $hA$ to $h'E$  if $h' = h \circ a$ for some $a \in A$ or 
 $h = h' \circ e$ for some $e \in E$.

We endow $\mathcal{T}$ with the unique tree metric putting length $1$ to all edges. 
The left action of an automorphism $h$ on cosets induces an action on 
 $\mathcal{T}$ by isometries. It sends any vertex of the form $hA$ (resp. $hE$) to $fh A$ (resp. to $fhE$).

An automorphism is conjugate to an affine map (resp. to an elementary map) if and only if
 it fixes a vertex of the form $hA$ (resp. $hE$) in $\mathcal{T}$.
On the other hand,   a polynomial automorphism $f$ of H\'enon type acts as a hyperbolic element of $\mathrm{Isom}(\mathcal{T})$,
 that is,  there exists
a unique geodesic\footnote{that is,  a bi-infinite path in $\mathcal{T}$.} $\geo(f)$   invariant under the action of $f$. Furthermore, 
both ends of the geodesic are fixed, and $f$ acts as a non-trivial translation on $\geo(f)$.

\subsection{Proof of Theorem~\ref{thm:unlikely}}\label{subs:unlikely algebraic}

In this section we assume that both automorphisms $f$ and $g$ are 
of Hénon type and  defined over a number field $\mathbb L$. We suppose that they share a Zariski dense subset of periodic points, and
 wish to prove that they admit a common iterate.
The proof is divided into    three steps. 
\medskip

{\bf Step 1}:  $f$ and $g$ have the same equilibrium measure at any place.

\medskip

Let us  assume that $f$ and $g$ share a  set $\set{p_m}$ of   periodic points which is Zariski dense in $\mathbb{A}^2$. 
We use a  diagonal argument to extract a subset $\set{p'_k}$ satisfying the  requirements of Theorem~\ref{thm:yuan-henon}.
For this, enumerate all irreducible curves  $(C_q)_{q\in \N}$ in $\A^2_\LL$. 
We construct an auxiliary subsequence of $(p_m)$ as follows.  Let $m_1$ be the minimal integer such that $p_{m_1} \notin C_1$, and set $p'_1 = p_{m_1}$.
Then define $m_2> m_1$ to  be the minimal integer such that $p_{m_2} \notin C_1 \cup C_2$,
and set $p'_2 = p_{m_2}$. These integers exist since the set $\{p_m\}$ is Zariski dense. 
Continuing in this way one defines recursively a sequence of periodic points 
$(p'_k)$
with the desired properties.
In particular we conclude from Theorem~\ref{thm:yuan-henon} that $\mu_{f,v} = \mu_{g,v}$ for every place $v$.

\medskip

{\bf Step 2}: $f$ and $g$ have the same set of periodic points. 

\medskip

The difficulty is that we do not assume $f$ and $g$ to be conjugate by the {\em same} automorphism to a regular map. If this happens, the conclusion follows rather directly from the work of Lamy \cite{lamy}, as we will see in Step 3. 

To overcome this problem, we   proceed  as follows. 
Fix a place $v$, and define  $K_v(f) =\{ p \in \A^{2,\an}_{\LL_v},\,  \sup_{n \in \Z} | H^n (p)|  < + \infty\}$. 

\begin{lem}\label{lem:Shilov}
For any place $v$, the set $K_v(f)$ is the largest compact set in $\A^{2,\an}_{\LL_v}$
such that 
$$
\sup_{K_v(f)} |P| = \sup_{\supp (\mu_{f,v})} |P| 
$$
for all $P\in \LL_v[\A^2]$. 
\end{lem}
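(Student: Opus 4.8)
The plan is to identify $K_v(f)$ with the ``filled Julia set'' $K = K^+ \cap K^-$ of $f$ acting on $\A^{2,\an}_{\LL_v}$, and then to recognize it as the analogue of the \emph{polynomial hull} (or Shilov-type envelope) of the support of the equilibrium measure $\mu_{f,v}$. The first observation is that $K_v(f) = \{G_{f,v} = 0\}$, where $G_{f,v} = \max\{G^+_{f,v}, G^-_{f,v}\}$ is the Green function of Proposition~\ref{prop:continuity of G}; indeed $\sup_{n\in\Z}\|f^n(p)\| < \infty$ holds exactly on the set where both $G^\pm$ vanish, by Proposition~\ref{prop:Gplus}(3). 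In particular $K_v(f)$ is a compact subset of $\A^{2,\an}_{\LL_v}$ (its image in $\P^{2,\an}_{\LL_v}$ is closed and misses the points at infinity by Proposition~\ref{prop:continuity of G}(1), which forces $G_{f,v}(p) \to \infty$ as $p \to \partial \A^2$). Since $\supp(\mu_{f,v}) = \supp\big((\mathrm{dd}^c G_{f,v})^2\big) \subset \{G_{f,v}=0\} = K_v(f)$, the inequality $\sup_{\supp(\mu_{f,v})} |P| \le \sup_{K_v(f)} |P|$ is immediate for every $P$.

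The substance of the statement is the reverse inequality $\sup_{K_v(f)}|P| \le \sup_{\supp(\mu_{f,v})}|P|$, together with the maximality assertion. For the maximality, suppose $K'$ is a compact set with $\sup_{K'}|P| = \sup_{\supp(\mu_{f,v})}|P|$ for all regular $P$; applying this to $P$ a coordinate function and its powers shows $K'$ is bounded, hence contained in some $\A^2(\|\cdot\| \le C)$. The function $u := \log^+\|\cdot\| - G_{f,v}$ extends to a bounded continuous function on all of $\P^{2,\an}$ (Proposition~\ref{prop:continuity of G}(1)); more to the point, on $\A^2$ one has $G_{f,v} = \lim_n \frac1{d^n}\log^+\|f^n\|$ as a decreasing, then increasing, limit of functions of the form $\frac1{d^n}\log\max\{|P_1|,|P_2|\}$, so that $G_{f,v} = \sup_n \frac1{d^n}\log\max_i|P_i^{(n)}|$ for suitable polynomials. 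Hence for any $p\notin K_v(f)$, i.e. $G_{f,v}(p) > 0$, there is a polynomial $P$ with $\log|P(p)| > \sup_{\{G_{f,v}=0\}}\log|P| \ge \sup_{\supp(\mu_{f,v})}\log|P|$; this exhibits $p \notin K'$, proving $K' \subseteq K_v(f)$, i.e. $K_v(f)$ is the largest such compact set. It remains only to check $K_v(f)$ itself has the defining property, i.e. $\sup_{K_v(f)}|P| = \sup_{\supp\mu_{f,v}}|P|$: the $\le$ direction here is exactly the content still to prove.

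For that remaining inequality I would argue via the maximum principle for the Chambert-Loir/Thuillier Monge–Ampère operator. Fix $P$ and consider the subharmonic-type function $v := \log|P| + c\, G_{f,v}$ on $\A^{2,\an}$ for a suitable constant $c$; more cleanly, restrict attention to the curve-slicing machinery or use that $\log|P|$ is $G_{f,v}$-subharmonic in the sense of the continuous semipositive metric $|\cdot|_G$ on $\O(1)_{\P^2}$ (Proposition~\ref{pro:adelicG}), so that on $\P^{2,\an}$ the function $\log|P| - (\deg P)\,G_{f,v}$ is globally bounded above and its ``$\mathrm{dd}^c$'' with respect to that metric, wedged with the curvature current, is a positive measure supported on $K_v(f)$. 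The key input is that the top self-intersection of this metric is $\mu_{f,v}$, whose support is contained in $\supp(\mathrm{dd}^c G_{f,v}) \subset K_v(f)$, but in fact is the \emph{essential} locus: on $K_v(f) \setminus \supp(\mu_{f,v})$ one has $G_{f,v} \equiv 0$ and moreover $G^+_{f,v}, G^-_{f,v}$ are locally harmonic there (their Laplacians being supported on $\supp(\mu_{f,v})$ by the product structure $\mu_{f,v} = \mathrm{dd}^c G^+ \wedge \mathrm{dd}^c G^-$), so $\log|P|$ is locally harmonic near such points along every slicing curve, and the maximum of $|P|$ over $K_v(f)$ must therefore be attained (by a connectedness and maximum-principle argument, using Proposition~\ref{prop:maximum-principle}) on the set where the Laplacian concentrates, namely $\supp(\mu_{f,v})$. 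The main obstacle is precisely making this last maximum-principle step rigorous in the Berkovich setting at a non-archimedean place, where one cannot slice by arbitrary curves through a given point and must instead invoke Thuillier's potential theory on $\A^{2,\an}$ directly — but this is a standard consequence of the fact that $|\cdot|_G$ is a continuous semipositive metric whose induced measure is $\mu_{f,v}$, exactly as in Chambert-Loir's theory (the archimedean case being the classical fact that the Shilov boundary of the uniform algebra on a polynomially convex hull equals the support of the equilibrium measure).
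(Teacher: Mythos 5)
The easy parts of your argument are fine: $\supp(\mu_{f,v})\subset K_v(f)=\{G_{f,v}=0\}$ gives one inequality, and the maximality follows, as you say, from the fact that $G_{f,v}$ is a uniform limit of functions $\frac1{d^n}\log\max\{1,|f^n_1|,|f^n_2|\}$, so any point with $G_{f,v}(p)>0$ is separated from $\{G_{f,v}=0\}$ by a polynomial (this is the statement that $K_v(f)$ is polynomially convex, which the paper leaves implicit). The genuine gap is the reverse inequality $\sup_{K_v(f)}|P|\le\sup_{\supp(\mu_{f,v})}|P|$, which is the whole content of the lemma, and your maximum-principle sketch for it does not work. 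First, the intermediate claim is false: the Laplacians $\mathrm{dd}^c G^{\pm}_{f,v}$ are \emph{not} supported on $\supp(\mu_{f,v})$ but on the full boundaries of $K^{\pm}$, whose intersection with $K_v(f)$ may a priori be strictly larger than $\supp(\mu_{f,v})$ (already over $\C$ the equality of $\supp\mu_f$ with $J^+\cap J^-$ is not known), so "$G^\pm$ are locally harmonic on $K_v(f)\setminus\supp\mu_{f,v}$" is unjustified; and the harmonicity of $\log|P|$ off its zero set is beside the point. Second, there is no ``standard'' domination or maximum principle for the two-dimensional Berkovich space $\A^{2,\an}_{\LL_v}$ that one can simply invoke: Thuillier's theory is one-dimensional and Chambert-Loir's construction only defines the measures. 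This missing principle is exactly what the paper has to supply by a different, global route.

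The paper's proof replaces your local argument by a rigidity argument: given $P$ with $\log(|P|/C_0)\le 0$ on $\supp(\mu_{f,v})$, set $\tilde G:=\max\{G,\log(|P|/(C_0+\varepsilon))\}$, which again induces a continuous semi-positive metric on $\O(1)$ and agrees with $G$ on a neighborhood of $\supp(\mu_{f,v})$. The locality theorem for the non-Archimedean Monge--Amp\`ere operator proved in the appendix (Corollary~\ref{cor:locality}, which is included in the paper precisely because it is not in the literature) yields $\MA(\tilde G)=\MA(G)$, and the Yuan--Zhang Calabi-type uniqueness theorem \cite{YZ13a} for semipositive adelic metrics then forces $\tilde G-G$ to be constant, hence $\tilde G=G$ and $\log(|P|/(C_0+\varepsilon))\le 0$ on $\{G=0\}=K_v(f)$; letting $\varepsilon\to0$ concludes. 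If you want to salvage your approach you would need to prove such a domination statement yourself (at archimedean places a pluripotential-theoretic result relating the Shilov boundary of the hull to the support of the equilibrium measure does exist, but it must be cited precisely, and it has no ready-made non-Archimedean counterpart), so as written the key step of your proposal is a gap rather than an alternative proof.
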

\begin{proof}
Since $\supp (\mu_{f,v})\subset K_v(f)$, it is sufficient to prove 
that the supremum of $|P|$ over $K_v(f)$ is attained at a point lying in $\supp (\mu_{f,v})$.

Suppose that $P$ is a polynomial function, and pick any constant $C_0>0$ such that 
$\log (|P|/C_0) \le 0$ on  $\supp (\mu_{f,v})$. Then the function
$$\tilde{G} := \max \{ G, \log (|P|/(C_0+\varepsilon))\}$$ is a continuous non-negative function on $\A^2_{\LL_v}$ that induces a continuous semi-positive metric on $\O(1)$.
Since $\tilde{G} = G$ near $\supp (\mu_{f,v})$, from Corollary~\ref{cor:locality} we deduce
 the equality of measures $\MA( \tilde{G}) = \MA(G)$,
and it follows from Yuan-Zhang's theorem, see~\cite{YZ13a} that 
$\tilde{G} - G$ is a constant, hence $\tilde{G} = G$. It follows that 
$\log (|P|/(C_0+\varepsilon)) \le 0$ on $K_v(f)$. By letting $\varepsilon\to 0$ we conclude that 
$\log (|P|/C_0) \le 0$ on  $K_v(f)$.
\end{proof}

In plain words, the polynomially convex envelope of $\supp (\mu_{f,v})$ is the set $K_{f,v}$.
Since $\mu_{f,v} = \mu_{g,v}$ for all $v$ we conclude that  $K_{f,v} = K_{g,v}$. 

Now pick any periodic point $p$ of $f$. At the place 
 $v$, it belongs to $K_{f,v}$ hence to $K_{g,v}$. Since Lee's height can be computed by summing the local quantities
$G_{g,v} := \max \{ G^+_{g,v}, G^-_{g,v}\}$ and since  $\{G_{g,v} = 0\} = K_{g,v}$
we get that the canonical $g$-height of $p$ is zero, hence $p$ is $g$-periodic.

\medskip

In the sequel we   actually need a stronger information. 

\begin{lem}\label{lem:interm1}
Suppose $f$ and $g$ are two hyperbolic polynomial automorphisms of the affine plane
defined over a number field $\LL$, satisfying the assumptions of Theorem~\ref{thm:mainNA}.

Then for all places $v$ over $\LL$, and for any H\'enon-type
 automorphism $h$ belonging to the subgroup generated by $f$ and $g$, one has  $K_{h,v} = K_{g,v} = K_{f,v}$. 
\end{lem}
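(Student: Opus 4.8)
The plan is to derive the statement from the two facts already obtained in Steps~1 and~2 of the proof of Theorem~\ref{thm:unlikely}: that $\mu_{f,v}=\mu_{g,v}$ at every place $v$, that $K_{f,v}=K_{g,v}=:K_v$, and that $f$ and $g$ share the set $P:=\per(f)=\per(g)$ of periodic points, which is Zariski dense. The structural observation that drives everything is that, at every place $v$, both $K_v$ and the set $P$ are invariant under the whole group $\langle f,g\rangle$: indeed $K_{f,v}$ is $f$-invariant, $K_{g,v}$ is $g$-invariant, and they coincide; likewise $f(P)=P$ and $g(P)=P$ since $\per(f)=\per(g)$.

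First I would prove the easy inclusion $K_v\subseteq K_{h,v}$ for any H\'enon-type $h\in\langle f,g\rangle$: since $K_v$ is compact and $h(K_v)=K_v$ (being $\langle f,g\rangle$-invariant), every point of $K_v$ has a bounded $h$-orbit, so $K_v\subseteq\{G_{h,v}=0\}=K_{h,v}$. This holds at every place of $\LL$ and every completion. Consequently, for any $p\in P$, the point $p$ together with all of its Galois conjugates lies in $P\subseteq K_v\subseteq K_{h,v}=\{G_{h,v}=0\}$ for every $v$, so the canonical $h$-height of $p$ vanishes, and therefore $p$ is $h$-periodic (by the defining property of the canonical height, see~\cite{lee}). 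Hence $P\subseteq\per(h)$; in particular $h$ admits a Zariski dense set of periodic points.

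Next I would extract, by the diagonal argument used in Step~1, a sequence of distinct points $p'_k\in P\subseteq\per(h)$ such that $\{p'_k\}\cap C$ is finite for every irreducible curve $C$, and apply the equidistribution Theorem~\ref{thm:yuan-henon} to $h$: at every place $v$ the normalized counting measures on the Galois orbits $O(p'_k)$ converge to $\MA(G_{h,v})=\mu_{h,v}$. Since $O(p'_k)\subseteq P\subseteq K_v$ and $K_v$ is compact, every weak limit of these measures is carried by $K_v$, so $\supp\mu_{h,v}\subseteq K_v$. Now Lemma~\ref{lem:Shilov} applied to $h$ identifies $K_{h,v}$ with the polynomially convex envelope of $\supp\mu_{h,v}$, while the same lemma applied to $f$ shows that $K_v=K_{f,v}$ is polynomially convex. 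Therefore $K_{h,v}\subseteq K_v$, and combined with the first step this yields $K_{h,v}=K_v=K_{f,v}=K_{g,v}$ at every place, as desired.

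The main obstacle is exactly the reverse inclusion $K_{h,v}\subseteq K_{f,v}$: there is no direct way to compare the filled Julia sets of $h$ and $f$ at a single place, because $h$ is an a priori unrelated word in $f$ and $g$. The detour through the global canonical height — using that $P$ sits in the filled Julia set of $f$ simultaneously at all places in order to force $P\subseteq\per(h)$ — is what makes equidistribution of $h$-periodic points available, after which one returns to each individual place via the polynomial-convexity characterization of Lemma~\ref{lem:Shilov}. Along the way one should also check the routine points that Theorem~\ref{thm:yuan-henon} and Lemma~\ref{lem:Shilov} apply to the (possibly non-regular) H\'enon-type automorphism $h$ after conjugating it to a regular one, and that the Galois conjugates of a point of $P$ again belong to $P$.
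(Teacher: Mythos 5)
Your proof is correct and follows essentially the same route as the paper's: the inclusion $K_v\subseteq K_{h,v}$ from invariance of $K_v=K_{f,v}=K_{g,v}$ under the group generated by $f$ and $g$, then vanishing of the canonical $h$-height on a generic sequence of shared periodic points, equidistribution, and Lemma~\ref{lem:Shilov} to obtain the reverse inclusion. The only cosmetic difference is that the paper feeds the $f$-periodic points of period $n$ (as points of zero $h$-height) directly into Yuan's equidistribution theorem for both $f$ and $h$, deducing $\mu_{h,v}=\mu_{f,v}$ and hence equality of the polynomial hulls, whereas you first upgrade zero $h$-height to $h$-periodicity and conclude via the containment $\supp\mu_{h,v}\subseteq K_v$ together with polynomial convexity of $K_v$ — both endings are immediate consequences of the same two ingredients.
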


\begin{proof}
We already know that  $K_v := K_{g,v} = K_{f,v}$. 
Since this compact set is invariant by both $f$ and $g$, it follows that $h$ also preserves $K_v$,
and this implies $K_v \subset K_{h,v}$ for all $v$.
Now let $F_n$ denote the set of points of period $n$ for $f$. 
For all $v$, we have $F_n \subset K_{h,v}$ hence the canonical $h$-height of $F_n$ is equal to $0$. Extracting a subsequence if necessary, we may always assume that $F_n$ is generic since the set of periodic points of a hyperbolic automorphism is Zariski-dense. 
By Yuan's theorem  $F_n$ is equidistributed with respect to the equilibrium measure of both $K_v$ and $K_{h,v}$ and we conclude that $K_{h,v} = K_v$.
\end{proof}

\medskip

{\bf Step 3}: $f$ and $g$ admit a common iterate.

\medskip

We use Lamy's structure theory of subgroups of the group of polynomial automorphisms of the plane~\cite{lamy}. 
Let us assume by contradiction that $f$ and $g$ admit no common iterate. 

\begin{lem}\label{lem:interm2}
Under the above hypotheses, 
there exists two H\'enon-type  elements $h_1, h_2$ in the subgroup generated by $f$ and $g$ 
and a polynomial automorphism $\varphi$ such that 
\begin{itemize}
\item
the subgroup $H$ generated by $h_1$ and $h_2$ is a free non-abelian group;
\item
any element in $H$ that is not the identity is of Hénon-type;
\item 
for any element $h\in H$,  the automorphism $\varphi^{-1} \circ h \circ \varphi$ is a regular automorphisms of $\A^2_L$.
\end{itemize}
\end{lem}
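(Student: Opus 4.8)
The plan is to use the action of $\aut[\A^2]$ on its Bass-Serre tree $\mathcal{T}$ together with Lamy's classification of subgroups. Since we are assuming by contradiction that $f$ and $g$ have no common iterate, their axes $\geo(f)$ and $\geo(g)$ in $\mathcal{T}$ cannot be equal; in fact two hyperbolic isometries of a tree with the same axis and no common power would have to generate a subgroup where $f^n = g^m$, so distinctness of axes (or at least non-commensurability of the translations along a shared axis) is forced. First I would recall from Lamy's work that the subgroup $\Gamma$ generated by $f$ and $g$ must be of one of a short list of types: it either fixes an end of $\mathcal{T}$, or fixes a point of $\mathcal{T}$, or stabilizes a line, or contains a free subgroup acting in ping-pong fashion on $\mathcal{T}$. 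The first three cases are incompatible with $\Gamma$ containing two H\'enon-type elements without a common iterate: fixing a point would make every element conjugate to an affine or elementary map, contradicting the H\'enon hypothesis, while fixing an end or stabilizing a line forces all hyperbolic elements to share a common axis, hence (after passing to the translation lengths) to have commensurable powers.

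Therefore $\Gamma$ contains a non-abelian free subgroup. The next step is to run a ping-pong argument on $\mathcal{T}$: choosing sufficiently high powers $h_1 = a^N$, $h_2 = b^N$ of two non-commensurable hyperbolic elements $a, b \in \Gamma$ (with disjoint or transverse axes), the translation distance becomes large enough that the attracting/repelling half-trees for $h_1$ and $h_2$ are pairwise disjoint. Classical ping-pong then shows $H = \langle h_1, h_2 \rangle$ is free of rank $2$, and moreover \emph{every} nontrivial element of $H$ is hyperbolic on $\mathcal{T}$ — because a reduced word in $h_1^{\pm1}, h_2^{\pm1}$ moves points of $\mathcal{T}$ a definite amount — hence every nontrivial element of $H$ is of H\'enon type (it is conjugate neither to an affine nor to an elementary automorphism, as those fix a vertex). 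This gives the first two bullet points.

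For the third bullet point — simultaneous conjugation of all of $H$ into the regular automorphisms by a single $\varphi$ — I would again invoke Lamy: a finitely generated subgroup all of whose nontrivial elements are H\'enon-type (equivalently, acts without fixed point or fixed end, purely hyperbolically) admits an invariant subtree on which it acts cocompactly, and when such a group is free one can arrange that there is a single vertex $v$ whose stabilizer behavior witnesses regularity; concretely, after conjugating by the automorphism $\varphi$ sending $v$ to the base vertex $A$, each generator — and hence each element — becomes a reduced composition $e_1 a_1 \cdots$ in normal form whose first and last syllables are elementary, which is precisely the condition (up to the normalization in \cite{friedland-milnor}) for being a regular automorphism. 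The main obstacle I anticipate is this last point: ensuring that \emph{one} conjugating automorphism $\varphi$ works for the whole group $H$ rather than a separate conjugator for each element. This is exactly where the tree-theoretic input is essential — one needs the combinatorial position of $\geo(h_1)$ and $\geo(h_2)$ in $\mathcal{T}$ to be simultaneously "straightened", and I would handle it by passing to a suitable power $h_i = a^N$ so that the axes $\geo(h_1), \geo(h_2)$ meet (or fellow-travel) along a segment containing a common vertex $v$, then conjugating by $\varphi$ carrying $v$ to $A$ and checking, syllable by syllable in the amalgamated-product normal form, that $\varphi^{-1} h \varphi$ is regular for every $h \in H$.
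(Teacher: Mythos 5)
Your overall strategy (Bass--Serre tree, ping-pong, Lamy's structure theory, and a single conjugation coming from the position of the axes) is the same as the paper's, and in the case where $\geo(f)\cap\geo(g)$ contains a segment your argument essentially reproduces the paper's: take $h_1=f^N$, $h_2=g^N$ with $N$ larger than the diameter of the overlap, so that every nontrivial word has its axis through a fixed edge $\gamma$ of the overlap, and then one conjugation (Lamy: a Hénon-type map whose geodesic contains the edge $\id S$ is cyclically reduced, hence a composition of generalized Hénon maps, hence regular) handles all of $H$ at once.

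However, there is a genuine gap in your treatment of the other case, when $\geo(f)$ and $\geo(g)$ are disjoint or meet in a single point -- which is precisely the case that needs extra care, since by Lamy the absence of a common iterate only guarantees that $\geo(f)\cap\geo(g)$ is compact, possibly empty. Your proposed fix is to ``pass to a suitable power $h_i=a^N$ so that the axes $\geo(h_1),\geo(h_2)$ meet along a segment containing a common vertex''. This cannot work: the axis of a hyperbolic tree isometry is unchanged under taking powers, $\geo(a^N)=\geo(a)$, so if the axes of your chosen elements are disjoint they remain disjoint for all powers, and there is then no common edge through which all axes of words in $H$ pass, hence no single $\varphi$ supplied by the ``common edge implies regular after one conjugation'' fact. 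The paper's resolution is different: instead of powers of $f$ and $g$ it takes \emph{products}, $h_1=f^Ng^N$ and $h_2=f^{2N}g^{2N}$ with $N$ large compared to the bridge $I$ joining the two axes; the axes of these elements (and of every nontrivial word in them) cross the bridge, so a point $\gamma\in I$ lies in the interior of $[h(\gamma),h^{-1}(\gamma)]$ for every nontrivial $h\in H$, and the single conjugation attached to $\gamma$ makes the whole group regular. A secondary, smaller inaccuracy: for the regularity criterion you need the axis to contain a fixed \emph{edge} (the edge $\id S$ in Lamy's terminology, equivalently the cyclically reduced normal form), not merely to pass through the base vertex, so your ``conjugate $v$ to the vertex $A$'' step should be stated at the level of an edge.
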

\label{step3}
The rest of the argument is now contained in~\cite[Th\'eor\`eme~5.4]{lamy}. We
include it for the convenience of the reader.
We may assume that $h_1, h_2$ are two regular polynomial automorphisms of $\A^2_L$.
By Lemma~\ref{lem:interm1} we have $K_v:= K_{h_1,v} = K_{h_2,v}$ at all places. Pick any 
archimedean place $v$. Then  $\mu:= \mu_{h_1,v} = \mu_{h_2,v}$. 
Now since both $h_1$ and $h_2$ are regular, it follows that $G_1:= \max \{ G^+_{h_1},  G^-_{h_1}\}$ and $G_2$ are both equal to the Siciak-Green function of $K_v$ by~\cite[Proposition~3.9]{BS1}, and are hence equal.

Replacing  $h_1$ by its inverse if necessary, it follows that  $G^+_{h_1} = G^+_{h_2}$ on a 
non-empty open set where the two  functions are positive. 
Since   these functions are pluriharmonic where there are non zero, and since for a H\'enon-type automorphism $h$, 
 $\cd\setminus K^+$ is connected (indeed $\cd \setminus K^+ = \bigcup_{n\geq 0} h^{-n} (V^+)$)\label{connected}
 we deduce that they coincide everywhere. 
We conclude that  the positive closed $(1,1)$-currents $T:=  dd^c  G^+_{h_1}=  dd^c  G^+_{h_2}$ are equal. 
Now consider    the commutator $ h_3 = h_1 h_2 h_1^{-1} h_2^{-1}$.
Observe that  $h_3^* T = T$, and $h_3$ is regular by the previous lemma. 
Since the support of $T$ has a unique point on the line at infinity, replacing $h_3$ by $h^{-1}_3$ if needed 
we may suppose 
that this point is not an indeterminacy point of $h_3$. It then    follows that the mass of $h^*_3T$ equals
the degree of $h_3$ times the mass of $T$ which is contradictory. This completes the proof of Theorem \ref{thm:unlikely}.

\begin{proof}[Proof of Lemma~\ref{lem:interm2}]
We   need the following two facts.
Let $f$ and $g$ be  two polynomial automorphisms of H\'enon type. 
\begin{itemize}
\item[(F1)]
given a segment $\gamma$ in the Bass-Serre tree $\mathcal{T}$, there exists a polynomial automorphism $\varphi$ such that any polynomial automorphism $f$ such that if $\geo(f)$ contains $\gamma$
then $\varphi^{-1} \circ f \circ \varphi$ is regular\footnote{This follows from the fact that an automorphism of H\'enon type whose associated geodesic contains the edge $\id S$ in the terminology of Lamy is regular. Indeed such a map is cyclically reduced by~\cite[Remarque~2.3]{lamy}, hence a composition of generalized H\'enon maps by~\cite[Theorem~2.6]{friedland-milnor}, hence regular.}.
\item[(F2)]
If $f$ and $g$ do not share a non-trivial iterate, then 
  the two geodesics $\geo(f)$ and $\geo(g)$
have a compact intersection (possibly empty). See~\cite[Proposition~4.10]{lamy}.
\end{itemize}

Let us first treat the case when $\geo(f) \cap\geo(g)$ contains a segment. 
Pick an edge $\gamma$
in this intersection. By (F1) we may assume $f$ and $g$ are regular automorphisms.

Set then $h_1 = f^N$ and $h_2 = g^N$ where $N$ is greater than the diameter of   $\geo(f) \cap \geo(g)$.
The invariant geodesics of these two automorphisms are equal to 
$\geo(f)$ and $\geo(g)$ respectively. Now pick 
any non-trivial  word  $$h = h_1^{n_p} \circ h_2^{m_p} \circ \ldots \circ h_1^{n_1} \circ h_2^{m_1}$$
with $p \ge 1$ and all $n_i, m_i \in \Z\setminus\set{0}$.
Then a   ping-pong argument (see~\cite[Proposition~4.3]{lamy}) shows that 
$\gamma$ lies in the interior of the segment $[h(\gamma), h^{-1}(\gamma)]$
which implies that $h$ is  of H\'enon type and $\geo(h) \supset\gamma$. By (F1) we conclude that
$h$ is also regular. 
This concludes the proof in this case.

\medskip

Assume now that $\geo(f)$ and $\geo(g)$ are either disjoint, or is reduced to a singleton.
Then there exists a unique segment $I= [\gamma_1, \gamma_2]$ in the tree
with $I\cap \geo(f) = \{ \gamma_1\}$ and $I\cap \geo(g) = \{ \gamma_2\}$.
Pick any element $\gamma \in I$, and $N$ large enough
such that the translation length of both $f^N$ and $g^N$ are larger than twice the diameter of $I$. Then both automorphisms $h_1
= f^N g^N$ and $h_2 = f^{2N} g^{2N}$
satisfy $\gamma \in [h_i(\gamma), h_i^{-1}(\gamma)]$ so that we can apply the same argument as in the previous case. 
The proof is complete.
\end{proof}

\subsection{Proof of Theorem~\ref{thm:unlikelycomp}}

 Here we assume that  $f$ and $g$ are automorphisms of Hénon type with \emph{complex} coefficients 
such that  $|\jac(f)|\neq 1$, that  share an infinite set $\mathcal{P}$ of   periodic points.

\smallskip

As in Section \ref{sec:general fields}, we pick a  
finitely generated $\Q^{\rm alg}$-algebra $R$, such that $f$ and $g$ are defined over $K:={\rm Frac}(R)$, and we set $S = \spec (R)$.  As before we use the notation $f_s$ for the specialization of $f$ at $s\in S$.

We proceed by contradiction, so let us assume that $f$ and $g$ do not have any common iterate. 
By Lemma~\ref{lem:interm2} we can fix two elements $h_1, h_2$ in the subgroup generated by $f$ and $g$ such that any non trivial element in $H := \langle h_1,h_2\rangle$ is  of Hénon-type and regular.

\begin{lem}
There exists an open subset $S' \subset S$ such that for any $s \in S'$ the maps  
$h_{1,s}$, and $h_{2,s}$  as well as 
 their commutators $(h_1h_2 h_1^{-1}h_2^{-1})_s$, $(h_1h^{-1}_2 h_1^{-1}h_2)_s$ are regular polynomial automorphisms of  Hénon type.
\end{lem}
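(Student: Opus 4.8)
The plan is to argue exactly as in the proof of Lemma~\ref{lem:regular S}, applied simultaneously to the four automorphisms $h_1$, $h_2$, $c_1 := h_1 h_2 h_1^{-1} h_2^{-1}$ and $c_2 := h_1 h_2^{-1} h_1^{-1} h_2$. First I would observe that, being words in $f, g, f^{-1}, g^{-1}$, all four maps as well as their inverses have coefficients in $R$, so that they define $S$-morphisms of $\A^2_S$; moreover, since $f_s \circ f^{-1}_s = \id$ and $g_s \circ g^{-1}_s = \id$ for every $s\in S$, each of $h_{1,s}, h_{2,s}, c_{1,s}, c_{2,s}$ is a polynomial automorphism of $\A^2_{\kappa(s)}$, and specialization, being evaluation of the coefficients, commutes with composition and with inversion; in particular $c_{1,s} = h_{1,s} h_{2,s} h_{1,s}^{-1} h_{2,s}^{-1}$ and similarly for $c_{2,s}$.

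Next, recall that by Lemma~\ref{lem:interm2} every non-trivial element of $H = \langle h_1, h_2 \rangle$ is a regular automorphism of H\'enon type over $K$; in particular $h_1, h_2, c_1, c_2$ are regular, of respective degrees $d_1, d_2, e_1, e_2 \ge 2$. For a polynomial endomorphism $h$ of degree $d$ decompose $h = h^{(0)} + \cdots + h^{(d)}$ into homogeneous parts. As used in the proof of Lemma~\ref{lem:regular S}, $h$ is regular of degree $d$ precisely when $h^{(d)} \circ h^{(d)} \not\equiv 0$, and this in particular forces $h^{(d)} \not\equiv 0$, i.e.\ $\deg h = d \ge 2$, so that $h$ is of H\'enon type. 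Applying this at the generic point $\eta$ of $S$, where $h_{1,\eta}^{(d_1)} \circ h_{1,\eta}^{(d_1)} \not\equiv 0$, we see that the locus $S_1 := \{ s \in S : h_{1,s}^{(d_1)} \circ h_{1,s}^{(d_1)} \not\equiv 0 \}$ is a non-empty open subset of $S$. In the same way one obtains non-empty open sets $S_2, S_3, S_4$ attached to $h_2, c_1, c_2$.

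Finally I would set $S' := S_1 \cap S_2 \cap S_3 \cap S_4$, which is non-empty and open since $S = \spec(R)$ is irreducible ($R$ being a domain). For $s \in S'$, each of $h_{1,s}, h_{2,s}, c_{1,s}, c_{2,s}$ is a regular polynomial automorphism of degree $\ge 2$, hence of H\'enon type; and by the compatibility of specialization with composition noted above, $c_{1,s}$ and $c_{2,s}$ are exactly the commutators $(h_1 h_2 h_1^{-1} h_2^{-1})_s$ and $(h_1 h_2^{-1} h_1^{-1} h_2)_s$. This is the assertion of the lemma.

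There is no genuine obstacle here; the only subtlety — the same as in Lemma~\ref{lem:regular S} — is that the degree of a specialized map could a priori drop, in which case it would cease to be regular (and possibly cease to be of H\'enon type). This is precisely what is excluded on the open locus where $h^{(d)} \circ h^{(d)}$ does not vanish identically, which is why the argument is phrased in terms of that non-vanishing condition rather than directly in terms of the degree.
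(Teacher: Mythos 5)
Your argument is correct, but it is not quite the route the paper takes, so a comparison is in order. You reduce everything to the single criterion already used in the proof of Lemma~\ref{lem:regular S}: for a map of generic degree $d$, regularity of degree $d$ is equivalent to $h^{(d)}\circ h^{(d)}\not\equiv 0$, a condition whose coefficients are regular functions of $s\in S$, hence open, and non-empty because Lemma~\ref{lem:interm2} guarantees that $h_1,h_2$ and the two commutators are regular of degree $\ge 2$ at the generic point; you then observe that a regular automorphism of degree $\ge 2$ is of H\'enon type by the very definition, so no separate argument for the H\'enon-type condition is needed. The paper instead splits the two conditions: it notes that regularity is open because it amounts to the disjointness of the indeterminacy loci of $h_s$ and $h_s^{-1}$, and it handles the H\'enon-type condition via Furter's theorem ($h$ is of H\'enon type iff $\deg(h^2)>\deg(h)$), which is preserved on the open set where $\deg(h_s)=\deg(h)$ and $\deg(h_s^2)=\deg(h^2)$. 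Your version buys economy — it avoids invoking Furter's theorem and the separate openness discussion, relying only on the criterion the paper has already asserted — at the cost of leaning on that equivalence (which, as stated in Lemma~\ref{lem:regular S}, you are entitled to reuse). Two small points you gloss over, both harmless and at the same level of detail as the paper: the elements $h_1,h_2$ are conjugates by the automorphism $\varphi$ of words in $f,g,f^{-1},g^{-1}$, so their coefficients lie in $K=\mathrm{Frac}(R)$ rather than in $R$ a priori, and one should enlarge the finitely generated algebra $R$ (or localize) so that $h_1,h_2$ and their inverses are defined over $R$ before specializing; and the identity $c_{i,s}=h_{1,s}h_{2,s}h_{1,s}^{-1}h_{2,s}^{-1}$ does indeed follow from the compatibility of specialization with composition, as you say.
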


\begin{proof}
The condition for an automorphism $h$ to be regular is    open   since it amounts to 
saying that the indeterminacy loci of $h$ and $h^{-1}$ are disjoint. 
A theorem of J.-P. Furter~\cite{furter} asserts 
that an automorphism $h$ is of Hénon type if and only if $\deg(h^2) > \deg(h)$. Since there exists an open set where $\deg(h_s) = \deg(h)$ and  $\deg(h_s^2) = \deg(h^2)$, the result follows.
\end{proof}

Thus, replacing $S$ by a Zariski dense open subset, we may assume that 
for all $s\in S$, the maps $h_{1,s}$, $h_{2,s}$, $(h_1h_2 h_1^{-1}h_2^{-1})_s$ or $(h_1h^{-1}_2 h_1^{-1}h_2)_s$ are regular polynomial automorphisms of  Hénon type. 

\smallskip

Let us now pick    $s\in S(\Q^{\rm alg})$ such that $\jac(f_s)$ possesses at least one complex conjugate of norm $\neq 1$.
It is always possible to find such a parameter since otherwise $\jac(f)$ would be an algebraic number
whose complex conjugates would all lie on the unit circle, contradicting our assumption. 

\smallskip

Our next claim  is that  $\mathcal{P}_s$ is infinite. Indeed assume by contradiction that $\mathcal{P}_s$ is finite
so that $\mathcal{P}_s$ is included in the set of fixed points of $f^{n_0}$. 
For each $n\ge n_0$ denote by $X_n$ the subscheme of $\A^2_S$ whose underlying space is 
$$\mathcal{P} \cap \set{(x,y), \ f^n(x,y) = (x,y)},$$ endowed with the scheme structure 
induced by the quotient sheaf $$\mathcal{O}_{\A_S^2}/ (f^n_1-x, f^n_2-y),$$ where $f^n=(f_1^n,f_2^n)$.

For any $p\in \mathcal{P}_s$, the ordinary multiplicity $e(p,X_{n,s})$ of $p$ as a point in $X_{n,s}$
 is equal to the multiplicity $\mu(p,f^n_s)$ as a fixed point for $f^n_s$. By the Shub-Sullivan 
 Theorem~\cite{shub-sullivan}, the sequence $e(p,X_{n,s})$ is bounded, hence
$\sum_{p\in \mathcal{P}_s}e(p,X_{n,s})$ is bounded.
 
Arguing as in the proof of Lemma~\ref{lem:ducros}, we see that the map $X_n \to S$ is proper and finite, and
by Nakayama's lemma we get that 
$$
\# \left[\mathcal{P} \cap \{f^n = \id\}\right]
\le 
\sum_{p\in \mathcal{P}}e(p,X_n)
\le 
\sum_{p\in \mathcal{P}_s}e(p,X_{n,s}) <+\infty~.
$$
Now observe that $\mathcal{P} \cap \{f^{n!} = \id\}$ contains all the periodic points in $\mathcal{P}$ of period $\le n$ 
so it follows that  the cardinality of this set tends to infinity as $n\to\infty$. This  is  contradictory, thereby
showing  that $\mathcal{P}_s$ is infinite.

\smallskip

To conclude the proof of the theorem, fix an Archimedean place $v$ at which $|\jac(f_s)|_v \neq 1$. 
Since $\mathcal{P}_s$ is infinite, Theorem~\ref{thm:mainArch} implies $\mathcal{P}_s$ to be Zariski dense.
By Lemma~\ref{lem:interm1} we get that $h_1$ and $h_2$ have the same $K_v$, and by arguing as in Step~3 on p.\pageref{step3},
we conclude that the two pairs of functions $\{G^+_{h_{1,s}}, G^-_{h_{1,s}}\}$ and $\{G^+_{h_{2,s}}, G^-_{h_{2,s}}\}$ are identical. 
It follows that one among the two commutators $(h_1h_2 h_1^{-1}h_2^{-1})_s$ or $(h_1h^{-1}_2 h_1^{-1}h_2)_s$, denoted by $h_3$, 
 leaves invariant $G^+_s := G^+_{h_{1,s}}$, that is $G_s\circ h_3 = h_3$. As we saw in Theorem~\ref{thm:unlikely}, 
  since  $h_3$ is a regular automorphism, this property cannot be true. This contradiction finishes the proof. \qed

\begin{rem}
The argument uses in an essential way the fact that the place $v$ is  Archimedean. 
Indeed we ultimately rely on the fact that 
for any two regular maps of H\'enon type  the equality $G_{h_1} = G_{h_2}$ forces that of   
$\{G^+_{h_{1}}, G^-_{h_{1}}\}$ and $\{G^+_{h_{2}}, G^-_{h_{2}}\}$. 
The  corresponding statement over a non-Archimedean field $k$ is not true. 

Indeed as above  it can be shown that  
$\{G^+_{h_{1}}, G^-_{h_{1}}\}=\{G^+_{h_{2}}, G^-_{h_{2}}\}$
if and only if $h_1$ and $h_2$ admit a common 
iterate. On the other hand if $h$ is any regular automorphism such that 
 $h$ and $h^{-1}$ have   their coefficients in the ring of integers of $k$, 
then $G_h (x,y) = \log \max \{1, |x|, |y|\}$.
\end{rem}

\subsection{Sharing cycles}\label{sec:cycles}
Let us start with  the following observation.

\begin{prop}\label{prop:cycles}
Let $f$ be an  automorphism of  H\'enon type defined over a number field $\LL$. Let $(F_m)$ 
be any sequence of disjoint periodic {\em cycles}.
Then the sequence of probability measures $(\mu_m)$   equidistributed over the Galois conjugates of $F_m$ converges weakly 
to $\mu_{f,v}$ for all places $v$.
\end{prop}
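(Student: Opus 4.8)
The plan is to deduce this from the equidistribution theorem for periodic points of H\'enon maps, Theorem~\ref{thm:yuan-henon}, after a reduction that replaces the cycle $F_m$ by an average of Galois orbits of individual points. Write $k_m=\#F_m$ and pick $p_m\in F_m$, so $F_m=\{f^j(p_m)\}_{0\le j<k_m}$. Because $f$ is defined over $\LL$, for every $\sigma$ in the absolute Galois group the set $\sigma(F_m)$ is again a periodic cycle (the orbit of $\sigma(p_m)$), the conjugate cycles are pairwise equal or disjoint, and each of them meets a fixed $\LL$-rational set in the same number of points as $F_m$ does; a short bookkeeping then yields the identity
\[
\mu_m=\frac1{k_m}\sum_{q\in F_m}[O(q)],
\]
where $[O(q)]$ denotes the normalized counting measure on the Galois orbit of $q$. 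Since $\mu_{f,v}$ is $f$-invariant (\S\ref{sec:inv-meas}), it therefore suffices to show that $[O(q)]$ tends to $\mu_{f,v}$, uniformly over the points $q$ of $F_m$, as $m\to\infty$.

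For this I would list all points of $F_1,F_2,\dots$ as a single sequence $(q_l)_{l\ge1}$ of pairwise distinct periodic points (of canonical height $0$), and apply Theorem~\ref{thm:yuan-henon} to it. Its hypothesis requires $\{q_l\}\cap C$ to be finite for every irreducible curve $C\subset\A^2_\LL$. Here the key geometric input is Proposition~\ref{prop:no fixed curves}: the decreasing intersection $Z_C:=\bigcap_{n\in\Z}f^{-n}(C)$ stabilizes to an $f$-invariant algebraic set which cannot contain a curve (any iterate $f^r$ is again of H\'enon type, hence has no invariant curve), so $Z_C$ is finite; consequently only finitely many of the $F_m$ lie entirely inside $C$, and for the others every maximal run of consecutive orbit points contained in $C$ has length bounded by a constant depending only on $C$ (a run of length $r$ pushes its first point into $C\cap f^{-1}C\cap\dots\cap f^{-(r-1)}C=Z_C$ once $r$ is large). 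Granting that this makes $\{q_l\}\cap C$ finite, Theorem~\ref{thm:yuan-henon} gives $[O(q_l)]\to\mu_{f,v}$; and since every $q\in F_m$ carries index $l\ge\#F_1+\dots+\#F_{m-1}\to\infty$ — one uses $\#F_m\to\infty$, which holds because $\fix(f^k)$ is finite for every $k$ (again Proposition~\ref{prop:no fixed curves}, applied to $f^k$) — this convergence is uniform over $q\in F_m$, so averaging over $F_m$ yields $\mu_m\to\mu_{f,v}$.

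The step that requires real work is the genericity check: the run-length bound only yields $\#(F_m\cap C)\le\frac{r(C)}{r(C)+1}\#F_m$, not a vanishing fraction, and when a curve $C$ does carry infinitely many periodic points — as happens for reversible $f$ — the sequence $(q_l)$ genuinely fails to be generic (and one checks that the Galois orbit $\widetilde F_m$ does not help here, since $\mu_m(C)=\#(F_m\cap C)/k_m$). I expect this to be the main obstacle. Its resolution is to bound $\#(F_m\cap C)$ more tightly, showing that a long periodic cycle spends only a vanishing proportion of its length on any fixed curve — a dynamical statement for which the crude algebraic bound is insufficient and one needs in addition B\'ezout-type control on $C\cap f^{-j}(C)$ against the period, equivalently that algebraic curves carry zero mass for every invariant probability measure arising as a weak limit of the $\mu_m$. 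Once that is available, any weak limit of $(\mu_m)$ is an $f$-invariant probability measure charging no algebraic curve, and a final appeal to the equidistribution theorem — or, at an archimedean place, to the uniqueness of the measure of maximal entropy~\cite{BLS93,bls2} — identifies it with $\mu_{f,v}$, completing the proof.
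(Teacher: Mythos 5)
There is a genuine gap, and it sits exactly where you flag it: the estimate you defer to ``B\'ezout-type control'' is the whole content of the paper's proof. Since the genericity hypothesis of Theorem~\ref{thm:yuan-henon} can really fail (e.g.\ for reversible maps a curve may carry infinitely many periodic points), what one needs is the quantitative bound $\#(C\cap F_m)=o(\#F_m)$ for every algebraic curve $C$; your run-length argument via $Z_C=\bigcap_n f^{-n}(C)$ only gives a non-vanishing fraction $\tfrac{r}{r+1}\#F_m$, as you admit, and the stronger bound is never proved. The missing argument is in fact elementary and purely combinatorial: if $\#(C\cap F_m)\ge\varepsilon\,\#F_m$ along a subsequence, fix $N>1/\varepsilon$; since the minimal periods in $F_m$ tend to infinity, the set $B$ of points $p\in F_m\cap C$ with $f^k(p)\notin C$ for all $1\le k\le N$ satisfies $\#B\cdot N\le \#F_m$, so the number of points of $F_m\cap C$ returning to $C$ within $N$ steps is at least $(\varepsilon-\tfrac1N)\#F_m\to\infty$; by pigeonhole $C\cap f^{-k}(C)$ is then infinite for some $1\le k\le N$, forcing $f^k(C)=C$ and contradicting Proposition~\ref{prop:no fixed curves} applied to $f^k$. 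This is how the paper obtains $\#(C\cap F_m)=o(\#F_m)$.

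Your concluding identification is also not sound as stated: knowing that a weak limit of $(\mu_m)$ is $f$-invariant and gives no mass to algebraic curves does not single out $\mu_{f,v}$ (there are many such invariant measures, a maximal-entropy characterization would require knowing the entropy of the limit, and at non-Archimedean places no ergodic uniqueness statement is available at all --- the paper explicitly says the ergodic theory of $\mu_{f,v}$ there remains to be done). The paper instead reruns the arithmetic equidistribution argument in the style of \cite{favre-gauthier}: the $o(\#F_m)$ bound allows one to discard the (Galois-invariant, height-zero) points lying on any given curve without affecting the limit, and Yuan's theorem then yields convergence to $\mu_{f,v}$ at every place simultaneously. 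So your reduction to Galois orbits and the use of \cite{lee} are compatible with the paper's strategy, but both essential steps --- the counting bound and the arithmetic identification of the limit --- are missing or replaced by an appeal that does not work.
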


As a consequence we have the following variation on Theorem \ref{thm:unlikely}.

\begin{cor}\label{cor:unlikely cycles}
 Let $f$ and $g$ be two      polynomial automorphisms of H\'enon type 
of the affine plane, defined over a number field. 

Then if  $f$ and $g$ share an infinite set of  periodic cycles, there exist two non-zero integers $n,m\in \Z$ such that $f^n = g^m$.
\end{cor}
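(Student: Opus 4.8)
The strategy is to reduce to the situation of Theorem \ref{thm:unlikely} by showing that sharing infinitely many periodic cycles forces $f$ and $g$ to have the same equilibrium measure at every place, after which the argument of Steps 2 and 3 in \S\ref{subs:unlikely algebraic} applies verbatim. The only genuinely new ingredient is Proposition \ref{prop:cycles}, which provides the equidistribution of Galois conjugates of periodic \emph{cycles} toward $\mu_{f,v}$. So the first thing I would do is invoke Proposition \ref{prop:cycles} for both $f$ and $g$: since $f$ and $g$ share an infinite family $(F_m)$ of periodic cycles, we may assume (discarding finitely many members if necessary) that the $F_m$ are pairwise disjoint, and then the probability measures $\mu_m$ equidistributed over the Galois orbits of $F_m$ converge weakly to $\mu_{f,v}$ and also to $\mu_{g,v}$ at every place $v\in\cM_\LL$. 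Hence $\mu_{f,v}=\mu_{g,v}$ for all $v$.

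\medskip

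Once the equality of equilibrium measures at all places is established, I would copy the remainder of the proof of Theorem \ref{thm:unlikely}. Concretely: by Lemma \ref{lem:Shilov} the polynomially convex envelope of $\supp(\mu_{f,v})$ equals $K_{f,v}$, so $K_{f,v}=K_{g,v}$ at all places; consequently $f$ and $g$ have the same set of periodic points (Step 2), and indeed by Lemma \ref{lem:interm1} every H\'enon-type element of the group $\langle f,g\rangle$ shares this compact set at all places. Assuming for contradiction that $f$ and $g$ have no common iterate, Lemma \ref{lem:interm2} produces regular H\'enon-type generators $h_1,h_2$ of a free subgroup all of whose nontrivial elements are regular of H\'enon type; working at an archimedean place one gets $G^+_{h_1}=G^+_{h_2}$ and hence $dd^cG^+_{h_1}=dd^cG^+_{h_2}=:T$, so the commutator $h_3$ satisfies $h_3^*T=T$, contradicting the mass computation at the line at infinity exactly as on p.\pageref{step3}. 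This forces $f^n=g^m$ for some nonzero $n,m$.

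\medskip

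The one point that requires care, and which I would flag as the main obstacle, is the extraction step: the hypothesis gives infinitely many \emph{common} periodic cycles, but Proposition \ref{prop:cycles} is stated for a sequence of \emph{disjoint} cycles, so I must first observe that any infinite family of periodic cycles contains an infinite pairwise-disjoint subfamily. This is immediate because distinct periodic cycles are disjoint as subsets of $\A^2$ (two cycles sharing a point would be equal), so the family $(F_m)$ is automatically pairwise disjoint, and one only needs it to be infinite. Thus there is really no obstacle beyond bookkeeping: the corollary follows formally from Proposition \ref{prop:cycles} together with the machinery already assembled for Theorem \ref{thm:unlikely}, since the sole place that theorem used the hypothesis ``$f,g$ share a Zariski-dense set of periodic points'' was to feed Theorem \ref{thm:yuan-henon} and conclude $\mu_{f,v}=\mu_{g,v}$ — and here Proposition \ref{prop:cycles} supplies that conclusion directly.
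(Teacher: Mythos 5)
Your proposal is correct and matches the paper's own argument: the paper proves Corollary~\ref{cor:unlikely cycles} precisely by using Proposition~\ref{prop:cycles} (applied to the shared, automatically disjoint cycles for both $f$ and $g$) to replace Step~1 of the proof of Theorem~\ref{thm:unlikely}, i.e.\ to get $\mu_{f,v}=\mu_{g,v}$ at all places, and then repeating Steps~2 and~3 verbatim. Your extraction remark about disjointness is exactly the bookkeeping the paper leaves implicit.
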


Indeed to prove the corollary it suffices to repeat the proof of Theorem \ref{thm:unlikely} starting from Step 2. 

\begin{proof}[Proof of Proposition \ref{prop:cycles}]
We may assume that all $F_m$ are Galois invariant.
The result does not quite follow from Lee's argument of~\cite[Theorem~A]{lee} 
since we do not assume the set $\bigcup_m (F_m \cap C)$ to be finite for every curve $C$.
We claim however that any algebraic curve $C$ it holds that 
\begin{equation}\label{eq:almost}
\# (C \cap F_m) = o(\# F_m) \text{ as } n\to \infty.
\end{equation}
One then argues exactly as in  \cite[Proof of Theorem~1]{favre-gauthier} 
to conclude that $\mu_m$ converges to $\mu_f$.

\smallskip

Let us justify~\eqref{eq:almost}. Suppose by contradiction that there exists $\varepsilon >0$ such that 
$$\# (C \cap F_m)  \ge \varepsilon\, \# F_m~.$$
First observe that the minimal period of all points in $F_m$ tends to infinity since 
$f^n$ admits only finitely many fixed points for any $n>0$.
Pick any integer $N> 1/\varepsilon$ and $n$ large enough such that the periods of all points in $F_m$ are larger than $N$. We claim that
$$
\# \{ p \in F_m \cap C, \, \text{ such that } f^k(p) \in C \text{ for some } 0 <k \le N\}
$$
tends to infinity. But this implies that  $C \cap f^{-k}(C)$ is infinite for some $0 < k \le N$, whence $ f^k(C) = C$, a contradiction. 

To prove the claim, let $B$ denote the set of points in $F_m\cap C$ such that 
$f^k(p) \notin C$ for all $1\le k \le N$, and let $G$ be its complement in $F_m\cap C$.
We want to estimate $\# G$.
For this, we see that  $\# B \times N \le \# F_m$, hence
$$\# G \ge \# (F_m \cap C)  - \# B \ge \varepsilon \# F_m - \frac1{N} \# F_m \to \infty$$
as required.
\end{proof}


\section{Reversible polynomial automorphisms}\label{sec:reversible}

A polynomial automorphism of $\mathbb A^2$ is said to be {\em reversible} if there exists a polynomial automorphism $\sigma$, which may or may not be an involution,  such that $\sigma^{-1} f \sigma  = f^{-1}$. Any such $\sigma$ is then called a {\em reversor}.

Since invariance under time-reversal appears frequently
in physical models, such mappings have attracted a lot of  attention in the mathematical physics literature.
In the context of  plane polynomial automorphisms, reversible mappings were classified by Gomez and Meiss in \cite{gm1, gm2}. 
In particular they prove that the reversor $\sigma$ is either affine or elementary and of finite (even) order. Moreover they show
 that when $\sigma$ admits a curve of fixed points, then $\sigma$ must be an  involution conjugate to the affine involution $t:(x,y)\mapsto (y,x)$.

Our aim is to prove the following:
\begin{prop}\label{prop:reversible}
Suppose that $f$ is a reversible polynomial automorphism of H\'enon type and that $\sigma$ is an involution conjugating $f$ to $f^{-1}$. 

Then any curve of fixed points of $\sigma$ contains infinitely many periodic points of $f$.
\end{prop}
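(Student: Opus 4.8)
The plan is to reduce the proposition to the purely dynamical assertion that $\#(C\cap f^{-n}(C))\to\infty$. Write $\sigma f\sigma=f^{-1}$ and recall $\sigma^2=\id$. After conjugating $f$ and $\sigma$ by a common polynomial automorphism we may assume $f$ is regular, with associated points $p_+,p_-\in H_\infty$, and that $C$ is irreducible (if the fixed curve of $\sigma$ is reducible, it suffices to treat one of its components). The first, elementary observation is that $C\cap f^{-n}(C)$ consists of periodic points of $f$: if $p\in C$ and $f^n(p)\in C$ for some $n\ge1$, then $\sigma(p)=p$ and $\sigma(f^n(p))=f^n(p)$, while also $\sigma(f^n(p))=(\sigma f\sigma)^n(\sigma(p))=f^{-n}(p)$, so that $f^n(p)=f^{-n}(p)$, i.e. $p$ has period dividing $2n$. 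Moreover $f^k(C)\cap f^{-n}(f^k(C))=f^k(C\cap f^{-n}(C))$, and $f^k(C)$ is again a curve of fixed points of the involution $f^k\sigma f^{-k}$ (which still conjugates $f$ to $f^{-1}$); so we may replace $C$ by any forward iterate, and by Proposition~\ref{prop:no fixed curves} together with \cite[Proposition~4.2]{BS1} we may assume in addition that $\overline C\subset\P^2$ meets $H_\infty$ only at the super-attracting point $p_+$. Then $K^+\cap C$ is compact, and it contains every set $C\cap f^{-n}(C)$, since periodic points have bounded orbit.

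Fix $n\ge1$. The curves $C$ and $f^{-n}(C)$ are irreducible and distinct (otherwise $C$ would be invariant under the H\'enon-type map $f^n$, contradicting Proposition~\ref{prop:no fixed curves}), so $C\cap f^{-n}(C)$ is a finite set whose cardinality we must bound from below. The relevant object is the measure $\mu_C^+:=T^+\wedge[C]=\Delta(G^+|_C)$ of \S\ref{sec:inv-meas}. Because $\overline C$ meets $H_\infty$ only at $p_+$, the continuous non-negative function $G^+|_C$ tends to $+\infty$ at infinity, hence is not harmonic, so $\mu_C^+$ is a nonzero positive measure, supported in the compact set $K^+\cap C\subset\A^2$, and carrying no mass at points because $G^+$ is continuous. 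Writing $C=\{P=0\}$ with $P$ a polynomial, one has $f^{-n}(C)=\{P\circ f^n=0\}$, so the cycle $C\cap f^{-n}(C)$ is the zero divisor on $C$ of the regular function $(P\circ f^n)|_C$. I would then invoke the classical convergence $\frac1{d^n}[f^{-n}(C)]\to c^+T^+$ as positive closed currents, with $c^+>0$ (Bedford--Smillie \cite{BS1}, Forn\ae ss--Sibony \cite{fornaess-sibony}), in a form precise enough to control the restriction to $C$ of the potentials $\frac1{d^n}\log|P\circ f^n|$; taking Laplacians along $C$ this yields the equidistribution
$$
\frac1{d^n}\,[C]\wedge[f^{-n}(C)]\ \longrightarrow\ c^+\,\mu_C^+ .
$$

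Granting this, assume for contradiction that $\#(C\cap f^{-n}(C))\le N$ for infinitely many $n$. Along such a subsequence the measures $\frac1{d^n}[C]\wedge[f^{-n}(C)]$ are atomic and carried by at most $N$ points of the fixed compact set $K^+\cap C$; their total masses converge to that of $c^+\mu_C^+$, which is positive, and, extracting once more, the supporting points converge. Hence any weak limit of the subsequence is a nonzero atomic measure, contradicting the absence of atoms of $c^+\mu_C^+$. Therefore $\#(C\cap f^{-n}(C))\to\infty$, and since all these points are periodic for $f$ and lie on $C$, the curve $C$ contains infinitely many periodic points of $f$.

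The crux is the equidistribution of the normalized intersection measures $d^{-n}[C]\wedge[f^{-n}(C)]$ towards a nonzero multiple of the atomless measure $\mu_C^+$: this is exactly what rules out a concentration of the intersection (which does grow when counted with multiplicity) at a bounded number of periodic points, and it requires controlling the potentials $\frac1{d^n}\log|P\circ f^n|$ in restriction to $C$, which in turn rests on a slow-recurrence estimate for $\mu_C^+$-generic orbits; an argument of the same nature underlies the proof of Lemma~\ref{lem:trans}. Over a non-Archimedean ground field the scheme is identical, with $T^+$, $\mu_C^+$ and the Bedford--Taylor product replaced by the corresponding Green current, Chambert-Loir measure and Thuillier Laplacian, as in \S\ref{sec:pot-NA} and \S\ref{sec:inv-meas}.
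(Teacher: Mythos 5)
Your reduction is fine: the observation that every point of $C\cap f^{-n}(C)$ is periodic of period dividing $2n$, the replacement of $C$ by a forward iterate meeting $H_\infty$ only at $p_+$, and the final deduction (a bounded number of atoms cannot converge weakly, within a fixed compact set, to a nonzero atomless limit) are all correct. The genuine gap is the displayed equidistribution $\frac1{d^n}[C]\wedge[f^{-n}(C)]\to c^+\mu^+_C$, which carries the entire weight of the argument and which you assert rather than prove. The convergence $d^{-n}[f^{-n}(C)]\to c^+T^+$ of \cite{BS1,fornaess-sibony} is convergence of currents, i.e.\ $L^1_{\mathrm{loc}}(\C^2)$ convergence of the potentials $u_n=d^{-n}\log|P\circ f^n|$ towards $c^+G^+$; this does not pass to the restriction to the curve $C$. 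On $C\setminus K^+$ the restricted potentials do converge locally uniformly, but on $C\cap K^+$ --- which is exactly where $\mu^+_C$ lives --- one only has the Hartogs-type upper bound $\limsup u_n|_C\le c^+G^+|_C=0$, and nothing a priori prevents a subsequence of $u_n|_C$ from dropping on a set of positive $\mu^+_C$-measure, which is precisely the scenario where $\Delta(u_n|_C)$ concentrates its mass on a few deep tangencies between $C$ and $f^{-n}(C)$. In other words, the failure mode of your limit statement is exactly the conclusion you are trying to exclude, so invoking ``a form precise enough to control the restriction to $C$'' and a ``slow-recurrence estimate'' names the difficulty without resolving it.

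This is precisely the point where the paper's analytic proof brings in machinery you do not reconstruct: by \cite{bs5} and \cite{isect} the convergence $d^{-k}[f^{\pm k}(\Delta)]\to T^{\mp}$ holds \emph{geometrically} (after discarding a uniformly small mass, the currents are uniformly laminar unions of graphs in cubes, converging as graphs), and the geometric intersection theory of \cite{isect} then produces $\sim d^{2k}$ \emph{transverse} intersection points of $f^{k}(\Delta)$ with $f^{-k}(\Delta)$; importing that technology (or proving the restricted-potential convergence directly) is what it would take to close your gap. Note also that the paper's primary proof avoids all of this and is far more elementary: after using the Gomez--Meiss classification to take $\sigma=t$ and $C=\Delta$, B\'ezout in $\P^2$ gives $\sum_p\mu_n(p)=\deg\big(f^{-n_0}(\overline\Delta)\big)\cdot\deg\big(f^{\,n-n_0}(\overline\Delta)\big)\to\infty$ once the intersections are known to lie in $\A^2$ for large $n$, while a theorem of Arnol'd (Lemma~\ref{lem:bdd}, \cite{arnold93,SY13}) says that for each fixed $p\in\Delta$ the local intersection multiplicities $\mu_n(p)$ stay bounded in $n$; hence the intersection cannot be carried by finitely many points. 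Either route works; as it stands, your proposal establishes neither.
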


Specific examples include all H\'enon transformations of Jacobian 1, that are of the form $(x,y)\mapsto ( p(x) - y, x)$, for which the reversor is the affine involution  $t$. So is  the H\'enon mapping $(x,y)\mapsto (-y, p(y^2)-x)$,  of Jacobian $-1$. More generally, 
 a mapping of the form $t  H^{-1} t H$ is reversible with reversor $t$, where $H$ denotes any polynomial automorphism.

Let us also observe that taking iterates is really necessary in Conjecture \ref{conj:MM2}. Indeed 
pick for some $n\geq 2$ a  primitive $n$-th root of unity $\zeta$,  and let $p$ be any polynomial such that
$p(\zeta x) = \zeta p(x)$. Then the   automorphisms $ ( p(x) - y, x)$, $(\zeta x, \zeta y)$  commute and the automorphism 
 defined by $H := ( p(x) - y, x) \circ (\zeta x, \zeta y)$ is not reversible but its $n$-th iterate is. 
Observe also that the jacobian of $H$ equals $\zeta^2$.

\begin{proof}[Algebraic proof of Proposition~\ref{prop:reversible}]
As observed above, we may assume that $\sigma = t$ so that the curve of fixed points is actually the diagonal $\Delta = \{ x=y\}$.

Observe now that any point $ p\in \Delta \cap f^n (\Delta)$ satisfies
$f^{-n}(p) = \sigma f^n \sigma(p) = \sigma f^n (p) = f^n(p)$, and is thus
periodic of period $2n$.

To conclude, it remains to prove that $\# \Delta \cap f^n (\Delta) \to \infty$.
For any $p \in \Delta \cap f^n (\Delta)$, we denote by $\mu_n(p)$ the multiplicity of intersection of 
$\Delta$ and $f^n(\Delta)$ at $p$. We rely on the following result of Arnol'd~\cite{arnold93}, see also \cite{SY13}.

\begin{lem}\label{lem:bdd}
For any $p\in \Delta$, the sequence $\mu_n(p)$ is bounded.
\end{lem}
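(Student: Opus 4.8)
The plan is to deduce Lemma~\ref{lem:bdd} from Arnol'd's boundedness theorem for Milnor numbers of intersections in holomorphic dynamics~\cite{arnold93} (see also~\cite{SY13}); the only work is to localize $\mu_n(p)$ at a periodic orbit so that the theorem applies, and this is exactly where reversibility enters. We work over $\C$, and recall that we have reduced to $\sigma=t\colon(x,y)\mapsto(y,x)$, so that $\Delta=\{x=y\}=\mathrm{Fix}(\sigma)$.

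First I would record the basic consequence of reversibility: for $p\in\Delta$ one has $p\in f^n(\Delta)$ if and only if $f^{2n}(p)=p$. Indeed $p\in f^n(\Delta)$ means $f^{-n}(p)\in\Delta$, i.e.\ $\sigma(f^{-n}(p))=f^{-n}(p)$; since $\sigma f^{-n}\sigma=f^{n}$ and $\sigma(p)=p$, the left-hand side equals $f^n(p)$, so the condition is $f^n(p)=f^{-n}(p)$. In particular, if $p$ is not periodic then $\mu_n(p)=0$ for all large $n$ and we are done. So assume $p$ has exact period $m$. Whenever $p\in f^n(\Delta)$, set $q:=f^{-n}(p)$; then $q\in\Delta$, $q$ lies in the (period-$m$) $f$-orbit of $p$, and $q$ depends only on $n\bmod m$ because $f^m(p)=p$. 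Hence $\{\,n: p\in f^n(\Delta)\,\}$ is a union of residue classes modulo $m$, and it suffices to bound $\mu_n(p)$ along each one.

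Fix a residue $r$ and write $n=r+km$, so $q=f^{-r}(p)$ is independent of $k$ and $f^n=f^r\circ g^k$ with $g:=f^m$ a germ of biholomorphism fixing $q$. Near $p$ the curve $f^n(\Delta)$ is the image under $f^r$ of the germ $g^k(\Delta_q)$, where $\Delta_q$ is the germ of $\Delta$ at $q$. Since $f^r$ is a local biholomorphism carrying $q$ to $p$ and intersection multiplicities are biholomorphic invariants,
\[
\mu_n(p)=\big(\Delta_p\cdot f^r(g^k\Delta_q)\big)_p=\big(f^{-r}(\Delta_p)\cdot g^k\Delta_q\big)_q .
\]
This intersection is isolated at $q$: otherwise $f^n(\Delta)=\Delta$, i.e.\ $\Delta$ would be invariant under the Hénon-type automorphism $f^n$, contradicting Proposition~\ref{prop:no fixed curves}. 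Setting $A:=f^{-r}(\Delta_p)$ and $B:=\Delta_q$, two germs of (smooth) analytic curves through the fixed point $q$ of $g$, Arnol'd's theorem~\cite{arnold93} asserts that the sequence $\big(A\cdot g^k B\big)_q$, $k\ge0$, is bounded. Taking the maximum over the finitely many residues $r$ yields the claim.

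The main point, and essentially the only non-elementary ingredient, is Arnol'd's boundedness theorem; everything else is the reduction above, whose crucial use of reversibility is that it forces every point of $\Delta\cap f^n(\Delta)$ to be periodic, so that $\mu_n(p)$ can be read as a local intersection number at a fixed point of an iterate of $f$.
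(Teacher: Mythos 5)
Your proof is correct and takes essentially the same route as the paper, which states the lemma as a direct consequence of Arnol'd's theorem \cite{arnold93} (see also \cite{SY13}) without further argument; what you add is the explicit localization, via the reversibility identity $p\in f^n(\Delta)\Leftrightarrow f^{2n}(p)=p$ (which the paper also records), to an intersection of two curve germs at a fixed point of $g=f^m$ along each residue class of $n$ modulo the period. Both the reduction and the isolation argument via Proposition~\ref{prop:no fixed curves} are sound, so this is a faithful filling-in of the citation rather than a different proof.
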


Now since $f$ is a polynomial automorphism of H\'enon type we may choose
affine coordinates such that $f$ extends to $\P^2$ as a regular map. 
Recall that $f$ admits a super-attracting point $p_+$ and a point of indeterminacy $p_-$
on the line at infinity and that $p_+ \neq p_-$.
Write $\overline{\Delta}$ for the closure of the diagonal in $\P^2$.

By~\cite{BS1}, we know that there exists an integer $n_0\ge 1$ such that $f^n (\overline{\Delta})\ni p_+$  for all  $n\ge n_0$ and
 $f^n (\overline{\Delta})\ni p_-$  for all  $n\le -n_0$. 
It follows that for  all $n\ge n_0$, the intersection 
$f^{-n_0} (\overline{\Delta}) \cap f^n (\overline{\Delta})$ is included in $\A^2$.
By B\'ezout's Theorem  we infer that 
$$
\sum_{p\in \Delta} \mu_n(p)
= 
f^{-n_0} (\overline{\Delta}) \cdot  f^{n-n_0} (\overline{\Delta})
= \deg( f^{-n_0} (\overline{\Delta})) \times \deg ( f^{n-n_0} (\overline{\Delta}) ) \to \infty~.
$$
 We conclude that there are infinitely many fixed points of $f$ on $\Delta$ for otherwise their multiplicities would have to grow to infinity, contradicting Lemma \ref{lem:bdd}. 
\end{proof}

\begin{proof}[Analytic proof of Proposition~\ref{prop:reversible}]
Let us sketch an alternate argument for the fact that $\# \Delta \cap f^n(\Delta)\cv \infty$, which is based on intersection theory of laminar 
currents. 

For notational ease, let us assume that $n = 2k$ is even. Then $\# \Delta \cap f^n(\Delta) = \# f^{-k}(\Delta) \cap f^k(\Delta)$. 
We know from \cite{BS1} that the sequence of positive closed $(1,1)$-currents $d^{-k} [f^k(\Delta)]$ converges to $T^-$, and likewise for $T^+$.  It follows from \cite{bs5} that this 
convergence holds in a geometric sense. Informally this means that 
 we can discard a part of $d^{-k} [f^k(\Delta)]$  of arbitrary small mass, uniformly in $k$, 
 such that the remaining part is made of disks of uniformly bounded geometry, which geometrically converge to the disks making up the laminar structure of $T^-$.  
 
 To state things more precisely, we follow the presentation of \cite{isect}. Fix $\e>0$. Given a generic subdivision $\mathcal Q$ of $\cd$ by affine cubes of size $r>0$, there exists uniformly laminar currents $T_{\mathcal{Q}, k} ^- \leq d^{-k} [f^k(\Delta)]$ and $T_{\mathcal{Q}, k} ^+ \leq d^{-k} [f^{-k}(\Delta)]$ made of graphs in these cubes and such that the mass of $d^{-k} [f^{\pm k}(\Delta)] - T_{\mathcal{Q}, k} ^\pm$ is bounded by $Cr^2$ for some constant $C$ \cite[Prop. 4.4]{isect}.
  Therefore, up to extracting a subsequence,  
 the currents $T_{\mathcal{Q}, k} ^\pm$ converge to  currents  $T_{\mathcal{Q}} ^\pm \leq T^\pm$ such that 
 $\mathbf{M} (T^\pm - T_{\mathcal{Q}} ^\pm)\leq Cr^2$. Then we infer from \cite[Thm. 4.2]{isect} that if $r$ is smaller than some $r(\e)$, 
 $\mathbf{M}(T^+\wedge T^-  - T^+_{\mathcal Q} \wedge T^-_{\mathcal Q}) \leq \e/2$. 
 Furthermore, only transverse intersections account for  the wedge product  
$T^+_{\mathcal Q}  \wedge T^-_{\mathcal Q}$.  If we denote by $\geom$ the geometric intersection product for curves, which consists in putting a Dirac mass at any proper intersection, without counting multiplicities, we have the weak convergence
 $$T^+_{\mathcal Q, k} \geom T^-_{\mathcal Q, k} \cv T^+_{\mathcal Q}  \geom T^-_{\mathcal Q} = T^+_{\mathcal Q} \wedge 
  T^-_{\mathcal Q},$$ where the last equality follows from \cite[Thm 3.1]{isect}. Therefore 
 the mass of $T^+_{\mathcal Q, k} \geom T^-_{\mathcal Q, k}$ is larger than $1-\e$ for large enough $k$, which was the result to be proved. 
\end{proof}

\begin{rem}
Observe that the analytic argument implies that
$\Delta$ intersects $f^{2k}(\Delta)$ transversally at $\sim d^k$ points. 
We claim that this implies
$$\# {\rm Per} (f^{2k}) \cap \Delta = d^k ( 1 + o(1)) ~,$$
that is, most of $\Delta\cap f^{2k}(\Delta)$ is made of points of exact period $2k$. Indeed, assume that this is not the case. Then there exists $\e>0$ and a sequence $k_j\cv\infty$  such that $ \e d^{k_j}$ of these points have a period which is a proper divisor $N$ of $d^{2k_j}$, in particular $N\leq d^{k_j}$. Let $F_j$ be this set of points and $\nu_j  = d^{-k_j} \sum_{p\in F_j} \delta_p$. 
By \cite{bls2}, the measure equidistributed on $\mathrm{Fix}(f^{k_j})$ (which has cardinality $d^{k_j}$) converges to $\mu_f$. 
 Thus any cluster limit $\nu$ of $(\nu_j)$ has mass at least $\e$ and satisfies $\nu\leq  \mu_f$. It follows that $\mu_f$ gives a mass at least
  $\e$ to $\Delta$, which is contradictory.  
\end{rem}


\appendix

\section{A complement on the non-Archimedean Monge-Amp\`ere operator}
Let $K$ be any complete non-Archimedean field.  We prove
\begin{thm}\label{thm:locality}
Suppose $L\to X$ is an ample line bundle over a smooth $K$-variety of dimension $d$. Pick any two semi-positive continuous metrics $|\cdot|_1, |\cdot|_2$ in the sense of Zhang.
Here we establish the following theorem.  
\begin{equation}\label{eq:compar}
  \one_{\{|\cdot|_1<|\cdot|_2\}}\,c_1( L,\min\{|\cdot|_1,|\cdot|_2\})^d = 
  \one_{\{|\cdot|_1<|\cdot|_2\}}\,c_1( L,|\cdot|_1)^d.
  \end{equation}
\end{thm}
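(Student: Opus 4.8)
The plan is to reduce \eqref{eq:compar} to the \emph{locality} of the non-Archimedean Monge-Amp\`ere operator and then to prove that locality by a telescoping argument. Write $|\cdot|_{12} := \min\{|\cdot|_1,|\cdot|_2\}$, and note first that $|\cdot|_{12}$ is again a continuous semi-positive metric on $L$: its weight is $-\log|\cdot|_{12} = \max\{-\log|\cdot|_1,-\log|\cdot|_2\}$, a maximum of two semi-positive weights, hence semi-positive, so $c_1(L,|\cdot|_{12})^d$ is defined. Next, the ratio $u := \log\bigl(|\cdot|_{12}/|\cdot|_1\bigr)$ is a continuous function on $X^{\an}$; explicitly $u = \min\bigl\{0,\log(|\cdot|_2/|\cdot|_1)\bigr\}$, so $u\le 0$ everywhere and, most importantly, $u\equiv 0$ on the open set $U := \{|\cdot|_1 < |\cdot|_2\}$ (open since $\log(|\cdot|_1/|\cdot|_2)$ is a continuous function on $X^{\an}$). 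Since scaling a metric by a constant does not affect its curvature, \eqref{eq:compar} is therefore exactly the assertion that $c_1(L,|\cdot|_{12})^d$ and $c_1(L,|\cdot|_1)^d$ have the same restriction to $U$, i.e. the locality of the $d$-th self-intersection measure applied to the pair $|\cdot|_{12}$, $|\cdot|_1$, which agree on $U$.

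To prove this locality I would invoke the multilinearity and symmetry of mixed Monge-Amp\`ere measures of continuous semi-positive metrics (Chambert-Loir \cite{chambert06}, \cite{chambert11}). Writing $c_1(L,|\cdot|_{12}) = c_1(L,|\cdot|_1) + dd^c u$ and using the elementary identity $x^d-y^d=(x-y)\sum_j x^jy^{d-1-j}$, one gets the telescoping relation
$$ c_1(L,|\cdot|_{12})^d - c_1(L,|\cdot|_1)^d \;=\; dd^c u \wedge \Bigl(\sum_{j=0}^{d-1} c_1(L,|\cdot|_{12})^{j}\wedge c_1(L,|\cdot|_1)^{d-1-j}\Bigr) $$
as an equality of signed Radon measures on $X^{\an}$, each term on the right being a mixed Monge-Amp\`ere measure carrying at least one factor equal to the curvature current $dd^c u$ of the difference metric $|\cdot|_{12}/|\cdot|_1$ on $\O_X$. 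The point is that every such term is produced by a \emph{local} construction: $dd^c u$ restricted to an open set depends only on $u$ restricted to that set (the local nature of the Laplacian on continuous functions — for curves this is contained in the properties of $\Delta$ recalled in \S\ref{sec:pot-NA}, and in general it is part of the Bedford-Taylor--Chambert-Loir formalism), so $dd^c u = 0$ on $U$ because $u$ is constant there; and the wedge product of $dd^c u$ with closed semi-positive currents having continuous potentials is again a local operator. Hence the whole right-hand side vanishes on $U$, which yields \eqref{eq:compar}; the case $d=1$ degenerates to the identity $c_1(L,|\cdot|_{12}) - c_1(L,|\cdot|_1) = dd^c u$ together with $dd^c u|_U=0$.

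I expect the main obstacle to be the rigorous justification of these locality assertions in the Berkovich setting, namely that a mixed measure $dd^c u \wedge S$ has a restriction to an open set $V$ depending only on $u|_V$ and $S|_V$. I would carry this out either by appealing to the differential-geometric calculus of Chambert-Loir--Ducros, in which $dd^c$ and exterior products of currents are manifestly local, or — to keep the argument self-contained — by reduction to model metrics: any continuous semi-positive metric is a uniform limit of semi-positive model metrics, the associated Monge-Amp\`ere measures converge weakly under such uniform convergence \cite{chambert06}, and for model metrics on a common model the mixed Monge-Amp\`ere measure is an explicit intersection cycle supported on the divisorial points of the special fibre, whose mass over $V$ is computed from the part of the model lying above $V$. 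The only delicate step in this second route is to arrange the approximating model data so that the model function representing $u$ still vanishes on a slightly shrunk open subset of $U$, which can be done by truncating with a suitable model function; once that is granted the comparison of intersection numbers over $U$ is routine. Finally, the companion statement Corollary~\ref{cor:locality} (two semi-positive metrics agreeing on an open set have the same Monge-Amp\`ere measure there) will follow from \eqref{eq:compar} by perturbing one metric to $e^{-\varepsilon}|\cdot|_1$, applying \eqref{eq:compar}, and letting $\varepsilon\to 0$ using the weak continuity of the Monge-Amp\`ere measure.
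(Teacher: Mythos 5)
Your reduction is sound as far as it goes: the min of two semi-positive metrics is semi-positive, the function $u=\min\{0,\log(|\cdot|_2/|\cdot|_1)\}$ vanishes on $U=\{|\cdot|_1<|\cdot|_2\}$, and by multilinearity of mixed Chambert-Loir measures the difference $c_1(L,\min)^d-c_1(L,|\cdot|_1)^d$ telescopes into terms each carrying one factor $dd^c u$. But at that point you have only \emph{reformulated} the statement: what you then invoke --- that a mixed measure $dd^c u\wedge S$ restricted to an open set depends only on $u$ and $S$ restricted to that set --- is precisely the locality of the non-Archimedean Monge-Amp\`ere operator, which is the whole difficulty here and is in fact a statement \emph{stronger} than the theorem (it is Corollary~\ref{cor:locality}, which the paper deduces \emph{from} Theorem~\ref{thm:locality}, not the other way around). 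The Chambert-Loir measures are defined by global intersection numbers on models, so locality is not available off the shelf; your route (a) via Chambert-Loir--Ducros would additionally require the nontrivial compatibility of their local products with the global Chambert-Loir measures for Zhang-semipositive (i.e.\ uniform limits of nef model) metrics, which you do not address.

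Your route (b) contains the concrete false step. For model metrics the mass of $c_1(\mathfrak{L}_1|\cdots|\mathfrak{L}_d)$ at a divisorial point $x_E$ is the intersection number of the restrictions $\mathfrak{L}_i|_E$ computed over the \emph{entire} projective component $E$ of the special fibre; it is not ``computed from the part of the model lying above $V$'' for a small neighborhood $V\ni x_E$. Two model metrics agreeing on such a $V$ differ by a model function that may be nonzero at the divisorial points adjacent to $x_E$, so the restrictions to $E$ need not be numerically equivalent and the masses at $x_E$ can a priori differ; to conclude one needs vanishing of the difference at $x_E$ \emph{and} at all adjacent vertices of a suitable model. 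This is exactly what the comparison-principle structure buys in the paper's proof: since $u\le 0$ and $u(x_E)=0$ when $|\cdot|_1<|\cdot|_2$ at $x_E$, on a model adapted to the three metrics the sign constraint forces $u$ to vanish at every component meeting $E$, whence $\mathfrak{L}_{\min}|_E$ and $\mathfrak{L}_1|_E$ agree numerically --- an argument with no analogue for an arbitrary pair of metrics merely agreeing on an open set. Likewise your final ``truncate so that the approximating $u_n$ vanishes on a shrunk open set'' is unsubstantiated; the paper instead keeps the min-structure at the level of the approximants and handles the uniform-approximation error by the $e^{-\e}$ perturbation $\Omega_n=\{|\cdot|_{1,n}e^{-\e}<|\cdot|_{2,n}\}\supset\Omega$, then lets $n\to\infty$ and $\e\to0$ using weak continuity of the measures. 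So the proposal has a genuine gap: the locality it rests on is assumed, not proven, and the sketched justification of it fails at the model-metric level.
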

As in \cite[\S 5]{nama} this result implies 
\begin{cor}\label{cor:locality}
Suppose $L\to X$ is an ample line bundle over a smooth $K$-variety of dimension $d$. Pick any two semi-positive continuous metrics $|\cdot|_1, |\cdot|_2$ in the sense of Zhang
and suppose that they
coincide on an open set $\Omega$ in the analytification of $X$ in the sense of Berkovich. 

Then the positive measures $c_1( L, |\cdot|_1)^d$ and $c_1( L, |\cdot|_2)^d$ coincide in $\Omega$.
\end{cor}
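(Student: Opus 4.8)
The plan is to deduce Corollary~\ref{cor:locality} from Theorem~\ref{thm:locality} by a perturbation trick, in the spirit of \cite[\S 5]{nama}. Write $|\cdot|_{\min}:=\min\{|\cdot|_1,|\cdot|_2\}$; as is already implicit in the statement of Theorem~\ref{thm:locality}, the minimum of two semi-positive continuous metrics on $L$ is again semi-positive continuous, so $c_1(L,|\cdot|_{\min})^d$ is well defined. It suffices to prove that $c_1(L,|\cdot|_1)^d$ and $c_1(L,|\cdot|_{\min})^d$ agree on $\Omega$: the analogous statement with $|\cdot|_2$ in place of $|\cdot|_1$ follows by symmetry, and the two identities together yield $c_1(L,|\cdot|_1)^d=c_1(L,|\cdot|_{\min})^d=c_1(L,|\cdot|_2)^d$ on $\Omega$.

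The point to handle is that $\Omega\subseteq\{|\cdot|_1=|\cdot|_2\}$ sits inside a \emph{closed} set, whereas Theorem~\ref{thm:locality} only controls $c_1(L,|\cdot|_{\min})^d$ on the \emph{open} set $\{|\cdot|_1<|\cdot|_2\}$. So first I would perturb one metric. For $\e>0$, set $|\cdot|_{1,\e}:=e^{-\e}|\cdot|_1$; this is again a semi-positive continuous metric on $L$, with the same curvature as $|\cdot|_1$, hence $c_1(L,|\cdot|_{1,\e})^d=c_1(L,|\cdot|_1)^d$. On $\Omega$ we have $|\cdot|_{1,\e}=e^{-\e}|\cdot|_1<|\cdot|_1=|\cdot|_2$, so $\Omega$ is contained in the open set $\{|\cdot|_{1,\e}<|\cdot|_2\}$. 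Applying Theorem~\ref{thm:locality} to the pair $(|\cdot|_{1,\e},|\cdot|_2)$ and restricting the resulting equality of measures to $\Omega$ gives
\begin{equation*}
c_1\bigl(L,\min\{|\cdot|_{1,\e},|\cdot|_2\}\bigr)^d\big|_\Omega \;=\; c_1(L,|\cdot|_{1,\e})^d\big|_\Omega \;=\; c_1(L,|\cdot|_1)^d\big|_\Omega .
\end{equation*}

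Next I would let $\e\to0$. Since $|\cdot|_{1,\e}$ differs from $|\cdot|_1$ by the constant factor $e^{-\e}$, the metrics $\min\{|\cdot|_{1,\e},|\cdot|_2\}$ converge uniformly on $X^{\an}$ to $|\cdot|_{\min}$; by continuity of the non-Archimedean Monge-Amp\`ere operator along uniformly convergent sequences of semi-positive continuous metrics \cite{chambert06}, the measures $c_1(L,\min\{|\cdot|_{1,\e},|\cdot|_2\})^d$ converge weakly to $c_1(L,|\cdot|_{\min})^d$. Testing against continuous functions with compact support in $\Omega$ (legitimate since $X^{\an}$ is locally compact Hausdorff), and using that the restriction to $\Omega$ is the \emph{fixed} measure $c_1(L,|\cdot|_1)^d\big|_\Omega$ all along the family, I would conclude that $c_1(L,|\cdot|_{\min})^d=c_1(L,|\cdot|_1)^d$ on $\Omega$; the symmetric argument finishes the proof.

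Within this deduction there is no serious obstacle: it is a purely formal consequence of Theorem~\ref{thm:locality}, the only external inputs being the stability of semi-positivity under $\min$ and the continuity of the Monge-Amp\`ere operator under uniform limits, both standard in Chambert-Loir's framework. The actual difficulty of the appendix lies entirely in the proof of Theorem~\ref{thm:locality} itself, which quantifies how the Monge-Amp\`ere mass of a minimum of two metrics localizes to the region where one of them dominates.
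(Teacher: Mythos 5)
Your deduction is correct and is essentially the argument the paper has in mind: it simply delegates the Corollary to \cite[\S 5]{nama}, and the intended proof there is exactly your $e^{-\e}$-perturbation of one metric followed by uniform convergence and continuity of the Monge-Amp\`ere operator, mirroring the $e^{-\e}$ trick already used in the paper's proof of Theorem~\ref{thm:locality}. No gaps; the only implicit points (semi-positivity of $e^{-\e}|\cdot|_1$ and of the minimum of two semi-positive continuous metrics) are standard and are also taken for granted in the paper.
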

Recall that in the main body of the text, we deal  with metrics $|\cdot|$ on $\O(1) \to \P^d$ and the evaluation of the section $1$ on the analytification of the affine space $\A^d \subset \P^d$ defines a function $G := \log |1|$.
With this identification, one has
 $\MA (G) = c_1(\O(1), |\cdot|)^d$.

\begin{proof}
Assume first that the two metrics $|\cdot|_i$  are model metrics. This means that we 
can find a model $\mathfrak{X}$ of $X$ over $\spec \O_K$, and nef line bundles
$\mathfrak{L}_i \to \mathfrak{X}$ whose restriction to the generic fiber of $\mathfrak{X}$ is $L$.

In that case $c_1( L,\max\{|\cdot|_1,|\cdot|_2\})^d$ and $c_1( L,|\cdot|_1)^d$
 are both atomic measures, supported on 
divisorial points corresponding to irreducible
components of the special fiber.

 If $E$ is such a component for which $|\cdot|_1/| \cdot|_2 (x_E) <1$ 
then $|\cdot|_1/| \cdot|_2 (x_F) \le1$ for all irreducible components
  $F$ of the special fiber intersecting $E$.
  It follows that $\mathfrak{L}_1|_E=\mathfrak{L}_2|_E$ as numerical classes on $E$, 
  and hence $c_1( L,\min\{|\cdot|_1,|\cdot|_2\})^d\{ x_E\}$ and $c_1( L,|\cdot|_1)^d\{ x_E\}$ by the 
  definition of Monge-Amp\`ere measures of model functions. 
 
  \smallskip

In the general case, we may assume that we have sequences of model metrics $|\cdot|_{i,n}$
on $L$ such that $|\cdot|_{i,n} \to |\cdot|_{i}$ uniformly on $X^{\an}$.
Observe that $\Omega:=\{|\cdot|_1<|\cdot|_2\}$ is open since both metrics are continuous.
It suffices to prove that $$\int h\, c_1( L,\min\{|\cdot|_1,|\cdot|_2\})^d =\int h\, c_1( L,|\cdot|_1)^d$$  for all continuous functions $h$  whose support is contained in
  $\Omega$ and such that $0\le h\le1$.

  Pick $\e>0$ small and rational and write $\Omega_n:=\{|\cdot|_{1,n} e^{-\e}<|\cdot|_{2,n}|\}$.
  For $n\gg0$, we have $\Omega\subseteq\Omega_n$.
  Since $|\cdot|_{1,n} e^{-\e}$ and $|\cdot|_{2,n}|$ are both model metrics, we have 
 $c_1( L,\min\{|\cdot|_{1,n}e^{-\e},|\cdot|_{2,n}\})^d = 
 c_1( L,|\cdot|_{1,n})^d$ on $\Omega_n$ by the previous step.
Since $h$ is supported in $\Omega \subset \Omega_n$ we get 
$$
\int h\, c_1( L,\min\{|\cdot|_{1,n}e^{-\e},|\cdot|_{2,n}\})^d
=
\int h\, c_1( L,|\cdot|_{1,n}e^{-\e})^d
=
\int h\, c_1( L,|\cdot|_{1,n})^d
$$
for all $n$, and we conclude letting $n\to \infty$ and $\e \to 0$ and using \cite[Proposition 2.7]{chambert06}.
 \end{proof}



\end{document}